\documentclass[12pt]{amsart}

\usepackage{amsmath}
\usepackage{amsfonts}
\usepackage{latexsym}
\usepackage{graphicx}
\usepackage{amssymb}
\usepackage{amsthm}
\usepackage[margin=2cm]{geometry}
\usepackage{url}
\usepackage{color}
\usepackage{enumerate}
\usepackage[shortlabels]{enumitem} %
\usepackage[small]{caption}
\usepackage{cite}       %

\usepackage{bm}

\usepackage{todonotes}

\usepackage{tikz}
\usepackage{tikz-cd}

\usepackage{hyperref}

\usepackage[T1]{fontenc}     %
\usepackage{lmodern}         %
\usepackage[utf8]{inputenc}  %

\newtheorem{definition}{Definition}[section]
\newtheorem{lemma}[definition]{Lemma}
\newtheorem{proposition}[definition]{Proposition}

\newtheorem{corollary}[definition]{Corollary}

\newtheorem{theorem}[definition]{Theorem}
\newtheorem{remark}[definition]{Remark}

\newtheorem{maintheorem}{Theorem}

\newtheorem*{corollary*}{Corollary}

\newcommand{\Proofof}{Proof of }

\title{Almost everywhere balanced sequences of complexity $2n+1$}
\author[J.~Cassaigne]{Julien Cassaigne}
\address[J.~Cassaigne]{Aix-Marseille Université, CNRS, Centrale Marseille, Institut de mathématiques de Marseille, I2M - UMR 7373, 13453 Marseille, France}
\email{julien.cassaigne@math.cnrs.fr}

\author[S.~Labb\'e]{S\'ebastien Labb\'e}
\address[S.~Labb\'e]{Univ. Bordeaux, CNRS,  Bordeaux INP, LaBRI, UMR 5800, F-33400, Talence, France}
\email{sebastien.labbe@labri.fr}

\author[J.~Leroy]{Julien Leroy}
\address[J.~Leroy]{Département de mathématique,  Université de Liège,  12 Allée de la
découverte (B37), 4000 Liège, Belgique}
\email{j.leroy@uliege.be}

\date{\today}

\begin{document}

\keywords{Substitutions \and factor complexity \and Selmer \and continued fraction \and bispecial \and Lyapunov exponents \and balance}

\subjclass[2010]{Primary 37B10; Secondary 68R15 \and 11J70 \and 37H15}

\begin{abstract}
    We study ternary sequences associated with a multidimensional continued fraction algorithm introduced by the first author.
The algorithm is defined by two matrices and we show that it is measurably isomorphic to the shift on the set $\{1,2\}^\mathbb{N}$ of directive sequences.
For a given set $\mathcal{C}$ of two substitutions, we show that there exists a $\mathcal{C}$-adic sequence for every vector of letter frequencies or, equivalently, for every directive sequence.
We show that their factor complexity is at most $2n+1$ and is $2n+1$ if and only if the letter frequencies are rationally independent if and only if the $\mathcal{C}$-adic representation is primitive.
It turns out that in this case, the sequences are dendric.
We also prove that $\mu$-almost every $\mathcal{C}$-adic sequence is balanced, where $\mu$ is any shift-invariant ergodic Borel
probability measure on $\{1,2\}^\mathbb{N}$ giving a positive measure to the cylinder $[12121212]$. 
We also prove that the second Lyapunov exponent of the matrix cocycle associated with the measure $\mu$ is negative.

\end{abstract}

\maketitle

\newcommand{\N}{\mathbb{N}}
\newcommand{\Z}{\mathbb{Z}}
\newcommand{\Q}{\mathbb{Q}}
\newcommand{\R}{\mathbb{R}}
\newcommand{\bx}{\mathbf{x}}
\newcommand{\A}{\mathcal{A}}
\newcommand{\C}{\mathcal{C}}
\newcommand{\E}{\mathcal{E}}
\renewcommand{\P}{\mathbb{P}}
\newcommand{\MONE}{{\arraycolsep=2pt\left(\begin{array}{rrr}
1 & 1 & 0 \\
0 & 0 & 1 \\
0 & 1 & 0
\end{array}\right)}}
\newcommand{\MTWO}{{\arraycolsep=2pt\left(\begin{array}{rrr}
0 & 1 & 0 \\
1 & 0 & 0 \\
0 & 1 & 1
\end{array}\right)}}
\newcommand{\AONE}{{\arraycolsep=2pt\left(\begin{array}{rr}
1 & 1 \\
0 & 1
\end{array}\right)}}
\newcommand{\ATWO}{{\arraycolsep=2pt\left(\begin{array}{rr}
1 & 0 \\
1 & 1
\end{array}\right)}}

\def\ba{\mathbf{a}}
\def\bc{\mathbf{c}}
\def\bf{\mathbf{f}}
\def\bm{\mathbf{m}}
\def\bs{\mathbf{s}} 
\def\bu{\mathbf{u}}
\def\bv{\mathbf{v}}
\def\bw{\mathbf{w}}
\def\by{\mathbf{y}}
\def\bz{\mathbf{z}}
\newcommand\transpose[1]{\vphantom{#1}^{\mathsf{T}}\!#1}
\newcommand\transposeENV[1]{\strut^{\mathsf{T}}\!#1}

\def\Bcal{\mathcal{B}}
\def\Ical{\mathcal{I}}
\def\Mcal{\mathcal{M}}
\def\Pcal{\mathcal{P}}
\def\Xcal{\mathcal{X}}
\def\Lcal{\mathcal{L}}
\def\Scal{\mathcal{S}}

\def\alph{\mathrm{alph}}
\def\dim{\mathrm{dim}}
\def\Lexless{\mathrm{Lexless}}

\def\Msf{\mathsf{M}}

\def\mapC{\mathsf{c}}
\def\mapS{\mathsf{s}}

\setcounter{tocdepth}{1}
\tableofcontents

\section{Introduction}

A theorem of Dirichlet says that every positive irrational number $\alpha$
has infinitely many rational approximations $\frac{p}{q}\in\Q$
such that $|\alpha-\frac{p}{q}|<\frac{1}{q^2}$.
Such approximations can be computed from the continued
fraction expansion of $\alpha$ 
\[
    \alpha=[a_0;a_1,a_2,\dots] 
    =
    a_0
+ \frac{\displaystyle 1}{\displaystyle a_1
+ \frac{\displaystyle 1}{\displaystyle a_2
+ \frac{\displaystyle 1}{\displaystyle \dots}}}
\]
where $a_0\in\N$ and $a_1,a_2,\ldots\in\N\setminus\{0\}$.
Indeed, for all $n\in\N$, the truncation $\frac{p_n}{q_n}=[a_0;a_1,\ldots,a_n]$
provides a sequence $(p_n/q_n)_{n\in\N}$ of rational approximations of $\alpha$
called convergents satisfying Dirichlet's theorem.
Equivalently, the convergents $p_n/q_n$ can be computed from a product of the matrices
$A_1=\left(\begin{smallmatrix}1&1\\0&1\end{smallmatrix}\right)$
    and
$A_2=\left(\begin{smallmatrix}1&0\\1&1\end{smallmatrix}\right)$
    involving the above sequence of partial quotients:
\[
    \left(\begin{array}{cc} p_{2n+1} & p_{2n} \\ q_{2n+1} & q_{2n} \end{array}\right)
	= A_1^{a_0} A_2^{a_1} A_1^{a_2} \cdots A_2^{a_{2n+1}}.
\]
The convergence of $p_n/q_n$ to $\alpha$ then implies that 
\begin{equation}
\label{eq:2x2-matrix-convergence}
	\left(\begin{array}{c} \alpha \\ 1 \end{array}\right) \R_{\geq 0}
	=
	\bigcap_{k \geq 0} A_{i_0} A_{i_1} \cdots A_{i_k} \R^2_{\geq 0} 
\end{equation}
where the sequence $(i_n)_{n \in \N} \in \{1,2\}^\N$ 
is $1^{a_0}2^{a_1}1^{a_2}\cdots1^{a_{2k}}2^{a_{2k+1}}\cdots$.
Equation~\eqref{eq:2x2-matrix-convergence} holds even
if $1$ and $2$ do not both occur infinitely many times 
in $(i_n)_{n \in \N}$, in which case $\alpha$ is rational.
	If $\Delta = \{(x,y) \in \mathbb{R}_{\geq 0}^2 \mid x+y=1\}$ denotes
    the projection of the positive cone,
    Equation~\eqref{eq:2x2-matrix-convergence} defines a continuous and onto map
	$\pi: \{1,2\}^\N \to \Delta$.
This map is almost one-to-one and its (almost everywhere) inverse is obtained by iterating the normalized Euclid
algorithm $f_E$ which successively applies either $\bx\mapsto A_1^{-1}\bx/\|A_1^{-1}\bx\|_1$
or $\bx\mapsto A_2^{-1}\bx/\|A_2^{-1}\bx\|_1$,
according to whether $\bx\in A_1\R^2_{\geq 0}$ or $\bx\in A_2\R^2_{\geq 0}$.
Thus the shift map on $\{1,2\}^\N$ defines a symbolic representation of the dynamical system $(\Delta,f_E)$.

Sturmian words give a combinatorial flavor to
Equation~\eqref{eq:2x2-matrix-convergence}.
With the matrices $A_1$ and $A_2$ are respectively associated the substitutions
$ s_{1}:
1 \mapsto 1,
2 \mapsto 12$
and
$s_{2}:
1 \mapsto 21,
2 \mapsto 2.$
With the directive sequence $(i_n)_{n \in \N} \in \{1,2\}^\N$ is then associated the $\{s_1,s_2\}$-adic word $\bw \in \{1,2\}^\N$:
\begin{equation}\label{eq:stumian-s-adic}
    \bw = \lim_{n\to\infty} s_{i_0}s_{i_1}\cdots s_{i_n}(1^\omega)
\end{equation}
which is a Sturmian word \cite{MR1970391} if both letters $1$ and $2$ appear infinitely often in the directive sequence.
Since $A_j$ is the incidence matrix of the substitution $s_i$ for $i\in\{1,2\}$, Equation~\eqref{eq:2x2-matrix-convergence} ensures that
the vector of frequencies of letters in $\bw$ exists and is equal to $\pi((i_n)_{n \in \N}) = \frac{1}{1+\alpha}(\alpha,1)$.
Recall that the incidence matrix of a substitution $\sigma:A^* \to A^*$ is the matrix $M_\sigma = (|\sigma(a)|_b)_{b,a \in A}$, where $|u|_v$ stands for the number of occurrences of a word $v$ in a word $u$.
It is easily seen that for any word $w \in A^*$, $M_\sigma (|w|_a)_{a \in A} = (|\sigma(w)|_a)_{a \in A}$.

Sturmian words form a deeply studied class of binary words with lots of equivalent definitions~\cite{MR1905123}.
They are for instance the aperiodic words with minimal factor complexity $\#\Lcal_\bw(n)=n+1$~\cite{MR0322838}, where $\Lcal_\bw(n)$ denotes the language of words of length $n$ of $\bw \in A^\N$, i.e., 
$\Lcal_\bw(n)=\{u \in A^n \mid u \text{ occurs in } \bw\}$.
Sturmian words are also the aperiodic 1-balanced binary words~\cite{MR0000745}, where an infinite word $\bw \in A^\N$ is $K$-balanced if any two finite words of the same length occurring in $\bw$ have, up to $K$, the same number of occurrences of each letter.
The balance property allows to prove that for any Sturmian word $\bw$, the frequencies of $1$ and $2$ exist and are irrational. 
More than that, any Sturmian word $\bw$ has uniform word frequencies, that is, for all finite word $u$ occurring in $\bw$, the ratio $\frac{|w_k w_{k+1} \cdots w_{k+n}|_u}{n+1}$ has a limit $f_u$ when $n$ goes to infinity, uniformly in $k$.

\subsection*{Results}
We consider an extension of Equation~\eqref{eq:2x2-matrix-convergence} to a set of two $3\times3$ matrices.
Using a generalization of Euclid's algorithm, which is thus named a Multidimensional Continued Fraction Algorithm (MCFA), we show that these matrices allow to represent any direction in the positive cone $\R_{\geq0}^3$.
Doing so, we generalize Sturmian words on a three-letter alphabet by extending Equation~\eqref{eq:stumian-s-adic} to two well-chosen substitutions.
We obtain words $\bw$ of complexity $2n+1$ that are balanced for
almost every given vector of letter frequencies.
This article extends our previous work~\cite{MR3703620} presented
during the conference WORDS 2017.

The two matrices are
\[
C_{1}=\MONE
\qquad\text{and}\qquad
C_{2}=\MTWO
\]
and we show that for each sequence $(i_n)_{n\in\N}\in\{1,2\}^\N$, the set
$    \bigcap_{n\geq0} C_{i_0}C_{i_1}\cdots C_{i_n} \R^3_{\geq0}
$
is one-dimensional.
This property, sometimes called \emph{weak convergence}, is not satisfied by all choices of $3\times 3$ matrices. 
For instance, Nogueira proved that the Poincaré MCFA is not convergent~\cite{MR1336331}.
In our case, convergence allows to define a continuous map
$\pi:\{1,2\}^\N \to \Delta =\{\bx\in\mathbb{R}^3_{\geq 0}\mid\Vert\bx\Vert_1=1\}$ by 
\begin{equation}\label{eq:def-pi-C1-C2}
	\pi((i_n)_{n \in \N})\, \R_{\geq0}
	=
	\bigcap_{n\geq0} C_{i_0}C_{i_1}\cdots C_{i_n} \R^3_{\geq0}.
\end{equation}
This map is not injective, as for example $\pi(1222\dots)=(0,1,0)=\pi(2111\dots)$,
but it is onto. 
We also show that $\pi$ is injective exactly on the set $\mathcal{P}$ of primitive sequences, i.e., sequences $(C_{i_n})_{n \in \N}$ such that for all $m$ and all large enough $n>m$, $C_{i_m} \cdots C_{i_n}$ has only positive entries.
Furthermore, the image $\pi(\mathcal{P})$ is the set $\mathcal{I}$ of normalized vectors with rationally independent entries.
The inverse of $\pi:\mathcal{P} \to \mathcal{I}$ is given by the MCFA introduced by the first author~\cite{cassaigne_algorithme_2015} that consists in iterating the map $f_C$ on $\bx \in \mathcal{I}$ that applies either $\bx\mapsto C_1^{-1}\bx/\|C_1^{-1}\bx\|_1$
or $\bx\mapsto C_2^{-1}\bx/\|C_2^{-1}\bx\|_1$
according to whether 
$\bx\in C_1\R^3_{\geq 0}$ or $\bx\in C_2\R^3_{\geq 0}$.
Thus we obtain a similar symbolic representation as for the classical Euclid algorithm. 

\begin{maintheorem}
\label{maintheorem:conjugacy to shift}
	The symbolic dynamical system $(\{1,2\}^\N,S)$ is a symbolic representation of $(\Delta,f_C)$.
More precisely, 
\begin{itemize}
\item
for any shift-invariant Borel probability measure $\mu$ on $\{1,2\}^\N$ such that $\mu(\mathcal{P})=1$, the map $\pi:(\{1,2\}^\N,S,\mu) \to (\Delta,f_C,\pi_*\mu)$ is a measure-preserving isomorphism; 
\item
for any $f_C$-invariant Borel probability measure $\nu$ on $\Delta$ such that $\nu(\mathcal{I})=1$, the map $\pi:(\{1,2\}^\N,S,\pi^{-1}_*\nu) \to (\Delta,f_C,\nu)$ is a measure-preserving isomorphism.
\end{itemize} 
\end{maintheorem}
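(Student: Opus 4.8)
The plan is to recognize both bullets as two faces of a single structural fact: the restriction $\pi|_{\mathcal{P}}\colon\mathcal{P}\to\mathcal{I}$ is a bimeasurable conjugacy intertwining the shift $S$ with the algorithm $f_C$. Once this is in hand, the first bullet is obtained by pushing a shift-invariant measure forward and the second by pulling an $f_C$-invariant measure back through the (Borel) inverse. I would carry this out in three steps: (1) prove the equivariance $f_C\circ\pi=\pi\circ S$ on $\mathcal{P}$; (2) upgrade the already-established bijection $\pi\colon\mathcal{P}\to\mathcal{I}$ to a Borel isomorphism; (3) check that $\mathcal{P}$ and $\mathcal{I}$ are invariant, and assemble the measure-preserving isomorphisms.

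For step (1), I would read off the equivariance directly from the nested-cone definition~\eqref{eq:def-pi-C1-C2}. Writing $\bx=\pi((i_n)_{n})$, the factorization $\bigcap_{n\ge0}C_{i_0}\cdots C_{i_n}\R^3_{\ge0}=C_{i_0}\bigl(\bigcap_{n\ge1}C_{i_1}\cdots C_{i_n}\R^3_{\ge0}\bigr)$ shows that $\bx\,\R_{\ge0}=C_{i_0}\,\pi(S((i_n)_n))\,\R_{\ge0}$; in particular $\bx\in C_{i_0}\R^3_{\ge0}$, and the normalization of $C_{i_0}^{-1}\bx$ equals $\pi(S((i_n)_n))$. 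It remains to see that $f_C$ selects the branch $i_0$, i.e.\ that on $\mathcal{I}$ the cone containing $\bx$ is unique; this holds because $C_1\R^3_{\ge0}\cap C_2\R^3_{\ge0}$ is contained in a union of rational hyperplanes and hence meets $\mathcal{I}$ trivially. Therefore $f_C(\pi((i_n)_n))=\pi(S((i_n)_n))$ for every $(i_n)_n\in\mathcal{P}$.

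For step (2), note that $\pi$ is continuous, hence Borel, and that $\mathcal{P}$ and $\mathcal{I}$ are Borel (the former is cut out by countably many conditions ``$C_{i_m}\cdots C_{i_n}>0$'', each depending on finitely many coordinates, and the latter is the complement of the countable union of hyperplanes $\{\bx:\ba\cdot\bx=0\}$, $\ba\in\Z^3\setminus\{0\}$). Since $\pi|_{\mathcal{P}}$ is an injection onto $\mathcal{I}$ (recalled in the excerpt), the Lusin--Souslin theorem guarantees that it is a Borel isomorphism with Borel inverse, the latter being exactly the map realized by iterating $f_C$. For the invariance in step (3), dropping the first symbol preserves primitivity, and conversely prepending a symbol does too because each of $C_1,C_2$ has no zero row, so that $C_{i_0}$ times a positive matrix is again positive; hence $S^{-1}\mathcal{P}=\mathcal{P}$, and by the conjugacy $f_C^{-1}\mathcal{I}=\mathcal{I}$.

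Assembling: for a shift-invariant $\mu$ with $\mu(\mathcal{P})=1$, the measure $\pi_*\mu$ is carried by $\mathcal{I}$ and is $f_C$-invariant by the equivariance of step (1), so $\pi|_{\mathcal{P}}$ is the desired measure-preserving isomorphism onto $(\Delta,f_C,\pi_*\mu)$; for an $f_C$-invariant $\nu$ with $\nu(\mathcal{I})=1$, the Borel inverse from step (2) transports $\nu$ to a shift-invariant measure $\pi^{-1}_*\nu$ on $\mathcal{P}$, and the same map is the isomorphism. I expect the only delicate point to be the clean verification of the equivariance at the boundary---namely that the set where the branch $i_0$ (equivalently, the cone membership of $\bx$) is ambiguous is null; this is precisely where the hypotheses $\mu(\mathcal{P})=1$ and $\nu(\mathcal{I})=1$, together with $\pi(\mathcal{P})=\mathcal{I}$, are used, so that the conjugacy holds on a full-measure invariant set in each case.
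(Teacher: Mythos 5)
Your proposal is correct: all three steps hold (the cone factorization argument for equivariance is valid since $C_{i_0}$ is injective, the overlap $C_1\R^3_{\geq0}\cap C_2\R^3_{\geq0}$ is indeed the rational wall $\{x_1=x_3\}$ which misses $\mathcal{I}$, and the invariance and transport-of-measure arguments go through), and like the paper you take as input the bijection $\pi:\mathcal{P}\to\mathcal{I}$ from Theorem~\ref{thm:dim-sur-Q} and Corollaries~\ref{cor:nonuniqueimpliesnonprimitive} and~\ref{cor:l'algo rend la meme suite directrice}. The route differs from the paper's in two places, though. First, the paper never proves the equivariance of $\pi$ directly; instead it introduces the explicit coding map $\delta:\Delta\to\{1,2\}^\N$, $\delta(\bf)=(i_n)_{n}$ where $f_C^n(\bf)\in\Delta_{i_n}$, for which the intertwining $\delta\circ f_C=S\circ\delta$ holds on all of $\Delta$ by construction, and for which surjectivity of $\pi$ follows from the identity $\pi\circ\delta=\mathrm{id}$. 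Second, where you invoke Lusin--Souslin to get Borel measurability of the inverse, the paper gets it for free from the explicit formula $\delta^{-1}([i_0\cdots i_{n-1}])=\bigcap_{0\leq k<n}f_C^{-k}(\Delta_{i_k})$. So the paper's proof is more constructive and elementary (no descriptive set theory, and it yields the ``symbolic representation'' of the whole simplex, not just of $\mathcal{I}$), while yours shows that the conjugacy can be read off the nested-cone definition of $\pi$ alone, without ever naming the inverse map; the price is the abstract measurability argument, and the small extra care you correctly flagged at the boundary hyperplane where the branch of $f_C$ is ambiguous.
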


This result in particular applies to any positive Bernoulli measure $\beta$ on $\{1,2\}^\N$ and to the $f_C$-invariant probability measure $\xi$ defined by the density function $6/(\pi^2(1-x_1)(1-x_3))$~\cite{arnoux_labbe_2017}.
Observe that any Bernoulli measure on $\{1,2\}^\N$ is ergodic and that the measure $\xi$ is also ergodic~\cite{fougeron_simplicity_2021}.
Thus the pointwise ergodic theorem may be applied to obtain properties for Bernoulli-almost every directive sequence $(i_n)_{n \in \N}$ or for Lebesgue-almost every vector $\bx$.
Theorem~\ref{maintheorem:combinatoire almost always} below is an example of such a result. 

We pursue the analogy with Euclid's algorithm by giving a combinatorial flavor to the symbolic representations $(i_n)_{n \in \N} \in \{1,2\}^\N$.
We consider the substitutions
\[
c_{1}:
\begin{cases}
1 \mapsto 1 	\\
2 \mapsto 13	\\
3 \mapsto 2
\end{cases}
\qquad\text{and}\qquad
c_{2}:
\begin{cases}
1 \mapsto 2\\
2 \mapsto 13\\
3 \mapsto 3
\end{cases}
\]
whose incidence matrices are respectively $C_1$ and $C_2$ and we show that the
class of $\C$-adic words with $\C=\{c_1,c_2\}$
provides a nice generalization of Sturmian words over
a three-letter alphabet.
We indeed have the following interpretations of the previous discussion:
\begin{itemize}
\item
by weak convergence, the frequencies of letters exist in every $\C$-adic word;
\item
by surjectivity of $\pi$, every $\bx \in \Delta$ is the vector of letter frequencies of a $\C$-adic word;
\item
the bijection $\pi:\mathcal{P} \to \mathcal{I}$ induces a bijection between primitive $\C$-adic words and vectors of letter frequencies with rationally independent entries.
\end{itemize}

We give another equivalence of primitive $\C$-adic words in terms of their factor complexity, generalizing the Sturmian case.
We also show that the primitive $\C$-adic words are exactly the $\C$-adic words that are dendric, a property recently introduced under the name of ``tree sets''~\cite{MR3320917} (see Section~\ref{sec:factor complexity} for the definition).

\begin{maintheorem}
\label{maintheorem:caracterization 2n+1}
Let $\bw$ be a $\C$-adic word with directive sequence $(i_n)_{n \in \N}$.
The following are equivalent.
\begin{enumerate}[\rm (i)]
\item
$\bw$ has factor complexity $p(n)=2n+1$ for all $n \in \mathbb{N}$;
\item
the frequencies of letters in $\bw$ are rationally independent;
\item
    $(C_{i_n})_{n \in \N}$ is primitive;
\item
$\bw$ is a uniformly recurrent dendric word.
\end{enumerate}
\end{maintheorem}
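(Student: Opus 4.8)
The plan is to show the four conditions equivalent through the cycle (iii) $\Rightarrow$ (iv) $\Rightarrow$ (i) $\Rightarrow$ (iii), attaching (ii) to (iii) directly via the map $\pi$. Indeed (ii) $\Leftrightarrow$ (iii) is almost immediate from what precedes: by weak convergence the vector of letter frequencies of $\bw$ is exactly $\pi((i_n)_{n\in\N})$, and the established properties of $\pi$ (in particular the bijection $\pi\colon\mathcal{P}\to\mathcal{I}$ and the fact that it identifies primitive $\mathcal{C}$-adic words with frequency vectors of rationally independent entries) give that $\pi((i_n)_{n})\in\mathcal{I}$ if and only if $(i_n)_{n}\in\mathcal{P}$. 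So I would first dispose of (ii) $\Leftrightarrow$ (iii) and then concentrate on linking primitivity, the value $2n+1$ of the factor complexity, and dendricity, all three of which are governed by the same object: the extension graphs (equivalently, the bispecial factors) of $\bw$.

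The heart of the proof is a description of how the extension graph $E(w)$ of a factor $w$ behaves under desubstitution. Writing $\bw = c_{i_0}(\bw^{(1)})$, where $\bw^{(1)}$ is the $\mathcal{C}$-adic word directed by the shifted sequence $(i_n)_{n\ge 1}$, I would set up a correspondence between the bispecial factors of $\bw$ and those of $\bw^{(1)}$ and show that applying $c_{i_0}$ carries a tree extension graph to a tree extension graph. For (iii) $\Rightarrow$ (iv): primitivity of $(C_{i_n})_{n}$ yields uniform recurrence by the standard fact that a primitive $\mathcal{C}$-adic directive sequence generates a minimal subshift, and iterating the extension-graph correspondence along the whole primitive directive sequence propagates the tree property from the trivial graph of the empty word to every factor, giving dendricity.

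Granting dendricity, (iv) $\Rightarrow$ (i) is the complexity formula for tree sets: for a uniformly recurrent dendric word the bilateral multiplicity $m(w)$ vanishes for every factor $w$, so the second difference $p(n+1)-2p(n)+p(n-1)$ is zero; since $p(0)=1$ and $p(1)=3$ this forces $p(n)=2n+1$ for all $n\in\N$ \cite{MR3320917}. For (i) $\Rightarrow$ (iii) I would argue by contraposition, using the earlier classification of the non-primitive directive sequences (those sent by $\pi$ outside $\mathcal{I}$). Running the same extension-graph analysis on such a sequence shows that the first difference $p(n+1)-p(n)$ eventually drops below $2$ -- intuitively because the persistent zeros in the products $C_{i_0}\cdots C_{i_n}$ suppress some bi-extensions -- so the complexity is strictly smaller than $2n+1$ for large $n$, closing the cycle.

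The main obstacle is the bispecial/extension-graph analysis itself, and it is sharpened by the fact that $c_1$ and $c_2$ are not proper: the extensions created at the seams between the images $c_{i_0}(a)$ of consecutive letters must be tracked explicitly, and it is precisely at these seams that one must verify both that the tree structure is preserved in the primitive case and that it degenerates in the non-primitive case. I expect this bounded but delicate seam bookkeeping to be where essentially all the work lies, the remaining implications being either formal, as in (ii) $\Leftrightarrow$ (iii), or a direct appeal to the tree-set complexity theorem, as in (iv) $\Rightarrow$ (i).
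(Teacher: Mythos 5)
Your plan follows the same overall architecture as the paper's proof: (ii)$\Leftrightarrow$(iii) exactly as you describe (Proposition~\ref{prop:unif frequencies} plus Theorem~\ref{thm:dim-sur-Q}); (iii)$\Rightarrow$(iv) by a bispecial analysis; (iv)$\Rightarrow$(i) by the tree-set complexity formula; (i)$\Rightarrow$(iii) by contraposition. But at the bispecial analysis, which you correctly identify as the heart of the matter, your plan has a genuine gap. The induction you propose --- ``applying $c_{i_0}$ carries a tree extension graph to a tree extension graph'' --- is not a sound induction hypothesis: the extension set of a bispecial extended image is computed from the \emph{specific} extension set of its antecedent and the seam words, not merely from whether that set is a tree. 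Indeed, the operations involved (merging rows or columns, deleting rows or columns) can turn a tree into a graph with a cycle for suitably shaped extension sets, so tree-ness alone cannot be propagated; what the paper propagates is membership in an explicit finite family of extension sets. It computes the possible sets $E(\varepsilon,\bw^{(n)})$ (Lemma~\ref{lemma: empty word}), shows that ordinariness is preserved under extended images (Lemma~\ref{lemma:ordinary preserved}), and tracks the finitely many non-ordinary extension sets through the finite graphs of Figures~\ref{figure:descendants of c_{122}} and~\ref{figure:descendants of c_{121}}, all members of which happen to be trees. Moreover, your ``seam bookkeeping'' with $c_1,c_2$ themselves lacks the tool that makes the antecedent correspondence rigorous: Proposition~\ref{prop:def antecedent and bsp ext image} (from \cite{GLL21}), which gives existence and uniqueness of bispecial antecedents together with the extension formula~\eqref{eq:extensionsofufromv}, requires injective \emph{strongly proper} substitutions, and $c_1,c_2$ are neither left nor right proper. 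The paper's enabling idea, absent from your proposal, is to recode every directive sequence over $\C'=\{c_1^2,\,c_2^2,\,c_1c_2^2,\,c_2c_1^2,\,c_1c_2c_1,\,c_2c_1c_2\}$, whose elements are injective and strongly left or right proper, so that this machinery applies verbatim.

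Two further discrepancies are worth noting. For (i)$\Rightarrow$(iii), the paper does not use extension graphs at all: by Proposition~\ref{prop:primitiveness for C adic}, a non-primitive directive sequence has a tail in $\{c_1^2,c_2^2\}^\N$, and then the word is either eventually periodic (bounded complexity, by Morse--Hedlund) or the image of a Sturmian word over $\{1,3\}$ under the injective substitution $\sigma_{[0,N)}$ (complexity $n+k$); either way $p(n)\neq 2n+1$ for large $n$. This is both cleaner and easier to justify than your ``first difference eventually drops below $2$'' degeneration argument, which you would otherwise have to carry out for non-primitive sequences by hand. Finally, in (iv)$\Rightarrow$(i) you assert $p(1)=3$ without justification; this requires knowing that all three letters occur in $\bw$, which does not follow from dendricity and uniform recurrence alone (a Sturmian word over $\{1,3\}$, which arises from directive sequences such as $(c_1^2c_2^2)^\omega$, has tree extension graphs), so you must either build this requirement into your reading of ``dendric word over $\{1,2,3\}$'' or derive the occurrence of all three letters before invoking the complexity formula.
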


The last property of Sturmian words that we consider is their balancedness.
Not all primitive $\C$-adic words are balanced~\cite{CRMATH_2021__359_4_399_0}, but we prove that almost all of them are (for many measures).
Our proof is based on the method proposed by Avila and
Delecroix~\cite{MR4043208} for Brun and Fully Subtractive MCFA.
It consists in applying the pointwise ergodic theorem to show that some fixed contracting matrix appears sufficiently often in almost every sequence $(C_{i_n})_{n \in \N}$.
The same method allows to show that the second Lyapunov exponent
is negative. The definition of Lyapunov exponents can be found
in Section~\ref{sec:lyapunov}.

An application of multidimensional continued fraction algorithms is
to provide simultaneous Diophantine approximation of a vector of real numbers
\cite{MR568710}.
The quality of the approximations can be evaluated in terms of the first two Lyapunov exponents of the MCFA~\cite{MR1156412,MR1230366}.
In particular, if the second Lyapunov exponent is negative, this implies that
the algorithm is strongly convergent \cite{MR1960307,MR1804954,MR1960306}.

\begin{maintheorem} 
    \label{maintheorem:combinatoire almost always}
    Let $\mu$ be a shift-invariant ergodic 
    Borel probability measure on $\{1,2\}^\N$.
    If 
    \[
    	\mu([12121212])>0,
    \] 
    then for $\mu$-almost every directive sequence 
    $(i_n)_{n\in\N} \in \{1,2\}^\N$,
    the word $\bw=\lim_{n\to\infty}c_{i_0}\dots c_{i_n}(1^\omega)$ 
    is balanced
    and the second Lyapunov exponent $\theta_2^\mu$ 
    of the cocycle with matrices $\{C_1,C_2\}$ is negative.
\end{maintheorem}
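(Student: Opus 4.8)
The plan is to derive both conclusions from the single statement that the second Lyapunov exponent $\theta_2^\mu$ of the cocycle generated by $\{C_1,C_2\}$ over $(\{1,2\}^\N,S,\mu)$ is negative, so I first reduce balance to $\theta_2^\mu<0$. Since $\mu$ is ergodic and $\mu([12121212])>0$, the block $12$, and indeed $(C_1C_2)^4$, occurs in $(C_{i_n})_{n\in\N}$ with positive density for $\mu$-almost every directive sequence; as $(C_1C_2)^4$ is strictly positive and the generators have no zero row, such a sequence is primitive, so by weak convergence (Equation~\eqref{eq:def-pi-C1-C2}) the letter-frequency vector $\bf=\pi((i_n))$ exists and, by Theorem~\ref{maintheorem:caracterization 2n+1}, is the Perron direction. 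Writing $\mathbf{ab}(u)=(|u|_a)_{a\in\{1,2,3\}}$ for the abelianization, the word $\bw$ is balanced if and only if the prefix discrepancies $\mathbf{ab}(p)-|p|\bf$ form a bounded subset of $\mathbf{1}^\perp=\{\bx:\langle\mathbf{1},\bx\rangle=0\}$. Desubstituting a prefix along the $\C$-adic structure gives the telescoping
\[
    \mathbf{ab}(p)=\sum_{n\ge 0} C_{i_0}C_{i_1}\cdots C_{i_{n-1}}\,\mathbf{ab}(q_n),
\]
where each $q_n$ is a proper prefix of some $c_{i_n}(a)$, hence of length at most $1$. Splitting $\mathbf{ab}(q_n)=|q_n|\bf^{(n)}+\bz_n$ with $\bf^{(n)}=\pi(S^n\omega)$ and $\bz_n\in\mathbf{1}^\perp$ bounded, and projecting onto $\mathbf{1}^\perp$ along $\bf$ (which annihilates every $\bf^{(n)}$-component), the discrepancy equals the sum of the central-stable parts of $C_{i_0}\cdots C_{i_{n-1}}\bz_n$. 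By Oseledets' theorem, if $\theta_2^\mu<0$ then $\|C_{i_0}\cdots C_{i_{n-1}}|_{E^{\mathrm{cs}}}\|\le C(\omega)\rho^n$ with $\rho<1$ for $\mu$-almost every $\omega$, so this series is absolutely convergent, the discrepancies are bounded, and $\bw$ is balanced.

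Everything therefore reduces to $\theta_2^\mu<0$, which I would attack by the method of Avila and Delecroix~\cite{MR4043208}. The ergodic input is the pointwise ergodic theorem applied to the indicator of $[12121212]$: for $\mu$-almost every $\omega$ the factor $B=(C_1C_2)^4$ occurs in $(C_{i_n})_{n\in\N}$ with asymptotic frequency $\mu([12121212])>0$. The matrix $B$ is strictly positive, and $C_1C_2$ is a Pisot matrix, with characteristic polynomial $\lambda^3-2\lambda^2+\lambda-1$ having one real root $>1$ and a complex-conjugate pair of modulus $<1$; thus $B$ has a dominant eigenvalue together with two eigenvalues of modulus $<1$, i.e. it contracts the plane transverse to its Perron eigenline \emph{in absolute terms}. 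Since $\det C_1=\det C_2=-1$ we have $\theta_1^\mu+\theta_2^\mu+\theta_3^\mu=0$, and $\theta_1^\mu>0$ by primitivity, so the target $\theta_2^\mu<0$ is equivalent to the second singular value satisfying $\tfrac1n\log\sigma_2(C_{i_0}\cdots C_{i_{n-1}})\to\theta_2^\mu<0$, equivalently to the top exponent $\theta_1^\mu+\theta_2^\mu$ of the exterior-square cocycle $\Lambda^2 A$ being strictly smaller than $\theta_1^\mu$.

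The core is a quantitative contraction statement for $\sigma_2$, and this is the step I expect to be the main obstacle. One cannot simply run a Perron–Frobenius/Birkhoff argument on the positive orthant, because $B$ is not totally positive (for instance its minor on rows $\{1,2\}$ and columns $\{1,3\}$ equals $-1$), so $\Lambda^2 B$ does not preserve the standard cone of $\Lambda^2\R^3\cong\R^3$; an invariant cone $\mathcal{K}$ adapted to the stable plane of $B$ must be built by hand, checked to be preserved by $\Lambda^2 C_1,\Lambda^2 C_2$ and mapped strictly inside itself by $\Lambda^2 B$. The genuinely delicate point is that cone invariance alone yields only simplicity of the exponents, a projective statement, whereas the sign of $\theta_2^\mu$ is an absolute growth rate; to bridge this gap I would exploit the Pisot spectrum of $B$ (its subdominant eigenvalues have modulus $<1$) together with an adapted norm in which each occurrence of $B$ strictly contracts $\sigma_2$ by a factor bounded away from $1$ while the interspersed factors $C_1,C_2$ are non-expanding. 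The pointwise ergodic theorem then turns the positive frequency $\mu([12121212])>0$ into a strictly negative value of $\lim_n\tfrac1n\log\sigma_2(C_{i_0}\cdots C_{i_{n-1}})=\theta_2^\mu$ for $\mu$-almost every $\omega$. By the first paragraph this simultaneously gives balance for $\mu$-almost every directive sequence, which completes the proof.
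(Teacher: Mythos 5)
There is a genuine gap, and it sits exactly where you flagged it: the quantitative contraction statement behind $\theta_2^\mu<0$ is never proved, and it is the heart of the matter. Your first paragraph (balance follows from $\theta_2^\mu<0$) is essentially Theorem 6.4 of \cite{MR3330561}, which the paper cites as known, so that reduction is acceptable; but your second and third paragraphs only describe a strategy for the contraction, and the specific strategy you outline cannot work as stated. You ask for ``an adapted norm in which each occurrence of $B=(C_1C_2)^4$ strictly contracts $\sigma_2$ by a factor bounded away from $1$ while the interspersed factors $C_1,C_2$ are non-expanding.'' No such fixed norm exists: $C_1$ has characteristic polynomial $(x-1)^2(x+1)$ with a nontrivial Jordan block at the eigenvalue $1$, so $\Vert C_1^n\Vert$ (and likewise $\Vert \wedge^2 C_1^n\Vert$) grows linearly in $n$, which rules out non-expansiveness of the individual generators in any norm on $\R^3$ or on $\wedge^2\R^3$. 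Moreover $\sigma_2$ is not submultiplicative against factors with $\sigma_1>1$, so ``$B$ contracts $\sigma_2$, the rest doesn't expand it'' cannot be chained. The only way to make non-expansiveness true is to measure the transpose cocycle restricted to the moving hyperplane bundle $\mathbf{f}^\perp$ (equivalently, cone-restricted semi-norms), and even there it fails for individual matrices and for $C_1C_2$: the paper exhibits $z=(8,-5,13)$ with $\Vert\transpose{(C_1C_2)}z\Vert_D=26>18=\Vert z\Vert_D$.

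What the paper actually does, and what is missing from your proposal, is the following chain: (i) introduce the semi-norm $\Vert v\Vert_D=\max(v)-\min(v)$ and show it is a norm on each $\mathbf{f}^\perp$, submultiplicative along products in the cone-restricted sense (Lemma~\ref{lem:semi-norm-on-AB}); (ii) show that non-expansiveness holds not for $C_1,C_2$ but for the complete blocks $C_1C_2^kC_1$ and $C_2C_1^kC_2$, uniformly in $k$ (Lemma~\ref{lem:C1C2kC1-upperbound-1}) — this forces a factorization of the directive sequence into such blocks (Lemma~\ref{lem:primitive-implies-nice-factorization}); (iii) prove the key contraction $\Vert\transpose{M}\Vert_D^{M\R^3_{\geq0}}\leq 4/5$ for $M=(C_1C_2)^3$, which is a genuinely finite but nontrivial computation reducing the supremum to boundaries of explicit subcones (Lemma~\ref{lem:sup-on-boundaries} plus the 36-candidate table); and (iv) do the combinatorial bookkeeping converting positive density of the cylinder $[12121212]$ into a positive density of \emph{disjoint, block-aligned} contracting pairs (the sets $J_m\supseteq J_m'$ with gaps $\geq 8$ and the index map $i(j)$ in Lemma~\ref{lem:upper-bound-4over5}), a point your sketch also skips. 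From the resulting bound $\Vert\transpose{M_{[0,n)}}|_{\mathbf{f}^\perp}\Vert_\infty\leq (n+3)(4/5)^{\frac18 n(\mu([12121212])-\varepsilon)-\frac18}$ both conclusions follow at once — balance via the summability criterion of Theorem~\ref{thm:berthe-delecroix_balance}, and $\theta_2^\mu<0$ via the identity $\theta_2^\mu=\lim_n\frac1n\log\Vert\transpose{A_n}|_{\mathbf{f}^\perp}\Vert$ — so the paper never needs Oseledets or the exterior-square cocycle. In short: your architecture (ergodic theorem plus a contracting block of positive density) matches the paper's, but the contraction itself, which is the entire technical content, is left as an acknowledged obstacle, and the norm-based shortcut you propose for it is provably unavailable.
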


This result in particular applies to any positive Bernoulli measure and to the measure $\pi^{-1}_*(\xi)$.
Thus it extends a result of Berthé, Steiner and Thuswaldner~\cite{MR4194166} who proved that the second Lyapunov
exponent is negative for the measure $\pi_*^{-1}(\xi)$.
Observe that $\pi^{-1}_*(\xi)$ is not a Bernoulli measure (see Remark~\ref{rem:not-bernouilli}) so all these measures are pairwise mutually singular.

It turns out that the map $f_C$ is conjugate 
with a semi-sorted version of 
another MCFA, the Selmer algorithm \cite{MR0130852,schweiger}
(see Section~\ref{sec:selmer}).
Also note that Selmer algorithm is conjugate on the absorbing simplex to
M\"onkemeyer's algorithm \cite{MR64084} (see \cite{MR2413304}).

\subsection*{Example and applications}

Consider the periodic sequence $121212\cdots$.
We have that
\[
\pi(121212\cdots)=
\frac{1}{\beta^2+1}
\left(\begin{array}{c}\beta\\\beta^2-\beta\\1\end{array}\right)
\approx
\left(\begin{array}{c}
0.4302\\ 0.3247\\ 0.2451
\end{array}\right)
\]
is a positive right eigenvector of the primitive matrix $C_1C_2$
associated with the Perron-Frobenius eigenvalue
$\beta\approx 1.7548$ of $C_1C_2$. 
It is the positive root of the characteristic polynomial $x^3-2x^2+x-1$
of $C_1C_2$.
The infinite word on the alphabet $\{1,2,3\}$ 
obtained by applying our MCFA to the above vector
is the
$\C$-adic word which is 
the unique fixed point of the substitution $c_1c_2:1\mapsto 13, 2\mapsto 12, 3\mapsto 2$:
\[
    \bw = (w_n)_{n\geq0} = \lim_{k\to\infty}(c_1c_2)^k(1) = 
1321213121321312132121321312132121312132\cdots
\]
whose set of factors of lengths 0, 1, 2, 3 and 4 are listed in the following table:
\[
\begin{array}{l|l}
n & 2n+1 \text{ factors of length } n\\
\hline
0 & \{\varepsilon\}\\
1 & \{1, 2, 3\}\\
2 & \{12, 13, 21, 31, 32\}\\
3 & \{121, 131, 132, 212, 213, 312, 321\}\\
4 & \{1213, 1312, 1321, 2121, 2131, 2132,
		3121, 3212, 3213\}
\end{array}
\]
The left eigenvector of $C_1C_2$ 
associated with the dominant eigenvalue $\beta$
is $u=(1, \beta^2-\beta, \beta-1)$.
We define the map
$h:\{1,2,3\}\to\mathbb{C}$
by
$h(1)=1$, $h(2)={\beta^*}^2-\beta^*$ and $h(3)=\beta^*-1$
where
$\beta^*\approx 0.12256 + 0.74486i$
is one of the two complex Galois conjugates of $\beta$.
Observe that the vector $u^*=(h(1), h(2), h(3))$ is 
the image of $u$ under
the automorphism of the field $\Q(\beta)$ defined by $\beta\mapsto\beta^*$.
The scalar product of $u^*$ with $\pi(121212\cdots)$ is zero.
Thus, as $w$ is balanced,
the partial sums
$S^h(N)=\sum_{i=0}^{N-1} h(w_i)$
are bounded. The set $\{S^h(N)\colon N\in\N\}$, shown in
Figure~\ref{fig:rauzy-fractal-12}, is a well-known construction of the Rauzy
fractal associated with a substitution \cite{MR667748,MR1020484,MR2721985,MR2759108}.
Theorem~\ref{maintheorem:combinatoire almost always}
implies that the Rauzy fractal is bounded for almost every $\C$-adic word.
As shown recently, this is not true for all $\C$-adic words \cite{CRMATH_2021__359_4_399_0}.

\begin{figure}[h]
    \begin{center}
        \includegraphics[width=.45\linewidth]{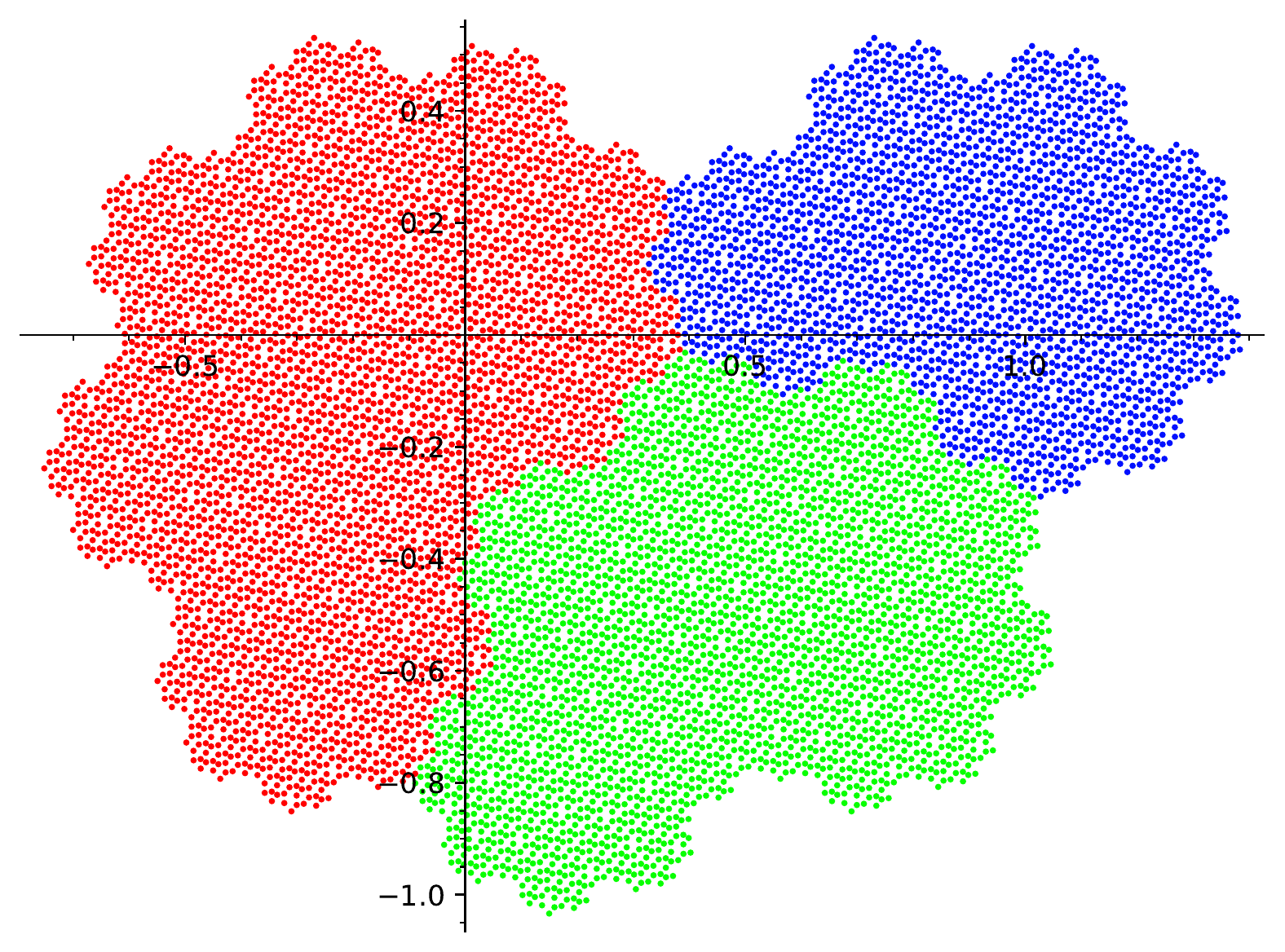}
        \includegraphics[width=.45\linewidth]{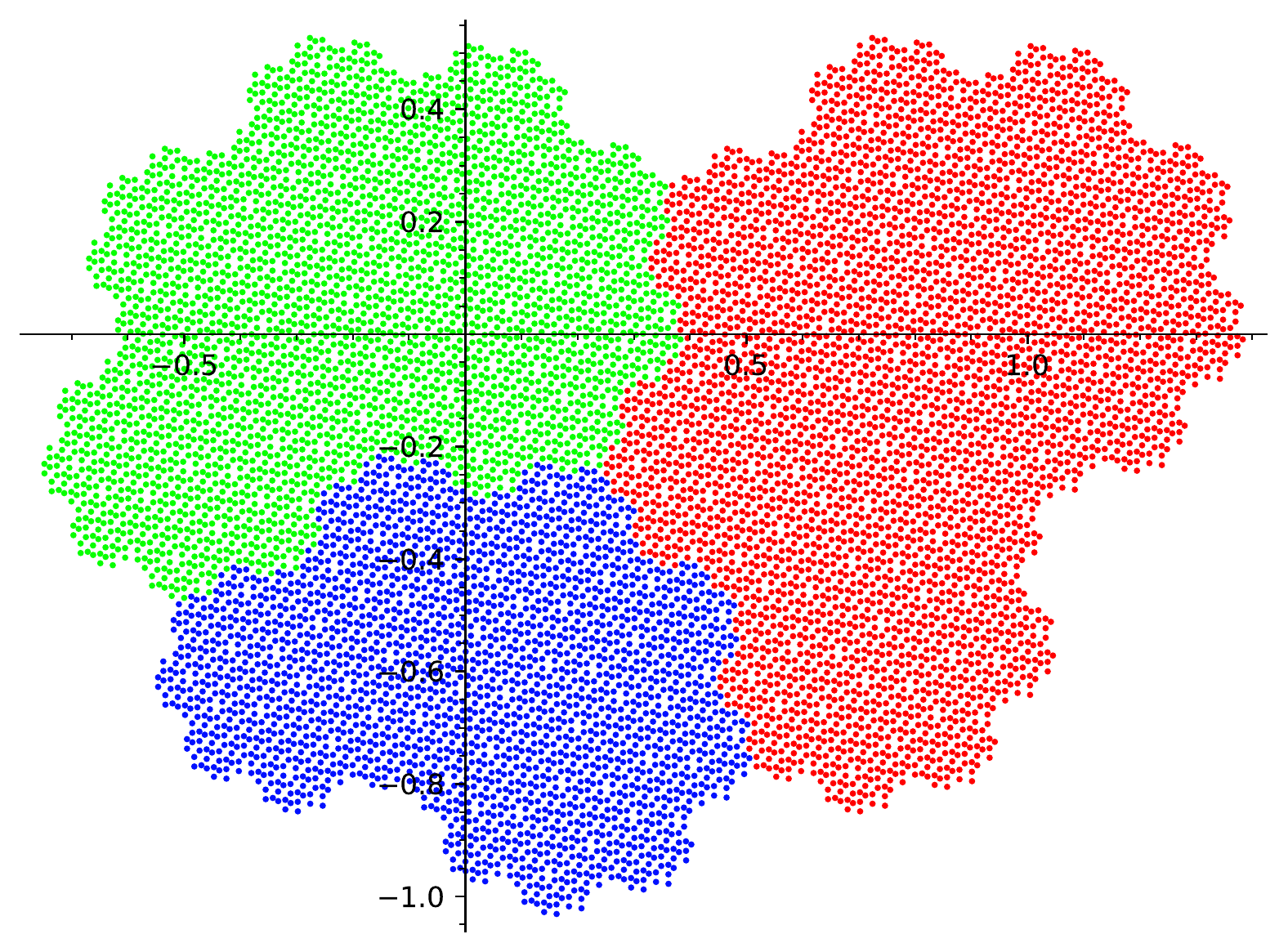}
    \end{center}
    \caption{The Rauzy fractal associated with the fixed point $\bw$ of $c_1c_2$.
            On the left (right resp.) the color of the point $S^h(N)\in\mathbb{C}$ is chosen
            according to the letter $w_N$ ($w_{N-1}$ resp.).}
\label{fig:rauzy-fractal-12}
\end{figure}

Figure~\ref{fig:rauzy-fractal-12} can be reproduced in SageMath
in few lines:
{\footnotesize
\begin{verbatim}
    sage: c1 = WordMorphism("1->1,2->13,3->2")
    sage: c2 = WordMorphism("1->2,2->13,3->3")
    sage: c12 = c1*c2
    sage: c12.rauzy_fractal_plot()
    sage: c12.rauzy_fractal_plot(exchange=True)
\end{verbatim}}
\noindent
In Figure~\ref{fig:rauzy-fractal-12}, we observe that the fractal can
be decomposed into three parts in two distinct ways, which defines an exchange
of pieces inside the fractal. 
Theorem~\ref{maintheorem:combinatoire almost always}
has important consequences.
Recent progresses \cite{berthe_multidimensional_2020,fogg_symbolic_2020}, which
build on our preliminary work \cite{MR3703620}, prove that the exchange
of pieces is almost surely equivalent to a rotation on a two-dimensional torus, and more
importantly that almost every rotation on the 2-dimensional torus admits a
coding of complexity $2n+1$ through such a fractal partition of the 2-torus.

\subsection*{Comparison with other generalizations of Sturmian words over larger alphabets}
There exist many other generalizations of Sturmian words over larger alphabets, each focusing on particular properties satisfied by Sturmian words. 

Words of complexity $2n+1$ were for instance considered by Arnoux and Rauzy~\cite{MR1116845}
with the condition that, like Sturmian words, there is exactly one left and one right special factor of each length; 
these words are now called Arnoux-Rauzy words. 
It is known that the frequencies of any Arnoux-Rauzy word are well defined and belong to the Rauzy gasket~\cite{MR3184185}, a fractal set of Lebesgue measure zero.
Thus the above condition on the number of special factors is very restrictive for the possible letter frequencies.

Words of complexity $p(n)\leq 2n+1$ include Arnoux-Rauzy words, 
codings of interval exchange transformations and more~\cite{MR3214265}. 
For any given letter frequencies one can construct words of factor
complexity $2n+1$ by the coding of a 3-interval exchange transformation. It is however known that these words are almost always unbalanced~\cite{MR1488330}.

In recent years, multidimensional continued fraction algorithms were used to obtain ternary balanced words with low factor complexity for any given vector of letter frequencies.  
Indeed the Brun algorithm leads to balanced words~\cite{MR3124516} and it was shown that the Arnoux-Rauzy-Poincaré algorithm leads to words of factor complexity $p(n)\leq\frac{5}{2}n+1$~\cite{MR3283831}.

Thus the words that we consider in this paper provide the first class of words which simultaneously generalize the three Sturmian properties of having factor complexity $(\# A-1)n+1$, having any vector of rationally independent letter frequencies and being almost always balanced.
The problem of finding an analogue of $f_C$ in dimension $d\geq 4$, generating balanced $\Scal$-adic sequences with complexity $(d-1)n+1$ for almost every vector of letter frequencies, is still open.

\subsection*{Structure of the article}
In Section~\ref{sec:bidimensional CFA}, we define the MCFA used in this article as well as the associated matrices $C_1$ and $C_2$, the substitutions $c_1$ and $c_2$ and the adic words.

Since we are dealing with convergence of cones $C_{i_0}C_{i_1}\cdots C_{i_n} \R^3_{\geq0}$, an important part of the paper deals with products of matrices.
In Section~\ref{sec:seminorm}, we define a semi-norm $\|\cdot\|_D$ on $\mathbb{R}^d$ which is well-suited for the matrices $C_1$ and $C_2$ and, using it, we give sufficient conditions so that a sequence $(M_n)_{n \in \N}$ of non-negative $d\times d$ matrices is weakly convergent (Proposition~\ref{prop:convergence-sufficient-conditions}).
We then apply our results in Section~\ref{section:primitivity and convergence} to sequences $(C_{i_n})_{n \in \N} \in \{C_1,C_2\}^\N$ and show that any such sequence is weakly convergent (Proposition~\ref{prop:convergence-for-C1-C2}).
In particular, this defines the map $\pi$ of Equation~\eqref{eq:def-pi-C1-C2}.

In Section~\ref{sec:rational dependencies}, we characterize the rational dependencies of $\pi((i_n)_{n \in \N})$.
In particular, we show that $\pi(\mathcal{P}) = \mathcal{I}$ (Theorem~\ref{thm:dim-sur-Q}) and that the restriction of $\pi$ to $\mathcal{P}$ is a bijection (Corollaries~\ref{cor:nonuniqueimpliesnonprimitive} and~\ref{cor:l'algo rend la meme suite directrice}).
In particular, this implies Theorem~\ref{maintheorem:conjugacy to shift}, as detailed in Section~\ref{sec:symbolic repr}.

We show in Section~\ref{sec:word frequencies} that all $\C$-adic words have uniform word frequencies (Proposition~\ref{prop:unif frequencies}) and in Section~\ref{sec:balance} that almost all of them are balanced (part 1 of Theorem~\ref{maintheorem:combinatoire almost always}).
We show that the Lyapunov exponent is negative in Section~\ref{sec:lyapunov}, completing the proof of Theorem~\ref{maintheorem:combinatoire almost always}.
The factor complexity of $\C$-adic words is studied in Section~\ref{sec:factor complexity}, completing Theorem~\ref{maintheorem:caracterization 2n+1}.
The link with Selmer algorithm is studied in Section~\ref{sec:selmer}.

\subsection*{Acknowledgments}

We are thankful to Valérie Berthé for her enthusiasm toward this project
and Vincent Delecroix for helping discussions.
We also thank the referee for their thorough reading and pertinent suggestions
improving the quality of the article.

\section{A bidimensional continued fraction algorithm}
\label{sec:bidimensional CFA}
On $\mathbb{R}^3_{\geq 0}$, the bidimensional continued fraction algorithm
introduced by the first author~\cite{cassaigne_algorithme_2015} is
\[
F_C (x_1,x_2,x_3) = 
\begin{cases}
    (x_1-x_3, x_3, x_2), & \mbox{if } x_1 \geq x_3;\\
    (x_2, x_1, x_3-x_1), & \mbox{if } x_1 < x_3.
\end{cases}
\]
More information on multidimensional continued fraction algorithms
can be found in \cite{BRENTJES,schweiger}.

Alternatively, the map $F_C$ can be defined by associating nonnegative matrices
to each part of a partition of $\mathbb{R}^3_{\geq 0}$ into $\Lambda_1\cup\Lambda_2$ where
\begin{align*}
	\Lambda_1 &= \{(x_1,x_2,x_3)\in\mathbb{R}^3_{\geq 0} \mid 
	x_1 \geq x_3\}, \\
    \Lambda_2 &= \{(x_1,x_2,x_3)\in\mathbb{R}^3_{\geq 0} \mid 
	x_1 < x_3\}.
\end{align*}
The matrices are given by the rule
$\Msf(\bx) = C_i$
if and only if
$\bx\in\Lambda_i$ where
\[
C_{1}=\MONE
\qquad\text{and}\qquad
C_{2}=\MTWO.
\]
The map $F_C$ on $\mathbb{R}^3_{\geq 0}$ and
the projective map $f_C$ on
$\Delta=\{\bx\in\mathbb{R}^3_{\geq 0}\mid\Vert\bx\Vert_1=1\}$ are then defined as:
\[
    F_C(\bx) = \Msf(\bx)^{-1}\bx
    \qquad\text{and}\qquad
    f_C(\bx) = \frac{F_C(\bx)}{\Vert F_C(\bx)\Vert_1}.
\]
Thus, we have
\[
f_C (x_1,x_2,x_3) = 
\begin{cases}
    (\frac{x_1-x_3}{1-x_3}, \frac{x_3}{1-x_3}, \frac{x_2}{1-x_3}), & \mbox{if } x_1 \geq x_3;\\
    (\frac{x_2}{1-x_1}, \frac{x_1}{1-x_1}, \frac{x_3-x_1}{1-x_1}), & \mbox{if } x_1 < x_3.
\end{cases}
\]
Many of their properties can be found in \cite{labbe_3-dimensional_2015}.
Since $\{\Lambda_1,\Lambda_2\}$ is a partition of $\mathbb{R}^3_{\geq 0}$, any vector $\bx \in \mathbb{R}^3_{\geq 0}$ defines a sequence of matrices $(C_{i_n})_{n \in \N}$ by $C_{i_n} = \Msf(F_C^n(\bx))$ and we have
\begin{equation}
\label{eq:definition of sequence of matrices}
	\bx \in \bigcap_{n \geq 0} C_{i_0} C_{i_1} \cdots C_{i_n} \mathbb{R}^3_{\geq 0}.
\end{equation}
The $n$-cylinders induced by $f_C$ on $\Delta$ are illustrated in Figure~\ref{fig:cylinders}.

\begin{figure}
    \includegraphics[width=4cm]{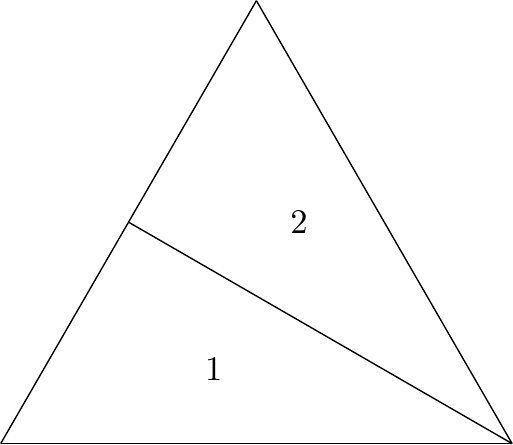}
    \includegraphics[width=4cm]{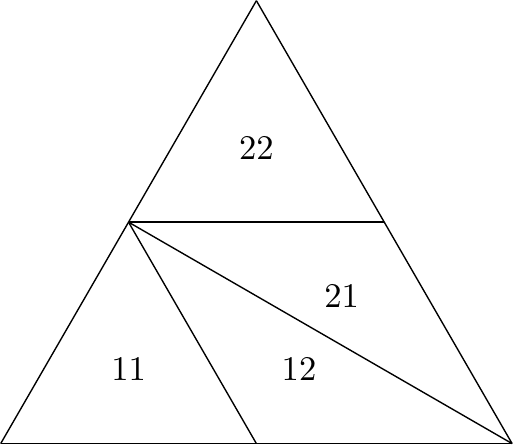}
    \includegraphics[width=4cm]{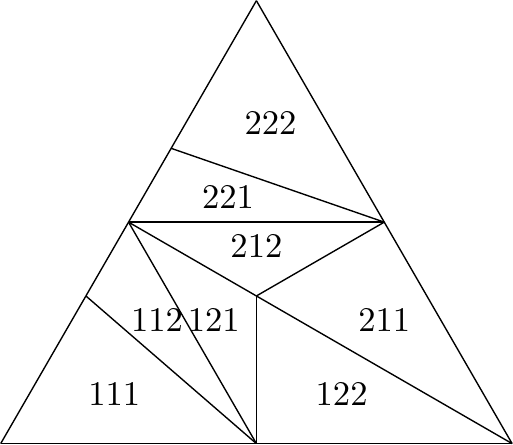}\\
    \includegraphics[width=4cm]{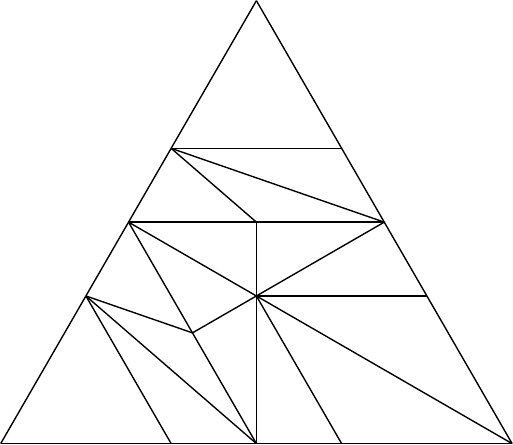}
    \includegraphics[width=4cm]{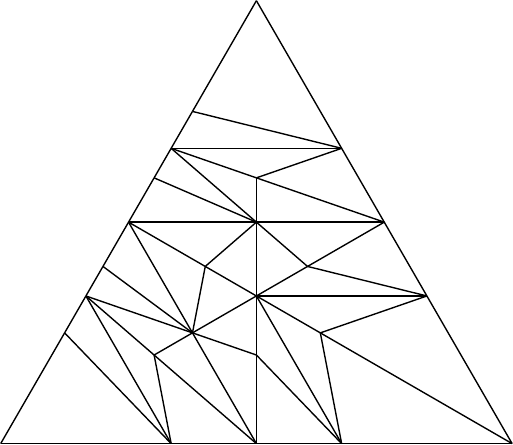}
    \includegraphics[width=4cm]{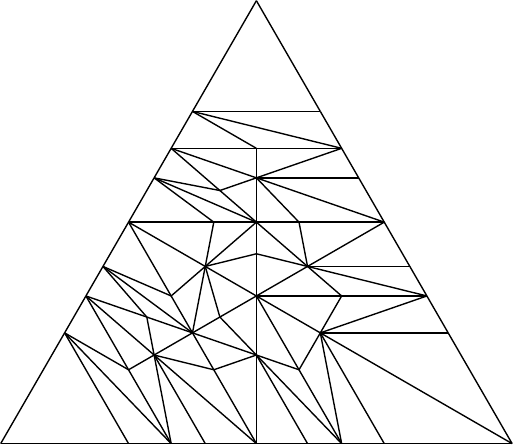}
    \caption{The $n$-cylinders of $f_C$ on $\Delta$ for each
    $n\in\{1,2,3,4,5,6\}$.
    Any $n$-cylinder is represented by a word $u_0 u_1 \cdots u_{n-1}$ over $\{1,2\}^*$ and is the set of points $x \in \Delta$ such that for all $k \in \{0,1,\dots,n-1\}$, $\Msf(f_C^k(x)) = C_{u_k}$.}
    \label{fig:cylinders}
\end{figure}

\subsection{Background on substitutions and $\Scal$-adic words}

Let $A$ be an alphabet, i.e., a finite set.
By {\em substitution} over $A$ we mean an endomorphism $\sigma$ of the free monoid $A^*$ which is non-erasing, i.e. $\sigma(a) \neq \varepsilon$ for all $a$, where $\varepsilon$ is the empty word.
If $\Scal$ is a set of substitutions over $A^*$,
a word $\bw \in A^\N$ is said to be {\em $\Scal$-adic} if there is a sequence 
$\boldsymbol{\sigma}=(\sigma_n)_{n \in \N} \in \Scal^\N$ and a sequence $\ba = (a_n)_{n \in \N} \in A^\N$ such that the limit $\lim_{n \to +\infty} \sigma_0 \sigma_1 \cdots \sigma_{n-1}(a_{n})$ exists and is equal to $\bw$.
The 2-tuple $(\boldsymbol{\sigma},\ba)$ is called an {\em $\Scal$-adic representation} of $\bw$ and the sequence $\boldsymbol{\sigma}$ a {\em directive sequence} of $\bw$.

A sequence of substitutions $(\sigma_n)_{n \in \N} \in \Scal^\N$ is said to be {\em everywhere growing} if $\min_{a \in A} |\sigma_{[0,n)}(a)|$ goes to infinity as $n$ goes to infinity.

With an substitution $\sigma:A^* \to A^*$, we associate its {\em incidence matrix} $M_\sigma \in \N^{A \times A}$ defined by $(M_\sigma)_{a,b} = |\sigma(b)|_a$.
Thus, for any word $w\in A^*$, we have $\overrightarrow{\sigma(w)} = M_\sigma \overrightarrow{w}$, where $\overrightarrow{w} \in \N^A$ is defined by $\overrightarrow{w}_a = |w|_a$.

\subsection{Substitutions and $\Scal$-adic words associated with the matrices $C_1$ and $C_2$}

We consider the alphabet $\A = \{1,2,3\}$ and the two substitutions
\[
c_{1}:
\begin{cases}
1 \mapsto 1 	\\
2 \mapsto 13	\\
3 \mapsto 2
\end{cases}
\qquad\text{and}\qquad
c_{2}:
\begin{cases}
1 \mapsto 2\\
2 \mapsto 13\\
3 \mapsto 3
\end{cases}
\]
and $\C$-adic words over the set $\C=\{c_1,c_2\}$.
One may check that $C_i$ is the incidence matrix of $c_i$ for $i=1,2$.
Note that the choice of the above substitutions $c_1$ and $c_2$ is less trivial than one
may first think.
Indeed, not all choices for the image of the letter $2$ allow the complexity
to be $2n+1$ and obtain Theorem~\ref{maintheorem:caracterization 2n+1}.
In particular, changing
$c_1$ to be
$1\mapsto1,2\mapsto31,3\mapsto2$ may seem interesting since it makes both $c_1$
and $c_2$ left-marked (the first letter of the images are all distinct),
but this choice does not work as it increases the factor complexity for the
associated $\C$-adic words.

Like for matrices, any vector $\bx \in \R^3_{\geq 0}$ defines a sequence of substitutions $(c_{i_n})_{n \in \N}$, where $c_{i_n} = \mapC(F_C^{n}(\bx))$ and $\mapC(\by) = c_i$ if and only if $\by \in \Lambda_i$. 
For example, using vector $\bx=\left(1, e, \pi\right)$, we have
\[
\mapC(\bx) \mapC(F_C\bx) \mapC(F_C^2\bx) \mapC(F_C^3\bx) \mapC(F_C^4\bx)
    = c_{2} c_{1} c_{2} c_{1} c_{1}
    = \begin{cases}
	1 \mapsto 23\\
	2 \mapsto 23213\\
	3 \mapsto 2313
    \end{cases}
\]
whose incidence matrix is $C_{2} C_{1} C_{2} C_{1} C_{1}$. 

The next lemma shows that not every 2-tuple $((\sigma_n)_{n \in \N},(a_n)_{n \in \N}) \in \C^\N \times \A^\N$ can be a $\C$-adic representation of a word.
In what follows, we use the notations
$\sigma_{[m,n]}=\sigma_{m} \sigma_{m+1} \cdots \sigma_{n}$
and
$\sigma_{[m,n)}=\sigma_m \sigma_{m+1} \cdots \sigma_{n-1}$ when $m\leq n$.

\begin{lemma}
\label{lemma:existence of limit}
For every directive sequence $\boldsymbol{\sigma} = (\sigma_n)_{n \in \N} \in \C^\N$, there exists a sequence of letters $(a_n)_{n \in \N} \in \A^\N$
such that
$\bw = \lim_{n \to +\infty} \sigma_{[0,n)}(a_n)$ exists
    and is an infinite word. Moreover, $\bw$ is independent of the choice of
    $(a_n)_{n\in\N}$.
More precisely,
\begin{enumerate}
\item
    If $\boldsymbol{\sigma}$ contains infinitely many occurrences of both $c_1$ and $c_2$, then the limit exists and $\bw = \lim_{n \to +\infty} \sigma_{[0,n)}(1)$. 
\item
If there is some integer $N$ such that $\sigma_n = c_1$ for all $n \geq N$, then the limit exists and is an infinite word if and only if $(a_n)_{n \in \N} \in \A^*\{2,3\}^\N$.
In that case, we have $\bw = (\sigma_{[0,N)}(1))^\omega = \sigma_{[0,n)}(1^\omega)$.
\item
If there is some integer $N$ such that $\sigma_n = c_2$ for all $n \geq N$, then the limit exists and is an infinite word if and only if there is some integer $N' \geq N$ such that $(a_{N'+2n},a_{N'+2n+1}) = (1,2)$ for all $n$.
In that case, we have $\bw = \sigma_{[0,N')}(13^\omega)$.
\end{enumerate}
\end{lemma}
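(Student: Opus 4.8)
The plan is to reduce everything to the dynamics of first letters and of word lengths under $c_1$ and $c_2$. Write $F_\sigma(a)$ for the first letter of $\sigma(a)$, so that $F_{c_1}$ sends $1,2,3$ to $1,1,2$ and $F_{c_2}$ sends $1,2,3$ to $2,1,3$. The only rule that lengthens a letter is the common one $2\mapsto 13$, so lengths grow exactly when the symbol $2$ is produced, while $c_1$ fixes the letter $1$ and $c_2$ fixes the letter $3$; these two fixed letters are the obstructions to growth. The limit $\lim_{n}\sigma_{[0,n)}(a_n)$ exists and is infinite if and only if, for each fixed position, the letter read there in $\sigma_{[0,n)}(a_n)$ is eventually constant and $|\sigma_{[0,n)}(a_n)|\to\infty$. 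A convenient sufficient condition is \emph{admissibility} of the seed, namely $a_n=F_{\sigma_n}(a_{n+1})$ for all $n$, which forces $\sigma_{[0,n)}(a_n)$ to be a prefix of $\sigma_{[0,n+1)}(a_{n+1})$; admissible seeds always exist, since they are precisely the points of the inverse limit $\varprojlim(\A,F_{\sigma_n})$, a nonempty inverse limit of finite sets.

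For existence in the generic situation (1), where both $c_1$ and $c_2$ occur infinitely often, I would take the constant seed $a_n\equiv 1$ and prove directly that $\sigma_{[0,n)}(1)$ converges. The key lemma is that whenever $\sigma_m=c_1$, the word $\sigma_{[0,m)}(1)$ is a prefix of $\sigma_{[0,n)}(1)$ for every $n\ge m$. Indeed $\sigma_{[m,n)}(1)=c_1(\sigma_{[m+1,n)}(1))$, and the first letter of $\sigma_{[m+1,n)}(1)$ always lies in $\{1,2\}$: iterating $F_{c_1},F_{c_2}$ from the letter $1$ never reaches $3$, since $F_{c_1}$ never outputs $3$ and $F_{c_2}$ outputs $3$ only from $3$. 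Because $c_1$ occurs infinitely often, the words $\sigma_{[0,m)}(1)$ indexed by the $c_1$-positions $m$ form a nested family of common prefixes of the whole sequence, and the everywhere-growing property (valid in case (1)) forces their lengths to infinity; hence $\sigma_{[0,n)}(1)$ converges to an infinite word $\bw$.

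The degenerate cases are handled through the fixed-point structure of the eventual substitution. In case (2), for $n\ge N$ one has $\sigma_{[0,n)}=\sigma_{[0,N)}\circ c_1^{\,n-N}$, and $c_1^{m}(a)\to 1^\omega$ for every $a\in\{2,3\}$ while $c_1^m(1)=1$; thus the limit is infinite exactly when $a_n\in\{2,3\}$ for all large $n$, that is $(a_n)\in\A^*\{2,3\}^\N$, and then $\bw=\sigma_{[0,N)}(1^\omega)$ regardless of which of $2,3$ is chosen at each step. In case (3) the salient feature is that $c_2$ has no growing letter $a$ for which $c_2(a)$ begins with $a$ (only the non-growing $3$), but $c_2^2$ satisfies $(c_2^2)^k(1)\to 13^\omega$; since $F_{c_2}$ swaps $1$ and $2$, the first letter oscillates with period two, so an infinite limit forces the tail of the seed to follow the unique growing admissible pattern $1,2,1,2,\dots$. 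This yields the stated equivalence with $(a_{N'+2n},a_{N'+2n+1})=(1,2)$, and reading the limit off through $c_2^2$ gives $\bw=\sigma_{[0,N')}(13^\omega)$.

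Independence of $\bw$ from the seed follows in case (1) from the common-prefix mechanism: any seed whose limit exists and is infinite must admit $\sigma_{[0,m)}(1)$ as a prefix of that limit at each $c_1$-position $m$, so all such limits agree with $\lim_n\sigma_{[0,n)}(1)$; in cases (2) and (3) the displayed formula pins down $\bw$ once the relevant $N$, respectively $N'$, is read off the seed. I expect the main obstacle to be the convergence argument in case (1): the prefixes $\sigma_{[0,n)}(1)$ are \emph{not} monotone in $n$ (an intermediate step can overwrite a position before it settles), so nesting cannot simply be invoked, and the entire weight rests on the first-letter lemma that locks in genuine prefixes at $c_1$-positions, together with a clean proof that the directive sequence is everywhere growing. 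The secondary delicate point is the parity and phase bookkeeping of case (3).
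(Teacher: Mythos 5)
Your case (1) engine --- the invariance of $\{1,2\}$ under the first-letter maps $F_{c_1},F_{c_2}$, with $c_1$-positions locking in prefixes --- is correct, and it is essentially the paper's own mechanism; but as written it does not prove the statement, because part (1) of Lemma~\ref{lemma:existence of limit} is a claim about \emph{every} seed: for any $(a_n)_{n\in\N}\in\A^\N$, the limit $\lim_{n}\sigma_{[0,n)}(a_n)$ exists and equals $\lim_n\sigma_{[0,n)}(1)$. You prove existence only for the constant seed $1$, and your independence step is conditional (``any seed whose limit exists and is infinite\dots''), so existence for arbitrary seeds is never established. Moreover, that conditional step is not covered by your key lemma: an arbitrary seed can feed in the letter $3$, which survives every run of $c_2$'s since $F_{c_2}(3)=3$, and the leading $c_1$ then produces $c_1(3)=2$, not $1$. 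Concretely, take $\sigma_0=c_2$, $\sigma_1=c_1$, $\sigma_2=\dots=\sigma_{n-1}=c_2$ and $a_n=3$: then $\sigma_{[1,n)}(3)=c_1(3)=2$ and $\sigma_{[0,n)}(a_n)=c_2(2)=13$, which does not have $\sigma_{[0,1)}(1)=2$ as a prefix, even though $m=1$ is a $c_1$-position. The missing idea is the absorption property $F_{c_1}(\A)\subseteq\{1,2\}$: once at least one further $c_1$ occurs inside $(m,n)$, the first letter of $\sigma_{[m+1,n)}(a)$ lies in $\{1,2\}$ for \emph{every} letter $a$, and then your prefix-locking at the $c_1$-position $m$ holds for all seeds and all sufficiently large $n$, giving existence and independence in one stroke. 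This is exactly what the paper's sandwich blocks $c_1c_2^kc_1$ accomplish: $1$ is a prefix of $c_1c_2^kc_1(a)$ for every $a\in\A$ (the rightmost $c_1$ absorbs any letter into $\{1,2\}$, the $c_2$'s preserve $\{1,2\}$, the leftmost $c_1$ outputs a leading $1$).

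The second hole is the growth of lengths: you need $|\sigma_{[0,m)}(1)|\to\infty$ along your locking positions, and you appeal to ``the everywhere-growing property (valid in case (1))''. That property is precisely Corollary~\ref{cor:everywhere growing}, which the paper \emph{deduces from} this lemma, so it must be proved here and is not automatic. It is, however, elementary: writing $L_n=(|\sigma_{[0,n)}(1)|,|\sigma_{[0,n)}(2)|,|\sigma_{[0,n)}(3)|)$, applying either substitution replaces the second entry by the sum of the first and third (hence it is always at least $2$ after one step), while $c_1$ copies the old second entry into the third and fixes the first, and $c_2$ copies the old second entry into the first and fixes the third; from this one checks that all three entries tend to infinity as soon as both $c_1$ and $c_2$ occur infinitely often. (The paper's block structure makes this immediate instead: $c_1c_2^kc_1$ maps every letter to a word of length at least $2$, so the minimum image length at least doubles across each disjoint block.) Your cases (2) and (3) are correct in substance, phase bookkeeping included; the two repairs above concern only case (1) and the ``moreover'' clause.
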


\begin{proof}
    For all $m$, we set $w_m = \sigma_{[0,m)}(a_m)$.
We also let $p_m$ denote the longest prefix of $w_m$ which is a prefix of $w_n$ for all $n \geq m$.
The limit $\lim_{n \to +\infty} w_n$ exists and is an infinite word if and only if the length of $p_n$ tends to infinity as $n$ increases.
Furthermore, in that case $\lim_{n \to +\infty} w_n = \lim_{n \to +\infty} p_n$.

Let us prove (1).
Since both $c_1$ and $c_2$ occur infinitely many times in $(\sigma_n)_{n\in \N}$, there is a sequence of integers $(l_m)_{m \in \N}$ such that $(\sigma_{n})_{n \geq l_m}$ has a prefix of the form $c_1 c_2^k c_1$ for some $k \geq 1$ and $l_{m+1} \geq l_m+k+2$.
Furthermore, for all $k \geq 1$ and all $a \in \A$, $1$ is a prefix of $c_1 c_2^k c_1(a)$.
    Thus for all $n \geq l_{m+1}$ and all $a \in \A$, $1$ is a prefix of $\sigma_{[l_m,n)}(a)$, hence $\sigma_{[0,l_m)}(1)$ is a prefix of 
    $\sigma_{[0,n)}(a)$.
    As the length of $\sigma_{[0,l_m)}(1)$ tends to infinity as $n$ increases, this shows that 
\[
    \lim_{n \to +\infty} \sigma_{[0,n)}(a_n)
	=
	\lim_{n \to +\infty} \sigma_{[0,n)}(1)
\]
for all sequences $(a_n)_{n \in \N}$, which ends the proof.

Let us prove (2).
    As $c_1(1) = 1$, the sequence of letters $(a_n)_{n \in \N}$ cannot contain infinitely many ones, otherwise the sequence $(\sigma_{[0,n)}(a_n))_{n \in \N} \in (\A^*)^\N$ would have a constant subsequence and the limit, if it exists, would be a finite word.
Thus the sequence $(a_n)_{n \in \N}$ has to be in $\mathcal{A}^* \{2,3\}^\mathbb{N}$.
As for all $m$ and all $n \geq 2m$, $1^m$ is a proper prefix of both $c_1^n(2)$ and $c_1^n(3)$, the limit $\lim_{n \to +\infty} \sigma_{[0,n)}(a_n)$ is the periodic word $(\sigma_{[0,N)}(1))^\omega$.
The proof of (3) is obtained in a similar way.
\end{proof}

The next result is a direct consequence of Lemma~\ref{lemma:existence of limit}.
One could actually show that the converse also holds.

\begin{corollary}
\label{cor:everywhere growing}
If a $\mathcal{C}$-adic word $\bw$ is aperiodic, then it admits an everywhere growing directive sequence $(\sigma_n)_{n \in \mathbb{N}} \in \mathcal{C}^\mathbb{N}$.
\end{corollary}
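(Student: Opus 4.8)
The plan is to show that the directive sequence coming from \emph{any} $\C$-adic representation of $\bw$ is already everywhere growing, by ruling out the two degenerate cases of Lemma~\ref{lemma:existence of limit}. First I would fix a $\C$-adic representation $(\boldsymbol{\sigma},\ba)$ of $\bw$, so that $\bw = \lim_{n\to\infty}\sigma_{[0,n)}(a_n)$ with $\boldsymbol{\sigma} = (\sigma_n)_{n\in\N}\in\C^\N$. The sequence $\boldsymbol{\sigma}$ falls into exactly one of the three mutually exclusive situations of Lemma~\ref{lemma:existence of limit}, according to whether both $c_1$ and $c_2$ occur infinitely often in $\boldsymbol{\sigma}$, or $\boldsymbol{\sigma}$ is eventually equal to $c_1$, or $\boldsymbol{\sigma}$ is eventually equal to $c_2$.

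The key observation is that the last two situations are incompatible with aperiodicity. Indeed, in case (2) Lemma~\ref{lemma:existence of limit} gives $\bw = (\sigma_{[0,N)}(1))^\omega$, which is purely periodic, while in case (3) it gives $\bw = \sigma_{[0,N')}(13^\omega) = \sigma_{[0,N')}(1)\,(\sigma_{[0,N')}(3))^\omega$, which is ultimately periodic. Both contradict the assumption that $\bw$ is aperiodic. Hence $\boldsymbol{\sigma}$ must fall into case (1), that is, it contains infinitely many occurrences of both $c_1$ and $c_2$.

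It then remains to check that such a $\boldsymbol{\sigma}$ is everywhere growing, and here I would reuse the construction from the proof of Lemma~\ref{lemma:existence of limit}(1). Recall that there one produces an increasing sequence of indices $(l_m)_{m\in\N}$ such that, for every $m$, every $n\geq l_{m+1}$ and every letter $a\in\A$, the word $\sigma_{[0,l_m)}(1)$ is a prefix of $\sigma_{[0,n)}(a)$, and in particular $|\sigma_{[0,n)}(a)|\geq|\sigma_{[0,l_m)}(1)|$. Since $\sigma_{[0,l_m)}(1)$ is a prefix of the infinite word $\bw$ whose length tends to infinity with $m$ (as noted there), it follows that for any bound $L$ one may choose $m$ with $|\sigma_{[0,l_m)}(1)|\geq L$, forcing $\min_{a\in\A}|\sigma_{[0,n)}(a)|\geq L$ for all $n\geq l_{m+1}$. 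This is exactly the statement that $\min_{a\in\A}|\sigma_{[0,n)}(a)|\to\infty$, i.e.\ that $\boldsymbol{\sigma}$ is everywhere growing, which concludes the argument.

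Since no real computation is involved, the only point requiring care — and the closest thing to an obstacle — is confirming that the two eventually-constant cases of Lemma~\ref{lemma:existence of limit} genuinely produce (ultimately) periodic words, so that aperiodicity cleanly forces case (1); this is immediate from the explicit forms $(\sigma_{[0,N)}(1))^\omega$ and $\sigma_{[0,N')}(13^\omega)$ recorded there. The second minor point is that the prefix estimate must be taken uniformly over all letters $a\in\A$ (not merely for $a=1$), which is precisely what the common-prefix construction in the proof of Lemma~\ref{lemma:existence of limit}(1) delivers.
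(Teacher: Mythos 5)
Your proof is correct and is exactly the argument the paper has in mind: the paper states Corollary~\ref{cor:everywhere growing} as a ``direct consequence'' of Lemma~\ref{lemma:existence of limit}, and your write-up supplies precisely the intended details (aperiodicity rules out cases (2) and (3), whose limits are (ultimately) periodic, and the common-prefix construction in case (1) gives $\min_{a\in\A}|\sigma_{[0,n)}(a)|\to\infty$). Nothing is missing or superfluous.
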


By Lemma~\ref{lemma:existence of limit}, when the sequence $(c_{i_n})_{n \in \N} = (\mapC(F_C^n\bx))_{n \in \mathbb{N}}$ contains infinitely many occurrences of $c_1$ and $c_2$, it defines a unique $\mathcal{C}$-adic word
\[
    W(\bx) = \lim_{n\to\infty} c_{i_0} c_{i_1} \cdots c_{i_n} (1).
\]

For example, using vector $\bx=\left(1, e, \pi\right)$, it is a consequence of Proposition~\ref{prop:primitiveness for C adic}, Theorem~\ref{thm:dim-sur-Q} and Corollary~\ref{cor:l'algo rend la meme suite directrice} that the sequence $(\mapC(F_C^n\bx))_{n \in \mathbb{N}}$ contains infinitely many occurrences of $c_1$ and $c_2$ and the associated infinite $\mathcal{C}$-adic word is
\[
    W(\bx) 
    = 2323213232323132323213232321323231323232 \cdots.
\]

\section{Semi-norm of matrices and convergence}
\label{sec:seminorm}

Equation~\eqref{eq:definition of sequence of matrices} shows that the iteration of the map $F_C$ on $\bx \in \mathbb{R}_{\geq 0}^3$ defines a sequence of matrices $(M_n)_{n \geq 0} \in \{C_1,C_2\}^\N$ such that
\[
	\bx \in \bigcap_{n \geq 0} M_{[0,n)} \mathbb{R}_{\geq 0}^3.
\]
In this section, we give sufficient conditions for a sequence of
$d$-dimensional nonnegative matrices $(M_n)_{n \in \N}$ to be {\em weakly
convergent}, i.e., to be such that the cone
\[	
	\bigcap_{n \geq 0} M_{[0,n)} \R_{\geq 0}^d
\]
is one-dimensional.

The following result states that the notion of weak convergence is related to the existence of (uniform) frequencies in $\Scal$-adic words.
Let $\bw = (w_n)_{n \in \N} \in A^\N$ be an infinite word and let $u \in A^*$ be a word occurring in $\bw$.
The {\em frequency} of $u$ in $\bw$ is the limit, whenever it exists, $\lim_{n \to \infty} \frac{|w_{[0,n)}|_u}{n}$, where $w_{[m,n)} = w_m w_{m+1} \cdots w_{n-1}$ and $|v|_u$ stands for the number of occurrences of $u$ in the word $v$.
The word $\bw$ has {\em uniform word frequencies} if for every $u \in A^*$, the ratio $\frac{|w_{[k,k+n)}|_u}{n}$ converges when $n$ goes to infinity, uniformly in $k$.

\begin{theorem}{\rm\cite{MR3330561}}\label{thm:berthe-delecroix_convergence}
    Let $A$ be an alphabet of size $d$.
Let $\bw \in A^\N$ be a word that admits an everywhere growing directive sequence $(\sigma_n)_{n \in \N}$ and let $(M_{n})_{n \in \N}$ be the associated sequence of incidence matrices.
If for all $k \in \N$, the cone
\begin{equation} \label{eq:cone converge}
	\bigcap_{n \geq k} M_{[k,n)} \R_{\geq 0}^d
\end{equation}
is one-dimensional, then $\bw$ has uniform word frequencies.
In particular, if $\bf \in \R_{\geq 0}^d$ is such that $\|\bf\|_1 = 1$ and 
\begin{equation}
	\bigcap_{n \geq 0} M_{[0,n)} \R_{\geq 0}^d = \R_{\geq 0} \bf,
\end{equation}
then $\bf$ is the vector of letter frequencies of $\bw$.
\end{theorem}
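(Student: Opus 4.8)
The plan is to run a single \emph{desubstitution} argument: approximate an arbitrary long factor of $\bw$ by the $\sigma_{[0,m)}$-image of a shorter factor at level $m$, read off letter counts through the incidence matrix $M_{[0,m)}$, and let the cone condition squeeze the direction of these counts onto the limiting ray. Concretely, since $(\sigma_n)_{n\in\N}$ is everywhere growing, the level-$m$ word $\bw^{(m)}=\lim_{n\to\infty}\sigma_{[m,n)}(a_n)$ is a well-defined infinite word with $\bw=\sigma_{[0,m)}(\bw^{(m)})$, and the everywhere-growing property is inherited at every level. A factor $w_{[k,k+n)}$ then parses as a suffix of one image, followed by $\sigma_{[0,m)}$ applied to a factor of $\bw^{(m)}$, followed by a prefix of one image; the two boundary pieces have total length at most $2\max_{a\in A}|\sigma_{[0,m)}(a)|$. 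Hence the letter-count vector satisfies $\overrightarrow{w_{[k,k+n)}}=M_{[0,m)}\bv+\bm$, where $\bv\in\N^A$ is nonnegative and $\bm$ has $\ell^1$-norm at most $2\max_{a}|\sigma_{[0,m)}(a)|$.

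Second, I would exploit the cone condition. The cones $M_{[0,n)}\R_{\geq0}^d$ are nested and decreasing, since each $M_n$ maps $\R_{\geq0}^d$ into itself, so their $\ell^1$-normalized sections form a nested sequence of nonempty compact convex sets; the hypothesis that the intersection is a single ray, whose $\ell^1$-normalized generator we call $\bf$, forces their diameters to tend to $0$. Thus there is $\epsilon(m)\to0$ such that every nonzero vector of $M_{[0,m)}\R_{\geq0}^d$ has $\ell^1$-normalization within distance $\epsilon(m)$ of $\bf$. Combining with the previous paragraph: given $\varepsilon>0$, fix $m$ with $\epsilon(m)<\varepsilon/2$; then for every $k$ and every $n>N$, where $N$ is chosen so that $2\max_a|\sigma_{[0,m)}(a)|/n<\varepsilon/2$, one gets $\bigl\|\overrightarrow{w_{[k,k+n)}}/n-\bf\bigr\|_1<\varepsilon$. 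The bound on $\bm$ is independent of $k$, which is exactly where uniformity in the starting position comes from. This proves uniform letter frequencies and, using only the $k=0$ cone, identifies the frequency vector as $\bf$, which is the ``in particular'' statement.

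Third, to upgrade letter frequencies to the frequency of an arbitrary factor $u$ of length $\ell$, I would pass to the sliding-block coding $\tilde\bw$ over the alphabet $A^\ell$, defined by $\tilde w_i=w_iw_{i+1}\cdots w_{i+\ell-1}$, under which occurrences of $u$ in $\bw$ become occurrences of the single letter ``$u$'' in $\tilde\bw$. The point is that $\tilde\bw$ is again $\Scal'$-adic for substitutions induced by $(\sigma_n)$, well defined once $\min_a|\sigma_{[0,m)}(a)|>\ell$ so that every length-$\ell$ window straddles at most one image boundary, that everywhere growing is inherited, and that one-dimensionality of the cones of the induced matrices follows from one-dimensionality of $\bigcap_{n\geq k}M_{[k,n)}\R_{\geq0}^d$ at \emph{all} levels $k$ — this is precisely where the full hypothesis, and not just the $k=0$ case, is used. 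Applying the letter-frequency result of the first two paragraphs to $\tilde\bw$ then yields the uniform frequency of $u$ in $\bw$.

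Finally, the main obstacle. The contraction step is soft, being compactness of the simplex plus nestedness, and the boundary bookkeeping is routine once everywhere growing is in hand. The genuine work is the third paragraph: constructing the higher-block $\Scal'$-adic presentation so that the induced substitutions are well defined and the everywhere-growing and cone-contraction hypotheses transfer cleanly to the block level. Equivalently, if one avoids block codings, the difficulty reappears as the need to count occurrences of $u$ across image boundaries and to reduce them to frequencies of length-$2$ factors of $\bw^{(m)}$, which must then be controlled at every level $m$; either way the crux is organizing the occurrence-counting so that the one-ray cone condition at all levels can be invoked.
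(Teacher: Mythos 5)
The paper itself contains no proof of this statement: it is quoted from Berth\'e--Delecroix \cite{MR3330561}, where it is obtained by a measure-theoretic route --- shift-invariant measures on the subshift generated by $\bw$ are encoded by sequences of nonnegative vectors compatible with the matrices $(M_n)$, the everywhere growing hypothesis lets one recover the measure of every cylinder from these vectors, the cone condition at every level leaves room for at most one such sequence, and uniform word frequencies are then the classical reformulation of unique ergodicity. Your route is genuinely different: no invariant measures, only counting. Its first half is correct: the parsing of $w_{[k,k+n)}$ through $\sigma_{[0,m)}$, the nested-compact-sections argument giving $\epsilon(m)\to 0$, and the $k$-independence of the boundary term together prove uniform \emph{letter} frequencies and the ``in particular'' clause. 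One small repair: nothing in the hypotheses guarantees that $\bw^{(m)}=\lim_{n}\sigma_{[m,n)}(a_n)$ exists as an infinite word (the prefixes of $\sigma_{[m,n)}(a_n)$ need not stabilize), but you never need it, since every factor of $\bw$ is a factor of $\sigma_{[0,N)}(a_N)=\sigma_{[0,m)}\bigl(\sigma_{[m,N)}(a_N)\bigr)$ for all large $N$, and your parsing applies verbatim with $v$ a factor of the finite word $\sigma_{[m,N)}(a_N)$.

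The genuine gap is your third paragraph, i.e.\ exactly the step carrying the content of ``word'' rather than ``letter'' frequencies. To apply the letter result to $\tilde\bw$ you must prove that, for every $k$, the cone $\bigcap_{n\geq k}\tilde M_{[k,n)}\R^{A^\ell}_{\geq 0}$ of the induced block matrices is one-dimensional, and this does not follow formally from one-dimensionality of the cones of the $M_{[k,n)}$: up to an additive error bounded by $\ell$, the columns of $\tilde M_{[k,n)}$ are the vectors counting occurrences of each length-$\ell$ block in the words $\sigma_{[k,n)}(a)$, $a\in A$, so the statement you need says precisely that block frequencies of $\sigma_{[k,n)}(a)$ converge, as $n\to\infty$, to a limit independent of $a$ --- the very statement being proved, one level up. Asserting the transfer and labelling it ``the genuine work'' leaves the proof circular at its crucial point. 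The step can be filled: for $k<m<n$, count occurrences of a block $v$ in $\sigma_{[k,n)}(a)=\sigma_{[k,m)}(\sigma_{[m,n)}(a))$ as $\sum_{b\in A}|\sigma_{[m,n)}(a)|_b\,|\sigma_{[k,m)}(b)|_v$ plus occurrences straddling image boundaries, of which there are at most $(\ell-1)\,|\sigma_{[m,n)}(a)|$; after dividing by $|\sigma_{[k,n)}(a)|\geq|\sigma_{[m,n)}(a)|\min_b|\sigma_{[k,m)}(b)|$, the straddling term is at most $\ell/\min_b|\sigma_{[k,m)}(b)|$, which tends to $0$ as $m\to\infty$ since everywhere growing passes to every level, while for fixed $m$ the main term converges as $n\to\infty$ --- by the level-$m$ instance of your cone-contraction argument, the letter-count vector of $\sigma_{[m,n)}(a)$ being a column of $M_{[m,n)}$ --- to a quantity independent of $a$; a squeeze argument in $m$ concludes. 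Note that once this computation is written out, the block-coding formalism is superfluous: the same two-level count run at $k=0$, with $\sigma_{[m,n)}(a)$ replaced by the factor $v$ of your first paragraph and $\overrightarrow{v}$ controlled by the level-$m$ uniform letter frequencies, yields uniform word frequencies directly. So your plan is salvageable, but as it stands it proves only letter frequencies.
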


\subsection{A semimetric on the projective space}

Recall that the {\em Hilbert metric} is defined as
\begin{equation*}
    d_H(\R_{>0} \bv, \R_{>0}\bw) = \max_{1\leq i,j\leq d} \log\frac{v_iw_j}{v_jw_i}
\end{equation*}
where $\bv=(v_1,\ldots,v_d)\in\R^d_{>0}$ and $\bw=(w_1,\ldots,w_d)\in\R^d_{>0}$.
Here, we define another closely related function as
\begin{equation}
    d_M(\R_{>0} \bv, \R_{>0}\bw) = \frac{1}{\Vert\bv\Vert_2\cdot \Vert\bw\Vert_2}
                           \cdot\max_{1\leq i,j\leq d} |v_iw_j-v_jw_i|
\end{equation}
where $\bv=(v_1,\ldots,v_d)\in\R^d_{\geq 0}\setminus\{0\}$ and
$\bw=(w_1,\ldots,w_d)\in\R^d_{\geq 0}\setminus\{0\}$.
It is not a distance as it does not satisfy the triangle inequality, but it is
a semimetric, that is, it satisfies the first three axioms of a distance as
shown below.

\begin{lemma}\label{lem:semimetric}
    $d_M$ is a semimetric, i.e.,
\begin{enumerate}[\rm (i)]
    \item $d_M(\R_{>0} \bv, \R_{>0}\bw) \geq 0$,
    \item $d_M(\R_{>0} \bv, \R_{>0}\bw) = 0$ if and only if $\R_{>0} \bv=\R_{>0}\bw$,
    \item $d_M(\R_{>0} \bv, \R_{>0}\bw)=d_M(\R_{>0} \bw, \R_{>0}\bv)$.
\end{enumerate}
\end{lemma}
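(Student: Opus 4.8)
The plan is to verify the three semimetric axioms directly from the definition of $d_M$, treating each as an elementary computation. Since the quantity $\frac{1}{\|\bv\|_2\|\bw\|_2}\max_{i,j}|v_iw_j-v_jw_i|$ is manifestly symmetric in $\bv$ and $\bw$ and manifestly nonnegative (it is a maximum of absolute values divided by a product of positive norms), axioms (i) and (iii) require essentially no work. First I would dispatch these two: nonnegativity because each $|v_iw_j-v_jw_i|\geq 0$ and the normalizing factor is positive for $\bv,\bw\neq 0$; symmetry because $|v_iw_j-v_jw_i|=|w_iv_j-w_jv_i|$ after swapping the roles of the two vectors and relabeling the index pair $(i,j)\mapsto(j,i)$.

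The only substantive point is axiom (ii), that $d_M(\R_{>0}\bv,\R_{>0}\bw)=0$ if and only if $\R_{>0}\bv=\R_{>0}\bw$. The key observation is that $d_M$ vanishes exactly when $v_iw_j-v_jw_i=0$ for all pairs $i,j$, i.e., when all $2\times 2$ minors of the $2\times d$ matrix with rows $\bv$ and $\bw$ vanish. This is the classical criterion for the two vectors to be linearly dependent (the matrix has rank at most one). I would argue that vanishing of all these minors is equivalent to $\bv$ and $\bw$ being proportional; since both lie in $\R^d_{\geq 0}\setminus\{0\}$, the proportionality constant is necessarily positive, giving $\R_{>0}\bv=\R_{>0}\bw$. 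Conversely, if $\bw=\lambda\bv$ with $\lambda>0$, then $v_iw_j-v_jw_i=\lambda(v_iv_j-v_jv_i)=0$ for all $i,j$, so $d_M=0$.

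To make the forward direction of (ii) concrete without appealing to rank machinery, I would pick an index $k$ with $v_k>0$ (possible since $\bv\neq 0$) and note that $v_kw_j=v_jw_k$ for all $j$ forces $w_j=(w_k/v_k)v_j$, so that $\bw=(w_k/v_k)\bv$; positivity of the ratio follows because $w_k\geq 0$ and $\bw\neq 0$ together rule out $w_k/v_k\leq 0$ (were $w_k=0$, the relation would force $\bw=0$). This index-chasing is the main obstacle only in the bookkeeping sense: it must be carried out carefully because the vectors are merely nonnegative rather than strictly positive, so one cannot assume all coordinates are available as denominators. Once the proportionality constant is pinned down and shown positive, the equivalence with $\R_{>0}\bv=\R_{>0}\bw$ is immediate, completing the proof.
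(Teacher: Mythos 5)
Your proposal is correct and follows essentially the same route as the paper: direct verification of the three axioms, with the substance in (ii) handled by picking a coordinate that can serve as a denominator to extract the proportionality constant and checking its positivity (the paper picks an index $j$ with $w_j\neq 0$ and writes $\bv=k\bw$, while you pick $v_k>0$ and write $\bw=(w_k/v_k)\bv$ — a symmetric variant of the identical argument). The rank-one/minor remark is just framing on top of the same computation, so there is nothing further to reconcile.
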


\begin{proof}
Let $\bv=(v_1,\ldots,v_d)\in\R^d_{\geq 0}\setminus\{0\}$ and $\bw=(w_1,\ldots,w_d)\in\R^d_{\geq 0}\setminus\{0\}$.

    (i)
    We have $d_M(\R_{>0} \bv, \R_{>0}\bw) \geq 0$ by definition.

    (ii)
    If $\R_{>0} \bv=\R_{>0}\bw$, then there exists $k>0$ such that
    $\bw=k\bv$.
    Then
    \[
       \max_{1\leq i,j\leq d} |v_iw_j-v_jw_i|
       = \max_{1\leq i,j\leq d} |v_i(kv_j)-v_j(kv_i)|
       = k \max_{1\leq i,j\leq d} |v_iv_j-v_jv_i|
       = 0.
    \]
    Thus $d_M(\R_{>0} \bv, \R_{>0}\bw) = 0$.
    Conversely, if $d_M(\R_{>0} \bv, \R_{>0}\bw) = 0$, then
    \[
        \max_{1\leq i,j\leq d} |v_iw_j-v_jw_i|
        = {\Vert\bv\Vert_2\cdot \Vert\bw\Vert_2} \cdot
          d_M(\R_{>0} \bv, \R_{>0}\bw) 
        = 0.
    \]
Therefore, for every $i,j$ such that
    $1\leq i,j\leq d$, we have $v_iw_j=v_jw_i$. 
Choose $j$ such that $w_j \neq 0$ and set $k = \frac{v_j}{w_j}$.
Thus, for all $i$, we have $v_i=  k w_i$.
As $\bv \neq 0$, we have $k > 0$ and we conclude that $\R_{>0} \bv=\R_{>0}\bw$.

    (iii) We have
    \begin{align*}
        d_M(\R_{>0} \bv, \R_{>0}\bw) 
        &= \frac{1}{\Vert\bv\Vert_2\cdot \Vert\bw\Vert_2}
                           \cdot\max_{1\leq i,j\leq d} |v_iw_j-v_jw_i|\\
        &= \frac{1}{\Vert\bw\Vert_2\cdot \Vert\bv\Vert_2}
                           \cdot\max_{1\leq i,j\leq d} |w_iv_j-w_jv_i|
         = d_M(\R_{>0} \bw, \R_{>0}\bv).
    \end{align*}
\end{proof}

Using the semimetric $d_M$, we define the \emph{diameter} of a cone $\Lambda \subseteq \R_{\geq 0}^d$ as
\begin{equation}
        \mathrm{diam}(\Lambda) = \sup_{\bv,\bw\in \Lambda\setminus \{0\}} d_M(\R_{>0}\bv,\R_{>0}\bw).
\end{equation}
The fact that the diameter is defined from a semimetric is enough for our
needs since the following lemma proves that a cone of diameter zero is reduced to a single line.

\begin{lemma}\label{lem:diam-0=line}
Let $\Lambda\subseteq \R_{\geq 0}^d$ be a cone. 
If $\mathrm{diam}(\Lambda)=0$, then
there exists $\bu\in\R_{\geq 0}^d\setminus\{0\}$ satisfying $\Lambda=\R_{\geq 0}\bu$.
\end{lemma}

\begin{proof}
    Let $\bu\in \Lambda\setminus\{0\}$.
    By definition, we have $\R_{\geq 0}\bu\subseteq \Lambda$.
    Now let $\bv\in \Lambda \setminus \{0\}$.
    Since $\mathrm{diam}(\Lambda)=0$, we have $d_M(\R_{>0}\bu,\R_{>0}\bv)=0$.
    From Lemma~\ref{lem:semimetric} (ii), there exists $k\in\R_{>0}$ such that
    $\bv=k\bu$. Therefore $\bv\in\R_{> 0}\bu$.
    We have proved 
    $\Lambda\subseteq\R_{\geq 0}\bu$.
\end{proof}

Our aim is now to study the diameter of $\bigcap_{n \geq 0} M_{[0,n)} \R_{\geq 0}^d$ for a given sequence of matrices $(M_n)_{n \in \N}$.
To that aim, we provide an upper bound for the diameter of a cone defined by the image of the nonnegative orthant under the application of a nonnegative matrix.
It is defined in terms of the entries of the matrix and in terms of a matrix semi-norm that we define below.

If $V$ is a non-trivial vector subspace of $\R^d$ and $\Vert\cdot\Vert$ is a 
semi-norm on $\R^d$ which is a norm on $V$,
then the matrix semi-norm $\Vert\cdot|_V\Vert$ is defined for any $d\times d$
matrix $M$ as
\begin{equation}
    \left\Vert M\middle|_V\right\Vert := \sup_{v\in V\setminus\{0\}}\frac{\Vert M v\Vert}{\Vert v\Vert}.
\end{equation}
For any vector $\bf \in \mathbb{R}^d_{\geq 0} \setminus\{0\}$, $\bf^\perp$ stands for the vector space of codimension 1 orthogonal to $\bf$.

Let $\Lambda \subset\R^d_{\geq0}$ be some cone.
As done in \cite{MR4043208},
if $\Vert\cdot\Vert$ is a norm on $\bf^\perp$ for all $\bf \in \Lambda \setminus\{0\}$,
we define a matrix semi-norm 
on $\R^{d\times d}$ as
\begin{equation}\label{eq:semi-norm-perp-to-cone}
    \left\Vert M\right\Vert^{\Lambda}
    = \sup_{\bf\in \Lambda\setminus\{0\}}\left\Vert M\middle|_{\bf^\perp}\right\Vert.
\end{equation}
Thus, we have
\[
    \left\Vert M\right\Vert^{\Lambda}
    = \sup_{\bf\in \Lambda\setminus\{0\}}
	\sup_{v \in \bf^\perp\setminus\{0\}}    
    \frac{\left\Vert Mv\right\Vert}{\left\Vert v\right\Vert}.
\]
The diameter of the image of the nonnegative orthant under a positive matrix can be bounded
by the semi-norm of its transpose matrix.

\begin{lemma}\label{lem:diam-upper-bound}
    Let $A=(a_{ij})\in\R_{>0}^{d\times d}$ be a positive matrix.
    Then 
    \begin{equation}
        \mathrm{diam}(A\,\R^d_{\geq0}) 
        \leq 
           \frac{1}{\min_{1\leq i,j\leq d} a_{ij}}
           \cdot
           \left\Vert\transpose{A}\right\Vert_2^{A\R^d_{\geq0}}.
    \end{equation}
\end{lemma}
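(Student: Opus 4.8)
The plan is to estimate $d_M(\R_{>0}\bv,\R_{>0}\bw)$ uniformly over all pairs $\bv,\bw\in A\R^d_{\geq0}\setminus\{0\}$ and then pass to the supremum defining the diameter. Write $\bv=A\bx$ and $\bw=A\by$ with $\bx,\by\in\R^d_{\geq0}\setminus\{0\}$; note that $\bw\neq 0$ is automatic, since $A$ positive together with $\by\geq 0$, $\by\neq 0$ forces $A\by>0$. First I would fix a pair of indices $i,j$ and rewrite the $2\times2$ minor $v_iw_j-v_jw_i$ as a scalar product against a vector orthogonal to $\bw$.

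The key observation is that, denoting by $e_1,\dots,e_d$ the standard basis and setting $\mathbf p=w_j e_i-w_i e_j$, one has $\mathbf p\cdot\bw=0$ and $\mathbf p\cdot\bv=v_iw_j-v_jw_i$. Since $\bv=A\bx$, this rewrites as
\[
v_iw_j-v_jw_i=(\transpose{A}\,\mathbf p)\cdot\bx .
\]
Because $\bw$ lies in the cone $A\R^d_{\geq0}$ and $\mathbf p\in\bw^\perp$, the very definition of the matrix semi-norm gives $\Vert\transpose{A}\,\mathbf p\Vert_2\leq\Vert\transpose{A}\Vert_2^{A\R^d_{\geq0}}\,\Vert\mathbf p\Vert_2$, and Cauchy--Schwarz then yields
\[
|v_iw_j-v_jw_i|\leq \Vert\transpose{A}\Vert_2^{A\R^d_{\geq0}}\,\Vert\mathbf p\Vert_2\,\Vert\bx\Vert_2 .
\]

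Two elementary estimates finish the argument. First $\Vert\mathbf p\Vert_2=\sqrt{w_i^2+w_j^2}\leq\Vert\bw\Vert_2$, since $w_i,w_j$ are just two coordinates of $\bw$. Second, positivity of $A$ bounds $\Vert\bx\Vert_2$ in terms of $\Vert\bv\Vert_2$: each coordinate satisfies $(A\bx)_k=\sum_l a_{kl}x_l\geq(\min_{i,j}a_{ij})\Vert\bx\Vert_1\geq(\min_{i,j}a_{ij})\Vert\bx\Vert_2$, whence $\Vert\bv\Vert_2=\Vert A\bx\Vert_2\geq(\min_{i,j}a_{ij})\Vert\bx\Vert_2$. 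Combining these, for every $i,j$,
\[
|v_iw_j-v_jw_i|\leq \frac{1}{\min_{i,j}a_{ij}}\,\Vert\transpose{A}\Vert_2^{A\R^d_{\geq0}}\,\Vert\bv\Vert_2\,\Vert\bw\Vert_2 .
\]
Dividing by $\Vert\bv\Vert_2\Vert\bw\Vert_2$, taking the maximum over $i,j$ and the supremum over $\bv,\bw$ then produces exactly the claimed bound on $\mathrm{diam}(A\R^d_{\geq0})$.

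I do not expect a serious obstacle: the heart of the matter is the identity $v_iw_j-v_jw_i=(\transpose{A}\,\mathbf p)\cdot\bx$ with $\mathbf p\perp\bw$, which converts the geometric diameter into a single instance of the semi-norm. The only routine points to watch are the degenerate case $\mathbf p=0$ (which forces $w_i=w_j=0$, so both sides vanish and there is nothing to prove) and the observation that $\Vert\transpose{A}\Vert_2^{A\R^d_{\geq0}}$ is well defined because $\Vert\cdot\Vert_2$ is a genuine norm on every subspace $\bf^\perp$.
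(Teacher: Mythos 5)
Your proposal is correct and follows essentially the same route as the paper's proof: both express the $2\times 2$ minor $v_iw_j-v_jw_i$ as a scalar product of $\transpose{A}$ applied to a vector orthogonal to one element of the cone against the nonnegative preimage of the other element, then combine Cauchy--Schwarz, the defining property of the semi-norm $\Vert\transpose{A}\Vert_2^{A\R^d_{\geq0}}$, the bound $\sqrt{w_i^2+w_j^2}\leq\Vert\bw\Vert_2$, and the estimate $\Vert\bx\Vert_2\leq\Vert A\bx\Vert_2/\min_{1\leq i,j\leq d}a_{ij}$. The only cosmetic difference is that the paper builds the orthogonal vector from the coordinates of $Av$ and pairs $\transpose{A}$ of it with the preimage $w$ of the other cone element, whereas you do the symmetric thing (orthogonal vector from $\bw$, paired with $\bx$), so the two arguments coincide up to swapping the roles of the two vectors.
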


\begin{proof}
    From the definition of $d_M$ and of the diameter of a cone, and using the fact that for all $v,w \in \mathbb{R}^d_{\geq 0}$, $|\transpose{w}v| \leq \Vert w \Vert_2 \Vert v \Vert_2$, we compute
    \begin{align*}
        \mathrm{diam}(A\R^d_{\geq0}) 
        &= \sup_{v,w\in \R^d_{\geq0}\setminus\{0\}} d_M(\R_{>0} Av, \R_{>0} Aw) \\
        &= \sup_{v,w\in \R^d_{\geq0}\setminus\{0\}} \frac{1}{\Vert Av\Vert_2\Vert Aw\Vert_2}
           \max_{1\leq i,j\leq d} 
           \left|(\transpose{e_i}  Av)(\transpose{e_j}  Aw) - (\transpose{e_j}  Av)(\transpose{e_i}  Aw)\right|\\
        &= \sup_{v,w\in \R^d_{\geq0}\setminus\{0\}} \frac{1}{\Vert Av\Vert_2\Vert Aw\Vert_2}
           \max_{1\leq i,j\leq d} 
           \left|(\transpose{e_i}  Av)(\transpose{w}  \transpose{A}  e_j) - (\transpose{e_j}  Av)(\transpose{w}  \transpose{A}  e_i)\right|\\
        &= \sup_{v,w\in \R^d_{\geq0}\setminus\{0\}} \frac{1}{\Vert Av\Vert_2\Vert Aw\Vert_2}
           \max_{1\leq i,j\leq d} 
           \left|\transpose{w}  \transpose{A}  \left((\transpose{e_i}  Av)e_j - (\transpose{e_j}  Av)e_i\right)\right|\\
        &\leq \sup_{v,w\in \R^d_{\geq0}\setminus\{0\}} \frac{1}{\Vert Av\Vert_2\Vert Aw\Vert_2}
           \cdot\left\Vert w  \right\Vert_2\cdot
           \max_{1\leq i,j\leq d} 
           \left\Vert \transpose{A}  \left((\transpose{e_i}  Av)e_j - (\transpose{e_j}  Av)e_i\right)\right\Vert_2\\
        &=
        \sup_{w\in \R^d_{\geq0}\setminus\{0\}} 
           \frac{\left\Vert w \right\Vert_2}{\Vert Aw\Vert_2}
           \cdot
           \sup_{v\in \R^d_{\geq0}\setminus\{0\}} 
           \frac{1}{\Vert Av\Vert_2}
           \cdot
           \max_{1\leq i,j\leq d} 
           \left\Vert \transpose{A}  \left((\transpose{e_i}  Av)e_j - (\transpose{e_j}  Av)e_i\right)\right\Vert_2.
\end{align*}
Now observe that
\[
\sup_{w\in \R^d_{\geq0}\setminus\{0\}} 
\dfrac{\left\Vert w \right\Vert_2}{\Vert Aw\Vert_2}
\leq \dfrac{1}{\min_{1\leq i,j\leq d} a_{ij}}.
\]
Furthermore, for all $v \in \mathbb{R}^d_{\geq 0} \setminus \{0\}$, we have $\bf = Av \in A \mathbb{R}^d_{\geq 0}\setminus\{0\}$ and, for all $1 \leq i,j \leq d$, 
\[
    (\transpose{e_i}  Av)e_j - (\transpose{e_j}  Av)e_i \in \bf^\perp \setminus\{0\},
\]
and, as a consequence,
\[
    \left\Vert \transpose{A}  \left((\transpose{e_i}  Av)e_j - (\transpose{e_j}  Av)e_i\right)\right\Vert_2
	\leq 
    \left\Vert \transpose{A} \right\Vert_2^{A \mathbb{R}^d_{\geq 0}} 
    \left\Vert (\transpose{e_i}  Av)e_j - (\transpose{e_j}  Av)e_i\right\Vert_2.
\]
We finally get
\begin{align*}
        \mathrm{diam}(A\R^d_{\geq0}) 
        &\leq 
           \frac{1}{\min_{1\leq i,j\leq d} a_{ij}}
           \cdot
           \sup_{v\in \R^d_{\geq0}\setminus\{0\}} 
           \frac{1}{\Vert Av\Vert_2}
           \cdot
           \left\Vert \transpose{A} \right\Vert_2^{A\R^d_{\geq0}}
           \cdot
           \max_{1\leq i,j\leq d} 
           \left\Vert \left((\transpose{e_i}  Av)e_j - (\transpose{e_j}  Av)e_i\right)\right\Vert_2\\
        &\leq 
           \frac{1}{\min_{1\leq i,j\leq d} a_{ij}}
           \cdot
           \sup_{v\in \R^d_{\geq0}\setminus\{0\}} 
           \frac{1}{\Vert Av\Vert_2}
           \cdot
           \left\Vert \transpose{A} \right\Vert_2^{A\R^d_{\geq0}}
           \cdot
           \left\Vert Av\right\Vert_2\\
        & = 
           \frac{1}{\min_{1\leq i,j\leq d}a_{ij}}
           \cdot
           \left\Vert \transpose{A} \right\Vert_2^{A\R^d_{\geq0}}.
    \end{align*}
\end{proof}

The next lemma shows that the matrix semi-norm defined in Equation~\eqref{eq:semi-norm-perp-to-cone} behaves well with respect
to product of matrices.

\begin{lemma}\label{lem:semi-norm-on-AB}
    Let $A,B\in\R^{d\times d}_{\geq0}$ such that $AB \neq 0$ and
    let $\Vert\cdot\Vert$ be any semi-norm on $\R^d$ which is a norm on every
    $\bf^\perp$ with $\bf\in (A \R^d_{\geq0} \cup B \R^d_{\geq0} )\setminus\{0\}$.
    We have
    \[
        \left\Vert\transpose{(AB)} \right\Vert^{AB\R^d_{\geq0}}
        \leq
        \left\Vert \transpose{B} \right\Vert^{B\R^d_{\geq0}}
        \left\Vert \transpose{A} \right\Vert^{A\R^d_{\geq0}}.
    \]
\end{lemma}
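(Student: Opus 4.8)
The plan is to use the identity $\transpose{(AB)} = \transpose{B}\,\transpose{A}$ and to estimate the action of $\transpose{B}\,\transpose{A}$ on a vector orthogonal to a generator of the cone $AB\R^d_{\geq0}$ by applying the two given cone-semi-norms in cascade. First I would fix $\bf \in AB\R^d_{\geq0}\setminus\{0\}$ and a vector $v \in \bf^\perp\setminus\{0\}$, and write $\bf = Ag$ with $g = Bx$ for some $x \in \R^d_{\geq0}$. Since $\bf \neq 0$ forces $g = Bx \neq 0$, we have $g \in B\R^d_{\geq0}\setminus\{0\}$ while $\bf = Ag \in A\R^d_{\geq0}\setminus\{0\}$; moreover $AB\R^d_{\geq0} \subseteq A\R^d_{\geq0}$, so every hyperplane used below is indeed of the form $\bh^\perp$ with $\bh \in (A\R^d_{\geq0} \cup B\R^d_{\geq0})\setminus\{0\}$, and the hypothesis guarantees that $\Vert\cdot\Vert$ is a norm on each of them.

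The key observation is an orthogonality transfer. From the identity $\transpose{v}(Ag) = \transpose{(\transpose{A}v)}\,g$ one sees that $v \perp \bf = Ag$ is equivalent to $\transpose{A}v \perp g$. Setting $w = \transpose{A}v$, this means $w \in g^\perp$. I would then chain the two hypotheses: because $v \in \bf^\perp$ with $\bf \in A\R^d_{\geq0}$, the definition of the $A$-semi-norm yields $\Vert w\Vert = \Vert\transpose{A}v\Vert \leq \Vert\transpose{A}\Vert^{A\R^d_{\geq0}}\,\Vert v\Vert$; and because $w \in g^\perp$ with $g \in B\R^d_{\geq0}$, the definition of the $B$-semi-norm yields $\Vert\transpose{B}w\Vert \leq \Vert\transpose{B}\Vert^{B\R^d_{\geq0}}\,\Vert w\Vert$ (this also holds trivially in the degenerate case $w=0$). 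Combining the two estimates gives
\[
    \Vert\transpose{(AB)}v\Vert = \Vert\transpose{B}\,\transpose{A}v\Vert = \Vert\transpose{B}w\Vert \leq \Vert\transpose{B}\Vert^{B\R^d_{\geq0}}\,\Vert\transpose{A}\Vert^{A\R^d_{\geq0}}\,\Vert v\Vert.
\]
Since $\Vert\cdot\Vert$ is a norm on $\bf^\perp$, the condition $v \neq 0$ forces $\Vert v\Vert > 0$, so I may divide by $\Vert v\Vert$ and take the supremum first over $v \in \bf^\perp\setminus\{0\}$ and then over $\bf \in AB\R^d_{\geq0}\setminus\{0\}$, which delivers the claimed inequality.

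The step I expect to be the crux is precisely the orthogonality transfer $v \perp Ag \iff \transpose{A}v \perp g$: this is what permits the two distinct cone-semi-norms to be applied one after the other to a single vector, with the factored generator $\bf = A(Bx)$ feeding the $A$-cone estimate and the intermediate generator $g = Bx$ feeding the $B$-cone estimate. The remaining points — verifying $g \neq 0$ from $AB \neq 0$, checking that $\Vert\cdot\Vert$ restricts to a norm on each relevant hyperplane via the inclusion $AB\R^d_{\geq0} \subseteq A\R^d_{\geq0}$, and handling the trivial case $w = 0$ — are routine bookkeeping with the suprema.
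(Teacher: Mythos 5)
Your proof is correct and takes essentially the same route as the paper's: the orthogonality transfer $v \perp Ag \iff \transpose{A}v \perp g$ together with the cascade of the two cone semi-norms (using $AB\R^d_{\geq0} \subseteq A\R^d_{\geq0}$) is exactly the paper's argument. The only cosmetic difference is that you chain pointwise inequalities instead of splitting the supremum of a product of ratios, which lets you absorb the degenerate case $\transpose{A}v = 0$ trivially rather than via the paper's separate case distinction.
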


\begin{proof}
    We have
\[
    \left\Vert\transpose{(AB)} \right\Vert^{AB\R^d_{\geq0}}
        = \sup_{\bf\in AB\R^d_{\geq0}\setminus\{0\}, z\in \bf^\perp\setminus\{0\}}
        \frac{\Vert\transpose{(AB)}  z\Vert}{\Vert z\Vert}.
\]
If for all $\bf\in AB\R^d_{\geq0}\setminus\{0\}$ and $z\in \bf^\perp\setminus\{0\}$,
    we have $\transpose{A} z = 0$, then 
    $\left\Vert\transpose{(AB)} \right\Vert^{AB\R^d_{\geq0}} = 0$
and the result follows.
Otherwise, we have
\[
    \left\Vert\transpose{(AB)} \right\Vert^{AB\R^d_{\geq0}}
        = \sup_{\bf\in AB\R^d_{\geq0}\setminus\{0\}, 
                z\in \bf^\perp\setminus\{0\}, 
                \transpose{A} z\neq0}
            \frac{\Vert\transpose{(AB)}  z\Vert}{\Vert z\Vert}.
\]
Observe that for all $\bf \in AB\R^d_{\geq0}\setminus\{0\}$ and all $z\in \bf^\perp\setminus\{0\}$, we have $\transpose{A} z \in \mathbf{g}^\perp$ for some $\mathbf{g} \in B\R^d_{\geq0}\setminus\{0\}$.
Since $\Vert\cdot\Vert$ is a norm on every
    $\mathbf{g}^\perp$ with $\mathbf{g}\in B \R^d_{\geq0} \setminus\{0\}$, we have $\left\Vert \transpose{A} z \right\Vert \neq 0$ for all $\bf\in AB\R^d_{\geq0}\setminus\{0\}, z\in \bf^\perp\setminus\{0\}$ such that $\transpose{A} z\neq0$. 
Therefore, we get
    \begin{align*}
        \left\Vert\transpose{(AB)} \right\Vert^{AB\R^d_{\geq0}}
        &= \sup_{\bf\in AB\R^d_{\geq0}\setminus\{0\}, z\in \bf^\perp\setminus\{0\}, \transpose{A} z\neq0}
        \frac{\Vert \transpose{B} \transpose{A}  z\Vert}{\Vert \transpose{A}  z\Vert}
        \cdot
        \frac{\Vert \transpose{A}  z\Vert}{\Vert z\Vert}\\
        &\leq 
        \sup_{\bf\in AB\R^d_{\geq0}\setminus\{0\}, z\in \bf^\perp\setminus\{0\}, \transpose{A} z\neq0}
        \frac{\Vert \transpose{B} \transpose{A}  z\Vert}{\Vert \transpose{A}  z\Vert}
        \cdot
        \sup_{\bf\in AB\R^d_{\geq0}\setminus\{0\}, z\in \bf^\perp\setminus\{0\}, \transpose{A} z\neq0}
        \frac{\Vert \transpose{A}  z\Vert}{\Vert z\Vert}\\
        &\leq 
        \sup_{\mathbf{g}\in B\R^d_{\geq0}\setminus\{0\}, z'\in \mathbf{g}^\perp\setminus\{0\}}
        \frac{\Vert \transpose{B}  z'\Vert}{\Vert z'\Vert}
        \cdot
        \sup_{\bf\in A\R^d_{\geq0}\setminus\{0\}, z\in \bf^\perp\setminus\{0\}}
        \frac{\Vert \transpose{A}  z\Vert}{\Vert z\Vert}\\
        &=
        \left\Vert \transpose{B} \right\Vert^{B\R^d_{\geq0}}
        \cdot
        \left\Vert \transpose{A} \right\Vert^{A\R^d_{\geq0}}
    \end{align*}
    where we substituted $z'=\transpose{A}  z$, $\bf=A\mathbf{g}$ and $\mathbf{g}=Bv$ for some $v\in\R^d_{\geq0}$
    since there exists $v\in\R^d_{\geq0}$ such that $\bf = ABv$. 
\end{proof}

\subsection{Primitive sequences and convergence}

Recall that a square matrix $M$ is {\em primitive} if there is some positive integer $k$ such that $M^k$ has only positive entries.
As an analogue, if $\bm = (M_n)_{n \in \N}$  is a sequence of nonnegative square matrices of the same size $d$, we say that $\bm$ is {\em primitive} if for all $r \in \N$, there exists $s > r$ such that $M_{[r,s)}$ has only positive entries.

If $\bm$ is the sequence of incidence matrices associated with a sequence of endomorphisms $(\sigma_n)_{n \in \mathbb{N}}$ of $A^*$, then primitivity of $\bm$ means that for all $r \in \N$, there exists $s > r$ such that for all letters $a,b \in A$, $a$ occurs in $\sigma_{[r,s)}(b)$.

The next result provides sufficient conditions for having weak convergence of a
primitive sequence of matrices without any recurrence hypothesis like in
Proposition 3.5.5 of \cite{akiyama_cirm2017_book_2020}.

\begin{proposition}\label{prop:convergence-sufficient-conditions}
Let $\bm = (M_n)_{n \in \N}$ be a primitive sequence
of nonnegative integer matrices. 
If there exists $K>0$ such that
    $\left\Vert \transpose{M_{[0,n)}} \right\Vert^{M_{[0,n)}\R^d_{\geq0}}\leq K$
               for some norm and for infinitely many $n\in\N$,
    then there exists a vector $\bu\in\R^d_{\geq0} \setminus \{\boldsymbol{0}\}$ satisfying
    \begin{equation}
        \bigcap_{n\geq0} M_{[0,n)} \R^d_{\geq0} = \R_{\geq 0}\bu.
    \end{equation}
\end{proposition}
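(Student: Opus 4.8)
The plan is to show that the nested cone $\Gamma=\bigcap_{n\ge 0}M_{[0,n)}\R^d_{\geq 0}$ is nontrivial and has semimetric diameter zero, so that Lemma~\ref{lem:diam-0=line} yields the required $\bu$. Write $\Gamma_n=M_{[0,n)}\R^d_{\geq 0}$. Since each $M_n$ is nonnegative we have $M_n\R^d_{\geq0}\subseteq\R^d_{\geq0}$, hence $\Gamma_{n+1}=M_{[0,n)}M_n\R^d_{\geq0}\subseteq\Gamma_n$: the cones are nested and decreasing. As primitivity forces $M_{[0,n)}\neq 0$, each $\Gamma_n$ contains a nonzero vector, so the sets $\Gamma_n\cap\{\bx:\Vert\bx\Vert_2=1\}$ form a decreasing sequence of nonempty compact sets and $\Gamma\neq\{0\}$. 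Moreover $n\mapsto\mathrm{diam}(\Gamma_n)$ is non-increasing and $\mathrm{diam}(\Gamma)\le\inf_n\mathrm{diam}(\Gamma_n)=\lim_{n\to\infty}\mathrm{diam}(\Gamma_n)$, so it suffices to produce a single cofinal subsequence along which the diameter tends to $0$.

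The quantitative engine is Lemma~\ref{lem:diam-upper-bound}: whenever $M_{[0,n)}$ is a positive matrix, $\mathrm{diam}(\Gamma_n)\le\frac{1}{\min_{i,j}(M_{[0,n)})_{i,j}}\,\Vert\transpose{M_{[0,n)}}\Vert^{\Gamma_n}$. On the infinite set $\mathcal N=\{n:\Vert\transpose{M_{[0,n)}}\Vert^{\Gamma_n}\le K\}$ provided by the hypothesis the second factor is at most $K$, so $\mathrm{diam}(\Gamma_n)\le K/\min_{i,j}(M_{[0,n)})_{i,j}$ for every $n\in\mathcal N$ at which $M_{[0,n)}$ is positive. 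Thus everything reduces to producing arbitrarily large $n\in\mathcal N$ for which $M_{[0,n)}$ is positive with minimal entry tending to infinity (the case $d=1$ being trivial).

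For the growth of the minimal entry I would use primitivity to extract times $0=u_0<u_1<u_2<\cdots$ with each block $M_{[u_k,u_{k+1})}$ positive, and then observe that a product of $k$ positive integer matrices in dimension $d\ge 2$ has all entries at least $d^{k-1}$: from $(PQ)_{i,j}=\sum_\ell P_{i,\ell}Q_{\ell,j}\ge\sum_\ell Q_{\ell,j}\ge d\min_{\ell,j}Q_{\ell,j}$ one gets $\min(P_1\cdots P_k)\ge d\,\min(P_2\cdots P_k)\ge\cdots\ge d^{k-1}$. Hence $\min_{i,j}(M_{[0,u_k)})_{i,j}\ge d^{k-1}\to\infty$, so along the positivity times the minimal entry blows up; if the semi-norm bound held at these times the diameter bound above would immediately give $\mathrm{diam}(\Gamma_{u_k})\to 0$.

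The main obstacle is that the semi-norm-bounded times $\mathcal N$ and the positivity times $\{u_k\}$ need not coincide, and $M_{[0,n)}$ may even fail to be positive for $n\in\mathcal N$: a rank drop can collapse $\Gamma_n$ to a line (in which case the conclusion is already attained), but in general the two subsequences must be reconciled. To bridge them I would combine the submultiplicativity of the semi-norm (Lemma~\ref{lem:semi-norm-on-AB}) — which propagates the bound $\Vert\transpose{M_{[0,n)}}\Vert^{\Gamma_n}\le K$ at a good time $n\in\mathcal N$ across an intervening positive block to a later positivity time $m$ — with the monotonicity of $n\mapsto\mathrm{diam}(\Gamma_n)$, which transfers any smallness obtained along one cofinal subsequence to the full limit. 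The delicate point is to arrange the interplay of the good times with the positive blocks so that the exponential growth $d^{k-1}$ of the minimal entry is not outweighed by the accumulated semi-norm factor; once $\mathrm{diam}(\Gamma_n)\to 0$ is secured, Lemma~\ref{lem:diam-0=line} completes the proof.
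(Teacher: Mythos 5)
Your framework is exactly the paper's: nested cones $\Gamma_n=M_{[0,n)}\R^d_{\geq0}$, nontriviality of the limit via compactness, the bound of Lemma~\ref{lem:diam-upper-bound} with numerator controlled by the hypothesis and denominator the minimal entry, the growth $\min_{i,j}(M_{[0,u_k)})_{i,j}\geq d^{k-1}$ along chained positive blocks, monotonicity of the diameter, and Lemma~\ref{lem:diam-0=line} to conclude (the paper additionally converts the hypothesis norm to $\Vert\cdot\Vert_2$ by equivalence of norms, picking up a constant $K'$, a point you gloss over). But your write-up stops precisely where the work lies: the step you call ``the main obstacle'' and ``the delicate point'' is left unresolved, and the bridge you propose cannot close it. Lemma~\ref{lem:semi-norm-on-AB} propagates bounds in the wrong direction for your purpose: writing $M_{[0,m)}=M_{[0,n)}M_{[n,m)}$ with $n\in\mathcal N$ and $m$ a later positivity time, it yields
\[
\left\Vert \transpose{M_{[0,m)}}\right\Vert^{M_{[0,m)}\R^d_{\geq0}}
\leq
\left\Vert \transpose{M_{[n,m)}}\right\Vert^{M_{[n,m)}\R^d_{\geq0}}\cdot K,
\]
and positivity of the intervening block gives no control at all on its semi-norm (the paper itself shows that already $\Vert\transpose{(C_1C_2)}z\Vert_D$ can exceed $\Vert z\Vert_D$). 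So the ``accumulated semi-norm factor'' you worry about can genuinely swamp $d^{k-1}$, and no rearrangement of good times and positive blocks fixes this by propagation alone. As it stands, your argument proves the proposition only in the lucky case where the semi-norm bound and positivity happen to hold at common times.

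The paper's resolution is the opposite of propagation and takes one sentence: do not transport the bound forward to the positivity times, but place the positivity times inside $\mathcal N$. Construct $(n_k)$ recursively with $n_0=0$ and $n_k\in\mathcal N$ for $k\geq1$: given $n_k$, primitivity yields $s>n_k$ with $M_{[n_k,s)}>0$, and one then takes $n_{k+1}$ to be any element of the infinite set $\mathcal N$ with $n_{k+1}\geq s$. The block $M_{[n_k,n_{k+1})}=M_{[n_k,s)}M_{[s,n_{k+1})}$ is still positive, since a positive matrix times a nonnegative matrix with no zero column is positive (the partial products here have no zero columns; in the application the matrices $C_1,C_2$ are even invertible). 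With this choice, at every time $n_k$ one \emph{simultaneously} has $\min_{i,j}(M_{[0,n_k)})_{i,j}\geq d^{k-1}$ and $\left\Vert\transpose{M_{[0,n_k)}}\right\Vert^{M_{[0,n_k)}\R^d_{\geq0}}\leq K$, so Lemma~\ref{lem:diam-upper-bound} gives $\mathrm{diam}(\Gamma_{n_k})\leq K'K/d^{k-1}\to0$, and your own monotonicity remark finishes the proof. This choice of the $n_k$ within $\mathcal N$ is the single missing idea in your proposal; everything else you have matches the paper, and your treatment of the nontriviality of the limit cone is in fact more explicit than the paper's.
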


\begin{proof}
    Since $\bm$ is primitive, there exists an increasing sequence
    $(n_k)_{k\in\N}$ such that $n_0 = 0$ and $M_{[n_k,n_{k+1})}$ has only positive entries for
    every $k\in\N$. 
    Furthermore, the sequence $(n_k)_{k \in \mathbb{N}}$ can be chosen among the indices $n$ for which 
    $\left\Vert \transpose{M_{[0,n)}} \right\Vert^{M_{[0,n)}\R^d_{\geq0}}\leq K$.
    If $A^{(k)}=(a^{(k)}_{ij})=M_{[0,n_k)}$, then for $k \geq 1$, one has
    \[
        \min_{1\leq i,j\leq d} a^{(k)}_{ij} \geq d^{k-1}.
    \]
    Moreover, as the dimension is finite, the chosen norm is equivalent to the
    $2$-norm, which implies that there exists a constant $K'$ such that
    \[
    \left\Vert \transpose{M_{[0,n)}} \right\Vert_2^{M_{[0,n)}\R^d_{\geq0}}
               \leq K'\cdot
    \left\Vert \transpose{M_{[0,n)}} \right\Vert^{M_{[0,n)}\R^d_{\geq0}}.
    \]
    Therefore, as $M_{[0,n_{k-1})}\,\R^d_{\geq0} \subset M_{[0,n)}\,\R^d_{\geq0} \subset M_{[0,n_k)}\,\R^d_{\geq0}$ whenever $n_{k-1} \leq n \leq n_k$, we get, using Lemma~\ref{lem:diam-upper-bound},
    \begin{align*}
        \lim_{n\to\infty}
        \mathrm{diam}(M_{[0,n)}\,\R^d_{\geq0}) 
        &=
        \lim_{k\to\infty}
        \mathrm{diam}(M_{[0,n_k)}\,\R^d_{\geq0}) \\
        &\leq
        \lim_{k\to\infty}
           \frac{1}{\min_{1\leq i,j\leq d} a^{(k)}_{ij}}
           \cdot
           \left\Vert \transpose{M_{[0,n_k)}}
           \right\Vert_2^{M_{[0,n_k)}\R^d_{\geq0}}
        \leq
        \lim_{k\to\infty}
           \frac{1}{d^{k-1}}
           \cdot K'\cdot K = 0.
    \end{align*}
    We conclude from Lemma~\ref{lem:diam-0=line} that the cone $\bigcap_{n\geq0} M_{[0,n)} \R^d_{\geq0}$ is one-dimensional.
\end{proof}

\subsection{A piecewise linear semi-norm}\label{sec:piecewise-linear-semi-norm}

Proposition~\ref{prop:convergence-sufficient-conditions} holds for any norm on $\R^d$.
In this section, we consider the following function $\Vert\cdot\Vert_D:\R^d\to\R$ and show that it is a semi-norm on $\R^d$ and a norm on well-chosen subspaces.
It is defined as
\begin{equation}\label{eq:semi-norm-D}
    \Vert v\Vert_D = \max(v) - \min(v),
\end{equation}
where $\max(v) = \max_{1 \leq i \leq d} v_i$ and $\min(v) = \min_{1 \leq i \leq d} v_i$.
It is invariant under the addition of constant vectors, that is,
\begin{equation}\label{eq:1}
    \Vert v + a\transpose{(1,\dots,1)} \Vert_D  = \Vert v \Vert_D
\end{equation}
for every $v\in\R^d$ and $a\in\R$.
Note that the function $\Vert\cdot\Vert_D$ is not a norm on $\R^d$ as $\Vert v\Vert_D=0$ for some nonzero vector
$v$. But $\Vert\cdot\Vert_D$ is a norm on some well-chosen subspaces.

\begin{lemma}\label{lem:normD-equiv-norminfty}
Let $\bf\in\R^d_{\geq 0}\setminus\{0\}$.
Then
\begin{enumerate}[\rm (i)]
\item $\Vert\cdot\Vert_D$ is a semi-norm on $\R^d$,
\item $\Vert\cdot\Vert_D$ is a norm on $\bf^\perp$,
\item $\Vert\cdot\Vert_D$ and $\Vert\cdot\Vert_\infty$ are equivalent norms on $\bf^\perp$. 
More precisely, 
    $2\Vert v\Vert_\infty \geq \Vert v\Vert_D \geq \Vert v\Vert_\infty$
    for every $v\in \bf^\perp$.
\end{enumerate}
\end{lemma}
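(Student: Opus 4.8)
The plan is to prove the three claims in order, since (ii) and (iii) both follow once I understand how $\|\cdot\|_D$ behaves on vectors and on the hyperplane $\bf^\perp$. I would first verify (i), that $\|\cdot\|_D$ is a semi-norm on all of $\R^d$. Nonnegativity is clear since $\max(v) \geq \min(v)$ always. For absolute homogeneity, I would split into cases on the sign of the scalar $\lambda$: when $\lambda \geq 0$, $\max(\lambda v) = \lambda \max(v)$ and $\min(\lambda v) = \lambda\min(v)$, so $\|\lambda v\|_D = \lambda\|v\|_D$; when $\lambda < 0$ the roles of $\max$ and $\min$ swap, giving $\|\lambda v\|_D = \lambda(\min(v) - \max(v)) = |\lambda|\,\|v\|_D$. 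For the triangle inequality, I would use that $\max(v+w) \leq \max(v) + \max(w)$ and $\min(v+w) \geq \min(v) + \min(w)$, so subtracting the second from the first yields $\|v+w\|_D \leq \|v\|_D + \|w\|_D$ directly.

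Next I would prove the equivalence of norms in (iii), since establishing the two-sided bound on $\bf^\perp$ simultaneously gives (ii) for free: a function equivalent to a genuine norm is itself a norm. The easy direction is $\|v\|_D \geq \|v\|_\infty$, but this need \emph{not} hold for arbitrary $v \in \R^d$ (for instance $v = (1,1,1)$), so the constraint $v \in \bf^\perp$ with $\bf \in \R^d_{\geq 0}\setminus\{0\}$ must be used. The key observation is that if $v \in \bf^\perp$ then $\sum_i f_i v_i = 0$ with all $f_i \geq 0$ and not all zero; hence $v$ cannot be strictly positive nor strictly negative in all coordinates, so $\min(v) \leq 0 \leq \max(v)$. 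From this, $\|v\|_\infty = \max(|\max(v)|, |\min(v)|) = \max(\max(v), -\min(v))$, and since both $\max(v)$ and $-\min(v)$ are nonnegative and their sum is exactly $\|v\|_D$, I get $\|v\|_\infty \leq \max(v) - \min(v) = \|v\|_D$ (the larger of two nonnegative numbers is at most their sum), which is the lower bound. For the upper bound $\|v\|_D \leq 2\|v\|_\infty$, I would note $\max(v) \leq \|v\|_\infty$ and $-\min(v) \leq \|v\|_\infty$ hold for \emph{every} $v$, so adding gives $\|v\|_D \leq 2\|v\|_\infty$ unconditionally.

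The one genuine subtlety — the step I expect to be the only real obstacle — is the sign argument $\min(v) \leq 0 \leq \max(v)$ for $v \in \bf^\perp$, which is what makes the lower bound (and hence the norm property on $\bf^\perp$) work. If $v$ had all coordinates strictly positive then $\sum_i f_i v_i > 0$ since some $f_i > 0$, contradicting $v \perp \bf$; the same reasoning with signs reversed rules out all coordinates strictly negative. I would state this cleanly and then note that the lower bound also settles (ii): if $v \in \bf^\perp$ and $\|v\|_D = 0$, then $\|v\|_\infty \leq \|v\|_D = 0$ forces $v = 0$, which together with the already-established semi-norm axioms shows $\|\cdot\|_D$ restricts to a norm on $\bf^\perp$. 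Thus I would organize the write-up as: prove (i) as a standalone verification of the semi-norm axioms, prove the two inequalities of (iii) using the sign observation for the nontrivial direction, and finally deduce (ii) as an immediate corollary of the lower bound in (iii).
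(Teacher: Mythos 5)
Your proposal is correct, and for parts (i) and (iii) it follows essentially the same route as the paper: the same verification of the semi-norm axioms, the same unconditional bound $\Vert v\Vert_D \leq 2\Vert v\Vert_\infty$, and the same key observation that orthogonality to a nonzero nonnegative $\bf$ forces $\min(v)\leq 0\leq \max(v)$, which yields $\Vert v\Vert_D \geq \Vert v\Vert_\infty$. The only real difference is in (ii): the paper proves definiteness directly, arguing that $\Vert v\Vert_D=0$ forces $v=a(1,\dots,1)$ and then $0=\transpose{v}\bf=a\Vert\bf\Vert_1$ kills $a$, whereas you deduce it as a corollary of the lower bound in (iii), since $\Vert v\Vert_\infty\leq\Vert v\Vert_D=0$ gives $v=0$. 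Both arguments exploit nonnegativity of $\bf$ in the same essential way; yours is slightly more economical because it reuses (iii), while the paper's has the mild advantage that (ii) stands on its own without reordering the statement's parts, and there is no circularity in your version because the two inequalities in (iii) are proved without assuming $\Vert\cdot\Vert_D$ is a norm.
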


\begin{proof}
    (i) We show that it is a semi-norm.
It is \emph{absolutely homogeneous}.
    Let $a\in\R_{\geq 0}$ and $v\in \R^d$. We have $\|av\|_D = \max(av) -
    \min(av)=a\max(v) - a\min(v)=a\|v\|_D$,
    and $\|-v\|_D=\|v\|_D$.
It is \emph{subadditive}. 
    Let $u,v\in\R^d$. We have $\|u+v\|_D = 
    \max(u+v) - \min(u+v)\leq
    \max(u) + \max(v)
    -\min(u) - \min(v)=\|u\|_D+\|v\|_D$.
It is \emph{non-negative}.
    For every $v\in\R^d$, we have 
    $\max(v) \geq \min(v)$ so that $\|v\|_D\geq0$.

(ii) Now we show that it is a norm on $\bf^\perp$.
It is \emph{definite}.
    Let $v\in \bf^\perp$ and suppose that $\|v\|_D=0$.
    We have $\max(v)=\min(v)$ so that $v= a(1,\dots,1)$ for some $a\in\R$.
    By definition of $v$, we have that $0= \transpose{v}\bf = a\Vert
    \bf\Vert_1$ which holds only if $a=0$ since $\Vert \bf\Vert_1\neq 0$.
    Therefore $v=0$. 

(iii) We always have
    \[
    \Vert v\Vert_D = \max(v) - \min(v) 
    \leq |\max(v)| + |\min(v)|
    \leq 2\Vert v\Vert_\infty.
    \]
If $\min(v)>0$, then $\transpose{v} \bf>0$ which contradicts the fact that $v$ is orthogonal
to $\bf$. 
    Similarly, $\max(v)<0$ implies $\transpose{v} \bf<0$ and contradicts the fact that $v$ is orthogonal
to $\bf$.    
Therefore $v\in \bf^\perp$ implies that $\min(v)\leq0\leq\max(v)$.
We conclude that
    \[
    \Vert v\Vert_D 
    = \max(v) - \min(v) 
    = |\max(v)| + |\min(v)| 
    \geq \Vert v\Vert_\infty.
    \]
\end{proof}

If follows from Lemma~\ref{lem:normD-equiv-norminfty} that
$\Vert\cdot|_{\bf^\perp}\Vert_D$ is a matrix semi-norm as soon as
$\bf\in\R^d_{\geq0}\setminus\{0\}$.
Note that it follows from Equation~\eqref{eq:1} that it satisfies
\begin{equation}
    \left\Vert \left(M + \transpose{(1,\dots,1)}  u\right)|_{\bf^\perp}\right\Vert_D  
    = \left\Vert M|_{\bf^\perp} \right\Vert_D
\end{equation}
for every matrix $M\in\R^{d\times d}$ and row vector $u\in\R^d$.

Finally, if $M\in\R^{d\times d}$ and 
$\bf\in\R^d_{\geq0}\setminus\{0\}$, then
it follows from Lemma~\ref{lem:normD-equiv-norminfty}~(iii) that
\begin{equation}\label{eq:infty-vs-D}
    \frac{1}{2}\cdot\left\Vert M\middle|_{\bf^\perp}\right\Vert_D
    \leq
    \left\Vert M\middle|_{\bf^\perp}\right\Vert_\infty
    \leq
    2\cdot\left\Vert M\middle|_{\bf^\perp}\right\Vert_D.
\end{equation}

\subsection{The supremum is attained on the boundaries}

We now state a general result which states that the supremum of
    $\left\Vert \transpose{M} \right\Vert_D^{M\R^d_{\geq0}}$
is attained on the boundaries of a finite number of
subcones forming a partition of $\R^d_{\geq0}$.
It is used in this article for proving the balancedness of almost all $\C$-adic sequences.

\begin{lemma}\label{lem:sup-on-boundaries}
    Let $d\geq 2$ and $M\in\R^{d\times d}_{>0}$ be a positive and invertible matrix.
Consider the set $\mathcal{H}$ of hyperplanes orthogonal to some vector in
    \begin{equation}\label{eq:hyperplanes-vectors}
S = \left(M \E \cup (\E-\E) \cup M(\E-\E)\right)\setminus\{0\}
    \end{equation}
where $\E= \{e_i:1\leq i\leq d\}$ and $\mathcal{D}$ be the finite union of one-dimensional intersections of several hyperplanes of $\mathcal{H}$:
        \begin{equation}\label{eq:droites}
\mathcal{D} = \bigcup_{\substack{h_1,\dots,h_{d-1}\in\mathcal{H}\\ \dim \bigcap_{1 \leq i < d} h_i =1}} \bigcap_{1 \leq i < d} h_i.
        \end{equation}
Then the maximal value of the semi-norm is attained at some vector $z$ in
    $\left(\mathcal{D}\setminus\pm (\transpose{M} )^{-1}\R^d_{> 0} \right) \setminus\{0\}$, i.e.,
\begin{equation}\label{eq:sup-on-vertices}
    \left\Vert \transpose{M} \right\Vert_D^{M\R^d_{\geq 0}} = 
    \max_{z\in \left(\mathcal{D}\setminus\pm (\transpose{M} )^{-1}\R^d_{> 0}\right) \setminus\{0\}}\frac{\Vert \transpose{M}  z\Vert_D}{\Vert z\Vert_D}.
\end{equation}
\end{lemma}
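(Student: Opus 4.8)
The plan is to reduce the double supremum defining $\left\Vert\transpose{M}\right\Vert_D^{M\R^d_{\geq0}}$ to a single supremum over a conveniently described set, prove that it is attained by a compactness argument, and then exploit the piecewise fractional-linear nature of the objective to push a maximizer onto the lines of $\mathcal{D}$.

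First I would rewrite
\[
\left\Vert\transpose{M}\right\Vert_D^{M\R^d_{\geq0}}
= \sup_{\bf\in M\R^d_{\geq0}\setminus\{0\}}\ \sup_{z\in\bf^\perp\setminus\{0\}}
\frac{\Vert\transpose{M}z\Vert_D}{\Vert z\Vert_D}
= \sup_{z\in\mathcal{Z}}\frac{\Vert\transpose{M}z\Vert_D}{\Vert z\Vert_D},
\]
where $\mathcal{Z}$ is the set of nonzero $z$ for which there exists $\bf\in M\R^d_{\geq0}\setminus\{0\}$ with $z\perp\bf$, since for fixed $z$ the inner supremum runs over a nonempty set precisely when such a $\bf$ exists. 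Writing $\bf=Mv$ with $v\in\R^d_{\geq0}$ and $\transpose{z}\bf=\transpose{(\transpose{M}z)}v$, a nonzero such $v$ with $\transpose{(\transpose{M}z)}v=0$ exists if and only if $\transpose{M}z$ is neither strictly positive nor strictly negative, i.e. if and only if $z\notin\pm(\transpose{M})^{-1}\R^d_{>0}$. Hence $\mathcal{Z}=\R^d\setminus(\{0\}\cup\pm(\transpose{M})^{-1}\R^d_{>0})$, which explains the set in the statement. Since every nonzero multiple of $\transpose{(1,\dots,1)}$ lies in $\pm(\transpose{M})^{-1}\R^d_{>0}$ (as $M>0$ forces $\transpose{M}\transpose{(1,\dots,1)}\in\R^d_{>0}$), the function $g(z):=\Vert\transpose{M}z\Vert_D/\Vert z\Vert_D$ is well defined and continuous on $\mathcal{Z}$ by Lemma~\ref{lem:normD-equiv-norminfty}, homogeneous of degree $0$, and even. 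As $\mathcal{Z}\cap\mathbb{S}^{d-1}=\mathbb{S}^{d-1}\setminus\pm(\transpose{M})^{-1}\R^d_{>0}$ is a closed subset of the sphere, hence compact, the supremum is attained at some $z_0$.

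Next I would organise everything around the central hyperplane arrangement $\mathcal{H}$. On the relative interior of each (relatively open, conical) cell $F$ of $\mathcal{H}$ all the relevant combinatorial data are constant, and this is exactly what the three pieces of $S$ guarantee: the indices realising $\max$ and $\min$ in $\Vert\transpose{M}z\Vert_D=\max_i(\transpose{M}z)_i-\min_j(\transpose{M}z)_j$ are governed by the signs of $\transpose{(M(e_i-e_j))}z$ with $M(e_i-e_j)\in M(\E-\E)$; the indices in $\Vert z\Vert_D$ are governed by the signs of $\transpose{(e_k-e_l)}z$ with $e_k-e_l\in\E-\E$; and membership in $\pm(\transpose{M})^{-1}\R^d_{>0}$ is governed by the signs of $(\transpose{M}z)_i=\transpose{(Me_i)}z$ with $Me_i\in M\E$. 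Consequently each cell lies entirely inside or outside $\mathcal{Z}$, and on $\operatorname{relint}(F)\subseteq\mathcal{Z}$ one has $g=\transpose{a_F}z/\transpose{b_F}z$ for fixed vectors $a_F=M(e_{i^*}-e_{j^*})$ and $b_F=e_{k^*}-e_{l^*}$, with $\transpose{b_F}z=\Vert z\Vert_D>0$.

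Finally I would run a dimension descent on the cell $F$ with $z_0\in\operatorname{relint}(F)$. If $\dim F=1$ we are already done, as $z_0$ then lies on a line of $\mathcal{D}$. If $\dim F\geq2$, then for any $w\in\operatorname{span}(F)$ the map $t\mapsto g(z_0+tw)$ is a Möbius function of $t$, hence monotone on the interval where $z_0+tw$ stays in $\operatorname{relint}(F)$; since $t=0$ is an interior maximum it must be constant, and letting $w$ range over $\operatorname{span}(F)$ shows $a_F-g(z_0)b_F\perp\operatorname{span}(F)$, i.e. $g\equiv g(z_0)$ on $F$. Because $\R^d\setminus\pm(\transpose{M})^{-1}\R^d_{>0}$ is closed, $\overline{F}$ is contained in it, so $g$ extends by the same constant to the lower-dimensional faces in $\partial F$, all of which still lie in $\mathcal{Z}$ away from the origin. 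Since $M\E\subseteq S$ and $M$ is invertible, the normals in $S$ contain a basis of $\R^d$, so $\mathcal{H}$ is essential and every cell of dimension $\geq1$ is a pointed cone whose extreme rays are $1$-dimensional cells; moving $z_0$ to such an extreme ray produces a maximizer on a line cut out as a $1$-dimensional intersection of $d-1$ hyperplanes of $\mathcal{H}$, that is, on $\mathcal{D}$, and lying in $\mathcal{Z}$. This yields Equation~\eqref{eq:sup-on-vertices}. I expect the main obstacle to be the bookkeeping in the last paragraph: verifying rigorously that the essentiality coming from $M\E$ forces the descent to terminate on a $1$-dimensional cell of $\mathcal{D}$, rather than on a higher-dimensional minimal flat, while ensuring the maximizer never leaves the closed region $\mathcal{Z}$ so that the denominator $\Vert z\Vert_D$ never degenerates.
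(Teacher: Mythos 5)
Your proposal is correct and follows essentially the same route as the paper: both reduce the semi-norm to a supremum over $Z=\R^d\setminus\pm(\transpose{M})^{-1}\R^d_{>0}$, observe that $\Vert z\Vert_D$ and $\Vert \transpose{M} z\Vert_D$ are given by fixed linear forms on each cone of the central arrangement with normals in $S$, and push a maximizer onto a one-dimensional cell, i.e.\ onto $\mathcal{D}$. The only divergence is in the execution of the last step: the paper introduces the supporting linear form $L(z)=\Vert \transpose{M} z\Vert_D-m\Vert z\Vert_D\leq 0$ to conclude that the maximum on each subcone is attained on an edge, whereas you force constancy on the cell via Möbius monotonicity and then descend to an extreme ray — a minor variant, which in fact spells out the pointedness/essentiality details (via $M\E$ spanning $\R^d$) that the paper leaves implicit.
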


\begin{proof}
    We have
\[
    \left\Vert \transpose{M} \right\Vert_D^{M\R^d_{\geq0}}
    = \sup_{\bf\in M\R^d_{\geq0}\setminus\{0\}}\left\Vert \transpose{M} \middle|_{\bf^\perp}\right\Vert_D
    = \sup_{\bf\in M\R^d_{\geq0}\setminus\{0\}}\sup_{z\in \bf^\perp\setminus\{0\}}\frac{\Vert \transpose{M}  z\Vert_D}{\Vert z\Vert_D}
    = \sup_{z\in Z\setminus\{0\}}\frac{\Vert \transpose{M}  z\Vert_D}{\Vert z\Vert_D},
\]
where 
    \begin{align*}
        Z&=\{z\in\R^d\mid z\perp \bf \text{ for some } \bf\in M\R^d_{\geq0}\setminus\{0\}\}\\
         &=\{z\in\R^d\mid \transpose{z}  Mu=0 \text{ for some } u\in\R^d_{\geq0}\setminus\{0\}\}\\
         &=\{z\in\R^d\mid \text{ entries of } \transpose{M}  z \text{ are not all positive or all negative}\}\\
         &=\R^d\setminus \pm (\transpose{M} )^{-1}\R^d_{>0}
    \end{align*}
    The vectors of $Z$ correspond to $2^d-2$ of the $2^d$ cones delimited by
    the hyperplanes orthogonal to the vectors of $M\E$.

    The norm $\Vert z\Vert_D$ is a piecewise linear form which is linear on
    each of the cones delimited by the hyperplanes orthogonal to the nonzero vectors
    of $\E-\E$.
    There are $d!$ such cones.
    Similarly, the norm $\Vert \transpose{M}  z\Vert_D$ is a piecewise linear form which is linear on
    every of the cones delimited by the hyperplanes orthogonal to the nonzero vectors
    of $M(\E-\E)$.

    We consider any of the subcones $\Lambda$ delimited by hyperplanes orthogonal to
    some vectors in $S$ defined in Equation \eqref{eq:hyperplanes-vectors} that
    are inside of $Z$. 
    Remark that by construction both $\Vert z\Vert_D$ and $\Vert \transpose{M}z\Vert_D$
    are linear on $\Lambda$.
    The intersection of $\Lambda$ with the euclidean sphere
    of radius~1 is compact. 
    Therefore, the maximum $m$ of the function
    $\frac{\Vert \transpose{M}  z\Vert_D}{\Vert z\Vert_D}$ restricted to
    $\Lambda \setminus \{0\}$ is attained at some point $z_0$ with $\Vert z_0
    \Vert_2 =1$:
    \[
        \frac{\Vert \transpose{M}  z_0\Vert_D}{\Vert z_0\Vert_D}= m.
    \]

    Let $L$ be a linear form of $\R^d$ such that $L(z)=\Vert \transpose{M}  z\Vert_D - m\Vert z\Vert_D$ for $z\in\Lambda$. If $L=0$, then 
    $ \frac{\Vert \transpose{M}  z\Vert_D}{\Vert z\Vert_D}= m$
    is constant on $\Lambda$, so its maximum is attained on an edge of $\Lambda$.
    Otherwise,
    the equation $L(z)=0$ defines a hyperplane $H$ containing
    the origin. 
    By definition of the maximum we have
    $L(z)\leq 0$ for any $z\in \Lambda$.
    Therefore $\Lambda$ is contained in one of the halfspaces delimited by $H$.
    The set $H\cap\Lambda$ is either an edge, or contains an edge of $\Lambda$.
    Therefore, the maximum of $\Vert \transpose{M}  z\Vert_D - m\Vert z\Vert_D$ must the attained on an edge of $\Lambda$, that is,
    at some point in $\mathcal{D}\setminus\pm (\transpose{M} )^{-1}\R^d_{>0}$.
\end{proof}

\section{Convergence in the monoid generated by $C_1$ and $C_2$}
\label{section:primitivity and convergence}

In this section, we consider the monoid of $3\times3$ matrices generated by $C_1$ and $C_2$ and we study the weak convergence of sequences in $\{C_1,C_2\}^\N$, notably using Proposition~\ref{prop:convergence-sufficient-conditions}.

\subsection{Primitiveness}

Proposition~\ref{prop:convergence-sufficient-conditions} provides sufficient conditions for the convergence when the sequence is primitive. 
Our first task is to characterize primitive sequences of matrices in $\{C_1,C_2\}^\N$.

\begin{proposition}
\label{prop:primitiveness for C adic}
A sequence $\bm = (M_n)_{n \in \N} \in \{C_1,C_2\}^\N$ is not primitive if and only if there is some integer $N \geq 0$ such that for all $i \in \N$, $M_{N+2i}=M_{N+2i+1}$.
\end{proposition}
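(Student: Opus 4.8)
The plan is to translate primitivity of $\bm=(M_n)_{n\in\N}$ into a reachability property of a small automaton. Write $M_n=C_{u_n}$ for a word $u=(u_n)_{n\in\N}\in\{1,2\}^\N$. Since $C_1$ and $C_2$ are nonnegative, no cancellation occurs in the products, so the support (positions of nonzero entries) of $M_{[r,s)}$ is the Boolean product of the supports of $C_{u_r},\dots,C_{u_{s-1}}$; in particular $M_{[r,s)}$ is positive if and only if every row covector $\transpose{e_k}M_{[r,s)}$ ($k\in\{1,2,3\}$) has full support $\{1,2,3\}$. First I would record how such a support evolves under right multiplication: a covector with support $X\subseteq\{1,2,3\}$ is sent by $C_i$ to the covector with support $\bigcup_{x\in X}\operatorname{supp}(\transpose{e_x}C_i)$, i.e.\ the union of the supports of the rows of $C_i$ indexed by $X$. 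This defines an automaton on the seven nonempty subsets of $\{1,2,3\}$ in which $\{1,2,3\}$ is absorbing (both $C_1$ and $C_2$ fix it). Because the full state is absorbing, $M_{[r,s)}$ is positive for some $s$ exactly when each singleton $\{k\}$ is eventually driven to $\{1,2,3\}$ by $u_ru_{r+1}\cdots$; hence $\bm$ is \emph{not} primitive if and only if, for some $r$, the word $u_ru_{r+1}\cdots$ never drives some singleton $\{k\}$ to the full state.

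For the implication assuming the pairing condition, suppose $u_{N+2i}=u_{N+2i+1}$ for all $i$, so the tail $u_Nu_{N+1}\cdots$ is a concatenation of the blocks $11$ and $22$. The crucial (direct) computation is $\transpose{e_2}C_1^2=\transpose{e_2}C_2^2=\transpose{e_2}$, reflecting that the letter $2$ occurs in $c_i^2(b)$ only when $b=2$. Consequently any product $P$ of the factors $C_1^2$ and $C_2^2$ satisfies $\transpose{e_2}P=\transpose{e_2}$, of support $\{2\}$, while an extra single factor gives $\transpose{e_2}PC_{u_{s-1}}=\transpose{e_2}C_{u_{s-1}}\in\{(0,0,1),(1,0,0)\}$. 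In either parity of $s-N$, row $2$ of $M_{[N,s)}$ has a zero entry, so $M_{[N,s)}$ is never positive and $\bm$ is not primitive.

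For the converse I would tabulate the automaton on the six non-full states. The outcome is that $\{1,3\}$ is never reached from a singleton, that $\{1,2\}$ and $\{2,3\}$ are traps which avoid the full state only by reading $2^\omega$ and $1^\omega$ respectively, and that the singleton transitions avoiding both the traps and the full state are exactly $\{1\}\xrightarrow{2}\{2\}$, $\{3\}\xrightarrow{1}\{2\}$, $\{2\}\xrightarrow{1}\{3\}$ and $\{2\}\xrightarrow{2}\{1\}$. Reading this off, a full-avoiding trajectory from any singleton either drops immediately into a trap and reads $1^\omega$ or $2^\omega$, or reaches $\{2\}$ within one step and is then forced to read the blocks $11$ (through $\{2\}\to\{3\}\to\{2\}$) and $22$ (through $\{2\}\to\{1\}\to\{2\}$) until it possibly escapes into a trap and reads $1^\omega$ or $2^\omega$ forever. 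In every case the tail $u_Nu_{N+1}\cdots$ is a concatenation of the blocks $11$ and $22$ for a suitable $N$, i.e.\ $u_{N+2i}=u_{N+2i+1}$ for all $i$; since non-primitivity supplies such a full-avoiding word for some $r$, the pairing condition follows. I expect the main obstacle to be exactly this last step: the exhaustive verification of the transition table and the argument that every full-avoiding trajectory eventually reads only $\{11,22\}$-blocks with the correct alignment. The symmetry $PC_1P=C_2$, where $P$ is the permutation matrix exchanging coordinates $1$ and $3$, should roughly halve this case analysis by swapping the roles of the singletons $\{1\}$ and $\{3\}$ while fixing $\{2\}$.
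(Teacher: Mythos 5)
Your proof is correct, and it takes a genuinely different route from the paper's. The paper tracks whole boolean matrices: for the implication (pairing $\Rightarrow$ not primitive) it exhausts, in a graph whose vertices are boolean matrices and whose edges are labelled $C_1^2$ and $C_2^2$, the zero patterns reachable by products of blocks, observing that none is positive; for the converse it argues contrapositively, extracting from any non-paired sequence infinitely many factors $C_1C_2^{2k+1}C_1$ or $C_2C_1^{2k+1}C_2$ and checking on further graphs that any product containing such factors eventually becomes positive. You instead run a seven-state subset automaton on individual row supports, and this reshapes both halves: the first graph collapses to the exact invariant $\transpose{e_2}C_1^2=\transpose{e_2}C_2^2=\transpose{e_2}$, so row $2$ of any product of blocks is literally $\transpose{e_2}$ (a one-line algebraic reason where the paper has a figure), and the harder implication is proved directly in the direction (not primitive $\Rightarrow$ pairing) by classifying all infinite words whose trajectory from some singleton avoids the full state: the traps $\{1,2\}$ and $\{2,3\}$ force constant tails, the only other full-avoiding moves are the two-cycles through $\{2\}$, hence the word eventually decomposes into aligned blocks $11$ and $22$. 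I verified your transition table, the trap and cycle claims, and the reduction (via absorbency of the full state and monotonicity) of non-primitivity to the existence of a single fixed row that never fills; all are correct, including the implicit re-choice of $N$ when a trajectory leaves the cycles through $12$ or $21$ before reading a constant tail. As for what each approach buys: yours is more self-contained and gives a structural explanation---invariance of the middle coordinate---of why the eventually paired sequences are exactly the non-primitive ones, the same phenomenon the paper re-derives in vector form when it notes that $C_1^{-2}$ and $C_2^{-2}$ fix the middle entry (proofs of Proposition~\ref{prop:convergence-for-C1-C2} and Lemma~\ref{lem:quand-dimQ=2}); the paper's formulation instead isolates the concrete positive factors $C_iC_j^{2k+1}C_i$, which it reuses later, e.g.\ in the proof of Theorem~\ref{thm:dim-sur-Q}.
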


\begin{proof}
Given a matrix $M$, we associate with it a boolean matrix $B(M)$ of the same size defined by 
\[
(B(M))_{ij} = 
\begin{cases}
1, &	\text{if } M_{ij}>0; \\
0, & 	\text{otherwise}.
\end{cases}
\]
Thus a matrix $M$ has only positive entries if and only if $B(M)$ contains only 1's.

Assume first that there is some integer $N \geq 0$ such that for all $i \in \N$, $M_{N+2i}=M_{N+2i+1}$.
The graph in Figure~\ref{figure: graph for non-primitive} represents the possible boolean matrices associated with products of the form $C_1^{2k_0} C_2^{2k_1} C_1^{2k_2} \cdots C_2^{2k_{2n-1}}$.
The vertices are boolean matrices and there is an edge from $B_1$ to $B_2$ with label $C_i^2$ if $B_2 = B(B_1\cdot C_i^2)$.
We immediately check that this implies that the sequence $\bm$ is not primitive.
\begin{figure}[h]
\centering
\includegraphics{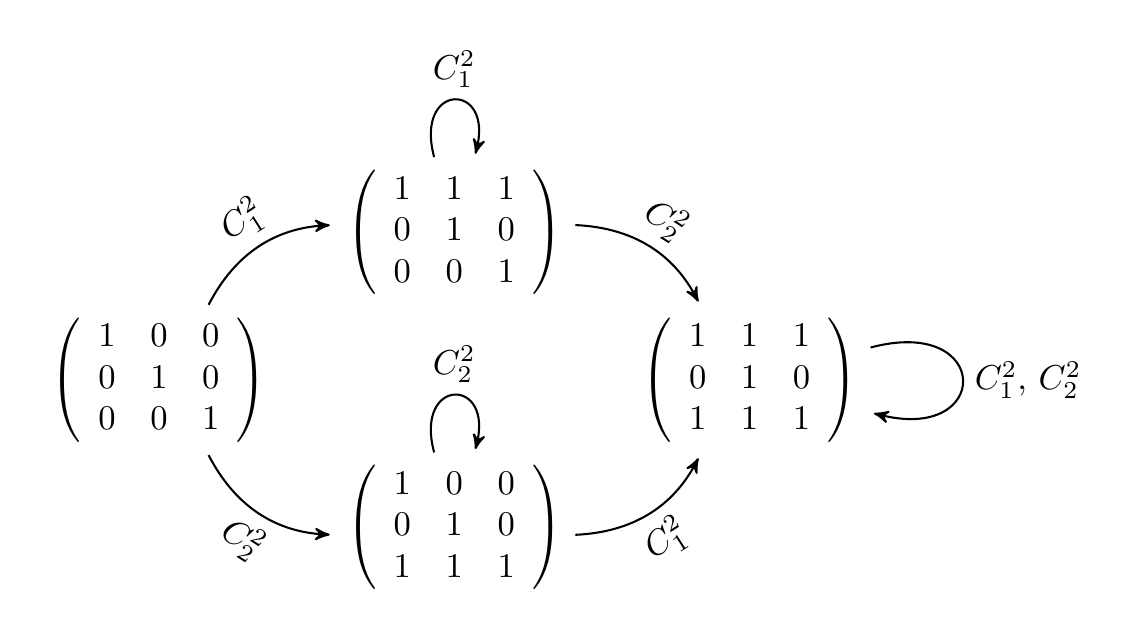}
\caption{If $M_{N+2i}=M_{N+2i+1}$ for all $i \in \N$, then $\bm$ is not primitive.}
\label{figure: graph for non-primitive}
\end{figure}

Now assume that there is no integer $N \geq 0$ such that for all $i \in \N$, $M_{N+2i}=M_{N+2i+1}$.
This implies that there are infinitely many $N \in \N$ such that the sequence $(M_n)_{n \geq N}$ starts with a product of the form $C_1C_2^{2k+1}C_1$ or $C_2C_1^{2k+1}C_2$.
Observe that we have
\[
B(C_1C_2C_1) = 
\left( \begin{array}{ccc}
1&1&1 \\0&1&1 \\1&1&0
\end{array} \right),
\quad
B(C_2C_1C_2) = 
\left( \begin{array}{ccc}
0&1&1 \\1&1&0 \\1&1&1
\end{array} \right)
\]
and, for $k \geq 1$,
\[
B(C_1C_2^{2k+1}C_1) = 
\left( \begin{array}{ccc}
1&1&1 \\1&1&1 \\1&1&0
\end{array} \right),
\quad
B(C_2C_1^{2k+1}C_2) = 
\left( \begin{array}{ccc}
0&1&1 \\1&1&1 \\1&1&1
\end{array} \right).
\]
We build graphs analogously to the one in Figure~\ref{figure: graph for non-primitive} but with starting vertex one of the 4 matrices above.
These graphs are represented in Figure~\ref{figure: graph for primitive} and we immediately check that $\bm$ is primitive.
\begin{figure}[h]
\centering
\includegraphics[width=\linewidth]{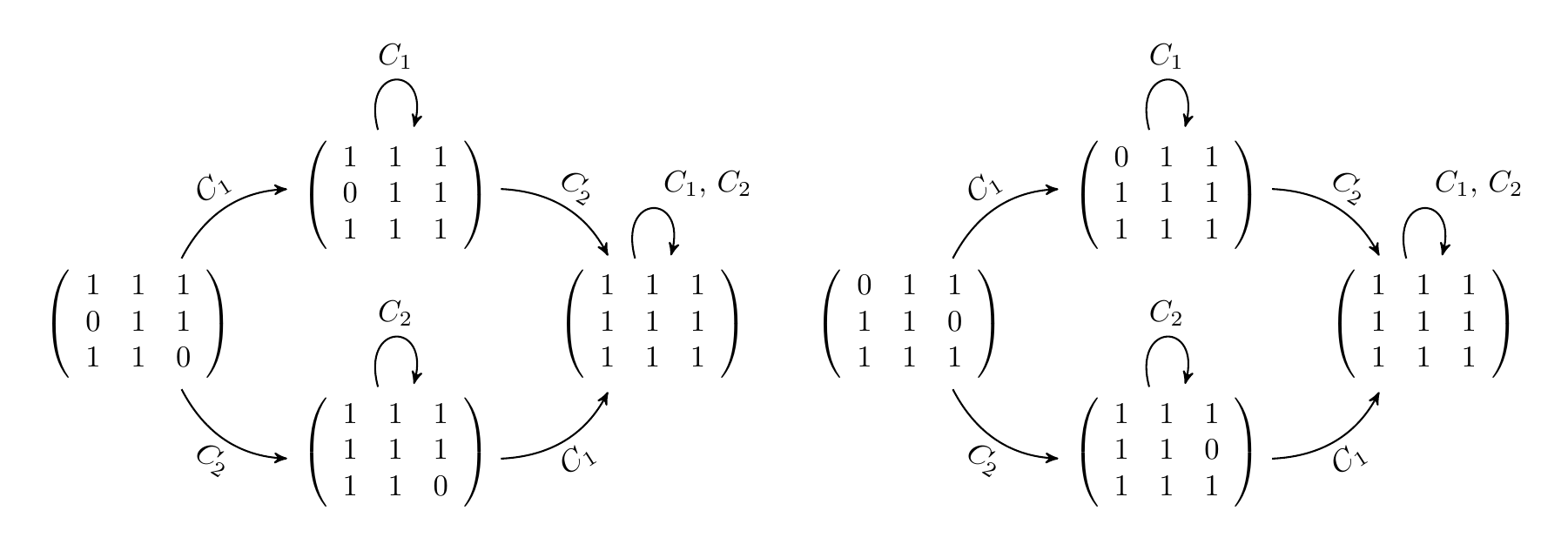}
\caption{If $\bm$ contains infinitely many occurrences of products of the form $C_1C_2^{2k+1}C_1$ or $C_2C_1^{2k+1}C_2$, then $\bm$ is primitive.}
\label{figure: graph for primitive}
\end{figure}
\end{proof}

\begin{lemma}\label{lem:primitive-implies-nice-factorization}
    Let $\bm = (M_n)_{n \in \N} \in \{C_1,C_2\}^\N$ be a sequence of matrices.
    If $\bm$ contains infinitely many occurrences of both $C_1$ and $C_2$, then
    there exists an increasing sequence of integers 
$(n_m)_{m \in \mathbb{N}}$ such that
$n_0=0$ and
\begin{equation}\label{eq:def_n_m}
M_{[n_m,n_{m+1)}} \in \{C_1 C_2^k C_1, C_2 C_1^k C_2 \mid k \in \mathbb{N}\}
\end{equation}
for all $m\in\N$.
\end{lemma}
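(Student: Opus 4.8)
The plan is to construct the sequence $(n_m)_{m\in\N}$ greedily, by induction, reading the infinite word $M_0 M_1 M_2 \cdots$ over the two-letter alphabet $\{C_1,C_2\}$ from left to right and cutting off one admissible block at a time. Setting $n_0 = 0$ is forced, so the whole content is in describing, given $n_m$, how to choose $n_{m+1}$ so that the block $M_{[n_m,n_{m+1})}$ lands in the set of Equation~\eqref{eq:def_n_m}.

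For the inductive step, I would look at the leading matrix $M_{n_m}$, which is either $C_1$ or $C_2$; I treat the first case, the second being symmetric. Let $j$ be the smallest index $>n_m$ with $M_j = C_1$. By minimality of $j$, every matrix at a position strictly between $n_m$ and $j$ equals $C_2$, so that
\[
    M_{[n_m,\,j+1)} = C_1\, C_2^{\,j-n_m-1}\, C_1 \in \{C_1 C_2^k C_1 \mid k\in\N\},
\]
and I then set $n_{m+1} = j+1$. When $M_{n_m} = C_2$ I instead take $j$ to be the next occurrence of $C_2$ and obtain in the same way a block of the form $C_2 C_1^k C_2$. Since $j > n_m$, the resulting sequence $(n_m)_{m\in\N}$ is strictly increasing, hence tends to infinity, so the half-open intervals $[n_m,n_{m+1})$ partition $\N$ and the associated blocks concatenate to the whole sequence $\bm$; this will yield the statement.

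The only point requiring care — and the single place where the hypothesis enters — is that the inductive step never gets stuck: at each stage the next occurrence of the \emph{current} leading letter must exist. This is guaranteed precisely because both $C_1$ and $C_2$ occur infinitely often, so whichever of the two $M_{n_m}$ happens to be, it does recur at a later position, and $j$ is well defined. I would also note that the degenerate case $j = n_m+1$ is allowed and simply produces the block $C_1 C_1 = C_1 C_2^0 C_1$ (resp. $C_2 C_2 = C_2 C_1^0 C_2$), which is why $k=0$, and hence $\N$ containing $0$, is used in Equation~\eqref{eq:def_n_m}. Apart from this, the argument is a routine verification, so I do not anticipate a genuine obstacle beyond bookkeeping.
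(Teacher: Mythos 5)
Your proposal is correct and is essentially identical to the paper's proof: the paper also proceeds by induction, defining $n_{m+1} = \min\{k > n_m \mid M_k = M_{n_m}\} + 1$, which is exactly your "next occurrence of the current leading letter, plus one," with existence of that occurrence guaranteed by the hypothesis that both $C_1$ and $C_2$ appear infinitely often. Your write-up just spells out the bookkeeping (the block form $C_1C_2^kC_1$ with $k\geq 0$, strict monotonicity) that the paper leaves implicit.
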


\begin{proof}
By induction, suppose that there exist $M\in\N$ and an increasing sequence of integers 
$(n_m)_{0\leq m \leq M}$ such that $n_0=0$ and satisfying Equation~\eqref{eq:def_n_m} for every $m\in\N$ such that $0\leq m< M$.
Then, the next value $n_{M+1}$ of the sequence is defined recursively as
    \[
    	n_{M+1} = 
 		\min\{k > n_M \mid M_{k} = M_{n_M}\} + 1.
    \]
The existence of $n_{M+1}$ is obvious since both $C_1$ and $C_2$ occur infinitely often in $\bm$.
\end{proof}

\subsection{The semi-norm $\|\cdot\|_D$ in the monoid generated by $C_1$, $C_2$}

The next lemma presents a nice property of the matrices of the form
$C_1C_2^{n}C_1$ or $C_2C_1^{n}C_2$ for $n\geq0$ in terms of the norm $\Vert\cdot\Vert_D$.
Note that when $d=3$, $\Vert(v_1,v_2,v_3)\Vert_D=\max\{|v_1-v_2|,|v_1-v_3|,|v_2-v_3|\}$.

\begin{lemma}\label{lem:C1C2kC1-upperbound-1}
    For every $n\in\N$, we have
    \[
        \left\Vert \transpose{(C_1C_2^{n}C_1)}  \right\Vert_D^{\R^3_{\geq 0}} = 1
        \quad
        \text{ and }
        \quad
        \left\Vert \transpose{(C_2C_1^{n}C_2)}  \right\Vert_D^{\R^3_{\geq0} } = 1.
    \]
\end{lemma}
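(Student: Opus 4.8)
The plan is to reduce the matrix semi-norm to an elementary estimate on coordinate differences, after computing the two matrix families explicitly. First I would note that, directly from the definition in Equation~\eqref{eq:semi-norm-perp-to-cone} and exactly as in the proof of Lemma~\ref{lem:sup-on-boundaries} specialized to the full orthant, for any matrix $B$,
\[
    \left\Vert \transpose{B} \right\Vert_D^{\R^3_{\geq0}}
    = \sup_{z \in Z \setminus\{0\}} \frac{\Vert \transpose{B}\, z\Vert_D}{\Vert z\Vert_D},
    \qquad
    Z = \bigcup_{\bf \in \R^3_{\geq0}\setminus\{0\}} \bf^\perp
      = \{z \in \R^3 \mid \min(z) \leq 0 \leq \max(z)\}.
\]
The last equality is the observation that $z \perp \bf$ has a nonzero solution $\bf \geq 0$ precisely when the entries of $z$ are neither all positive nor all negative. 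For $z \in Z\setminus\{0\}$ the vector $z$ is not constant, so $\Vert z\Vert_D > 0$ and the ratio is well defined.

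Next I would exploit that $\Vert\cdot\Vert_D$ is invariant under adding a constant vector (Equation~\eqref{eq:1}): consequently $\Vert \transpose{B}\, z\Vert_D = \max_{i,j}\bigl((\transpose{B})_i - (\transpose{B})_j\bigr)z$ depends only on the pairwise differences of the rows of $\transpose{B}$, i.e., on the column differences of $B$. I would therefore compute $C_1C_2^nC_1$ and $C_2C_1^nC_2$ explicitly, first establishing by a short induction the closed forms of $C_2^n$ and $C_1^n$ (distinguishing the parity of $n$). A direct multiplication then shows that the three pairwise row differences of $\transpose{(C_1C_2^nC_1)}$, viewed as linear functionals of $z = (z_1,z_2,z_3)$, are, up to sign, $z_2$, $z_3$ and $z_2 - z_3$, independently of $n$; for $\transpose{(C_2C_1^nC_2)}$ they are $z_1$, $z_2$ and $z_1 - z_2$. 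Hence
\[
    \Vert \transpose{(C_1C_2^nC_1)}\, z\Vert_D = \max\{|z_2|,\,|z_3|,\,|z_2 - z_3|\},
    \qquad
    \Vert \transpose{(C_2C_1^nC_2)}\, z\Vert_D = \max\{|z_1|,\,|z_2|,\,|z_1 - z_2|\}.
\]

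To get the upper bound $\leq 1$ I would apply Lemma~\ref{lem:normD-equiv-norminfty}(iii): for $z \in Z$ we have $\min(z) \leq 0 \leq \max(z)$, so each entry satisfies $|z_i| \leq \max(z) - \min(z) = \Vert z\Vert_D$, while $|z_i - z_j| \leq \Vert z\Vert_D$ holds by definition of $\Vert\cdot\Vert_D$. Every term in both maxima is thus at most $\Vert z\Vert_D$, giving ratio $\leq 1$. For the matching lower bound I would exhibit vectors attaining $1$: the vector $(0,1,0) \in Z$ for the first family and $(1,0,0) \in Z$ for the second each satisfy $\Vert z\Vert_D = 1$ and $\Vert \transpose{B}\, z\Vert_D = 1$. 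Combining the two bounds yields the claimed value $1$ in all cases.

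The only genuine work is bookkeeping, and I expect the parity computation to be the main obstacle: one must get the closed forms of $C_1^n$ and $C_2^n$ correct for even and odd $n$ and check that the $n$-dependence cancels in the row differences of the transposed products. Everything else reduces to the shift-invariance of $\Vert\cdot\Vert_D$ and the comparison $\Vert v\Vert_D \geq \Vert v\Vert_\infty$ on orthogonal complements, both already established.
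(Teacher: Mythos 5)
Your proposal is correct and takes essentially the same route as the paper's proof: both use the shift-invariance of $\Vert\cdot\Vert_D$ to remove the $n$-dependence (your row-difference computation is exactly the paper's subtraction of a constant from each column, carried out separately for even and odd $n$), both reduce to vectors $z$ with $\min(z)\leq 0\leq\max(z)$ and bound each of $|z_2|,|z_3|,|z_2-z_3|$ (resp.\ $|z_1|,|z_2|,|z_1-z_2|$) by $\Vert z\Vert_D$, and both finish by exhibiting a vector attaining the value $1$. The only presentational differences are that the paper treats $C_2C_1^nC_2$ by symmetry rather than explicitly, and uses the parametrized witness $(0,a,b)$ with $a\leq 0\leq b$ in place of your fixed vectors $(0,1,0)$ and $(1,0,0)$.
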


\begin{proof}
    Let $\bf\in\R^3_{\geq0}\setminus\{0\}$ be a nonzero nonnegative vector
    and let $z\in \bf^\perp$.
	We only prove it for $C_1C_2^{n}C_1$, the other one being symmetric.
    We separate the odd and even cases.
    Let $k\in\N$. Using Equation~\eqref{eq:1}, we have
    \[
        \Vert \transpose{(C_1C_2^{2k}C_1)}  z\Vert_D 
        =
        \left\Vert 
\left(\begin{array}{ccc}
1 & k & 0 \\
1 & k+1 & 0 \\
1 & k & 1
\end{array}\right)
        z\right\Vert_D 
        =
        \left\Vert 
\left(\begin{array}{ccc}
0 & 0 & 0 \\
0 & 1 & 0 \\
0 & 0 & 1
\end{array}\right)
        z\right\Vert_D 
    \]
and
    \[
        \Vert \transpose{(C_1C_2^{2k+1}C_1)}  z\Vert_D 
        =
        \left\Vert 
\left(\begin{array}{ccc}
1 & k & 1 \\
1 & k+1 & 1 \\
1 & k+1 & 0
\end{array}\right)
        z\right\Vert_D 
        =
        \left\Vert 
\left(\begin{array}{ccc}
0 & -1 & 0 \\
0 & 0 & 0 \\
0 & 0 & -1
\end{array}\right)
        z\right\Vert_D 
    \]
Since $z$ is orthogonal to $\bf \in \R^3_{\geq0}\setminus\{0\}$, we have $\min(z) \leq 0 \leq \max(z)$.
Thus, for 
\[
z_1 = 
\left(\begin{array}{ccc}
0 & 0 & 0 \\
0 & 1 & 0 \\
0 & 0 & 1
\end{array}\right)
        z
\quad \text{and} \quad 
z_2 =  
\left(\begin{array}{ccc}
0 & -1 & 0 \\
0 & 0 & 0 \\
0 & 0 & -1
\end{array}\right)
        z,
\]
we have
\begin{align*}
	\min(z) \leq \min (z_1)  \leq & 0 \leq \max(z_1) \leq \max(z)		\\	
	-\max(z) \leq \min (z_2) \leq & 0 \leq \max(z_2) \leq -\min(z),
\end{align*}
which implies that $\Vert z_1 \Vert_D, \Vert z_2 \Vert_D \leq \Vert z \Vert_D$.

This shows that for all $z\in \bf^\perp\setminus\{0\}$ and all $n \in \mathbb{N}$,
\[
    \frac{\Vert \transpose{(C_1C_2^nC_1)}  z \Vert_D}{\Vert z \Vert_D} \leq 1.
\]
Furthermore, $\bf$ being nonzero nonnegative, there exist $a,b \in \mathbb{R}$ with $a\leq 0\leq b$ such that $z=(0,a,b) \in \bf^\perp\setminus\{0\}$.
For this vector $z$, we have 
\begin{align*}
    \Vert \transpose{(C_1C_2^{2k}C_1)}  z\Vert_D &= \Vert (0,a,b)\Vert_D = \Vert z \Vert_D;		\\
    \Vert \transpose{(C_1C_2^{2k+1}C_1)}  z\Vert_D &= \Vert (-a,0,-b)\Vert_D = \Vert z\Vert_D,
\end{align*}
showing that $\sup_{z \in \bf^\perp\setminus\{0\}}\frac{\Vert \transpose{(C_1C_2^nC_1)}  z \Vert_D}{\Vert z \Vert_D} = 1$.
\end{proof}

Observe that Lemma~\ref{lem:C1C2kC1-upperbound-1} does not hold in general.
Indeed some matrices $M$ obtained as the product of matrices $C_1$ and
$C_2$ are such that $\Vert \transpose{M}  z\Vert_D > \Vert z\Vert_D$. For example,
it is the case for $M=C_1C_2$. For $z=(8, -5, 13)$ we 
compute
\[
    \Vert \transpose{(C_1C_2)}  z\Vert_D = 
    \left\Vert 
\left(\begin{array}{ccc}
1 & 0 & 1 \\
1 & 1 & 0 \\
0 & 1 & 0
\end{array}\right)\cdot
\left(\begin{array}{c}
8 \\
-5 \\
13
\end{array}\right)
    \right\Vert_D = 
    \left\Vert 
\left(\begin{array}{c}
21 \\
3 \\
-5
\end{array}\right)
    \right\Vert_D = 26
\]
which is larger than $\Vert z\Vert_D=18$. 

\subsection{Convergence in the monoid generated by $C_1$ and $C_2$}

The next result shows that any sequence $(M_n)_{n \in \N} \in \{C_1,C_2\}^\N$ is weakly convergent.

Let $\bu \in \mathbf{R}_{\geq 0}^3$ be a vector belonging to the cone $\bigcap_{n\geq0} M_{[0,n)} \R^3_{\geq0}$.
For all $k \in \N$, we define the vector
\[
	\bu^{(k)} = M_{[0,k)}^{-1} \bu.
\]   

\begin{proposition}\label{prop:convergence-for-C1-C2}
For any sequence $\bm = (M_n)_{n \in \N} \in \{C_1,C_2\}^\N$, there exists a vector $\bu\in\R^3_{\geq0} \setminus \{0\}$ satisfying
    \begin{equation}
        \bigcap_{n\geq0} M_{[0,n)} \R^3_{\geq0} = \R_{\geq0}\bu.
    \end{equation}
\end{proposition}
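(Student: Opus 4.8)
The plan is to reduce the proposition to the already-established Proposition~\ref{prop:convergence-sufficient-conditions} by verifying its two hypotheses for an arbitrary sequence $\bm=(M_n)_{n\in\N}\in\{C_1,C_2\}^\N$: primitivity, and a uniform bound $\left\Vert \transpose{M_{[0,n)}} \right\Vert_D^{M_{[0,n)}\R^3_{\geq0}}\leq K$ for infinitely many $n$. The main obstacle is that neither hypothesis holds automatically: as observed right after Lemma~\ref{lem:C1C2kC1-upperbound-1}, single products such as $C_1C_2$ can \emph{increase} the semi-norm, and by Proposition~\ref{prop:primitiveness for C adic} the sequence need not be primitive at all (for instance the eventually-constant sequences, or those of the form where $M_{N+2i}=M_{N+2i+1}$). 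So the first thing I would do is split into cases according to how often $C_1$ and $C_2$ occur in $\bm$.

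First I would handle the degenerate cases directly. If $\bm$ is eventually equal to $C_1$ (or to $C_2$), the intersection $\bigcap_n M_{[0,n)}\R^3_{\geq0}$ can be computed explicitly: it suffices to understand $\bigcap_n C_1^n\R^3_{\geq0}$ and then apply the fixed initial product $M_{[0,N)}$, which maps lines to lines. Since $C_1$ is the incidence matrix of $c_1$, the cone $\bigcap_n C_1^n\R^3_{\geq0}$ collapses to the single ray through $\transpose{(1,0,0)}$ (the Perron direction seen in Lemma~\ref{lemma:existence of limit}(2)), so the intersection is one-dimensional; the symmetric computation works for $C_2$. More generally, if only finitely many occurrences of one of the two matrices appear, a finite prefix $M_{[0,N)}$ times an eventually-constant tail reduces to this same computation, and $M_{[0,N)}$ being an invertible nonnegative matrix preserves one-dimensionality of the intersecting cone.

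The substantial case is when both $C_1$ and $C_2$ occur infinitely often. Here I would invoke Lemma~\ref{lem:primitive-implies-nice-factorization} to obtain the factorization $M_{[n_m,n_{m+1})}\in\{C_1C_2^kC_1,\;C_2C_1^kC_2\mid k\in\N\}$ with $n_0=0$. By Proposition~\ref{prop:primitiveness for C adic}, the fact that both letters occur infinitely often and that this factorization is into genuine ``alternating'' blocks makes $\bm$ primitive, so the primitivity hypothesis of Proposition~\ref{prop:convergence-sufficient-conditions} is met. For the semi-norm bound, the key input is Lemma~\ref{lem:C1C2kC1-upperbound-1}, which gives $\left\Vert \transpose{(C_1C_2^kC_1)}\right\Vert_D^{\R^3_{\geq0}}=\left\Vert \transpose{(C_2C_1^kC_2)}\right\Vert_D^{\R^3_{\geq0}}=1$ for every $k$. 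Combining this with the submultiplicativity Lemma~\ref{lem:semi-norm-on-AB} along the block factorization, and using that each block's seminorm relative to the full cone $\R^3_{\geq0}$ dominates its seminorm relative to the smaller image cone, I expect to telescope the product and obtain, at each index $n=n_m$,
\[
    \left\Vert \transpose{M_{[0,n_m)}}\right\Vert_D^{M_{[0,n_m)}\R^3_{\geq0}}
    \leq \prod_{j=0}^{m-1}\left\Vert \transpose{M_{[n_j,n_{j+1})}}\right\Vert_D^{M_{[n_j,n_{j+1})}\R^3_{\geq0}}
    \leq 1.
\]
This furnishes the bound with $K=1$ at the infinitely many indices $n_m$. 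The care required here is matching the cones in Lemma~\ref{lem:semi-norm-on-AB}: I must check that $\Vert\cdot\Vert_D$ is a norm on each relevant $\bf^\perp$, which Lemma~\ref{lem:normD-equiv-norminfty}(ii) guarantees for every $\bf\in\R^3_{\geq0}\setminus\{0\}$, and that the nesting of image cones lets me replace the block seminorm relative to its own image cone by the seminorm relative to all of $\R^3_{\geq0}$ from Lemma~\ref{lem:C1C2kC1-upperbound-1}. With both hypotheses verified, Proposition~\ref{prop:convergence-sufficient-conditions} immediately yields the one-dimensional cone and a generating vector $\bu$, completing the proof.
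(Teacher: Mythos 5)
There is a genuine gap in your case analysis. Your substantial case assumes that if both $C_1$ and $C_2$ occur infinitely often in $\bm$, then $\bm$ is primitive, and you attribute this to Proposition~\ref{prop:primitiveness for C adic}. That proposition says the opposite of what you need here: $\bm$ fails to be primitive precisely when there is some $N$ with $M_{N+2i}=M_{N+2i+1}$ for all $i\in\N$, and such sequences can perfectly well contain both matrices infinitely often --- for example $\bm = C_1C_1C_2C_2C_1C_1C_2C_2\cdots$. Note also that the block factorization of Lemma~\ref{lem:primitive-implies-nice-factorization} allows $k=0$, so blocks $C_1C_2^0C_1=C_1^2$ and $C_2C_1^0C_2=C_2^2$ are admissible; the existence of the factorization therefore does not certify primitivity. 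For these non-primitive, non-eventually-constant sequences your argument has no route to a conclusion: Proposition~\ref{prop:convergence-sufficient-conditions} explicitly requires primitivity (its proof needs the blocks $M_{[n_k,n_{k+1})}$ to be positive so that the entries of $M_{[0,n_k)}$ grow like $d^{k-1}$ and the diameter bound of Lemma~\ref{lem:diam-upper-bound} tends to zero), and your degenerate-case computation only covers eventually constant tails. Having the semi-norm bounded by $1$ along a subsequence is not, by itself, enough.

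This missing case is exactly where the paper's proof does genuinely new work: for a non-primitive sequence it picks $N$ with $M_{N+2i}=M_{N+2i+1}$ for all $i$, shows that any $\bu$ in the limit cone satisfies $u_2^{(N)}=0$ (because the middle entry of $\bu^{(N+2k)}$ is preserved by $C_1^{-2}$ and $C_2^{-2}$ while the sum of the outer entries decreases by at least $u_2^{(N)}$ at each step), and then observes that the induced action on the first and third coordinates is the additive Euclidean algorithm, whose nested cones collapse to a line. Your treatment of the eventually constant sequences and of the genuinely primitive case (block factorization, Lemma~\ref{lem:C1C2kC1-upperbound-1}, submultiplicativity via Lemma~\ref{lem:semi-norm-on-AB}, then Proposition~\ref{prop:convergence-sufficient-conditions}) matches the paper, but without an argument for the intermediate case the proof is incomplete.
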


\begin{proof}
We split the proof into two cases, depending on whether $\bm$ is primitive or not.

Assume first that $\bm = (M_n)_{n \in \N} \in \{C_1,C_2\}^\N$ is a primitive sequence.
    From Proposition~\ref{prop:primitiveness for C adic} and Lemma~\ref{lem:primitive-implies-nice-factorization}
    there exists an increasing sequence of integers
    $(n_m)_{m \in \mathbb{N}}$ such that
    $n_0=0$ and
    \begin{equation}\label{eq:def_n_m_2}
    M_{[n_m,n_{m+1)}} \in \{C_1 C_2^k C_1, C_2 C_1^k C_2 \mid k \in \mathbb{N}\}
    \end{equation}
    for all $m\in\N$.
    We compute using Equation~\eqref{eq:infty-vs-D}, Lemma~\ref{lem:semi-norm-on-AB}
    and Lemma~\ref{lem:C1C2kC1-upperbound-1} that
    \begin{align*}
    	\left\Vert \transpose{M_{[0,n_\ell)}} \right\Vert_\infty^{M_{[0,n_\ell)}\R^3_{\geq0}}
    	& \leq 
        2 \left\Vert \transpose{M_{[0,n_\ell)}} \right\Vert_D^{M_{[0,n_\ell)}\R^3_{\geq0}}	\\
               &=
        2 \left\Vert \prod_{m=0}^{\ell-1} \transpose{M_{[n_m,n_{m+1})}} 
               \right\Vert_D^{\prod_{m=0}^{\ell-1} M_{[n_m,n_{m+1})}\R^3_{\geq0}}\\
               &\leq
        2 \prod_{m=0}^{\ell-1} \left\Vert \transpose{M_{[n_m,n_{m+1})}} 
               \right\Vert_D^{M_{[n_m,n_{m+1})}\R^3_{\geq0}}\\
               &\leq
        2 \prod_{m=0}^{\ell-1} \left\Vert \transpose{M_{[n_m,n_{m+1})}} 
               \right\Vert_D^{\R^3_{\geq0}}
               =2.
    \end{align*}
    Therefore, from Proposition~\ref{prop:convergence-sufficient-conditions},
    there exists a vector $\bu\in\R^3_{\geq0} \setminus\{0\}$ satisfying
    \begin{equation}
        \bigcap_{n\geq0} M_{[0,n)} \R^3_{\geq0} =
        \bigcap_{m\geq0} M_{[0,n_m)} \R^3_{\geq0} = \R_{\geq0}\bu
    \end{equation}
    and the conclusion follows.

Assume now that $\bm$ is not primitive.
From Proposition~\ref{prop:primitiveness for C adic}, there is an integer $N \geq 0$ such that $M_{N+2i} = M_{N+2i+1}$ for all $i \in \N$.
Let us show that $\bigcap_{n\geq N} M_{[N,n)} \R^3_{\geq0}$ is one-dimensional.

Let $\bu \in \bigcap_{n\geq0} M_{[0,n)} \R^3_{\geq0}$.
    For all $k \in \N$, let us write $\bu^{(k)} = (u_1^{(k)},u_2^{(k)},u_3^{(k)})$ and let us show that $u_2^{(N)}=0$.
Indeed, for all $k \in \N$, the vector $\bu^{(N+2k+2)}$ is equal to one of the following two vectors:
    \begin{eqnarray*}
    C_1^{-2}\bu^{(N+2k)}
    &=&
    (u_1^{(N+2k)}-u_2^{(N+2k)}-u_3^{(N+2k)},u_2^{(N+2k)},u_3^{(N+2k)});
    \\
    C_2^{-2}\bu^{(N+2k)}
    &=&
    (u_1^{(N+2k)},u_2^{(N+2k)},u_3^{(N+2k)}-u_1^{(N+2k)}-u_2^{(N+2k)}).
\end{eqnarray*}
In both cases, the middle entry is unchanged.
Thus, by induction, for every $k\in\N$, we have $u^{(N+2k)}_2=u_2^{(N)}$.
Also the sum of the two other entries decreases by at least $u_2^{(N)}$. 
Therefore, for every $k\in\N$, we have 
    \[
    0\leq u^{(N+2k)}_1+u^{(N+2k)}_3 \leq u_1^{(N)}+u_3^{(N)}-ku_2^{(N)},
    \]
which implies that $u_2^{(N)}=0$.

    To end the proof, it suffices to observe that if $u_2^{(N)}=0$, the action of $C_1^{-2}$ and $C_2^{-2}$ on the vectors $\bu^{(N+k)}$, $k \in \N$, corresponds to the well-known additive Euclidean algorithm applied to the first and third components.
This shows that $\bigcap_{n\geq N} M_{[N,n)} \R^3_{\geq0}$ is one-dimensional and thus that so is $\bigcap_{n\geq 0} M_{[0,n)} \R^3_{\geq0}$.
\end{proof}

\section{Rational dependencies of the limit cone}
\label{sec:rational dependencies}

Proposition~\ref{prop:convergence-for-C1-C2} states that for any sequence of matrices $(M_n)_{n \geq 0} \in \{C_1,C_2\}^{\mathbb{N}}$, the cone $\bigcap_{n \geq 0} M_{[0,n)}\mathbb{R}^3_{\geq 0}$ converges to a half-line $\R_{\geq0}\bf$ with $\|\mathbf{f}\|_1=1$.
In this section, we give more insight on the properties of $(M_n)_{n \geq 0}$ in terms of the rational dependencies of the entries of the vector $\mathbf{f}$.
We define the dimension of a vector $\bf\in\R^3$ as the dimension of the $\Q$-vector space spanned by its entries, denoted
$\dim_\Q(\bf)$.
If $\dim_\Q(\bf)<3$, then there exists a rational dependency between its
entries.
If $\dim_\Q(\bf)=3$, we say that $\bf$ is totally irrational.
In this section, we show that $(M_n)_{n \geq 0}$ is primitive if and only if 
the vector $\mathbf{f}$ is totally irrational.
More precisely, we prove the following result.
Recall that $\Delta$ denotes the simplex 
$\{\bx\in\R^3_{\geq0} \mid \Vert\bx\Vert_1=1\}$.

\begin{theorem} \label{thm:dim-sur-Q}
Let $(M_n)_{n\in\N}\in\{C_1,C_2\}^\N$ and let $\bf\in\Delta$ such that
    $\bigcap_{n \in \mathbb{N}} M_{[0,n)}\R^3_{\geq 0} = \R_{\geq 0}\bf$.
\begin{enumerate}[\rm (i)]
    \item $\dim_\Q(\bf)=1$
        if and only if 
        $(M_n)_{n\in\N}\in\{C_1,C_2\}^*\{C_1^\N,C_2^\N\}$.
    \item $\dim_\Q(\bf)=2$
        if and only if
        $(M_n)_{n\in\N}\in \left(\{C_1,C_2\}^*\{C_1^2,C_2^2\}^\N\right) 
        \setminus \{C_1,C_2\}^*\{C_1^\N,C_2^\N\}$.
    \item $\dim_\Q(\bf)=3$ 
        if and only 
        if $(M_n)_{n\in\N}$ is primitive.
\end{enumerate}
\end{theorem}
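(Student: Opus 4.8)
The plan is to exploit the fact that the three conditions on the directive sequence in (i)--(iii) \emph{partition} $\{C_1,C_2\}^\N$. By Proposition~\ref{prop:primitiveness for C adic}, ``primitive'' is exactly ``not eventually of the form $\{C_1^2,C_2^2\}^\N$'', so the primitive sequences are the complement of the eventually paired ones; and ``eventually constant'' $\{C_1,C_2\}^*\{C_1^\N,C_2^\N\}$ is precisely the subcase of ``eventually paired'' in which only one of $C_1^2,C_2^2$ occurs from some index on. Since the values $\dim_\Q(\bf)\in\{1,2,3\}$ also partition, it suffices to prove the three forward implications (right-hand condition) $\Rightarrow (\dim_\Q(\bf)=i)$; the converses are then automatic. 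I would also record once that $\dim_\Q$ is controlled under multiplication by a finite prefix: each $C_i$ is invertible over $\Z$ (both have determinant $-1$), and if the tail direction is $\transpose{(\alpha,0,1)}$ then $M_{[0,N)}\transpose{(\alpha,0,1)}=\alpha\cdot(\text{first column of }M_{[0,N)})+(\text{third column})$, whose $\alpha$-part is nonzero because the first column of an invertible integer matrix is nonzero; hence the span of the entries keeps dimension $2$. This lets me work with the tail cone $\bigcap_{n\geq N}M_{[N,n)}\R^3_{\geq0}$ throughout.

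For (i) I would compute the stable directions of the constant tails directly: from $C_1^{-2j}\transpose{(a,b,c)}=\transpose{(a-j(b+c),b,c)}$ one obtains $\bigcap_k C_1^k\R^3_{\geq0}=\R_{\geq0}\transpose{(1,0,0)}$, and symmetrically $\bigcap_k C_2^k\R^3_{\geq0}=\R_{\geq0}\transpose{(0,0,1)}$. Thus $\bf$ is proportional to a column of the integer matrix $M_{[0,N)}$, so $\dim_\Q(\bf)=1$. For (ii) I would reuse the computation from the proof of Proposition~\ref{prop:convergence-for-C1-C2}: when the tail lies in $\{C_1^2,C_2^2\}^\N$ the middle coordinate of the primal vector vanishes and $C_1^{-2},C_2^{-2}$ act on the first and third coordinates as the subtractive Euclidean algorithm, so the tail direction is $\transpose{(\alpha,0,1)}$ where $\alpha$ has the corresponding additive continued fraction expansion. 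If both $C_1^2$ and $C_2^2$ occur infinitely often this expansion is infinite, so $\alpha$ is irrational and $\dim_\Q(\bf)=2$ by the invariance noted above.

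The hard case is (iii), which I would prove in the contrapositive form: if $\dim_\Q(\bf)\leq2$, then $\bm$ is not primitive. A rational dependency provides a primitive integer vector $\bm\in\Z^3\setminus\{0\}$ with $\transpose{\bm}\bf=0$; I then study the dual cocycle $\bm^{(n)}=\transpose{M_{[0,n)}}\,\bm$, an integer vector satisfying $\transpose{(\bm^{(n)})}\,\bf^{(n)}=0$ with $\bf^{(n)}=M_{[0,n)}^{-1}\bf\in\R^3_{\geq0}$. Arguing by contradiction, suppose $\bm$ is primitive. Then Lemma~\ref{lem:primitive-implies-nice-factorization} gives a factorization into blocks $M_{[n_m,n_{m+1})}\in\{C_1C_2^kC_1,\,C_2C_1^kC_2\}$, and since $\bm^{(n_m)}$ is orthogonal to the nonnegative vector $\bf^{(n_m)}$, Lemma~\ref{lem:C1C2kC1-upperbound-1} yields $\Vert\bm^{(n_{m+1})}\Vert_D=\Vert\transpose{M_{[n_m,n_{m+1})}}\bm^{(n_m)}\Vert_D\leq\Vert\bm^{(n_m)}\Vert_D$. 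Moreover $\bm^{(n_m)}\cdot\bf^{(n_m)}=0$ with $\bf^{(n_m)}$ nonnegative and nonzero forbids $\bm^{(n_m)}$ from being a multiple of $\transpose{(1,1,1)}$, so $\Vert\bm^{(n_m)}\Vert_D\geq1$. Hence $(\Vert\bm^{(n_m)}\Vert_D)_m$ is a non-increasing sequence of positive integers, so it is eventually constant, and from that index on equality must hold in Lemma~\ref{lem:C1C2kC1-upperbound-1} at every block.

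The main obstacle is turning this equality into a contradiction with primitivity. I would analyze the equality case of Lemma~\ref{lem:C1C2kC1-upperbound-1}: the bound $\Vert\transpose{(C_1C_2^kC_1)}z\Vert_D=\Vert z\Vert_D$ holds only when the coordinate of $z$ that the block effectively discards is not the unique extremizer of $z$, which forces a rigid sign/zero pattern on $\bm^{(n_m)}$ — I expect it to force the vanishing of the coordinate playing the role of the middle one, exactly mirroring the primal analysis in Proposition~\ref{prop:convergence-for-C1-C2}. Propagating this pattern through successive blocks should show that from some index on only one block family can occur, i.e.\ the tail lies in $\{C_1^2,C_2^2\}^\N$, contradicting primitivity by Proposition~\ref{prop:primitiveness for C adic}. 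Carrying out this sign bookkeeping carefully, tracking both block families and the odd/even parity of the exponent $k$ that governs the two subcases of Lemma~\ref{lem:C1C2kC1-upperbound-1}, is the delicate part of the argument.
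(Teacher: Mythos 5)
Your global strategy coincides with the paper's: by Proposition~\ref{prop:primitiveness for C adic} the three right-hand conditions partition $\{C_1,C_2\}^\N$, so the three forward implications suffice, and your arguments for (i) and (ii) are essentially the paper's Lemmas~\ref{lem:quand-dimQ=1} and~\ref{lem:quand-dimQ=2}. One small repair in (ii): ``the $\alpha$-part is nonzero'' does not by itself give $\dim_\Q(\bf)=2$ (the vector $(\alpha,2\alpha,0)$ has nonzero $\alpha$-part and $\dim_\Q=1$); what you need, and what is true, is that the first and third columns of the invertible integer matrix $M_{[0,N)}$ are linearly independent --- or, more simply, that $\dim_\Q$ is invariant under any matrix of $GL_3(\Q)$, since the entries of $M\bx$ and those of $\bx$ are rational linear combinations of one another.

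The genuine gap is in (iii), exactly at the step you defer with ``I expect'' and ``should show'': this is the mathematical heart of the theorem, and the mechanism you predict is not the one that works. Two ingredients are missing. First, before any equality analysis one must know that \emph{every} $\bf^{(n)}$ has all three entries positive, because your guessed criterion (the discarded coordinate is not the unique extremizer) is valid only when the dual vector $z$ satisfies $\min(z)<0<\max(z)$ strictly. For instance, with $\bf^{(n_m)}=(1,0,0)$ and $z=(0,5,3)\in(\bf^{(n_m)})^\perp$ one gets $\transpose{(C_1C_2^{2k}C_1)}z=(5k,\,5k+5,\,5k+3)$, so $\Vert\transpose{(C_1C_2^{2k}C_1)}z\Vert_D=\Vert z\Vert_D=5$ even though the discarded coordinate $0$ is the unique minimum. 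The paper secures positivity by a separate reduction: if some $\bf^{(N)}$ has a zero entry, then $f_2^{(N)}=0$ or $f_2^{(N+1)}=0$, and Lemma~\ref{lem:quand-dimQ=2}(ii) (together with Lemma~\ref{lem:quand-dimQ=1} when $\dim_\Q(\bf)=1$) forces the tail to be paired, contradicting primitivity. Second, persistent equality does not force any coordinate of the dual vector to vanish, nor does equality at single blocks confine the tail to one block family. Once positivity holds, equality at a block $C_1C_2^kC_1$ applied to $z=(a,b,c)$ means exactly that $a$ lies inclusively between $b$ and $c$, with common value $\Vert z\Vert_D=|c-b|=|b|+|c|$ (as $b,c$ straddle $0$ strictly), and this condition propagates with no obstruction through consecutive even-exponent blocks of the same type --- that is precisely the paired, non-primitive regime, so no contradiction can arise from single-block bookkeeping. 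The contradiction appears only when the equality conditions are combined across \emph{two consecutive blocks} at the configurations whose infinite occurrence is what Proposition~\ref{prop:primitiveness for C adic} guarantees for primitive sequences: a block with odd exponent, or two consecutive even blocks of alternating types. There an explicit computation (for $C_1C_2^{2k+1}C_1$ the new dual vector is $(a+kb+c,\;a+(k+1)b+c,\;a+(k+1)b)$, whose relevant differences are $|c|$ and $|b|$) shows that equality at the following block would force the norm to equal $|b|$ or $|c|$, strictly smaller than $|b|+|c|$; hence a strict decrease within two blocks, infinitely often, contradicting the eventual constancy of the integer sequence. This two-block case analysis, together with the positivity reduction, is exactly what the paper's proof supplies and what your sketch leaves unproven.
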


Note that the three conditions are mutually exclusive since 
we proved in Proposition~\ref{prop:primitiveness for C adic}
that $(M_n)_{n\in\N}$ is primitive
if and only if
$(M_n)_{n\in\N}\notin \{C_1,C_2\}^*\{C_1^2,C_2^2\}^\N$.
The proofs of the first two cases of Theorem~\ref{thm:dim-sur-Q} 
are done separately in
Lemma~\ref{lem:quand-dimQ=1} and
Lemma~\ref{lem:quand-dimQ=2}.

\begin{lemma}\label{lem:quand-dimQ=1}
    Let $(M_n)_{n\in\N}\in\{C_1,C_2\}^\N$
    and let $\bf \in\Delta$ such that
    $\bigcap_{n \in \mathbb{N}} M_{[0,n)}\R^3_{\geq 0} = \R_{\geq 0}\bf$.
    We have
\begin{enumerate}[\rm (i)]
\item $M_n=C_1$ for every $n\in\N$ if and only if 
    $\bf=(1,0,0)$, 
\item $M_n=C_2$ for every $n\in\N$ if and only if 
    $\bf=(0,0,1)$, 
\item 
    $(M_n)_{n\in\N}\in\{C_1,C_2\}^*\{C_1^\N,C_2^\N\}$
    if and only if $\dim_\Q(\bf)=1$.
\end{enumerate}
\end{lemma}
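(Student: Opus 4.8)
The plan is to establish (i) and (ii) first, since the proof of (iii) rests on them. For the forward implication of (i), observe that the first column of $C_1$ is $(1,0,0)$, so $C_1(1,0,0)=(1,0,0)$; hence $(1,0,0)$ belongs to $C_1^n\R^3_{\geq0}$ for every $n$, and since $\bigcap_n C_1^n\R^3_{\geq0}$ is one-dimensional by Proposition~\ref{prop:convergence-for-C1-C2}, it must equal $\R_{\geq0}(1,0,0)$. For the converse I would first record the elementary fact, obtained by writing a generic nonnegative combination of the columns of each matrix, that $C_1\R^3_{\geq0}=\{x_1\geq x_3\}$ and $C_2\R^3_{\geq0}=\{x_1\leq x_3\}$. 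As $(1,0,0)$ satisfies $x_1>x_3$ strictly, it does not lie in $C_2\R^3_{\geq0}$. Thus if $\bf=(1,0,0)$, then $\bf\in M_0\R^3_{\geq0}$ forces $M_0=C_1$; and since the description of $F_C$ gives $C_1^{-1}(1,0,0)=(1,0,0)$, the shifted vector $\bf^{(1)}=M_{[0,1)}^{-1}\bf$ is again proportional to $(1,0,0)$. An induction on $k$, using at each stage that $\bf^{(k)}\in M_k\R^3_{\geq0}$, then forces $M_k=C_1$ for all $k$. Part (ii) is entirely symmetric, with $(0,0,1)$ the fixed direction of $C_2$ and $(0,0,1)\notin C_1\R^3_{\geq0}$.

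For the forward implication of (iii), suppose the sequence ends in $C_1^\N$, say $M_n=C_1$ for all $n\geq N$. Applying part (i) to the shifted sequence $(M_{N+n})_n$ gives $\bf^{(N)}\in\R_{\geq0}(1,0,0)$, so by nestedness and injectivity of $M_{[0,N)}$ one has $\R_{\geq0}\bf=M_{[0,N)}\R_{\geq0}\bf^{(N)}$, which is generated by the first column of the integer matrix $M_{[0,N)}$. Hence $\bf$ is rational and $\dim_\Q(\bf)=1$; a $C_2^\N$ tail is treated symmetrically.

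The converse of (iii) is the heart of the lemma and the step I expect to be the main obstacle, as it amounts to proving that the algorithm terminates on rational input. Assume $\dim_\Q(\bf)=1$. Since $\Vert\bf\Vert_1=1$, this forces $\bf\in\Q^3$, so after clearing denominators there is $\lambda>0$ with $\bv:=\lambda\bf\in\Z^3_{\geq0}\setminus\{0\}$. Because the formula for $F_C$ shows that $C_1^{-1}$ and $C_2^{-1}$ are integer matrices, each $\bv^{(k)}=M_{[0,k)}^{-1}\bv=\lambda\bf^{(k)}$ again lies in $\Z^3_{\geq0}\setminus\{0\}$. The key computation, read directly off $F_C$, is that passing from $\bv^{(k)}$ to $\bv^{(k+1)}$ replaces $\Vert\bv^{(k)}\Vert_1$ by $\Vert\bv^{(k)}\Vert_1-v_3^{(k)}$ when $M_k=C_1$ and by $\Vert\bv^{(k)}\Vert_1-v_1^{(k)}$ when $M_k=C_2$. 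Thus $(\Vert\bv^{(k)}\Vert_1)_k$ is a non-increasing sequence of positive integers, hence constant for $k\geq K$. In that regime nothing is subtracted, so $M_k=C_1$ forces $v_3^{(k)}=0$ and $M_k=C_2$ forces $v_1^{(k)}=0$; in either case the middle entry of $\bv^{(k+1)}$ vanishes, so $v_2^{(j)}=0$ for all $j\geq K+1$. Writing $\bv^{(K+1)}=(v_1,0,v_3)$, the constant-norm condition at step $K+1$ yields either $\bv^{(K+1)}\in\R_{\geq0}(1,0,0)$ (if $M_{K+1}=C_1$) or $\bv^{(K+1)}\in\R_{\geq0}(0,0,1)$ (if $M_{K+1}=C_2$). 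Applying part (i) or (ii) to the shifted sequence from index $K+1$ then shows that $(M_n)$ is constant from $K+1$ onwards, that is, $(M_n)_{n\in\N}\in\{C_1,C_2\}^*\{C_1^\N,C_2^\N\}$, which completes the proof.
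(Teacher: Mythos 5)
Your proof is correct and takes essentially the same route as the paper: (i)--(ii) via the fixed direction of $C_i$ and induction, the forward half of (iii) by shifting the sequence, and the converse of (iii) by clearing denominators and running a descent on the $\ell_1$-norm of the nonnegative integer iterates under $C_1^{-1}$, $C_2^{-1}$. The only difference is bookkeeping: you conclude the descent by eventual constancy of a non-increasing sequence of positive integers (which forces the middle entry to vanish first and so sidesteps any case analysis of the form $(0,a,0)$), while the paper tracks explicit decreases by at least $1/b$ and enumerates the terminal forms $(a,0,0)$, $(0,a,0)$, $(0,0,a)$ before invoking (i)--(ii), exactly as you do.
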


\begin{proof}
    For every $k\in\N$,
    let $\bf^{(k)}=(f^{(k)}_1,f^{(k)}_2,f^{(k)}_3)=M_{[0,k)}^{-1}\bf$.

    (i)
    If $\bf=(1,0,0)$, then $M_0=C_1$, since $(1,0,0)\notin C_2\R_{\geq 0}^3$.
    Moreover, $\bf^{(1)}=\bf$.
    Therefore, by induction, $M_n=C_1$ for every $n\in\N$.
    Conversely,
    $\bigcap_{n \in \mathbb{N}} C_1^n \R^3_{\geq 0} = \R_{\geq 0}(1,0,0)$.

    (ii) The proof is done similarly to the proof of (i).

    (iii)
    Suppose that
    $(M_n)_{n\in\N}\in\{C_1,C_2\}^*\{C_1^\N,C_2^\N\}$.
    Then, there exists $k\in\N$ such that
    $(M_n)_{n\geq k}\in\{C_1^\N,C_2^\N\}$.
    From (i) and (ii), 
    $\bf^{(k)}\in\{(1,0,0), (0,0,1)\}$.
    Thus $\bf\in\Q^3\setminus\{0\}$ so that $\dim_\Q(\bf)=1$.
    Conversely, if 
    $\dim_\Q(\bf)=1$ then 
    $\bf\in\Q^3\setminus\{0\}$.
    We may suppose $b\bf\in\Z^3\setminus\{0\}$ for some $b\in\N_{>0}$.
    If $\min(\bf^{(k)}) \neq 0$, then $\|\bf^{(k+1)}\|_1 \leq \|\bf^{(k)}\|_1 -1/b$. 
    Thus there exists $k\in\N$
    such that $\min(\bf^{(k)})=0$.
Then, if $\bf^{(k')}$ is not of the form $(0,0,a)$, $(0,a,0)$ or $(a,0,0)$ for some $a$, then $\|\bf^{(k'+2)}\|_1 \leq \|\bf^{(k')}\|_1 -1/b$.    
    Thus there exists $k'\in\N$
    such that $\bf^{(k')} \in \{(a,0,0),(0,a,0),(0,0,a)\}$ for some $a>0$.
    If $\bf^{(k')} = (0,a,0)$, then $\bf^{(k'+1)} \in \{(a,0,0),(0,0,a)\}$.
    Like for the cases (i) and (ii), we deduce that $(M_n)_{n \geq k'+1}$ is in $\{C_1^\N,C_2^\N\}$, which ends the proof.
\end{proof}

\begin{lemma}\label{lem:quand-dimQ=2}
    Let $(M_n)_{n\in\N}\in\{C_1,C_2\}^\N$
    and let $\bf=(f_1,f_2,f_3)\in\Delta$ such that
    $\bigcap_{n \in \mathbb{N}} M_{[0,n)}\R^3_{\geq 0} = \R_{\geq 0}\bf$.
    We have
\begin{enumerate}[\rm (i)]
\item if $M_{2n}=M_{2n+1}$ for every $n\in\N$
    then $f_2=0$ and $\dim_\Q(\bf)\leq 2$,
\item 
    if $\dim_\Q(\bf)=2$ and $f_2=0$,
    then $M_{2n}=M_{2n+1}$ for every $n\in\N$.
\end{enumerate}
\end{lemma}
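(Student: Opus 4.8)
The plan is to analyze both implications through the renormalized orbit $\bf^{(k)} = M_{[0,k)}^{-1}\bf$ introduced just before Proposition~\ref{prop:convergence-for-C1-C2}. Since $\bf$ lies in $\bigcap_{n\geq 0} M_{[0,n)}\R^3_{\geq 0}$, each $\bf^{(k)}$ is a nonnegative vector, and moreover $\bf^{(k)} \in M_k\R^3_{\geq 0} = \Lambda_{i_k}$ whenever $M_k = C_{i_k}$, because $\bf \in M_{[0,k+1)}\R^3_{\geq 0}$. I would record at the outset the explicit actions coming from $F_C$, namely $C_1^{-1}(y_1,y_2,y_3) = (y_1-y_3,y_3,y_2)$ on $\Lambda_1$ and $C_2^{-1}(y_1,y_2,y_3) = (y_2,y_1,y_3-y_1)$ on $\Lambda_2$, together with their squares $C_1^{-2}(y_1,y_2,y_3) = (y_1-y_2-y_3,y_2,y_3)$ and $C_2^{-2}(y_1,y_2,y_3) = (y_1,y_2,y_3-y_1-y_2)$ already used in the proof of Proposition~\ref{prop:convergence-for-C1-C2}.

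For (i), the hypothesis $M_{2n}=M_{2n+1}$ means that $\bf^{(2n+2)}$ is obtained from $\bf^{(2n)}$ by applying $C_1^{-2}$ or $C_2^{-2}$. In both cases the middle entry is left unchanged, so $f^{(2n)}_2 = f_2$ for all $n$, while the sum of the first and third entries drops by at least $f_2$ at each double step; hence $0 \leq f^{(2n)}_1 + f^{(2n)}_3 \leq (f_1+f_3) - n f_2$ for all $n$, which forces $f_2=0$. This is precisely the computation performed in the non-primitive case of Proposition~\ref{prop:convergence-for-C1-C2} specialized to $N=0$, so I would simply cite it. Once $f_2=0$, the normalization $\Vert\bf\Vert_1=1$ gives $\bf=(f_1,0,1-f_1)$, whose entries span the $\Q$-vector space $\mathrm{span}_\Q\{1,f_1\}$; therefore $\dim_\Q(\bf)\leq 2$.

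For (ii), I would first note that $f_2=0$ together with $\dim_\Q(\bf)=2$ forces $f_1,f_3>0$ (otherwise $\bf\in\{(1,0,0),(0,0,1)\}$ and $\dim_\Q(\bf)=1$ by Lemma~\ref{lem:quand-dimQ=1}) and that $f_1/f_3$ is irrational, so in particular $f_1\neq f_3$. The heart of the argument is an induction showing that $\bf^{(2k)}=(a_k,0,b_k)$ with $a_k,b_k>0$ and $a_k/b_k$ irrational, and that each pair $(M_{2k},M_{2k+1})$ is forced to be matched. Starting from such a $\bf^{(2k)}$ with, say, $a_k>b_k$, the strict inequality places $\bf^{(2k)}$ in the interior of $\Lambda_1$, so $M_{2k}=C_1$; then $\bf^{(2k+1)}=C_1^{-1}\bf^{(2k)}=(a_k-b_k,b_k,0)$ has first entry $a_k-b_k>0$ and third entry $0$, again in the interior of $\Lambda_1$, so $M_{2k+1}=C_1$; finally $\bf^{(2k+2)}=C_1^{-1}\bf^{(2k+1)}=(a_k-b_k,0,b_k)$ has the required form with $(a_{k+1},b_{k+1})=(a_k-b_k,b_k)$. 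The case $a_k<b_k$ is symmetric, giving $M_{2k}=M_{2k+1}=C_2$ and $(a_{k+1},b_{k+1})=(a_k,b_k-a_k)$.

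The point that makes this induction run forever is that the even-step dynamics is exactly the subtractive Euclidean (continued fraction) algorithm on the pair $(a_k,b_k)$, which preserves the irrationality of the ratio and therefore never reaches $a_k=b_k$ nor lets either coordinate vanish. Consequently no $\bf^{(2k)}$ ever lands on the boundary $\{x_1=x_3\}$, where the choice of matrix would fail to be forced, and the matched-pair conclusion $M_{2k}=M_{2k+1}$ holds for every $k$. I expect this irrationality bookkeeping — guaranteeing that the orbit stays inside the open regions $\Lambda_1,\Lambda_2$ so that the given directive sequence is uniquely pinned down — to be the only genuinely delicate point; everything else reduces to the explicit $3\times 3$ linear algebra recorded in the first paragraph.
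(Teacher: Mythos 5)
Your proof is correct and follows essentially the same route as the paper: both arguments track the renormalized vectors $\bf^{(k)} = M_{[0,k)}^{-1}\bf$, maintain the invariant $f_2^{(2k)} = 0$ by induction, and use the hypothesis $\dim_\Q(\bf) = 2$ to exclude the degenerate situation $f_1^{(2k)} = f_3^{(2k)}$. The only cosmetic difference is that the paper rules out a mixed pair $M_{2k}M_{2k+1} \in \{C_1C_2,\, C_2C_1\}$ by deriving the contradiction $\bf^{(2k+1)} = (0,\alpha,0)$ (hence $\dim_\Q(\bf)=1$), whereas you force each matrix individually from the strict inequality $f_1^{(2k)} \neq f_3^{(2k)}$ guaranteed by irrationality --- the same computation read contrapositively.
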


\begin{proof}
    For every $k\in\N$,
    let $\bf^{(k)}=(f^{(k)}_1,f^{(k)}_2,f^{(k)}_3)=M_{[0,k)}^{-1}\bf$.
    The proof of the first item uses the same arguments as in the proof of Proposition~\ref{prop:convergence-for-C1-C2}.

    (i) 
Note that,
for every $k\in\N$,
    $\bf^{(2k+2)}$ is equal to one of the following two vectors:
    \begin{align}
\label{eq-f2k+2-first}
    C_1^{-2}\bf^{(2k)}&=(f^{(2k)}_1-f^{(2k)}_2-f^{(2k)}_3,f^{(2k)}_2,f^{(2k)}_3) 
        \text{ or }
        \\
\label{eq-f2k+2-second}
    C_2^{-2}\bf^{(2k)}&=(f^{(2k)}_1,f^{(2k)}_2,f^{(2k)}_3-f^{(2k)}_1-f^{(2k)}_2).
\end{align}
In both cases, the middle entry is unchanged.
Thus, by induction, for every $k\in\N$, we have $f^{(2k)}_2=f_2$.
Also the sum of the two other entries decreases by at least $f_2$. 
Therefore, for every $k\in\N$, we have 
    \[
    0\leq f^{(2k)}_1+f^{(2k)}_3 \leq f_1+f_3-kf_2
    \]
which implies that $f_2=0$.

    (ii)
    We do the proof by induction.
    Suppose that $f_2^{(2k)}=0$.
    We have that $\bf^{(2k)}\in M_{2k}M_{2k+1}\R^3_{\geq 0}$.
    If $M_{2k}M_{2k+1}=C_1C_2$ and $\bf^{(2k+2)}=(\alpha,\beta,\gamma)$ for some $\alpha,\beta,\gamma\geq 0$, 
    then $\bf^{(2k)}=(\alpha+\beta,\beta+\gamma,\alpha)$ 
    so that $\beta=\gamma=0$ and $f_1^{(2k)}=\alpha=f_3^{(2k)}$.
	Thus we have $\bf^{(2k+1)} = (0,\alpha,0)$, so that $\dim_\Q(\bf) = \dim_\Q(\bf^{(2k+1)}) = 1$, which is a contradiction.
	We similarly reach a contradiction when supposing $M_{2k}M_{2k+1}=C_2C_1$.
    We thus obtain that
    $M_{2k}M_{2k+1}=C_1C_1$ or $M_{2k}M_{2k+1}=C_2C_2$.
    As in both cases we get, using \eqref{eq-f2k+2-first} or \eqref{eq-f2k+2-second}, $f^{(2k+2)}_2=0$, this ends the proof.
\end{proof}

We now give the description of primitive sequences.

\begin{proof}[\Proofof Theorem~\ref{thm:dim-sur-Q}]
    Statement (i) 
    follows from
    Lemma~\ref{lem:quand-dimQ=1}.
Statement (ii) follows from Statements (i) and (iii) and from Proposition~\ref{prop:primitiveness for C adic}.
    Let us thus show Statement (iii).

    For every $k\in\N$,
    let $\bf^{(k)}=(f^{(k)}_1,f^{(k)}_2,f^{(k)}_3)=M_{[0,k)}^{-1}\bf$.
Let us assume that $\dim_\Q(\bf)=3$.
If $(M_n)_{n \in \N}$ is not primitive, then by Proposition~\ref{prop:primitiveness for C adic} and Lemma~\ref{lem:quand-dimQ=2}(i), there exists $N\in \N$ such that $\dim_\Q(\bf^{(N)})\leq 2$.
As $\bf = M_{[0,N)}\bf^{(N)}$, we obtain $\dim_\Q(\bf)\leq 2$, which contradicts our hypothesis.

Let us now assume that $(M_n)_{n \in \N}$ is primitive.
Using Proposition~\ref{prop:primitiveness for C adic} and Lemma~\ref{lem:quand-dimQ=1}(iii), we cannot have $\dim_\Q(\bf)=1$.
    So we assume $\dim_\Q(\bf)=2$.
    Observe first that, if $\bf^{(N)}$ has a zero entry
for some~$N \in \N$, then either $f_2^{(N)}=0$ or $f_2^{(N+1)}=0$.
Then, since $\dim_\Q(\bf^{(N)}) = \dim_\Q(\bf^{(N+1)}) = \dim_\Q(\bf)$, this would imply by Proposition~\ref{prop:primitiveness for C adic} and Lemma~\ref{lem:quand-dimQ=2}(ii) that $(M_n)_{n \in \N}$ is not primitive, which is a contradiction.
    From now on we assume that all entries of $\bf^{(n)}$
are positive for all $n$, and we show that we again reach a contradiction.

Since $\dim_\Q(\bf)<3$, there exists some integer vector $\bv \in \bf^\perp\setminus\{0\}$.
The sequence $(M_n)_{n \in \N}$ can be factored over
$\{C_1C_2^kC_1, C_2C_1^kC_2 \mid k \in \N \}$ by Lemma~\ref{lem:primitive-implies-nice-factorization}.
Let us consider the sequence $(n_m)$ such that $n_0=0$ and $M_{n_m}\ldots M_{n_{m+1}-1}$
is in this set for all $m\in\mathbb N$.
Since $C_1$ and $C_2$ are unimodular, for all $m \in \mathbb N$, $\transpose{M_{[0,n_m)}} \bv$ is an integer vector and so $(\Vert \transpose{M_{[0,n_m)}} \bv \Vert_D)_{m \in \mathbb N}$ is a nonnegative integer sequence.
In what follows, we reach a contradiction by showing that $(\Vert \transpose{M_{[0,n_m)}} \bv \Vert_D)_{m \in \mathbb N}$ is non-increasing and decreases infinitely often.

The proof that $(\Vert \transpose{M_{[0,n_m)}} \bv \Vert_D)_{m \in \mathbb N}$ is non-increasing is already done in the first part of the proof of Lemma~\ref{lem:C1C2kC1-upperbound-1}, observing that for all $m$, $\bf^{(n_m)}$ has positive entries and $\transpose{M_{[0,n_m)}} \bv \in (\bf^{(n_m)})^\perp \setminus\{0\}$.
Furthermore, if $\transpose{M_{[0,n_m)}} \bv = (a,b,c)$, then $\Vert \transpose{M_{[0,n_m)}} \bv \Vert_D = \max(|b-a|, |c-b|, |c-a|)$ and, using Equation~\eqref{eq:1} and considering separately the even and odd cases (like in the proof of Lemma~\ref{lem:C1C2kC1-upperbound-1}), we get
\[	\Vert \transpose{M_{[0,n_{m+1})}} \bv \Vert_D
	=
	\begin{cases}
		\max\{|b|,|c|,|c-b|\}, 
			& \text{if } M_{[n_m,n_{m+1})} \in \{C_1 C_2^k C_1 \mid k \in \mathbb N\}; \\
		\max\{|a|,|b|,|a-b|\}, 
			& \text{if } M_{[n_m,n_{m+1})} \in \{C_2 C_1^k C_2 \mid k \in \mathbb N\}.
	\end{cases}
\]
Since $\bf^{(n_m)}$ has positive entries and $\transpose{M_{[0,n_m)}} \bv \in (\bf^{(n_m)})^\perp \setminus\{0\}$, we have $\min \{a,b,c\}<0<\max\{a,b,c\}$, hence
\begin{itemize}
\item
if $M_{[n_m,n_{m+1})} \in \{C_1 C_2^k C_1 \mid k \in \mathbb N\}$, 
$\Vert \transpose{M_{[0,n_{m+1})}} \bv \Vert_D = \Vert \transpose{M_{[0,n_{m})}} \bv \Vert_D$ 
if and only if 
$a$ is (inclusively) between $b$ and $c$, in which case $\Vert \transpose{M_{[0,n_{m+1})}} \bv \Vert_D = \Vert \transpose{M_{[0,n_{m})}} \bv \Vert_D = |c-b|$;
\item
if $M_{[n_m,n_{m+1})} \in \{C_2 C_1^k C_2 \mid k \in \mathbb N\}$, 
$\Vert \transpose{M_{[0,n_{m+1})}} \bv \Vert_D = \Vert \transpose{M_{[0,n_{m})}} \bv \Vert_D$ 
if and only if 
$c$ is (inclusively) between $a$ and $b$, in which case $\Vert \transpose{M_{[0,n_{m+1})}} \bv \Vert_D = \Vert \transpose{M_{[0,n_{m})}} \bv \Vert_D = |a-b|$.
\end{itemize}

Using Proposition~\ref{prop:primitiveness for C adic}, there are infinitely many integers $m$ such that either $M_{[n_m,n_{m+1})}$ is in $\{C_i C_j^{2k+1}C_i \mid k \in \mathbb N, \{i,j\} = \{1,2\}\}$, or $M_{[n_m,n_{m+1})}M_{[n_{m+1},n_{m+2})}$ is in $\{C_i C_j^{2k} C_i C_j C_i^{2\ell} C_j \mid k,\ell \in \mathbb N,\{i,j\} = \{1,2\}\}$.
We show that for any such $m$, we have $\Vert \transpose{M_{[0,n_{m+2})}} \bv \Vert_D < \Vert \transpose{M_{[0,n_m)}} \bv \Vert_D$.
Let us write $\transpose{M_{[0,n_m)}} \bv = (a,b,c)$.

Assume first that $M_{[n_m,n_{m+1})} = C_1C_2^{2k+1}C_1$ for some $k \in \mathbb N$; the case $M_{[n_m,n_{m+1})} = C_2C_1^{2k+1}C_2$ is symmetric.
Writing 
\[
\transpose{M_{[0,n_{m+1})}} \bv 
=
\left( 
\begin{array}{c}
a' \\ b' \\ c'
\end{array} 
\right) 
=
\left( 
\begin{array}{l}
a+kb+c \\ a+(k+1)b+c \\ a+(k+1)b
\end{array} 
\right),
\]
we have 
\begin{align*}
& \Vert \transpose{M_{[0,n_{m})}} \bv \Vert_D
= \Vert \transpose{M_{[0,n_{m+1})}} \bv \Vert_D 
= \Vert \transpose{M_{[0,n_{m+2})}} \bv \Vert_D \\
\Leftrightarrow
& 
\begin{cases}
\Vert \transpose{M_{[0,n_{m})}} \bv \Vert_D
 = \Vert \transpose{M_{[0,n_{m+1})}} \bv \Vert_D
= |c-b|	\\
\Vert \transpose{M_{[0,n_{m+1})}} \bv \Vert_D 
 = \Vert \transpose{M_{[0,n_{m+2})}} \bv \Vert_D
\in \{|c'-b'|,|a'-b'|\}
\end{cases}
\end{align*}
Observing that $|c'-b'|=|c|$ and $|a'-b'|=|b|$, we deduce that
$\Vert \transpose{M_{[0,n_{m+2})}} \bv \Vert_D
<
 \Vert \transpose{M_{[0,n_{m})}} \bv \Vert_D$.

Now assume that $M_{[n_m,n_{m+1})} = C_1C_2^{2k}C_1$ and that $M_{[n_m,n_{m+1})} = C_2C_1^{2\ell}C_2$; the case $M_{[n_m,n_{m+1})} = C_2C_1^{2k}C_2$ and $M_{[n_m,n_{m+1})} = C_1C_2^{2\ell}C_1$ is symmetric.

Writing 
\[
\transpose{M_{[0,n_{m+1})}} \bv 
=
\left( 
\begin{array}{c}
a' \\ b' \\ c'
\end{array} 
\right) 
=
\left( 
\begin{array}{l}
a+kb \\ a+(k+1) b \\ a+k b + c
\end{array} 
\right),
\]
we have 
\begin{align*}
& \Vert \transpose{M_{[0,n_{m})}} \bv \Vert_D
= \Vert \transpose{M_{[0,n_{m+1})}} \bv \Vert_D 
= \Vert \transpose{M_{[0,n_{m+2})}} \bv \Vert_D \\
\Leftrightarrow
& 
\begin{cases}
\Vert \transpose{M_{[0,n_{m})}} \bv \Vert_D
 = \Vert \transpose{M_{[0,n_{m+1})}} \bv \Vert_D
= |c-b|	\\
\Vert \transpose{M_{[0,n_{m+1})}} \bv \Vert_D 
 = \Vert \transpose{M_{[0,n_{m+2})}} \bv \Vert_D
= |a'-b'|
\end{cases}
\end{align*}
Observing that $|a'-b'|=|b|$, we deduce that
$\Vert \transpose{M_{[0,n_{m+2})}} \bv \Vert_D
<
 \Vert \transpose{M_{[0,n_{m})}} \bv \Vert_D$. 
 \end{proof}

\begin{corollary} \label{cor:nonuniqueimpliesnonprimitive}
Let $\bf \in \Delta$.
If there exist two different sequences $(M_n)_{n \in \N}, (M_n')_{n \in \N} \in \{C_1,C_2\}^\N$ such that 
\[
\bigcap_{n \in \mathbb{N}} M_{[0,n)}\R^3_{\geq 0} 
= \R_{\geq 0}\bf
= \bigcap_{n \in \mathbb{N}} M_{[0,n)}'\R^3_{\geq 0}, 
\]
then $\dim_\Q(\bf)\leq 2$ and both sequences $(M_n)_{n \in \N}, (M_n')_{n \in \N}$ are not primitive.
\end{corollary}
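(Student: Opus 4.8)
The plan is to reduce both assertions to a single uniqueness statement: \emph{if $\dim_\Q(\bf)=3$, then there is exactly one sequence $(M_n)_{n\in\N}\in\{C_1,C_2\}^\N$ with $\bigcap_{n} M_{[0,n)}\R^3_{\geq 0}=\R_{\geq 0}\bf$.} Granting this, the corollary follows at once. Indeed, by Theorem~\ref{thm:dim-sur-Q}(iii) the primitivity of a sequence producing $\bf$ depends only on whether $\dim_\Q(\bf)=3$, so the two given sequences are simultaneously primitive or simultaneously non-primitive. If they were primitive we would have $\dim_\Q(\bf)=3$, and the uniqueness statement would force $(M_n)_{n\in\N}=(M_n')_{n\in\N}$, contradicting that they are different. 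Hence $\dim_\Q(\bf)\leq 2$ and, again by Theorem~\ref{thm:dim-sur-Q}(iii), both sequences are non-primitive.

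To prove the uniqueness statement, I would first identify the two one-step cones. A direct computation of the columns of $C_1$ and $C_2$ gives
\[
    C_1\R^3_{\geq 0} = \{\bx\in\R^3_{\geq 0}\mid x_1\geq x_3\}
    \quad\text{and}\quad
    C_2\R^3_{\geq 0} = \{\bx\in\R^3_{\geq 0}\mid x_1\leq x_3\},
\]
so that $C_1\R^3_{\geq 0}\cap C_2\R^3_{\geq 0}=\{\bx\in\R^3_{\geq 0}\mid x_1=x_3\}$. Since $\R_{\geq 0}\bf\subseteq M_{[0,1)}\R^3_{\geq 0}=M_0\R^3_{\geq 0}$, the vector $\bf$ lies in the cone $M_0\R^3_{\geq 0}$. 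As $\dim_\Q(\bf)=3$ forces $f_1\neq f_3$ (otherwise $(1,0,-1)$ would be a nontrivial rational relation among the entries of $\bf$), the vector $\bf$ belongs to exactly one of the two cones, which determines $M_0$ uniquely from $\bf$.

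The remaining step is an induction driven by the algorithm $F_C$. Setting $\bf^{(1)}=M_0^{-1}\bf$, we have $\bigcap_{n\geq 1}M_{[1,n)}\R^3_{\geq 0}=M_0^{-1}\bigcap_{n\geq 0}M_{[0,n)}\R^3_{\geq 0}=\R_{\geq 0}\bf^{(1)}$, and since $M_0$ is invertible over $\Q$ we have $\dim_\Q(\bf^{(1)})=\dim_\Q(\bf)=3$. Thus $\bf^{(1)}$ is again totally irrational and the tail $(M_n)_{n\geq 1}$ satisfies the same hypotheses for $\bf^{(1)}$. Applying the previous paragraph repeatedly (formally, by induction on $n$, using $\bf^{(n)}=M_{[0,n)}^{-1}\bf$ which remains totally irrational at each stage), each $M_n$ is uniquely determined, so the whole sequence $(M_n)_{n\in\N}$ is unique. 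This completes the uniqueness statement, and hence the corollary.

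I do not expect a genuine obstacle here: the only point requiring care is the exact description of the boundary $C_1\R^3_{\geq 0}\cap C_2\R^3_{\geq 0}$ and the observation that total irrationality keeps $\bf$ (and each iterate $\bf^{(n)}$) off this boundary, so that the choice of $M_n$ is forced at every stage.
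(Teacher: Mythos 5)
Your proposal is correct and rests on exactly the same key facts as the paper's proof: the cones $C_1\R^3_{\geq 0}$ and $C_2\R^3_{\geq 0}$ meet only in $\{x_1=x_3\}$, so wherever two directive sequences for $\bf$ could diverge the common iterate $\bf^{(n)}=M_{[0,n)}^{-1}\bf$ would satisfy the rational dependency $f_1^{(n)}=f_3^{(n)}$, and Theorem~\ref{thm:dim-sur-Q}(iii) converts between total irrationality and primitivity. The paper argues directly (take the first index $n$ with $M_n\neq M_n'$, note $\bf^{(n)}$ lies in both cones, conclude $\dim_\Q(\bf)\leq 2$, then apply Theorem~\ref{thm:dim-sur-Q}), while you package the identical computation contrapositively as a uniqueness-by-induction statement for totally irrational $\bf$; the difference is organizational, not mathematical.
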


\begin{proof}
    For every $k\in\N$,
    let 
    \[
    \bf^{(k)}=(f^{(k)}_1,f^{(k)}_2,f^{(k)}_3)=M_{[0,k)}^{-1}\bf 
    \qquad \text{and} \qquad
    \bf'^{(k)}=(f'^{(k)}_1,f'^{(k)}_2,f'^{(k)}_3)=M_{[0,k)}'^{-1}\bf.
    \]
    There exists $n\in\N$ such that
    $M_k=M'_k$ for every $k\in\N$ with $0\leq k<n$ and
    $M_n\neq M'_n$.
    Since $\bf^{(n)} \in M_n \R^3_{\geq 0}$, $\bf'^{(n)} \in M'_n \R^3_{\geq 0}$ and $\bf^{(n)} = \bf'^{(n)}$, this implies that $f^{(n)}_1=f^{(n)}_3$ so that $\dim_\Q(\bf)\leq 2$.
From Theorem~\ref{thm:dim-sur-Q}
$(M_n)_{n\in\N}$ and $(M'_n)_{n\in\N}$ are not primitive.
\end{proof}

As a consequence, any primitive sequence of matrices
$(M_n)_{n\in\N}\in\{C_1,C_2\}^\N$ can be recovered from the
vector it contracts the positive cone to
by applying the algorithm $F_C$.

\begin{corollary}
\label{cor:l'algo rend la meme suite directrice}
Let $(M_n)_{n\in\N}\in\{C_1,C_2\}^\N$ be a primitive directive sequence.
Let $\bf=(f_1,f_2,f_3)\in\Delta$ be such that
    $\bigcap_{n \in \mathbb{N}} M_{[0,n)}\R^3_{\geq 0} = \R_{\geq 0}\bf$.
    Then for all $n \in \N$, $M_n = \Msf(F_C^n(\bf))$.
\end{corollary}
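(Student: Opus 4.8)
The plan is to show, for every $n$, that $\Msf(\bf^{(n)}) = M_n$ where $\bf^{(n)} = M_{[0,n)}^{-1}\bf$; the statement then follows by a short induction. First I would record two elementary facts. A direct computation gives, for $\by = (y_1,y_2,y_3) \in \R^3_{\geq 0}$, that $C_1\by = (y_1+y_2, y_3, y_2)$ and $C_2\by = (y_2, y_1, y_2+y_3)$, so that $C_1\R^3_{\geq 0} \subseteq \{x_1 \geq x_3\} = \Lambda_1$ and $C_2\R^3_{\geq 0} \subseteq \{x_1 \leq x_3\}$. Next, since $\R_{\geq 0}\bf = \bigcap_{m} M_{[0,m)}\R^3_{\geq 0} \subseteq M_{[0,n+1)}\R^3_{\geq 0} = M_{[0,n)}M_n\R^3_{\geq 0}$, applying the invertible matrix $M_{[0,n)}^{-1}$ yields $\bf^{(n)} \in M_n \R^3_{\geq 0} \setminus \{0\}$, and in particular $\bf^{(n)} \geq 0$ so that $\Msf(\bf^{(n)})$ is defined.

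With these facts the case $M_n = C_1$ is immediate: then $\bf^{(n)} \in C_1\R^3_{\geq 0} \subseteq \Lambda_1$, hence $\Msf(\bf^{(n)}) = C_1 = M_n$. The delicate case is $M_n = C_2$, and this is the main obstacle. The inclusion only gives $f_1^{(n)} \leq f_3^{(n)}$, whereas membership in $\Lambda_2 = \{x_1 < x_3\}$ requires the strict inequality; ruling out the boundary equality $f_1^{(n)} = f_3^{(n)}$ is exactly where primitivity must enter, since on that boundary the cones $C_1\R^3_{\geq 0}$ and $C_2\R^3_{\geq 0}$ overlap and $\Msf$ would return $C_1$. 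To exclude it, I would argue as follows. Since $(M_n)_{n\in\N}$ is primitive, Theorem~\ref{thm:dim-sur-Q}(iii) gives $\dim_\Q(\bf) = 3$. Because $C_1$ and $C_2$ are unimodular (each has determinant $-1$), the matrix $M_{[0,n)}^{-1}$ has integer entries, so the entries of $\bf^{(n)} = M_{[0,n)}^{-1}\bf$ and those of $\bf = M_{[0,n)}\bf^{(n)}$ span the same $\Q$-vector space; hence $\dim_\Q(\bf^{(n)}) = \dim_\Q(\bf) = 3$. Thus the entries of $\bf^{(n)}$ admit no nontrivial rational relation, so $f_1^{(n)} \neq f_3^{(n)}$, which together with $f_1^{(n)} \leq f_3^{(n)}$ forces $f_1^{(n)} < f_3^{(n)}$, i.e. $\bf^{(n)} \in \Lambda_2$ and $\Msf(\bf^{(n)}) = C_2 = M_n$.

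Finally I would deduce the result by induction on $n$, showing $F_C^n(\bf) = \bf^{(n)}$. For $n = 0$ this is trivial, and assuming $F_C^n(\bf) = \bf^{(n)}$, the definition $F_C(\bx) = \Msf(\bx)^{-1}\bx$ together with $\Msf(\bf^{(n)}) = M_n$ gives $F_C^{n+1}(\bf) = M_n^{-1}\bf^{(n)} = M_{[0,n+1)}^{-1}\bf = \bf^{(n+1)}$. Consequently $M_n = \Msf(\bf^{(n)}) = \Msf(F_C^n(\bf))$ for all $n$, as required. Everything apart from the boundary degeneracy in the $C_2$ case is routine bookkeeping, and the non-uniqueness phenomenon quantified in Corollary~\ref{cor:nonuniqueimpliesnonprimitive} is precisely what primitivity rules out here.
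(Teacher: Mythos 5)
Your proof is correct, but it takes a genuinely different route from the paper's. The paper disposes of this corollary in two lines, by contradiction: if $(M_n)_{n\in\N} \neq (\Msf(F_C^n(\bf)))_{n\in\N}$, then one has two \emph{distinct} sequences in $\{C_1,C_2\}^\N$ whose cones contract to the same ray $\R_{\geq 0}\bf$ (the algorithm's own sequence does so by Equation~\eqref{eq:definition of sequence of matrices} together with Proposition~\ref{prop:convergence-for-C1-C2}), and Corollary~\ref{cor:nonuniqueimpliesnonprimitive} then forces both sequences to be non-primitive, contradicting the hypothesis. You instead argue directly, by induction: you identify $F_C^n(\bf)$ with $\bf^{(n)} = M_{[0,n)}^{-1}\bf$ and show $\Msf(\bf^{(n)}) = M_n$, with primitivity entering — via Theorem~\ref{thm:dim-sur-Q}(iii) and the unimodularity of $C_1, C_2$ — exactly to exclude the boundary $f_1^{(n)} = f_3^{(n)}$, which is the only place where the cones $C_1\R^3_{\geq 0}$ and $C_2\R^3_{\geq 0}$ overlap and where $\Msf$ could disagree with $M_n$. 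In substance you have inlined the content of Corollary~\ref{cor:nonuniqueimpliesnonprimitive}, whose proof is precisely the observation that a disagreement at the first index forces $f_1^{(n)} = f_3^{(n)}$, hence a rational dependency, hence non-primitivity by Theorem~\ref{thm:dim-sur-Q}. What the paper's route buys is brevity, since that corollary was just established; what your route buys is a self-contained argument (modulo Theorem~\ref{thm:dim-sur-Q}) that makes explicit both the conjugacy $F_C^n(\bf) = \bf^{(n)}$, which the paper leaves implicit, and the precise role of primitivity, namely guaranteeing the strict inequality in the definition of $\Lambda_2$. Both arguments ultimately rest on the same key fact, Theorem~\ref{thm:dim-sur-Q}(iii).
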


\begin{proof}
    Suppose on the contrary that
    $(M_n)_{n \in \mathbb{N}} \neq (\Msf(F_C^n(\mathbf{f})))_{n \in \mathbb{N}}$.
    From Corollary~\ref{cor:nonuniqueimpliesnonprimitive},
    $\dim_\Q(\bf)\leq2$ and 
$(M_n)_{n\in\N}$ is not primitive which is a contradiction.
\end{proof}

\section{Symbolic representation of $(\Delta,f_C)$}
\label{sec:symbolic repr}

In this section, we prove Theorem~\ref{maintheorem:conjugacy to shift}.
Let us first define the measure-preserving dynamical systems we are dealing with.

\subsection{Background on dynamical systems}

Let $(X,\mathcal{B}_X,\mu)$, $(Y,\mathcal{B}_Y,\nu)$ be two measured spaces.
A map $f:X \to Y$ is {\em measure-preserving} if it is measurable and satisfies $\mu(f^{-1}(B)) = \nu(B)$ for all $B \in \mathcal{B}_Y$.
If furthermore, $f$ is a bijection and $f^{-1}$ is measurable, then $f^{-1}$ is also measure-preserving.
In that case we say that $f$ is an {\em invertible measure-preserving} map.

Let $(X,\mathcal{B}_X,\mu)$ be a measured space and $(Y,\mathcal{B}_Y)$ be a measurable space.
If $\pi: X \to Y$ is measurable, then the {\em pushforward measure} on $Y$ is the measure $\pi_*\mu$ defined by $\pi_*\mu(B) = \mu(\pi^{-1}(B))$ for all $B \in \mathcal{B}_Y$.
The map $\pi: (X,\mathcal{B}_X,\mu) \to (Y,\mathcal{B}_Y,\pi_*\mu)$ is then measure-preserving.

A \emph{measure-preserving dynamical system} is a tuple $(X,T,\mathcal{B},\mu)$, where $(X,\mathcal{B},\mu)$ is a probability space and $T:X\to X$ is measure-preserving. 
We also say that the measure $\mu$ is \emph{$T$-invariant}.
It is said to be \emph{ergodic} if for every set
$B\in\mathcal{B}$, $T^{-1}B=B$ implies that $\mu(B) \in \{0,1\}$.

Two measure-preserving dynamical systems $(X,T,\mathcal{B}_X,\mu)$, $(Y,S,\mathcal{B}_Y,\nu)$ are said to be {\em isomorphic} if there exist sets $\tilde{X} \in \mathcal{B}_X,\tilde{Y} \in \mathcal{B}_Y$ of measure 1 such that $T(\tilde{X})\subset\tilde{X}$, $S(\tilde{Y})\subset\tilde{Y}$ and an invertible measure-preserving map $\pi:\tilde{X} \to \tilde{Y}$ such that $\pi \circ T(x) = S \circ \pi(x)$ for all $x \in \tilde{X}$.
Such a map $\pi$ is called an {\em isomorphism}.

In this paper, the measure-preserving dynamical system are always on a topological space $X$ and we always consider the Borel $\sigma$-algebra on it, so we simply denote them by $(X,T,\mu)$.

\subsection{Dynamical systems associated with $f_C$}

Equipping $\Delta$ with its natural Borel $\sigma$-algebra, $f_C: \Delta \to \Delta$ is measurable.
Furthermore, the measure $\xi$ defined 
for any measurable set 
$A\subset\Delta=\{(x_1,x_2,x_3)\mid x_1+x_2+x_3=1, x_1,x_2,x_3\geq0\}$
by the density function
\[
    \xi(A)=
	\frac{6}{\pi^2}
    \int_A
	\frac{1}{(1-x_1)(1-x_3)}
    dx_1dx_3
\]
is 
a $f_C$-invariant
Borel probability measure~\cite{arnoux_labbe_2017}, which makes $(\Delta,f_C,\xi)$ a measure-preserving dynamical system. The reader may confirm that it is a probability measure by computing the following integral:
    \begin{align*}
        \xi(\Delta)
            &= 
            \frac{6}{\pi^2}
            \int_0^{1}
                \int_{0}^{1-x_1}
                \frac{1}{(1-x_1) (1-x_3)} dx_3 dx_1
        = 1.
    \end{align*}
The measure $\xi$ is furthermore ergodic~\cite{fougeron_simplicity_2021} so it is the unique $f_C$-invariant probability measure which is equivalent to the Lebesgue measure on the simplex $\Delta$.

The set $\{1,2\}^\N$ is equipped with the product topology of the discrete topology on $\{1,2\}$ and we consider the associated Borel $\sigma$-algebra.
The shift map $S:\{1,2\}^\N \to \{1,2\}^\N$ defined by $S((i_n)_{n \in \N}) = (i_{n+1})_{n \in \N}$ is continuous, hence measurable.
For every $0 \leq p \leq 1$, the vector $(p_1,p_2) = (p,1-p)$ uniquely defines a Borel probability measure $\beta_p$ by 
$\beta_p([i_1 i_2\cdots i_n]) = p_{i_1} p_{i_2} \cdots p_{i_n}$, 
where 
$[i_1 i_2 \cdots i_n] = \{(j_k)_{k \in \N} \mid j_k = i_k \text{ for } 0 \leq k \leq n\}$.
This measure is shift-invariant and is called a {\em Bernoulli measure}.
It is {\em positive} whenever $0 < p < 1$.
For any $p$, $(\{1,2\}^\N,S,\beta_p)$ is thus a measure-preserving dynamical system.
It is classical to show that any Bernoulli measure is ergodic. 

We now show that $(\Delta,f_C)$ and $(\{1,2\}^\N,S)$ are isomorphic (for many measures).
We consider the sets $\Delta_1, \Delta_2$ that are the restriction to $\Delta$ of $\Lambda_1$ and $\Lambda_2$, i.e.
\begin{align*}
	\Delta_1 &= \{(x_1,x_2,x_3) \in \Delta \mid x_1 \geq x_3\};	\\
	\Delta_2 &= \{(x_1,x_2,x_3) \in \Delta \mid x_1 < x_3\}.		
\end{align*}
We respectively define the maps 
$\pi:\{1,2\}^\N \to \Delta$
and
$\delta: \Delta \to \{1,2\}^\N$ 
by
\begin{align*}
	\pi((i_n)_{n \in \N}) = \bf, 
	\quad
	&
	\text{where }
    \bigcap_{n\geq0} C_{i_0}C_{i_1}\cdots C_{i_n} \R^3_{\geq0} = 				\R_{\geq0}\bf	\\
    \delta(\bf) = (i_n)_{n \in \N}, 
    \quad
    &
    \text{where }
    f_C^n(\bf) \in \Delta_{i_n} 
    \text{ for every } n.
\end{align*}
The map $\pi$ is well defined and continuous by Proposition~\ref{prop:convergence-for-C1-C2}.
The map $\delta$ is well defined because $\{\Delta_1,\Delta_2\}$ is a partition of $\Delta$.
We finally let $\Pcal$ denote the set of sequences $(i_n)_{n \in \N} \in \{1,2\}^\N$ such that $(C_{i_n})_{n \geq 0}$ is primitive and we let $\Ical$ denote the set of vectors in $\Delta$ with rationally independent entries.

\begin{proof}[\Proofof Theorem~\ref{maintheorem:conjugacy to shift}]
The map $\pi$ is measurable because it is continuous.
The map $\delta$ is also measurable because so is $f_C$ and for all $i_0 i_1 \cdots i_{n-1} \in \{1,2\}^n$,
\[
	\delta^{-1}([i_0 i_1 \cdots i_{n-1}]) 
	= 
	\bigcap_{0 \leq k < n} f_C^{-k}(\Delta_{i_k}) 
\] 
is a measurable set.
Therefore, for every measure $\mu$ on $\{1,2\}^\N$ and every measure $\nu$ on $\Delta$, the maps $\pi:(\{1,2\}^\N,\mu) \to (\Delta,\pi_*\mu)$ and $\delta: (\Delta,\nu) \to (\{1,2\}^\N,\delta_*\nu)$ are measure-preserving.

Iterating the map $f_C$ shows that $\pi$ is surjective: any $\bf \in \Delta$ satisfies
\[
	\bf \in \bigcap_{n \geq 0} C_{i_0} C_{i_1} \cdots C_{i_n} \mathbb{R}^3_{\geq 0},
\]
where $(i_n)_{n \in \N} = \delta(\bf)$.
In other words, we have $\bf = \pi (\delta(\bf))$.
Theorem~\ref{thm:dim-sur-Q} and Corollaries~\ref{cor:nonuniqueimpliesnonprimitive} and~\ref{cor:l'algo rend la meme suite directrice} then imply that respectively restricted to $\Pcal$ and $\Ical$, the maps $\pi$ and $\delta$ are bijections that are the inverse of each other.
To conclude the proof, it suffices to observe that for all $\bf \in \Delta$, we have $\delta \circ f_C(\bf) = S \circ \delta(\bf)$.
\end{proof}

\begin{corollary}
The systems $(\Delta,f_C,\xi)$ and $(\{1,2\}^\N,S,\delta_*\xi)$ are isomorphic.
For any positive Bernoulli measure $\beta$, the systems $(\{1,2\}^\N,S,\beta)$ and $(\Delta,f_C,\pi_*\beta)$ are isomorphic. 
\end{corollary}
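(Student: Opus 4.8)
The plan is to deduce both isomorphisms directly from Theorem~\ref{maintheorem:conjugacy to shift}, so that the corollary reduces to two measure-theoretic verifications: that $\xi$ gives full measure to $\mathcal{I}$, and that every positive Bernoulli measure gives full measure to $\mathcal{P}$. Once these are in hand, the two bullets of Theorem~\ref{maintheorem:conjugacy to shift} apply verbatim.

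For the first statement, I would invoke the second bullet of Theorem~\ref{maintheorem:conjugacy to shift} with $\nu = \xi$. Since $\xi$ is already an $f_C$-invariant Borel probability measure, the only thing to check is $\xi(\mathcal{I}) = 1$. The complement $\Delta \setminus \mathcal{I}$ consists of the vectors satisfying a relation $a x_1 + b x_2 + c x_3 = 0$ with $(a,b,c) \in \Z^3 \setminus \{0\}$; combined with $x_1+x_2+x_3 = 1$, each such relation cuts out a segment of the two-dimensional simplex and hence a Lebesgue-null subset of $\Delta$. As there are only countably many relations and $\xi$ is equivalent to the Lebesgue measure on $\Delta$, we obtain $\xi(\Delta \setminus \mathcal{I}) = 0$, i.e. $\xi(\mathcal{I}) = 1$. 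Theorem~\ref{maintheorem:conjugacy to shift} then yields that $\pi : (\{1,2\}^\N, S, \pi^{-1}_*\xi) \to (\Delta, f_C, \xi)$ is a measure-preserving isomorphism, and it remains only to identify $\pi^{-1}_*\xi$ with $\delta_*\xi$.

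For the second statement, I would apply the first bullet of Theorem~\ref{maintheorem:conjugacy to shift} to $\mu = \beta$, which is shift-invariant; the point to verify is $\beta(\mathcal{P}) = 1$. By Proposition~\ref{prop:primitiveness for C adic}, the non-primitive sequences are exactly those for which there is some $N$ with $i_{N+2k} = i_{N+2k+1}$ for all $k$, a countable union over $N$. Writing $\beta = \beta_p$ with $0 < p < 1$ and $(p_1,p_2) = (p,1-p)$, the events $\{i_{N+2k} = i_{N+2k+1}\}$ involve disjoint pairs of coordinates, hence are independent under $\beta$, each of probability $p_1^2 + p_2^2 < 1$; their intersection over all $k$ therefore has $\beta$-probability $\prod_{k} (p_1^2 + p_2^2) = 0$. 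Summing over $N$ gives $\beta(\{1,2\}^\N \setminus \mathcal{P}) = 0$, so $\beta(\mathcal{P}) = 1$, and Theorem~\ref{maintheorem:conjugacy to shift} delivers the isomorphism $\pi : (\{1,2\}^\N, S, \beta) \to (\Delta, f_C, \pi_*\beta)$.

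The two null-set estimates are routine; the only point requiring a little care, and the main (minor) obstacle, is the identification $\pi^{-1}_*\xi = \delta_*\xi$ needed to phrase the first isomorphism in terms of $\delta$. For this I would use the facts established inside the proof of Theorem~\ref{maintheorem:conjugacy to shift}: that $\pi \circ \delta = \mathrm{id}_\Delta$ everywhere, that $\delta \circ \pi = \mathrm{id}$ on $\mathcal{P}$, and that $\delta \circ f_C = S \circ \delta$. These give at once that $\delta_*\xi$ is shift-invariant (by the semiconjugacy together with $f_C$-invariance of $\xi$), that it is supported on $\mathcal{P}$ (since $\delta(\mathcal{I}) \subseteq \mathcal{P}$ and $\xi(\mathcal{I})=1$), and that $\pi_*(\delta_*\xi) = (\pi \circ \delta)_*\xi = \xi$; hence $\delta_*\xi$ is precisely the measure $\pi^{-1}_*\xi$ produced by the theorem, completing the argument.
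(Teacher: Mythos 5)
Your proposal is correct, and its overall skeleton matches the paper's: both reduce the corollary to the two bullets of Theorem~\ref{maintheorem:conjugacy to shift} by checking $\xi(\mathcal{I})=1$ and $\beta(\mathcal{P})=1$. For the first statement your argument is essentially identical to the paper's (the paper simply asserts $\xi(\mathcal{I})=1$ from equivalence with Lebesgue measure, where you spell out the countable-union-of-segments computation; your extra care in identifying $\pi^{-1}_*\xi$ with $\delta_*\xi$ is also consistent with the paper, which treats this identification as notational). Where you genuinely diverge is in proving $\beta(\mathcal{P})=1$. The paper argues via ergodicity: a positive Bernoulli measure is ergodic and gives positive measure to the cylinder $[121]$, so $\beta$-almost every sequence contains $121$ infinitely often, and Proposition~\ref{prop:primitiveness for C adic} shows such sequences are primitive. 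You instead bound the non-primitive set directly: by Proposition~\ref{prop:primitiveness for C adic} it is the countable union over $N$ of the events $\bigcap_{k}\{i_{N+2k}=i_{N+2k+1}\}$, and since these involve disjoint pairs of coordinates they are independent under $\beta$, each of probability $p^2+(1-p)^2<1$, so each intersection is $\beta$-null. Your route is more elementary and quantitative --- it needs no ergodic theory at all --- but it exploits the product structure of $\beta$ and hence is tied to Bernoulli measures; the paper's ergodicity argument is softer and generalizes verbatim to any shift-invariant ergodic measure giving positive mass to $[121]$, which is the form of hypothesis the paper uses elsewhere (e.g., in Theorem~\ref{maintheorem:combinatoire almost always}).
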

\begin{proof}
The first part follows from the fact that, $\xi$ being equivalent to the Lebesgue measure, $\xi(\Ical) = 1$.
For the second part, it is well known that any Bernoulli measure $\beta$ is ergodic. 
If $\beta$ is positive, then $\beta([121])$ is positive and, by ergodicity, we get for all $m \in \N$, $\beta(\bigcup_{n \geq m} S^{-n}[121]) = 1$ and so $\beta(\bigcap_{m \in \N} \bigcup_{n \geq m} S^{-n}[121]) = 1$.
By Proposition~\ref{prop:primitiveness for C adic}, we have $\bigcap_{m \in \N}\bigcup_{n \geq m} S^{-n}[121] \subset \Pcal$, hence $\beta(\Pcal)=1$.
\end{proof}

In what follows, we consider measures $\mu$ on $\{1,2\}^\N$ to obtain results for $\mu$-almost directive sequences $(i_n)_{n \in \N}$.
However our main goal is to deal with sequences of matrices $(C_{i_n})_{n \in \N}$.
To alleviate notation, we will transfer the measures $\mu$ on $\{C_1,C_2\}^\N$ and speak about $\mu$-almost every sequences $(M_n)_{n \in \N} \in \{C_1,C_2\}^\N$.

\begin{remark}\label{rem:not-bernouilli}
    Observe that $\delta_*\xi=\pi^{-1}_*\xi$ is not a Bernoulli measure
    since $\delta_*\xi([11])\neq\delta_*\xi([1])^2$. Indeed,
    \begin{align*}
        \delta_*\xi([1]) = \delta_*\xi([2]) 
            &= 
            \frac{6}{\pi^2}
            \int_0^{\frac{1}{2}}
                \int_{x_1}^{1-x_1}
                \frac{1}{(1-x_1) (1-x_3)} dx_3 dx_1
        = \frac{1}{2}\\
        \delta_*\xi([11]) = \delta_*\xi([22]) 
            &= 
            \frac{6}{\pi^2}
            \int_{\frac{1}{2}}^1
                \int_{0}^{1-x_1}
                \frac{1}{(1-x_1) (1-x_3)} dx_3 dx_1
            = \frac{1}{2}-\frac{3 \log^2(2)}{\pi^2}
            \approx 0.3540\\
        \delta_*\xi([12]) = \delta_*\xi([21])
            &= 
            \frac{6}{\pi^2}
            \int_0^{\frac{1}{2}}
                \int_{0}^{x_1}
                \frac{1}{(1-x_1) (1-x_3)} dx_3 dx_1
            = \frac{3 \log^2(2)}{\pi^2}
            \approx 0.1460
    \end{align*}
    Since the measure $\delta_*\xi$ and Bernoulli measures on $\{1,2\}^\N$ are
    ergodic and shift-invariant, they are pairwise mutually singular. 
\end{remark}

\section{Word frequencies}
\label{sec:word frequencies}

In this section, we come back to Theorem~\ref{thm:berthe-delecroix_convergence} that motivated the study made in the previous sections and we prove the following result.

\begin{proposition}\label{prop:unif frequencies}
    Every $\C$-adic word $\bw = \lim_{n \to +\infty} \sigma_{[0,n)}(1^\omega)$, $(\sigma_n)_{n \in \N} \in \C^\N$, has uniform word frequencies.
    In particular, if $\bf \in \R_{\geq 0}^d$ is such that $\|\bf\|_1 = 1$ and 
\begin{equation}
	\bigcap_{n \geq 0} M_{\sigma_{[0,n)}} \R_{\geq 0}^3 = \R_{\geq 0} \bf,
\end{equation}
then $\bf$ is the vector of letter frequencies of $\bw$.
\end{proposition}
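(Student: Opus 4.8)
The plan is to reduce everything to Theorem~\ref{thm:berthe-delecroix_convergence}. Write $(M_n)_{n\in\N}=(M_{\sigma_n})_{n\in\N}\in\{C_1,C_2\}^\N$ for the sequence of incidence matrices of the directive sequence $(\sigma_n)_{n\in\N}$. To invoke that theorem I must check two things: that $(\sigma_n)_{n\in\N}$ is everywhere growing, and that for every $k\in\N$ the cone $\bigcap_{n\geq k}M_{[k,n)}\R^3_{\geq0}$ is one-dimensional. Granting both, the theorem yields uniform word frequencies, and the normalization $\bigcap_{n\geq0}M_{[0,n)}\R^3_{\geq0}=\R_{\geq0}\bf$ with $\|\bf\|_1=1$ forces $\bf$ to be the vector of letter frequencies, which is exactly the ``in particular'' claim.

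The one-dimensionality of the cones comes essentially for free. Indeed, Proposition~\ref{prop:convergence-for-C1-C2} establishes weak convergence for \emph{every} sequence in $\{C_1,C_2\}^\N$; since for each fixed $k$ the shifted sequence $(M_{k+n})_{n\in\N}$ again belongs to $\{C_1,C_2\}^\N$, applying the proposition to it shows that $\bigcap_{n\geq k}M_{[k,n)}\R^3_{\geq0}$ is one-dimensional. Thus the only real work is the everywhere-growing condition, and here I would split according to the trichotomy of Lemma~\ref{lemma:existence of limit}. If $(\sigma_n)_{n\in\N}$ contains infinitely many occurrences of both $c_1$ and $c_2$ (case~(1)), then it is everywhere growing: the proof of that lemma produces indices $l_m$ such that $\sigma_{[0,l_m)}(1)$ is a prefix of every $\sigma_{[0,n)}(a)$ for $n\geq l_{m+1}$, and since these prefixes increase to the infinite word $\bw$ their lengths tend to infinity, so $\min_{a\in\A}|\sigma_{[0,n)}(a)|\to\infty$. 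Both hypotheses of Theorem~\ref{thm:berthe-delecroix_convergence} then hold and the statement follows at once in this case.

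The point requiring separate care is the degenerate case where $(\sigma_n)_{n\in\N}$ is eventually constant, say $\sigma_n=c_1$ (resp. $\sigma_n=c_2$) for all $n\geq N$, for then the sequence fails to be everywhere growing and the theorem does not apply directly. Here Lemma~\ref{lemma:existence of limit} shows that $\bw$ is periodic (resp. eventually periodic), so it trivially has uniform word frequencies; it only remains to identify $\bf$ with the correct frequency vector. This is a direct computation: using $\bigcap_{m}C_1^m\R^3_{\geq0}=\R_{\geq0}e_1$ (resp. $\bigcap_{m}C_2^m\R^3_{\geq0}=\R_{\geq0}e_3$) from Lemma~\ref{lem:quand-dimQ=1}, and the nesting of the cones, one gets $\bigcap_{n\geq0}M_{[0,n)}\R^3_{\geq0}=\R_{\geq0}M_{[0,N)}e_1=\R_{\geq0}\overrightarrow{\sigma_{[0,N)}(1)}$ (resp. $\R_{\geq0}\overrightarrow{\sigma_{[0,N)}(3)}$), so that $\bf$ is proportional to the letter distribution of one period of $\bw$, as required. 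The main obstacle is thus not a single hard step but this bookkeeping for the eventually constant directive sequences, where $\bf$ must be matched with the period frequencies by hand rather than produced by Theorem~\ref{thm:berthe-delecroix_convergence}.
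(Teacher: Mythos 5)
Your proof is correct and follows essentially the same route as the paper's: in the case where both $c_1$ and $c_2$ occur infinitely often, reduce to Theorem~\ref{thm:berthe-delecroix_convergence} with the one-dimensionality of the cones supplied by Proposition~\ref{prop:convergence-for-C1-C2}, and treat the eventually constant directive sequences by hand via Lemma~\ref{lemma:existence of limit} and Lemma~\ref{lem:quand-dimQ=1}, matching $\bf$ with the abelianization $M_{[0,N)}e_1$ (resp.\ $M_{[0,N')}e_3$) of the period. The only cosmetic difference is that the paper cites Corollary~\ref{cor:everywhere growing} for the everywhere-growing hypothesis, whereas you verify it directly from the construction in the proof of Lemma~\ref{lemma:existence of limit}.
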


\begin{proof}
Let $(M_n)_{n \geq 0}$ be the sequence of incidence matrices associated with $(\sigma_n)_{n \in \N}$.
By Proposition~\ref{prop:convergence-for-C1-C2}, there is a vector $\bf$ such that $\|\bf\|_1=1$ and 
\[
	\bigcap_{n \geq 0} M_{[0,n)} \R_{\geq 0}^3 = \R_{\geq 0} \bf.
\]
By Corollary~\ref{cor:everywhere growing}, every $\C$-adic word either is ultimately periodic, or has an everywhere growing directive sequence $(\sigma_n)_{n \in \N} \in \C^\N$.
In the latter case, it directly follows from Theorem~\ref{thm:berthe-delecroix_convergence} that $\bf$ is the vector of letter frequencies.

Let us now assume that $\bw$ is ultimately periodic, hence that $(\sigma_n)_{n \in \N}$ is not everywhere growing.
Then $\bw$ has uniform word frequencies and it remains to show that $\bf$ is the vector of letter frequencies.
By Lemma~\ref{lemma:existence of limit}, there is an integer $N \geq 0$ such that one of the following situation happens:
\begin{enumerate}
\item
$\sigma_n=c_1$ for all $n \geq N$ and $\bw = \sigma_{[0,N)}(1^\omega)$.
\item
$\sigma_n=c_2$ for all $n \geq N$ and $\bw = \sigma_{[0,N')}(13^\omega)$ for some integer $N' \geq N$.
\end{enumerate}
Using Lemma~\ref{lem:quand-dimQ=1}, we deduce that in the first case (resp., second case), $\bf$ is the normed vector proportional to $M_{[0,N)}e_1$ (resp., to $M_{[0,N')}e_3$) and this indeed corresponds to the vector of letter frequencies of $\bw$.
\end{proof}

\section{Balance property}
\label{sec:balance}

A word $\bw \in A^\N$ is said to be \emph{$C$-balanced} or \emph{balanced} if there exists a constant $C>0$ such that for all words $u, v \in A^*$ of the same length and occurring in $\bw$, and for every letter $i\in A$, $||u|_i-|v|_i|\leq C$.
In this article, the notion of balance is thus more inclusive than what is
normally used in the context of Sturmian sequences \cite{MR0000745} where
balanced sequences refer here to $1$-balanced sequences.
Assuming that an $\Scal$-adic word has uniform word frequencies, a sufficient condition for finite balance can be expressed using the incidence matrices of the directive sequence.

\begin{theorem} {\rm\cite[Theorem 5.8]{MR3330561}}\label{thm:berthe-delecroix_balance}
    Let $(\sigma_n)_{n\in\N}$ be the directive sequence of an $\Scal$-adic representation of a word $\bw$. For each $n$, let $M_n$ be the incidence matrix of $\sigma_n$. Assume that $\bw$ has uniform letter frequencies and let $\bf$ be the letter frequencies vector. If
\begin{equation}\label{eq:conditionforbeingbalance}
    \sum_{n\geq 0}\left\Vert\transpose{M_{[0,n)}}\middle|_{\bf^\perp}\right\Vert\cdot\Vert M_n\Vert < \infty
\end{equation}
for some norm $\Vert\cdot\Vert$, then the word $\bw$ is balanced.
\end{theorem}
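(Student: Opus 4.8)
The plan is to reduce balancedness to a uniform bound on the abelianized deviation of factors, and then to control that deviation through the $\Scal$-adic desubstitution, the hypothesis entering via a duality between the action of $M_{[0,n)}$ on the zero-sum hyperplane and the action of its transpose on $\bf^\perp$. For a factor $w$ of $\bw$, set $D(w)=\overrightarrow{w}-|w|\,\bf$. Since $\overrightarrow{w}$ and $|w|\bf$ both have entry-sum $|w|$, the vector $D(w)$ lies in $\mathbf{1}^\perp$, the space of zero-sum vectors (where $\mathbf{1}$ is the all-ones vector). For two factors $u,v$ of the same length, $\overrightarrow{u}-\overrightarrow{v}=D(u)-D(v)$, so $\bw$ is balanced as soon as $\sup_w\|D(w)\|_\infty<\infty$; thus it suffices to bound $\|D(w)\|$ uniformly over factors, for any fixed norm.

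Next I would desubstitute. Writing $\bw^{(k)}$ for the $\Scal$-adic word directed by $(\sigma_n)_{n\ge k}$ and $\bf^{(k)}$ for its frequency vector (so that $M_{[0,k)}\bf^{(k)}$ is a positive multiple of $\bf$), any factor $w$ reads $w=p_0\,\sigma_0(x^{(1)})\,s_0$, with $x^{(1)}$ a factor of $\bw^{(1)}$ and $p_0,s_0$ a proper suffix and prefix of images under $\sigma_0$. Iterating and stopping at the first level $n$ for which $x^{(n)}$ is a single letter $a$ (possible since the sequence may be assumed everywhere growing), one obtains the exact identity $\overrightarrow{w}=M_{[0,n)}e_a+\sum_{k=0}^{n-1}M_{[0,k)}\,\overrightarrow{b_k}$, where $b_k=p_ks_k$ collects the two boundary words at level $k$ and satisfies $\|\overrightarrow{b_k}\|_1<2\max_{a}|\sigma_k(a)|$, which is comparable to $\|M_k\|$.

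The heart of the argument eliminates the large terms and invokes the hypothesis. Let $P$ be the projection onto $\mathbf{1}^\perp$ along $\bf$, namely $P(v)=v-(\transpose{\mathbf{1}}v)\,\bf$, so that $P\bf=0$ while $P$ fixes $D(w)$. Applying $P$ to the identity above, the term $-|w|\bf$ and every $\bf$-directional contribution vanish; moreover, replacing each vector $v$ by its zero-sum part $v-(\transpose{\mathbf{1}}v)\bf^{(k)}\in\mathbf{1}^\perp$ changes $M_{[0,k)}v$ only by a multiple of $\bf$, hence does not change its image under $P$. One is thus reduced to bounding $\|P(M_{[0,k)}y)\|$ for $y\in\mathbf{1}^\perp$. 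By duality $\|P(M_{[0,k)}y)\|=\sup_{\|z\|_*\le1}\langle z,P(M_{[0,k)}y)\rangle$, and since the pairing against a zero-sum vector only sees $z$ modulo $\mathbf{1}$, I may replace $z$ by $\tilde z=z-(\transpose{\bf}z)\mathbf{1}\in\bf^\perp$ at the cost of a bounded factor; then $\langle\tilde z,P(M_{[0,k)}y)\rangle=\langle\transpose{M_{[0,k)}}\tilde z,y\rangle$, and because $\tilde z\in\bf^\perp$ the factor $\transpose{M_{[0,k)}}\tilde z$ is controlled by $\|\transpose{M_{[0,k)}}|_{\bf^\perp}\|\cdot\|\tilde z\|$. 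Using $\|e_a-\bf^{(n)}\|\le C$ and $\|\overrightarrow{b_k}\|\lesssim\|M_k\|$, this yields
\[
    \|D(w)\|\;\lesssim\;\left\Vert\transpose{M_{[0,n)}}\middle|_{\bf^\perp}\right\Vert+\sum_{k\ge0}\left\Vert\transpose{M_{[0,k)}}\middle|_{\bf^\perp}\right\Vert\cdot\|M_k\|,
\]
whose right-hand side is finite and independent of $w$ by \eqref{eq:conditionforbeingbalance}; the isolated top term is bounded since summability forces $\|\transpose{M_{[0,k)}}|_{\bf^\perp}\|\to0$. The resulting uniform bound on $\|D(w)\|$ gives balancedness.

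The main obstacle is the duality step linking the action of $M_{[0,k)}$ on the zero-sum space $\mathbf{1}^\perp$ — which naturally governs abelianized factors — to the action of $\transpose{M_{[0,k)}}$ on $\bf^\perp$, which is the quantity appearing in the hypothesis; one must keep the norm-equivalence constants uniform in $k$ and check that the projection $P$ truly annihilates every $\bf$-directional and length-dependent contribution. A secondary point is the termination of the desubstitution, which rests on the directive sequence being everywhere growing so that each factor eventually desubstitutes to a single letter.
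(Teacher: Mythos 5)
This statement is not proved in the paper at all: it is quoted from Berthé--Delecroix \cite[Theorem 5.8]{MR3330561} and used as a black box in the proof of Theorem~\ref{maintheorem:combinatoire almost always}, so there is no internal proof to compare yours against. Judged on its own terms, your skeleton is the natural (and, as far as I can tell, the standard) one: reduce balance to a uniform bound on $D(w)=\overrightarrow{w}-|w|\bf$, desubstitute a factor to get $\overrightarrow{w}=M_{[0,n)}e_a+\sum_{k<n}M_{[0,k)}\overrightarrow{b_k}$, and convert the hypothesis on $\transpose{M_{[0,k)}}\big|_{\bf^\perp}$ into a bound on $P(M_{[0,k)}\cdot)$ by duality; the duality computation itself ($\langle z,P(M_{[0,k)}y)\rangle=\langle\tilde z,M_{[0,k)}y\rangle=\langle\transpose{M_{[0,k)}}\tilde z,y\rangle$ with $\tilde z=z-(\transpose{\bf}z)\mathbf{1}\in\bf^\perp$, all constants depending only on the dimension and the fixed norms) is correct.

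Two of your steps, however, are not justified by the stated hypotheses. First, ``the sequence may be assumed everywhere growing'' is not a legitimate reduction: the summability hypothesis \eqref{eq:conditionforbeingbalance} is attached to the \emph{given} matrices $M_n$, so you cannot trade the directive sequence for a better one, and there are genuine instances of the theorem whose directive sequences are not everywhere growing (e.g.\ with a letter whose images stay bounded but whose abelianized column lies exactly on $\R\bf$). You only use this to terminate the desubstitution, and that can be repaired with no extra hypothesis: every factor $w$ of $\bw$ occurs in the finite word $\sigma_{[0,n)}(a_n)$ for some $n$, and desubstituting inside that finite word necessarily stops at level $n$ with $x^{(n)}$ a single letter or empty; alternatively, work only with prefixes (a factor is a difference of two prefixes, and prefix desubstitution terminates automatically). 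Second, the vectors $\bf^{(k)}$ --- normalized, with $M_{[0,k)}\bf^{(k)}\in\R_{>0}\bf$ --- need not exist: the incidence matrices need not be invertible, the level-$k$ words need not exist or have frequencies, and the hypothesis does not force $\bf$ to lie in the image of $M_{[0,k)}$. Fortunately this step is superfluous: your duality identity never uses that the argument $y$ is a zero-sum vector, so you may apply the bound $\Vert P(M_{[0,k)}y)\Vert\leq C\,\Vert\transpose{M_{[0,k)}}\big|_{\bf^\perp}\Vert\,\Vert y\Vert$ directly to $y=e_a$ and $y=\overrightarrow{b_k}$. Deleting the passage to ``zero-sum parts'' removes the only place where $\bf^{(k)}$ appears, and the final estimate $\Vert D(w)\Vert\leq C'\Vert\transpose{M_{[0,n)}}\big|_{\bf^\perp}\Vert+C'\sum_{k}\Vert\transpose{M_{[0,k)}}\big|_{\bf^\perp}\Vert\,\Vert M_k\Vert$, uniform in $w$, goes through as you wrote it.
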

Note that if the substitutions $\sigma_n$ belong to a finite set, then the
norms $\Vert M_n\Vert$ are uniformly bounded and can be removed from the sum.

Therefore we want to show that
\[
    \left\Vert\transpose{M_{[0,n)}}\middle|_{\bf^\perp}\right\Vert
    =
    \sup_{\bv\in \bf^\perp\setminus\{0\}}\frac{\Vert \transpose{M_{[0,n)}} \bv\Vert}{\Vert \bv\Vert}.
\]
converges to 0 as $n$ goes to infinity fast enough so that the sum at
Equation~\eqref{eq:conditionforbeingbalance} converges.
We achieve this in the current section using the semi-norm $\Vert\cdot\Vert_D$ defined earlier.

The strategy that we use is inspired by Lemma 6 from Avila and
Delecroix \cite{MR4043208} which provides sufficient conditions
so that the second Lyapunov exponent is negative and so that the associated words are balanced~\cite[Theorem 6.4]{MR3330561}.
We already proved in Lemma~\ref{lem:C1C2kC1-upperbound-1}
that
$\left\Vert \transpose{(C_1C_2^{n}C_1)}  \right\Vert_D^{\R^3_{\geq 0}} = 1$
and
$\left\Vert \transpose{(C_2C_1^{n}C_2)}  \right\Vert_D^{\R^3_{\geq0} } = 1$
for every $n\in\N$.
Below, we prove the existence of a matrix in the monoid generated by $C_1$ and
$C_2$ that is contracting for the semi-norm $\Vert\cdot\Vert_D$
(Lemma~\ref{lem:C1C2C1C2C1C2-contracts-4over5}).
This allows to provide an upper-bound for
$\left\Vert\transpose{M_{[0,n)}}\middle|_{\bf^\perp}\right\Vert$ in
Lemma~\ref{lem:upper-bound-4over5}.
and prove part 1 of Theorem~\ref{maintheorem:combinatoire almost always}.

\begin{lemma}\label{lem:C1C2C1C2C1C2-contracts-4over5}
    If $M = (C_1C_2)^3$ or if $M=(C_2C_1)^3$, then
    \[
        \left\Vert \transpose{M} \right\Vert_D^{M\R^3_{\geq0}} \leq \frac{4}{5}.
    \]
\end{lemma}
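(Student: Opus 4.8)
The plan is to reduce the two cases to one by symmetry, compute the matrix explicitly, and then apply Lemma~\ref{lem:sup-on-boundaries} to turn the supremum into a finite maximum over edge directions. First I would note that the permutation matrix $P$ of the transposition $(1\,3)$ satisfies $PC_1P = C_2$ and $PC_2P=C_1$, so that $P(C_1C_2)^3P=(C_2C_1)^3$. Since $\transpose P=P=P^{-1}$, $P\R^3_{\geq0}=\R^3_{\geq0}$, and $\Vert Pv\Vert_D=\Vert v\Vert_D$ (the seminorm $\Vert\cdot\Vert_D$ is invariant under permuting coordinates), the substitutions $z=Pz'$ and $\bf=P\bf'$ give
\[
\left\Vert\transpose{(C_2C_1)^3}\right\Vert_D^{(C_2C_1)^3\R^3_{\geq0}}
=\left\Vert\transpose{(C_1C_2)^3}\right\Vert_D^{(C_1C_2)^3\R^3_{\geq0}},
\]
so it suffices to treat $M=(C_1C_2)^3$.

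Next I would compute
\[
M=(C_1C_2)^3=\begin{pmatrix} 2&3&2\\ 2&2&1\\ 1&2&1\end{pmatrix},
\]
which is positive, and observe that $\det M=(\det C_1\det C_2)^3=1$, so $M$ is invertible. Hence Lemma~\ref{lem:sup-on-boundaries} applies with $d=3$: the quantity $\left\Vert\transpose M\right\Vert_D^{M\R^3_{\geq0}}$ equals the maximum of $z\mapsto \Vert\transpose M z\Vert_D/\Vert z\Vert_D$ over the finite set $(\mathcal{D}\setminus\pm(\transpose M)^{-1}\R^3_{>0})\setminus\{0\}$, where the lines of $\mathcal{D}$ are the pairwise intersections of the hyperplanes orthogonal to the vectors of $S=(M\E\cup(\E-\E)\cup M(\E-\E))\setminus\{0\}$.

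To make the finite check efficient, I would record the two piecewise-linear forms explicitly. Using $(\transpose M z)_i-(\transpose M z)_j=\transpose{(Me_i-Me_j)}z$ together with the column differences of $M$, one gets
\[
\Vert\transpose M z\Vert_D=\max\{\,|z_1+z_3|,\ |z_2|,\ |z_1+z_2+z_3|\,\},\qquad
\Vert z\Vert_D=\max\{\,|z_1-z_2|,\ |z_1-z_3|,\ |z_2-z_3|\,\}.
\]
Each line of $\mathcal{D}$ is spanned by a cross product $n_i\times n_j$ of two normals in $S$; I would discard the directions for which $\transpose M z$ has all entries of the same strict sign (these lie in $\pm(\transpose M)^{-1}\R^3_{>0}$) and the direction $\transpose{(1,1,1)}$ (for which $\Vert z\Vert_D=0$), and evaluate the ratio on the remaining ones. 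The bound $4/5$ is sharp and attained: for $z=\transpose{(-2,3,-2)}$, which equals $Me_1\times(e_1-e_3)$ and hence is orthogonal to $Me_1=\transpose{(2,2,1)}\in M\E$ and to $e_1-e_3=\transpose{(1,0,-1)}\in\E-\E$, so it lies on an edge of $\mathcal{D}$, one computes $\transpose M z=\transpose{(0,-4,-3)}$, giving $\Vert\transpose M z\Vert_D=4$ and $\Vert z\Vert_D=5$.

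The main obstacle I anticipate is the last step: verifying that no admissible edge direction produces a ratio exceeding $4/5$ is a finite but lengthy case analysis (there are up to $\binom{9}{2}$ lines, several of which coincide or are excluded). The explicit formulas above keep each evaluation short, but because the count is large I would organize the cases by which pair of hyperplanes is intersected and ultimately confirm the maximum by computer algebra, as elsewhere in the paper.
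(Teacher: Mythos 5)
Your proposal is correct and follows essentially the same route as the paper: reduce to $M=(C_1C_2)^3$ by symmetry, invoke Lemma~\ref{lem:sup-on-boundaries} to replace the supremum by a finite maximum over the edge directions in $\mathcal{D}\setminus\pm(\transpose{M})^{-1}\R^3_{>0}$, and verify by enumeration (the paper's Table~\ref{table:all-36-candidates}) that the maximum is $4/5$, attained at $z=(-2,3,-2)$. Your explicit conjugation $PC_1P=C_2$ and the closed forms $\Vert\transpose{M}z\Vert_D=\max\{|z_1+z_3|,|z_2|,|z_1+z_2+z_3|\}$ make the symmetry and the per-vector checks slightly more transparent than the paper's presentation, but the argument is the same.
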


\begin{figure}
\begin{center}
    \includegraphics[width=\linewidth]{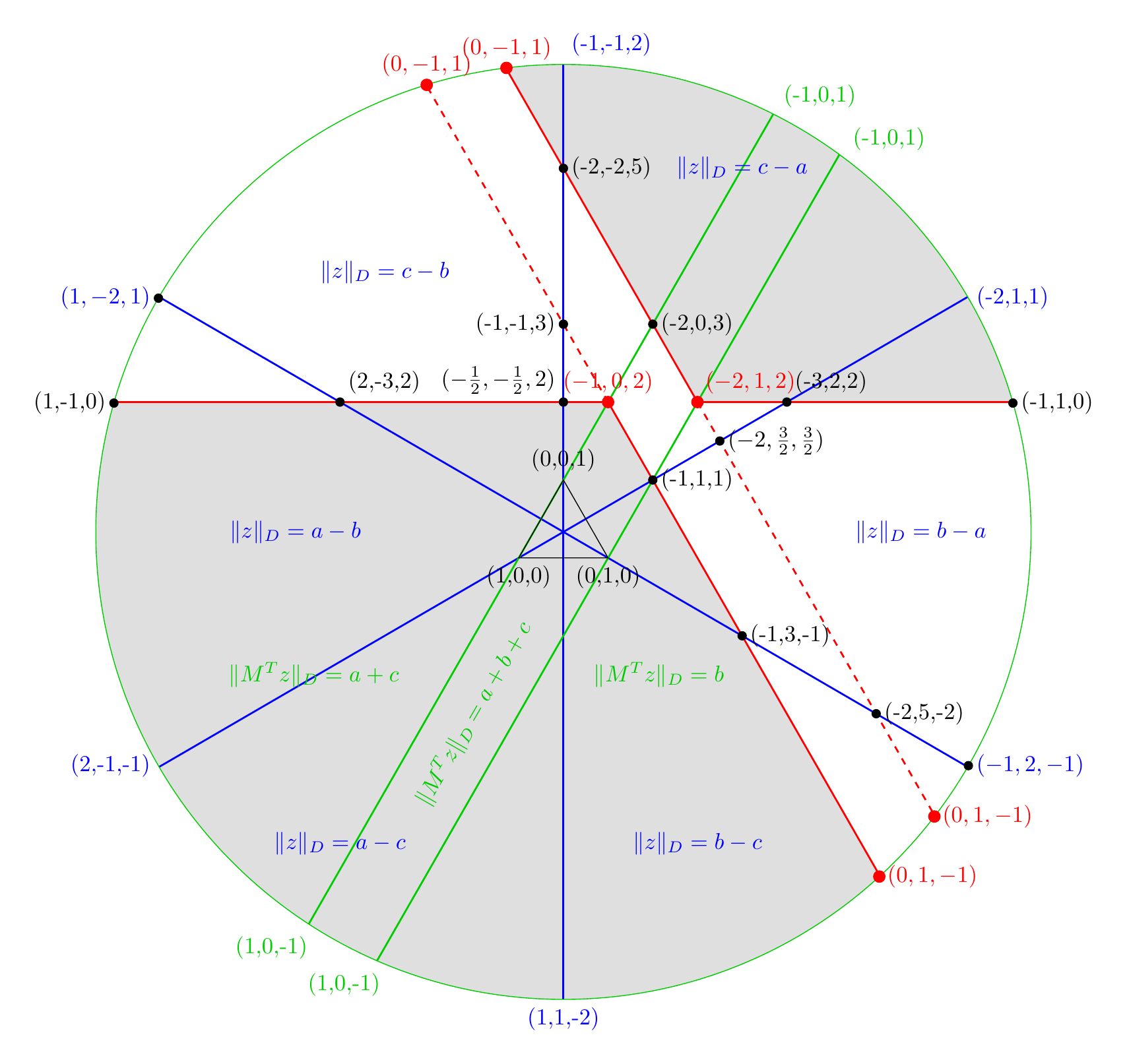}
\end{center}
\caption{An illustration on the plane $a+b+c=1$ 
of the proof that $M=(C_1C_2)^3$ is contracting on each subcone.
The hyperplanes associated with vectors in $M\E$, $\E-\E$ and $M(\E-\E)$ are
respectively drawn in red, blue and green. The green circle illustrates the points at infinity on the projective space.
    The grey regions represent the vectors $z\in
    \pm(\transposeENV{M} )^{-1}\,\R^3_{>0}$ that we do not need to consider for the maximum.
    The points $z$ in 
    $\mathcal{D}\setminus\pm (\transposeENV{M} )^{-1}\R^3_{>0}$
listed in Table~\ref{table:all-36-candidates}
are shown outside and on the boundary of the grey region.
    The maximum of $\frac{\Vert \transposeENV{M} z\Vert_D}{\Vert z\Vert_D}$ is
    attained at $z=(2,-3,2)$ with a value of $4/5$.}
\label{fig:proof-contracting}
\end{figure}

\begin{proof}
    Assume that $M= (C_1C_2)^3$ (the other case is symmetric).
    We have
    \[
    M=(C_1C_2)^3 =
\left(\begin{array}{ccc}
2 & 3 & 2 \\
2 & 2 & 1 \\
1 & 2 & 1
\end{array}\right)
\quad
\text{ and }
\quad
    M^{-1} =
\left(\begin{array}{ccc}
0 & 1 & -1 \\
-1 & 0 & 2 \\
2 & -1 & -2
\end{array}\right).
    \]
If $z=(a,b,c)$, we have
    \[
        \left\Vert
        \transpose{M} z
        \right\Vert_D 
        = \left\Vert
{\arraycolsep=2pt
\left(\begin{array}{ccc}
2 & 2 & 1 \\
3 & 2 & 2 \\
2 & 1 & 1
\end{array}\right)}z
        \right\Vert_D = 
        \left\Vert
{\arraycolsep=2pt
\left(\begin{array}{ccc}
0 & 0 & 0 \\
1 & 0 & 1 \\
0 & -1 & 0
\end{array}\right)}
\left(\begin{array}{c}
a \\
b \\
c 
\end{array}\right)
        \right\Vert_D = 
        \left\Vert
{\arraycolsep=2pt
\left(\begin{array}{c}
0 \\
a+c \\
-b 
\end{array}\right)}
        \right\Vert_D.
        \]

    Recall from Equation~\eqref{eq:hyperplanes-vectors}
    that $\mathcal{H}$ is the set of hyperplanes orthogonal to some vectors in
$S = \left(M \E \cup (\E-\E) \cup M(\E-\E)\right)\setminus\{0\}$ where $\E=
\{e_i\mid1\leq i\leq 3\}$ 
We compute:
	\[
    \begin{array}{lll}
    Me_1 =      (2,2,1),\qquad& 
    (e_1-e_3)=  (1,0,-1),& 
    M(e_1-e_2)= (-1,0,-1),\\
    Me_2 =      (3,2,2),& 
    (e_1-e_2)=  (1,-1,0),\qquad& 
    M(e_2-e_3)= (1,1,1),\\
    Me_3 =      (2,1,1),& 
    (e_2-e_3)=  (0,1,-1),& 
    M(e_1-e_3)= (0,1,0).\\
    \end{array}
    \]

    Thus $\mathcal{H}$ consists of 9 distinct hyperplanes.
    The hyperplanes and cones delimited by them are illustrated in
    Figure~\ref{fig:proof-contracting} on the plane $a+b+c=1$
    where the excluded region $\pm(\transpose{M} )^{-1}\R^3_{>0}$ is shown in grey.
    The set $\mathcal{D}$ is the finite union of intersections of two distinct
    hyperplanes of $\mathcal{H}$. 
    It contains at most $\frac{9\cdot 8}{2}=36$ distinct
    lines (actually, 26 distinct lines) 
    passing through the origin whose directions $z$ are listed in
	Table~\ref{table:all-36-candidates}.

\begin{table}
    \tabcolsep=6pt
    \begin{tabular}{lllllll}
$u$ & $v$ & $z=u\wedge v$ & $\transpose{M}z$ & $z\in\R^3\setminus\pm (\transpose{M})^{-1}\R^3_{>0}$ & $\Vert z\Vert_D$ & $\Vert \transpose{M} z\Vert_D$ \\ \hline
$Me_1$ & $Me_2 $ & $\left(2,\,-1,\,-2\right)$ & $\left(0,\,0,\,1\right)$ & yes & $4$ & $1$ \\
$Me_1$ & $Me_3 $ & $\left(1,\,0,\,-2\right)$ & $\left(0,\,-1,\,0\right)$ & yes & $3$ & $1$ \\
$Me_1$ & $(e_1-e_3)$ & $\left(-2,\,3,\,-2\right)$ & $\left(0,\,-4,\,-3\right)$ & yes & $5$ & $4$ \\
$Me_1$ & $(e_1-e_2)$ & $\left(1,\,1,\,-4\right)$ & $\left(0,\,-3,\,-1\right)$ & yes & $5$ & $3$ \\
$Me_1$ & $(e_2-e_3)$ & $\left(-3,\,2,\,2\right)$ & $\left(0,\,-1,\,-2\right)$ & yes & $5$ & $2$ \\
$Me_1$ & $M(e_1-e_2)$ & $\left(-2,\,1,\,2\right)$ & $\left(0,\,0,\,-1\right)$ & yes & $4$ & $1$ \\
$Me_1$ & $M(e_2-e_3)$ & $\left(1,\,-1,\,0\right)$ & $\left(0,\,1,\,1\right)$ & yes & $2$ & $1$ \\
$Me_1$ & $M(e_1-e_3)$ & $\left(-1,\,0,\,2\right)$ & $\left(0,\,1,\,0\right)$ & yes & $3$ & $1$ \\
$Me_2 $ & $Me_3 $ & $\left(0,\,1,\,-1\right)$ & $\left(1,\,0,\,0\right)$ & yes & $2$ & $1$ \\
$Me_2 $ & $(e_1-e_3)$ & $\left(-2,\,5,\,-2\right)$ & $\left(4,\,0,\,-1\right)$ & yes & $7$ & $5$ \\
$Me_2 $ & $(e_1-e_2)$ & $\left(2,\,2,\,-5\right)$ & $\left(3,\,0,\,1\right)$ & yes & $7$ & $3$ \\
$Me_2 $ & $(e_2-e_3)$ & $\left(-4,\,3,\,3\right)$ & $\left(1,\,0,\,-2\right)$ & yes & $7$ & $3$ \\
$Me_2 $ & $M(e_1-e_2)$ & $\left(-2,\,1,\,2\right)$ & $\left(0,\,0,\,-1\right)$ & yes & $4$ & $1$ \\
$Me_2 $ & $M(e_2-e_3)$ & $\left(0,\,-1,\,1\right)$ & $\left(-1,\,0,\,0\right)$ & yes & $2$ & $1$ \\
$Me_2 $ & $M(e_1-e_3)$ & $\left(-2,\,0,\,3\right)$ & $\left(-1,\,0,\,-1\right)$ & yes & $5$ & $1$ \\
$Me_3 $ & $(e_1-e_3)$ & $\left(-1,\,3,\,-1\right)$ & $\left(3,\,1,\,0\right)$ & yes & $4$ & $3$ \\
$Me_3 $ & $(e_1-e_2)$ & $\left(1,\,1,\,-3\right)$ & $\left(1,\,-1,\,0\right)$ & yes & $4$ & $2$ \\
$Me_3 $ & $(e_2-e_3)$ & $\left(-2,\,2,\,2\right)$ & $\left(2,\,2,\,0\right)$ & yes & $4$ & $2$ \\
$Me_3 $ & $M(e_1-e_2)$ & $\left(-1,\,1,\,1\right)$ & $\left(1,\,1,\,0\right)$ & yes & $2$ & $1$ \\
$Me_3 $ & $M(e_2-e_3)$ & $\left(0,\,-1,\,1\right)$ & $\left(-1,\,0,\,0\right)$ & yes & $2$ & $1$ \\
$Me_3 $ & $M(e_1-e_3)$ & $\left(-1,\,0,\,2\right)$ & $\left(0,\,1,\,0\right)$ & yes & $3$ & $1$ \\
$(e_1-e_3)$ & $(e_1-e_2)$ & $\left(-1,\,-1,\,-1\right)$ & $\left(-5,\,-7,\,-4\right)$ & no & - & - \\
$(e_1-e_3)$ & $(e_2-e_3)$ & $\left(1,\,1,\,1\right)$ & $\left(5,\,7,\,4\right)$ & no & - & - \\
$(e_1-e_3)$ & $M(e_1-e_2)$ & $\left(0,\,2,\,0\right)$ & $\left(4,\,4,\,2\right)$ & no & - & - \\
$(e_1-e_3)$ & $M(e_2-e_3)$ & $\left(1,\,-2,\,1\right)$ & $\left(-1,\,1,\,1\right)$ & yes & $3$ & $2$ \\
$(e_1-e_3)$ & $M(e_1-e_3)$ & $\left(1,\,0,\,1\right)$ & $\left(3,\,5,\,3\right)$ & no & - & - \\
$(e_1-e_2)$ & $(e_2-e_3)$ & $\left(1,\,1,\,1\right)$ & $\left(5,\,7,\,4\right)$ & no & - & - \\
$(e_1-e_2)$ & $M(e_1-e_2)$ & $\left(1,\,1,\,-1\right)$ & $\left(3,\,3,\,2\right)$ & no & - & - \\
$(e_1-e_2)$ & $M(e_2-e_3)$ & $\left(-1,\,-1,\,2\right)$ & $\left(-2,\,-1,\,-1\right)$ & no & - & - \\
$(e_1-e_2)$ & $M(e_1-e_3)$ & $\left(0,\,0,\,1\right)$ & $\left(1,\,2,\,1\right)$ & no & - & - \\
$(e_2-e_3)$ & $M(e_1-e_2)$ & $\left(-1,\,1,\,1\right)$ & $\left(1,\,1,\,0\right)$ & yes & $2$ & $1$ \\
$(e_2-e_3)$ & $M(e_2-e_3)$ & $\left(2,\,-1,\,-1\right)$ & $\left(1,\,2,\,2\right)$ & no & - & - \\
$(e_2-e_3)$ & $M(e_1-e_3)$ & $\left(1,\,0,\,0\right)$ & $\left(2,\,3,\,2\right)$ & no & - & - \\
$M(e_1-e_2)$ & $M(e_2-e_3)$ & $\left(1,\,0,\,-1\right)$ & $\left(1,\,1,\,1\right)$ & no & - & - \\
$M(e_1-e_2)$ & $M(e_1-e_3)$ & $\left(1,\,0,\,-1\right)$ & $\left(1,\,1,\,1\right)$ & no & - & - \\
$M(e_2-e_3)$ & $M(e_1-e_3)$ & $\left(-1,\,0,\,1\right)$ & $\left(-1,\,-1,\,-1\right)$ & no & - & - \\
\end{tabular}
\caption{Table of values for each 
    of the 36 vectors in $\mathcal{D}$.
    The vector in
    $\mathcal{D}\setminus\pm (\transposeENV{M})^{-1}\R^3_{>0}$
    which maximizes the ratio
    $\frac{\Vert \transposeENV{M} z\Vert_D}{\Vert z\Vert_D}$
    is $z=(-2,3,-2)$ with a value of $4/5$.}
\label{table:all-36-candidates}
\end{table}

    From Lemma~\ref{lem:sup-on-boundaries}, it is sufficient to consider
    vectors $z\in\mathcal{D}\setminus\pm\transpose{M}^{-1}\R^3_{>0}$.
    Exactly 13 of those lines belong to
    $\pm \transpose{M}^{-1}\R^3_{>0}$ (the grey region) and are excluded from
    the search of the optimal value.
    We have that
    $\mathcal{D}\setminus\pm (\transpose{M} )^{-1}\R^3_{>0}$
    is defined by 23 vectors 
    (some of them defining the same line).
    For each of them, we compute the respective values and norm in
	Table~\ref{table:all-36-candidates}.
    The maximum of $\frac{\Vert \transpose{M} z\Vert_D}{\Vert z\Vert_D}$ is
    attained at $z=(-2,3,-2)$ with a value of $4/5$. The conclusion follows.
    An alternative representation where the subcone $\transpose{M}^{-1}\R^3_{>0}$ is
    bounded (the central region of a Venn diagram) is shown in
    Figure~\ref{fig:venn-diagram}.
\end{proof}

In Lemma~\ref{lem:C1C2kC1-upperbound-1}, we proved that $C_1C_2^nC_1$ and
$C_2C_1^nC_2$ are neutral for the semi-norm $\Vert\cdot\Vert_D$ and in
Lemma~\ref{lem:C1C2C1C2C1C2-contracts-4over5}, we proved that
$(C_1C_2C_1)(C_2C_1C_2)$ and $(C_2C_1C_2)(C_1C_2C_1)$ are contracting for the
semi-norm $\Vert\cdot\Vert_D$. Therefore, it is natural to consider the
acceleration of the algorithm on the monoid generated by $C_2C_1^nC_2$ and
$C_1C_2^nC_1$ (see Figure~\ref{fig:cassaigne_accelerated}).

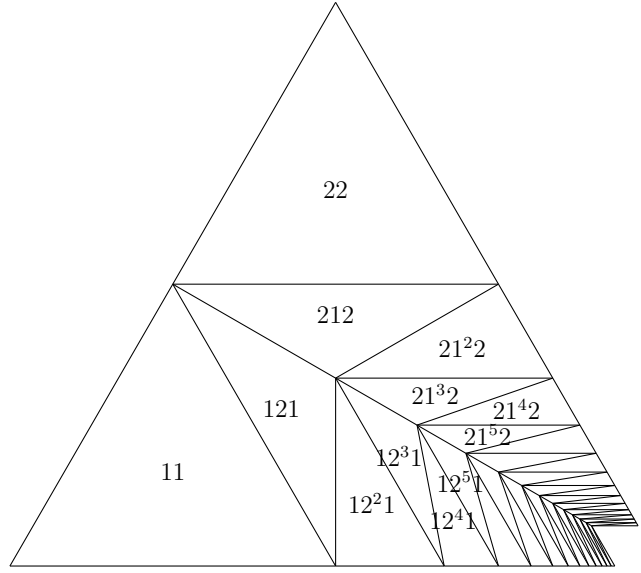
\begin{figure}[h]
\begin{center}
\begin{tikzpicture}
[scale=5, every node/.style={scale=0.8}]
\draw (0.6928, -0.5000) -- (0.7086, -0.5000);
\draw (0.7328, -0.5000) -- (0.7423, -0.5000);
\draw (0.7578, -0.3125) -- (0.5413, -0.3125);
\draw (0.7873, -0.3636) -- (0.7939, -0.3750);
\draw (-0.8660, -0.5000) -- (-0.4330, 0.2500);
\draw (0.7994, -0.3846) -- (0.6804, -0.3929);
\draw (0.0000, 1.0000) -- (-0.4330, 0.2500);
\draw (0.5774, -0.5000) -- (0.6186, -0.5000);
\draw (-0.8660, -0.5000) -- (0.0000, -0.5000);
\draw (0.7794, -0.3500) -- (0.6062, -0.3500);
\draw (0.6495, -0.3750) -- (0.7873, -0.3636);
\draw (0.0000, -0.5000) -- (0.2887, -0.5000);
\draw (0.6804, -0.3929) -- (0.6662, -0.3846);
\draw (0.6186, -0.5000) -- (0.6495, -0.5000);
\draw (0.7578, -0.3125) -- (0.7698, -0.3333);
\draw (0.0000, -0.5000) -- (-0.4330, 0.2500);
\draw (0.6495, -0.3750) -- (0.7086, -0.5000);
\draw (0.7217, -0.2500) -- (0.7423, -0.2857);
\draw (0.6495, -0.3750) -- (0.6662, -0.3846);
\draw (0.3464, -0.2000) -- (0.5196, -0.5000);
\draw (0.4330, -0.5000) -- (0.3464, -0.2000);
\draw (0.2165, -0.1250) -- (0.6495, -0.1250);
\draw (0.7086, -0.5000) -- (0.6298, -0.3636);
\draw (0.3464, -0.2000) -- (0.6495, -0.1250);
\draw (0.7794, -0.3500) -- (0.6298, -0.3636);
\draw (0.2165, -0.1250) -- (0.4330, -0.5000);
\draw (0.6736, -0.5000) -- (0.6062, -0.3500);
\draw (0.6928, -0.2000) -- (0.4330, -0.2500);
\draw (0.7423, -0.2857) -- (0.4949, -0.2857);
\draw (0.7994, -0.3846) -- (0.6662, -0.3846);
\draw (0.6495, -0.3750) -- (0.7939, -0.3750);
\draw (0.5774, -0.3333) -- (0.7698, -0.3333);
\draw (0.5413, -0.3125) -- (0.6186, -0.5000);
\draw (0.2165, -0.1250) -- (0.5774, 0.0000);
\draw (0.6804, -0.3929) -- (0.7328, -0.5000);
\draw (0.7217, -0.5000) -- (0.7328, -0.5000);
\draw (0.6928, -0.2000) -- (0.7217, -0.2500);
\draw (0.5774, -0.5000) -- (0.4949, -0.2857);
\draw (0.6495, -0.3750) -- (0.6298, -0.3636);
\draw (0.7794, -0.3500) -- (0.7873, -0.3636);
\draw (0.6298, -0.3636) -- (0.6062, -0.3500);
\draw (0.6495, -0.3750) -- (0.7217, -0.5000);
\draw (0.6804, -0.3929) -- (0.8042, -0.3929);
\draw (0.0000, 0.0000) -- (0.5774, 0.0000);
\draw (0.5774, 0.0000) -- (0.6495, -0.1250);
\draw (0.4330, 0.2500) -- (0.0000, 0.0000);
\draw (0.5196, -0.5000) -- (0.5774, -0.5000);
\draw (0.4330, -0.2500) -- (0.7217, -0.2500);
\draw (0.5413, -0.3125) -- (0.6495, -0.5000);
\draw (0.6804, -0.3929) -- (0.7423, -0.5000);
\draw (0.2887, -0.5000) -- (0.0000, 0.0000);
\draw (0.2165, -0.1250) -- (0.2887, -0.5000);
\draw (0.2165, -0.1250) -- (0.0000, 0.0000);
\draw (0.4330, -0.2500) -- (0.5774, -0.5000);
\draw (0.2887, -0.5000) -- (0.4330, -0.5000);
\draw (0.2165, -0.1250) -- (0.3464, -0.2000);
\draw (0.6928, -0.5000) -- (0.6298, -0.3636);
\draw (0.6736, -0.5000) -- (0.6928, -0.5000);
\draw (0.4330, -0.2500) -- (0.5196, -0.5000);
\draw (0.5774, -0.3333) -- (0.5413, -0.3125);
\draw (0.7578, -0.3125) -- (0.7423, -0.2857);
\draw (0.7794, -0.3500) -- (0.7698, -0.3333);
\draw (0.6662, -0.3846) -- (0.7939, -0.3750);
\draw (0.0000, -0.5000) -- (0.0000, 0.0000);
\draw (0.6662, -0.3846) -- (0.7328, -0.5000);
\draw (0.4330, -0.5000) -- (0.5196, -0.5000);
\draw (0.7217, -0.5000) -- (0.6662, -0.3846);
\draw (0.6928, -0.5000) -- (0.6062, -0.3500);
\draw (0.7086, -0.5000) -- (0.7217, -0.5000);
\draw (0.5774, -0.3333) -- (0.7578, -0.3125);
\draw (0.6298, -0.3636) -- (0.7873, -0.3636);
\draw (0.0000, 0.0000) -- (-0.4330, 0.2500);
\draw (0.5413, -0.3125) -- (0.7423, -0.2857);
\draw (0.6736, -0.5000) -- (0.6495, -0.5000);
\draw (0.7994, -0.3846) -- (0.7939, -0.3750);
\draw (0.6928, -0.2000) -- (0.3464, -0.2000);
\draw (0.6928, -0.2000) -- (0.6495, -0.1250);
\draw (0.4949, -0.2857) -- (0.6186, -0.5000);
\draw (0.6736, -0.5000) -- (0.5774, -0.3333);
\draw (0.5774, -0.3333) -- (0.6062, -0.3500);
\draw (0.4330, -0.2500) -- (0.3464, -0.2000);
\draw (0.4330, 0.2500) -- (0.0000, 1.0000);
\draw (0.5774, -0.3333) -- (0.6495, -0.5000);
\draw (0.7994, -0.3846) -- (0.8042, -0.3929);
\draw (0.5413, -0.3125) -- (0.4949, -0.2857);
\draw (0.4330, -0.2500) -- (0.4949, -0.2857);
\draw (0.7217, -0.2500) -- (0.4949, -0.2857);
\draw (0.7698, -0.3333) -- (0.6062, -0.3500);
\draw (0.4330, 0.2500) -- (0.5774, 0.0000);
\draw (0.4330, 0.2500) -- (-0.4330, 0.2500);
\node at (-0.4330, -0.2500) {$11$};
\node at (-0.1443, -0.0833) {$121$};
\node at (0.0962, -0.3333) {$12^{2}1$};
\node at (0.1684, -0.2083) {$12^{3}1$};
\node at (0.3127, -0.3750) {$12^{4}1$};
\node at (0.3320, -0.2750) {$12^{5}1$};
\node at (0.0000, 0.5000) {$22$};
\node at (0.0000, 0.1667) {$212$};
\node at (0.3368, 0.0833) {$21^{2}2$};
\node at (0.2646, -0.0417) {$21^{3}2$};
\node at (0.4811, -0.0833) {$21^{4}2$};
\node at (0.4041, -0.1500) {$21^{5}2$};
\end{tikzpicture}
\end{center}
    \caption{The partition associated with the acceleration of the
    algorithm.}
    \label{fig:cassaigne_accelerated}
\end{figure}

Note that
\[
C_1 C_2^{2k}C_1=
\left(\begin{array}{ccc}
1 & 1 & 1 \\
k & k+1 & k \\
0 & 0 & 1
\end{array}\right)
\quad
\text{ and }
\quad
C_1 C_2^{2k+1}C_1=
\left(\begin{array}{ccc}
1 & 1 & 1 \\
k & k+1 & k+1 \\
1 & 1 & 0
\end{array}\right)
\]
and
\[
C_2 C_1^{2k}C_2=
\left(\begin{array}{ccc}
1 & 0 & 0 \\
k & k+1 & k \\
1 & 1 & 1
\end{array}\right)
\quad
\text{ and }
\quad
C_2 C_1^{2k+1}C_2=
\left(\begin{array}{ccc}
0 & 1 & 1 \\
k+1 & k+1 & k \\
1 & 1 & 1
\end{array}\right).
\]

The next lemma gives an upper-bound for the norm restricted to the
complementary plane. Its proof follows the line of the proof of Lemma 6
from Avila and Delecroix \cite{MR4043208} that they applied
for {Brun} and fully subtractive algorithms.

\begin{lemma}\label{lem:upper-bound-4over5}
Let $\mu$ be a shift-invariant ergodic measure on $\{1,2\}^\mathbb{N}$.
For every $\varepsilon>0$, there exists $N$ such that for every $n>N$
and $\mu$-almost all sequences $(M_n)_{n \in \mathbb{N}} \in \{C_1,C_2\}^\mathbb{N}$, 
we have
    \[
        \left\Vert 
        \transpose{M_{[0,n)}} \middle|_{\bf^\perp} 
        \right\Vert_\infty
        \leq (n+3)
        \left(\frac{4}{5}\right)^{
            \frac{1}{8} n (\mu([12121212])-\varepsilon)-\frac{1}{8}},
     \]
where $\bigcap_{n \in \mathbb{N}} M_{[0,n)} \mathbb{R}_{\geq0}^3 = \mathbb{R}_{\geq0} \bf$. 
\end{lemma}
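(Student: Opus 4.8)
The plan is to apply the pointwise ergodic theorem to the indicator of the cylinder $[12121212]$ and to convert each occurrence of this length‑$8$ alternating pattern into one contracting factor of the product, using the nice factorization of Lemma~\ref{lem:primitive-implies-nice-factorization} together with the submultiplicativity of the semi-norm. First I would dispose of the trivial case: if $\mu([12121212])=0$ the claimed bound is vacuous for large $n$, so I assume $\mu([12121212])>0$. Since $\mu$ is ergodic, $\mu$-almost every sequence $(M_n)_{n\in\N}$ then contains infinitely many occurrences of both $C_1$ and $C_2$, so Lemma~\ref{lem:primitive-implies-nice-factorization} yields an increasing sequence $(n_m)_{m\in\N}$ with $n_0=0$ and $M_{[n_m,n_{m+1})}\in\{C_1C_2^kC_1,\ C_2C_1^kC_2\mid k\in\N\}$ for all $m$. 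For a fixed $n$ I write $M_{[0,n)}=B_1B_2\cdots B_r R$, where the $B_i=M_{[n_{i-1},n_i)}$ are the complete nice blocks contained in $[0,n)$ and $R=M_{[n_r,n)}$ is a (possibly empty) proper prefix of the next block.

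Next I would apply Birkhoff's theorem: for $\mu$-almost every sequence there is an $N$ so that for every $n>N$ the number of positions $k<n$ with $M_{[k,k+8)}=(C_1C_2)^4$ is at least $n(\mu([12121212])-\varepsilon)$. The combinatorial core is the observation that a window $M_{[k,k+8)}$ equal to $12121212$ contains no two consecutive equal letters, so any nice block meeting its interior must be of the form $C_1C_2C_1$ or $C_2C_1C_2$; a short case analysis on where the block boundaries fall across the window (the only freedom being an offset of at most two positions) shows that such a window of length $8$ always contains two \emph{consecutive complete} blocks $B_jB_{j+1}$, which necessarily multiply to $(C_1C_2)^3$ or $(C_2C_1)^3$ and are therefore contracting with factor $4/5$ by Lemma~\ref{lem:C1C2C1C2C1C2-contracts-4over5}. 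Disjoint windows induce disjoint block-pairs, and selecting the occurrence positions lying in a single residue class modulo $8$ (pigeonhole) produces a pairwise‑disjoint family of such windows, hence at least $\tfrac18 n(\mu([12121212])-\varepsilon)-\tfrac18$ disjoint contracting pairs inside $B_1\cdots B_r$, the $-\tfrac18$ absorbing the boundary positions whose pair need not be wholly contained in $[0,n)$.

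Finally I would assemble the estimate. Since $\bf\in M_{[0,n)}\R^3_{\geq0}$, Equation~\eqref{eq:infty-vs-D} gives $\Vert\transpose{M_{[0,n)}}|_{\bf^\perp}\Vert_\infty\le 2\,\Vert\transpose{M_{[0,n)}}\Vert_D^{M_{[0,n)}\R^3_{\geq0}}$. Applying the submultiplicativity of Lemma~\ref{lem:semi-norm-on-AB} to the factorization $B_1\cdots B_r R$, grouping the blocks into the disjoint contracting pairs found above (each contributing $4/5$ by Lemma~\ref{lem:C1C2C1C2C1C2-contracts-4over5}), with the remaining lone blocks contributing $1$ by Lemma~\ref{lem:C1C2kC1-upperbound-1}, I obtain
\[
\Vert\transpose{M_{[0,n)}}\Vert_D^{M_{[0,n)}\R^3_{\geq0}}
\le
\left\Vert\transpose R\right\Vert_D^{R\R^3_{\geq0}}\cdot
\left(\frac{4}{5}\right)^{\frac18 n(\mu([12121212])-\varepsilon)-\frac18}.
\]
As $R$ is a prefix $C_iC_j^l$ of a nice block with $l\le n$, its transpose has entries growing only linearly in $l$, so $\left\Vert\transpose R\right\Vert_D^{R\R^3_{\geq0}}$ is bounded by a linear function of $n$; absorbing the factor $2$ then yields the prefactor $n+3$ and the claimed inequality.

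The main obstacle is the second paragraph: proving that every length‑$8$ occurrence of the alternating pattern, irrespective of how the global block boundaries cross it, necessarily contains a pair of \emph{consecutive complete} blocks whose product is $(C_1C_2)^3$ or $(C_2C_1)^3$, and then carrying out the disjointness bookkeeping so as to extract exactly the density $\tfrac18$ of contracting factors. Once this combinatorial heart is secured, the norm conversion via \eqref{eq:infty-vs-D} and the crude linear control of the incomplete tail block $R$ are routine.
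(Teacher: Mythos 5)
Your proposal is, in substance, the paper's own proof: the same nice-block factorization from Lemma~\ref{lem:primitive-implies-nice-factorization}, the same combinatorial core (every alternating window of length $8$ contains two \emph{consecutive complete} blocks whose product is $(C_1C_2)^3$ or $(C_2C_1)^3$ — the paper phrases this as the existence of a unique $i(j)$ with $n_{i(j)}\in\{j,j+1,j+2\}$), the same density-$\tfrac18$ extraction of disjoint pairs (the paper greedily takes a maximal $8$-separated subset where you use pigeonhole on residues mod $8$; these are interchangeable), and the same assembly via Lemma~\ref{lem:semi-norm-on-AB}, Lemma~\ref{lem:C1C2kC1-upperbound-1}, Lemma~\ref{lem:C1C2C1C2C1C2-contracts-4over5} and Equation~\eqref{eq:infty-vs-D}. (One intermediate sentence — that any nice block \emph{meeting} the window must be $C_1C_2C_1$ or $C_2C_1C_2$ — is literally false, since a long block $C_iC_j^kC_i$ may clip the window's edge in its last one or two letters, but the conclusion you draw from the case analysis is correct.)

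Two points do need repair. First, the opening claim that the case $\mu([12121212])=0$ is ``vacuous'' is wrong as stated: the right-hand side is then $(n+3)(5/4)^{\varepsilon n/8+1/8}$, but the left-hand side still has to be bounded by something — nothing is automatic. When $\mu([1]),\mu([2])>0$ your own machinery does the job (zero guaranteed contracting pairs still gives $\mathrm{LHS}\le n+3$), but when $\mu$ is the Dirac measure on $1^\omega$ or $2^\omega$ the block factorization does not exist, and one must argue directly, as the paper does: there $\bf=(1,0,0)$ and an explicit computation of $\transpose{C_1^n}$ gives $\bigl\Vert \transpose{C_1^n}\big|_{\bf^\perp}\bigr\Vert_\infty=1$. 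So the correct case split is on whether $\mu([1])$ and $\mu([2])$ are both positive, with the constant-sequence measures treated separately. Second, your bookkeeping does not yield the stated constants: among the occurrences in a fixed residue class mod $8$, at most \emph{one} (not one-eighth of one) can have its window leave $[0,n)$, so your count of contracting pairs is $\tfrac18 n(\mu([12121212])-\varepsilon)-1$, not $-\tfrac18$; likewise, bounding $\Vert\transpose{R}\Vert_D^{R\R^3_{\geq0}}$ through $2\Vert\transpose{R}\Vert_\infty$ overshoots the prefactor $n+3$ by a factor $2$. These discrepancies are harmless for the intended applications and can be absorbed by running the argument with $\varepsilon/2$ and enlarging $N$, or avoided by the paper's sharper accounting (Birkhoff applied to positions $k\le n-8$, losing a single occurrence relative to $\#J_m$, and the incomplete tail handled with the plain operator norm $\Vert\transpose{M_{[n_m,n)}}\Vert_\infty\le\tfrac{n+3}{2}$ rather than the $D$-semi-norm); but as written, the inequality you claim does not follow from the estimates you give.
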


\begin{proof}
First consider the case where $\mu([2]) = 0$, the case $\mu([1]) = 0$ is symmetric.
Then the measure $\mu$ is the Dirac measure concentrated on the sequence $1^\omega$, hence $\mu([12121212])=0$ and 
$
    (n+3) \left(\frac{4}{5}\right)^{
    \frac{1}{8} n (\mu([12121212])-\varepsilon)-\frac{1}{8}} \geq 1
$ 
for all $n \geq 1$ and all $\varepsilon >0$.
Then $\mu$-almost surely $M_n=C_1$ for all $n$, and by Lemma~\ref{lem:quand-dimQ=1}, we then have $\bf = (1,0,0)$.
For all $n$, we have 
\[
\transpose{C_1^{2n}}
=
\left( 
\begin{array}{ccc}
1 & 0 & 0 	\\ 
n & 1 & 0	\\
n & 0 & 1
\end{array}
\right)
\quad \text{and} \quad
\transpose{C_1^{2n+1}}
=
\left( 
\begin{array}{ccc}
1 & 0 & 0 	\\ 
n+1 & 0 & 1	\\
n & 1 & 0
\end{array}
\right),
\]
which implies that 
$
	\left\Vert 
    \transpose{C_1^n} \middle|_{\bf^\perp} 
    \right\Vert_\infty
    = 1
$
for all $n$.

Assume now that $\mu([1])$ and $\mu([2])$ are positive.
By ergodicity of $\mu$, $\mu$-almost every sequence $(M_n)_{n \in \N}$ contains infinitely many occurrences of $C_1$ and of $C_2$.
By Lemma~\ref{lem:primitive-implies-nice-factorization}, there is an increasing sequence $(n_i)_{i\in\N}$ such that
$n_0=0$ and 
    \[
        A_i = M_{[n_i,n_{i+1})} \in \{C_1C_2^kC_1,C_2C_1^kC_2\mid k\in\N\}
    \]
for all $i$.
    For all $n\in\N_{>0}$, there exists a unique $m\in\N$ such that $n_m\leq n-1 < n_{m+1}$.
    Let $\mathbf{g}={M_{[0,n_m)}^{-1}\bf}$, then 
    using Lemma~\ref{lem:semi-norm-on-AB},
    we get
    \begin{align*}
        \left\Vert \transpose{M_{[0,n)}} \middle|_{\bf^\perp} \right\Vert_\infty
        &\leq\left\Vert \transpose{M_{[0,n)}} \right\Vert_\infty^{M_{[0,n)}\R^3_{\geq0}}\\
        &\leq \left\Vert\transpose{M_{[n_m,n)}} \right\Vert_\infty^{M_{[n_m,n)}\R^3_{\geq0}}
             \cdot \left\Vert\transpose{M_{[0,n_m)}}\right\Vert_\infty^{M_{[0,n_m)}\R^3_{\geq0}} \\
        &\leq \left\Vert\transpose{M_{[n_m,n)}} \right\Vert_\infty 
             \cdot \left\Vert\transpose{M_{[0,n_m)}}\right\Vert_\infty^{M_{[0,n_m)}\R^3_{\geq0}} 
    \end{align*}

Remark that $M_{[n_m,n)}$ is of the form 
\begin{align*}
    &C_1C_2^{2k} =
\left(\begin{array}{ccc}
1 & 1 & 0 \\
k & k & 1 \\
0 & 1 & 0
\end{array}\right)
\quad
\text{ or }
\quad
C_1C_2^{2k+1} =
\left(\begin{array}{ccc}
1 & 1 & 0 \\
k & k+1 & 1 \\
1 & 0 & 0
\end{array}\right)
\quad
\text{ or }\\
    &C_2C_1^{2k} =
\left(\begin{array}{ccc}
0 & 1 & 0 \\
1 & k & k \\
0 & 1 & 1
\end{array}\right)
\quad
\text{ or }
\quad
C_2C_1^{2k+1} =
\left(\begin{array}{ccc}
0 & 0 & 1 \\
1 & k+1 & k \\
0 & 1 & 1
\end{array}\right)
\end{align*}
for some $k\in\N$. 
Moreover
\[
\left\Vert \transpose{(C_1C_2^{2k}  )}\right\Vert_\infty = 
\left\Vert \transpose{(C_1C_2^{2k+1})}\right\Vert_\infty =
\left\Vert \transpose{(C_2C_1^{2k}  )}\right\Vert_\infty = 
\left\Vert \transpose{(C_2C_1^{2k+1})}\right\Vert_\infty = k+2.
\]
Therefore
    \begin{align*}
        \left\Vert \transpose{M_{[n_m,n)}} \right\Vert_\infty
        &\leq \frac{n-n_m-1}{2}+2
        \leq \frac{n-1}{2}+2
        = \frac{n+3}{2}.
    \end{align*}

Let us now focus on the term $\left\Vert\transpose{M_{[0,n_m)}}\right\Vert_\infty^{M_{[0,n_m)}\R_{\geq0}^3}$ where $M_{[0,n_m)} = \prod_{i=0}^{m-1}A_i=A_{[0,m)}$.

Let $J_m$ be the set of indices $j\in\{0,1,\dots,n_m-8\}$
such that $M_{[j,j+8)} = (C_1C_2)^4$.
    Let $J_m'\subseteq J_m$ be a subset of maximal cardinality
    such that 
\begin{equation}\label{eq:J-Jgeq8}
\min\left((J_m'-J_m')\cap\N_{>0}\right)\geq 8.
\end{equation}
Observe that $\# J_m' \geq\frac{1}{8}\# J_m$.
    If $j\in J_m'$, then 
there exists a unique $i(j)\in\N$ such that
    $n_{i(j)} \in \{j,j+1,j+2\}$
and therefore
    $A_{i(j)}A_{i(j)+1}\in \{(C_1C_2)^3,(C_2C_1)^3\}$.
    In particular if $j,j'\in J_m'$ with $j\neq j'$,
    then $|i(j')-i(j)|\geq2$
    using \eqref{eq:J-Jgeq8}.
    Let $I_m=i(J_m')=\{i(j)\mid j\in J_m'\}$.

Using Lemma~\ref{lem:semi-norm-on-AB} recursively, 
Equation~\eqref{eq:infty-vs-D},
Lemma~\ref{lem:C1C2kC1-upperbound-1}
and Lemma~\ref{lem:C1C2C1C2C1C2-contracts-4over5},
    we compute
    \begin{align*}
        \left\Vert \transpose{M_{[0,n_m)}}\right\Vert_\infty^{M_{[0,n_m)}\R_{\geq0}^3}
        &=
        \left\Vert \transpose{A_{[0,m)} } 
                \right\Vert_\infty^{A_{[0,m)} \R_{\geq0}^3}
        \leq
        2\left\Vert \transpose{A_{[0,m)} } 
                \right\Vert_D^{A_{[0,m)} \R_{\geq0}^3}\\
        &\leq
        2\prod_{i\in I_m} \left\Vert \transpose{\left(A_{i}A_{i+1}\right)}\right\Vert_D
                                               ^{A_{i}A_{i+1}\R_{\geq0}^3}
        \cdot
        \prod_{\substack{i\in \{0,1,\dots,m-1\}\\i\notin I_m,\;i\notin I_m+1}}
                           \left\Vert \transpose{A_{i}}\right\Vert_D
                                               ^{A_{i}\R_{\geq0}^3}\\
        &\leq
        2\left(\frac{4}{5}\right)^{\# I_m} \cdot 1
        \leq
        2\left(\frac{4}{5}\right)^{\frac{1}{8}\# J_m}.
    \end{align*}

Let us now conclude the proof.
From the pointwise ergodic theorem, for $\mu$-almost every $x\in\{1,2\}^\N$, we have
\[
	\lim_{n\to\infty}\frac{1}{n}\sum_{k=0}^{n-8}\chi_{[12121212]}\circ S^k(x) = \lim_{n\to\infty}\frac{1}{n}\sum_{k=0}^{n-1}\chi_{[12121212]}\circ S^k(x) = \mu([12121212]).
\]

Therefore, for $\mu$-almost every $x\in\{1,2\}^\N$ and for all $\varepsilon >0$, there exists $N$ such that for all $n>N$ we have
\[
    \left\vert\frac{1}{n}\sum_{k=0}^{n-8}\chi_{[12121212]}\circ S^k(x) - \mu([12121212])\right\vert
    <\varepsilon
\]
and we obtain
\begin{align*}
    \# J_m 
    &= \sum_{k=0}^{n_m-8}\chi_{[12121212]}\circ S^k(x) \\
    &\geq \sum_{k=0}^{n-8}\chi_{[12121212]}\circ S^k(x)-1 \\
    &> n (\mu([12121212])-\varepsilon)-1
\end{align*}
This ends the proof.
\end{proof}

\begin{proof}
    [\Proofof Theorem~\ref{maintheorem:combinatoire almost always} (part 1)]
	From Proposition~\ref{prop:unif frequencies}, for every $(i_n)_{n \in \N} \in \{1,2\}^\N$, the $\C$-adic word $\bw = \lim_{n \to +\infty} c_{i_0} c_{i_1} \cdots c_{i_n}(1)$ has uniform word frequencies and its vector of letter frequencies $\bf$ satisfies
	\[
		\bigcap_{n \in \mathbb{N}} C_{i_0} C_{i_1} \cdots C_{i_n} \mathbb{R}_{\geq 0}^3 = \mathbb{R}_{\geq 0} \bf.	
	\]
    From Lemma~\ref{lem:upper-bound-4over5},
    for every $\varepsilon>0$, 
    there exists $N$ such that 
    for $\mu$-almost all sequences 
    $(M_n)_{n \in \mathbb{N}} \in \{C_1,C_2\}^\mathbb{N}$, 
    we have
    \begin{align*}
    \sum_{n>N}
    \left\Vert
    \transpose{M_{[0,n)}}\middle|_{\bf^\perp}
    \right\Vert_\infty
    \cdot
    \Vert M_n \Vert_\infty
        &\leq\sum_{n>N}
        (n+3)
        \left(\frac{4}{5}\right)^{\frac{1}{8} n (\mu([12121212])-\varepsilon)-\frac{1}{8}}
                  \cdot 2.
    \end{align*}
    In particular, if $0<\varepsilon<\mu([12121212])$, 
    the above series converges.
    Therefore, from Theorem~\ref{thm:berthe-delecroix_balance}
    we conclude that
    for $\mu$-almost every directive sequence 
    in $\{1,2\}^\mathbb{N}$, the word $\bw$ is balanced.
\end{proof}

\section{The second Lyapunov exponent}\label{sec:lyapunov}

In this section, we prove the part of Theorem~\ref{maintheorem:combinatoire
almost always} about the second Lyapunov exponent. 
The proof follows from the
lemmas proved in Section~\ref{sec:balance}.
It is different from the one provided in \cite{MR4194166}
as it is based on the approach proposed by Avila and Delecroix
\cite{MR4043208}.
We furthermore prove the negativity of the second Lyapunov exponent
not only for Lebesgue-almost every vector of letter frequencies,
but also for $\mu$-almost every directive sequence $(i_n)_{n \in \mathbb{N}} \in \{1,2\}^\mathbb{N}$, where $\mu$ is any shift-invariant ergodic Borel probability measure on $\{1,2\}^\N$ giving a positive measure to the cylinder $[12121212]$.
Below, we follow the notations of \cite{MR3330561} and \cite{MR4043208}.
For general references on Lyapunov exponents, we refer to 
\cite{MR0240280} and \cite{MR1928529}.

Given an infinite word $\gamma\in\{1,2\}^\N$, we define
the matrices $A_n$ as
\[
    A_n(\gamma) = C_{\gamma_0} C_{\gamma_1}\dots C_{\gamma_{n-1}},
\]
for every $n\geq 0$
and we have the \emph{cocycle relation}
\[
    A_{m+n}(\gamma) = A_m(\gamma) A_n(S^m\gamma),
\]
where $S:\{1,2\}^\N\to\{1,2\}^\N$ is the shift map.
Let $\mu$ be a shift-invariant ergodic measure on $\{1,2\}^\N$.
Since the matrices $C_1$ and $C_2$ are invertible, the cocycle $A_n$ is \emph{log-integrable}, that is
\[
    \int_{\{1,2\}^\N}\log\max\left(
    \Vert A_1(\gamma)\Vert,
    \Vert A_1(\gamma)^{-1}\Vert
    \right)d\mu(\gamma) < \infty.
\]

Let $\mu$ be an ergodic probability measure on $\{1,2\}^\N$.
Since the cocycle $A_n$ is log-integrable,
the first Lyapunov exponent is the
$\mu$-almost everywhere limit
\[
        \theta_1^\mu
        =\lim_{n\to\infty} \frac{\log\Vert A_n(\gamma) \Vert}{n}.
\]
In particular the first Lyapunov exponent measures the exponential growth rate of product of matrices $C_i$ along a $\mu$-generic sequence.
The other Lyapunov exponents $\theta_2^{\mu} \geq \theta_3^{\mu}$ may be
defined by the almost everywhere limits
\begin{align*}
        \theta_1^\mu +
        \theta_2^\mu
        &=\lim_{n\to\infty} \frac{\log\Vert\wedge^2 A_n(\gamma) \Vert}{n},\\
        \theta_1^\mu +
        \theta_2^\mu +
        \theta_3^\mu
        &=\lim_{n\to\infty} \frac{\log\Vert\wedge^3 A_n(\gamma) \Vert}{n} = 0
\end{align*}
where $\wedge^k$ stands for the $k$-th exterior product.
Since the sequence of nested cones $M_{[0,n)}\R^3_{\geq0}$ converges
to a line $\R_{\geq0}\bf$, there 
is a useful characterization of $\theta_2^\mu$.
The second Lyapunov exponent is the $\mu$-almost everywhere limit
\[
        \theta_2^\mu 
        = \lim_{n\to\infty} 
        \frac{\log\left\Vert\transpose{A_n(\gamma)}
              \middle|_{\bf^\perp}\right\Vert}{n},        
\]
where $\bf$ is the vector $\pi(\gamma)$, see Equation~(6.1) from \cite{MR3330561}.
Observe that the limits do not depend on the chosen norm.

In \cite[p. 1522]{MR1156412} and \cite{labbe_3-dimensional_2015}, 
approximations of
the first and second Lyapunov exponents $\theta_1^\mu$ and $\theta_2^\mu$ of Selmer and
$f_C$ algorithms were computed
where $\mu=\pi_*^{-1}(\xi)$ and $\xi$ is the $f_C$-invariant measure on $\Delta$ which is absolutely continuous with respect to the Lebesgue measure.
The values are summarized in the table below:
    \[
\begin{array}{ccccc}
    \text{Algorithm} & \theta_1 & \theta_2 & 1-\theta_2/\theta_1  & \text{Source} \\
    \hline
    F_S & \log(1.200)\approx 0.182 & \log(0.9318)\approx -0.0706 & \approx 1.387 & \text{Baldwin \cite[p. 1522]{MR1156412}}\\
    F_S & \approx 0.18269 & \approx -0.07072 & \approx 1.38710  & \text{Labbé \cite{labbe_3-dimensional_2015}} \\
    F_C & \approx 0.18268 & \approx -0.07072 & \approx 1.38709  & \text{Labbé \cite{labbe_3-dimensional_2015}} \\
\end{array}
    \]
Therefore, the above experiments suggest that for almost every
$\bx\in\Delta$, the associated second Lyapunov is negative.
We prove the negativity of the second Lyapunov exponent for the
cocycle associated with matrices in $\{C_1, C_2\}$ below.

\begin{proof}
    [\Proofof Theorem~\ref{maintheorem:combinatoire almost always} (part 2)]
    Let $\gamma\in \{1,2\}^\mathbb{N}$.
    According to Proposition~\ref{prop:convergence-for-C1-C2},
    the sequence of nested cones $A_n(\gamma)\R^3_{\geq0}$
    converges to a line $\R_{\geq0}\bf$
    for some $\bf\in\Delta$.
    From Lemma~\ref{lem:upper-bound-4over5},
    for every $\varepsilon>0$
    and $\mu$-almost all sequences 
    $\gamma\in\{1,2\}^\mathbb{N}$, 
    we have
    \begin{align*}
    \theta_2^\mu 
        &= \lim_{n\to\infty} 
        \frac{\log\left\Vert\transpose{A_n(\gamma)}
              \middle|_{\bf^\perp}\right\Vert}{n}
        \leq 
        \lim_{n\to\infty}
        \frac{\log\left(
        (n+3)
        \left(\frac{4}{5}\right)^{
            \frac{1}{8} n (\mu([12121212])-\varepsilon)-\frac{1}{8}}
              \right)}{n}\\
        &=
        \lim_{n\to\infty}
        \frac{\log(n+3)+
        \left(\frac{1}{8} n (\mu([12121212])-\varepsilon)-\frac{1}{8}\right)
            \log\left( \frac{4}{5} \right)}{n}\\
        &=
        \frac{1}{8}  \left(\mu([12121212])-\varepsilon\right)
            \log\left( \frac{4}{5} \right).
    \end{align*}
    Therefore
    \[
    \theta_2^\mu 
        \leq
        \frac{1}{8}  \mu([12121212]) 
        \log\left( \frac{4}{5} \right)
        \approx 
        -0.00002633
        <0.
    \]
\end{proof}

The above upper bound is far from the one provided in \cite[Theorem
5.1]{MR4194166} where they proved using other methods that
$\theta_2^\mu<-0.052435991$.

\section{Factor complexity}
\label{sec:factor complexity}

If $\bw$ is an infinite word over some alphabet $A$, we let
$\Lcal_\bw$ denote the set of its factors, i.e., 
$\Lcal_\bw = \{u \in A^* \mid \exists i \in \N: \bw_i \cdots \bw_{i+|u|-1} = u\}$.
The {\em factor complexity} of $\bw$ is the function
\[
    p_\bw: \N \to \N, n \mapsto \#\Lcal_\bw(n) = \#(\Lcal_\bw \cap A^n).
\]
An infinite word $\bw$ is said to be {\em uniformly recurrent} if for all  $u \in \Lcal_\bw$, $u$ occurs infinitely many times in $\bw$ and the gap between two successive occurrences is bounded.
It is classical to prove that every primitive $\Scal$-adic word is uniformly recurrent.

In this section, we study the factor complexity of $\C$-adic words.
In particular, we prove the following result, which ends the proof of Theorem~\ref{maintheorem:caracterization 2n+1}.
It is worth noticing the analogy with Theorem~\ref{thm:dim-sur-Q}.

\begin{theorem}
\label{thm:factorcomplexity}
Let $\bw$ be a $\C$-adic word with directive sequence $(c_{i_n})_{n \in \N}$.
\begin{enumerate}[\rm (i)]
    \item \label{itemFC1}
    there exists $k \geq 1$ such that $p_\bw(n)=k$ for all large enough $n$
        if and only if 
        $(c_{i_n})_{n\in\N}\in\{c_1,c_2\}^*\{c_1^\N,c_2^\N\}$.
    \item \label{itemFC2}
    there exists $k \geq 1$ such that $p_\bw(n)=n+k$ for all large enough $n$ 
        if and only if
        $(c_{i_n})_{n\in\N}\in \left(\{c_1,c_2\}^*\{c_1^2,c_2^2\}^\N\right) 
        \setminus \{c_1,c_2\}^*\{c_1^\N,c_2^\N\}$.
    \item \label{itemFC3}
    $p_\bw(n) = 2n+1$ for all $n$ 
        if and only 
        if $(c_{i_n})_{n\in\N}$ is primitive.
        In particular, this is also equivalent to the fact that $\bw$ is a uniformly recurrent dendric word (see Section~\ref{subsection:bispecials} for the definition).
\end{enumerate}
\end{theorem}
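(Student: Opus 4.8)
The three conditions on the directive sequence appearing on the right-hand sides of the statement partition $\{c_1,c_2\}^\N$: by Proposition~\ref{prop:primitiveness for C adic} a sequence fails to be primitive exactly when it eventually lies in $\{c_1^2,c_2^2\}^\N$, and the first class is the subclass that is eventually equal to $c_1^\N$ or $c_2^\N$. Since the three prescribed growth behaviours of $p_\bw$ (eventually constant, eventually $n+k$, and identically $2n+1$) are pairwise incompatible, it suffices to prove the three forward implications; the converses then follow automatically from this partition. The plan is therefore to handle the three cases in turn, the first two being degenerate and the third carrying the real content.

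For part (i), if $(c_{i_n})_{n\in\N}$ is eventually $c_1^\N$ or $c_2^\N$, then Lemma~\ref{lemma:existence of limit} shows that $\bw$ is eventually periodic, so $p_\bw$ is bounded and hence eventually constant. For part (ii), I would first reduce to the pure case $(c_{i_n})_{n\in\N}\in\{c_1^2,c_2^2\}^\N$: prepending a finite prefix $\sigma_{[0,N)}\in\{c_1,c_2\}^*$ amounts to applying a fixed injective, hence recognisable, morphism, which changes $p_\bw$ only by a bounded amount and so preserves membership in the class ``eventually $n+k$''. In the pure case one computes that $c_1^2$ and $c_2^2$ act on letters by $1\mapsto 1,\ 2\mapsto 12,\ 3\mapsto 13$ and $1\mapsto 13,\ 2\mapsto 23,\ 3\mapsto 3$; since Lemma~\ref{lem:quand-dimQ=2} forces the middle letter to have frequency $0$, the word $\bw$ is quasi-Sturmian, and I would confirm $p_\bw(n)=n+k$ for large $n$ either through Cassaigne's characterisation of words of complexity $n+k$ or by a direct bispecial count in which the vanishing frequency collapses one of the two special factors of the primitive case.

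The core is part (iii). Assume $(c_{i_n})_{n\in\N}$ is primitive; then $\bw$ is uniformly recurrent by the usual argument, and it remains to prove that $\bw$ is dendric, that is, that the extension graph $E(w)$ is a tree for every factor $w$. Granting dendricity, every bispecial factor is neutral ($m(w)=0$, since a tree has bilateral multiplicity $0$), so the second difference $p_\bw(n+2)-2p_\bw(n+1)+p_\bw(n)=\sum_{|w|=n}m(w)$ vanishes and $p_\bw$ is affine; the initial values $p_\bw(0)=1$ and $p_\bw(1)=3$ (all three letters occur, by primitivity) then give $p_\bw(n)=2n+1$, in agreement with the known complexity of uniformly recurrent dendric words over a three-letter alphabet~\cite{MR3320917}. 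To establish dendricity I would proceed by desubstitution: writing $\bw=c_{i_0}(\bw')$ with $\bw'$ the $\C$-adic word directed by the shifted (still primitive) sequence, I would set up the correspondence sending each sufficiently long bispecial factor $v$ of $\bw'$ to its extended-image bispecial factor in $\bw$, show this correspondence is a bijection, and show it preserves the isomorphism type of the extension graph. Iterating this reduction collapses every bispecial factor to one of finitely many short seeds, whose extension graphs are checked directly to be trees.

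The main obstacle is exactly this graph-preservation step, and it is where the precise shape of $c_1,c_2$ matters. The substitution $c_1$ is \emph{not} left-marked, as the images of $1$ and $2$ both begin with the letter $1$, so the standard preservation theorems for proper or bifix morphisms do not apply as a black box; this is precisely the phenomenon flagged in the remark following the definition of $c_1,c_2$, where replacing the image $13$ by $31$ would make $c_1$ left-marked yet destroy the complexity. I therefore expect to analyse the left extensions by hand, tracking how the shared initial letter $1$ can merge left-extension classes of $v$ when one passes to $c_{i_0}(v)$, and verifying in each of the finitely many local configurations of $c_1$ and $c_2$ that both connectedness and acyclicity of $E(w)$ are preserved. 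Organising this case analysis around the factorisations into blocks $c_1c_2^kc_1$ and $c_2c_1^kc_2$ provided by Lemma~\ref{lem:primitive-implies-nice-factorization} is, I expect, the technical heart of the whole argument.
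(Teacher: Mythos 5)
Your overall architecture coincides with the paper's: since the three directive-sequence classes partition $\{c_1,c_2\}^\N$ and the three complexity behaviours are pairwise incompatible, only the implications from directive sequence to complexity need proof, and your treatments of (i) (Lemma~\ref{lemma:existence of limit} plus Morse--Hedlund) and (ii) (reduction by an injective morphism to the pure case, where the word is Sturmian over $\{1,3\}$, plus Cassaigne's quasi-Sturmian characterisation) are correct and essentially identical to the paper's.

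The gap is in (iii), and you have located it yourself: the step you call ``the technical heart'' is precisely what is missing, and the plan you sketch for it would not work as stated. First, the correspondence between a bispecial factor $u$ of $\bw$ and its antecedent $v$ in the desubstituted word does \emph{not} preserve the isomorphism type of the extension graph: in the paper's analysis (Lemma~\ref{lemma: empty word}, Lemma~\ref{lemma:ordinary preserved} and Figures~\ref{figure:descendants of c_{122}}--\ref{figure:descendants of c_{121}}) the extension sets of the non-ordinary descendants of $\varepsilon$ genuinely change along the descent (for instance a five-edge extension set produces four-edge ones); they merely remain within a finite list of configurations, each of which happens to be a tree. Nor is the antecedent map a bijection: a single bispecial $v$ can have several bispecial extended images (e.g.\ $u \in \{2,\varepsilon\}\,c_1c_2c_1(v)\,\{1,13\}$), though each $u$ has a unique antecedent. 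Second, desubstituting one $c_{i_0}$ at a time cannot be justified by the antecedent machinery, because $c_1$ and $c_2$ are neither proper nor marked --- exactly the obstruction you flag without resolving. The paper's fix, absent from your proposal, is to recode the directive sequence over $\C'=\{c_1^2,\,c_2^2,\,c_1c_2^2,\,c_2c_1^2,\,c_1c_2c_1,\,c_2c_1c_2\}$: these six substitutions are injective and \emph{strongly} left or right proper, so Proposition~\ref{prop:def antecedent and bsp ext image} applies and yields both the existence of antecedents and the explicit formula~\eqref{eq:extensionsofufromv} expressing $E(u)$ in terms of $E(v)$. With that formula one computes all extension sets of descendants of $\varepsilon$, shows via Lemma~\ref{lemma:ordinary preserved} that ordinariness, once attained, persists, and checks that the finitely many non-ordinary configurations are trees. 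Without this recoding and the resulting finite computation, dendricity --- and hence $p_\bw(n)=2n+1$ --- is asserted rather than proved.
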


The proof essentially consists in studying the bispecial factors of $\C$-adic words.

\subsection{Bispecial factors and extension sets}
\label{subsection:bispecials}

For every infinite word $\bw \in A^\N$ and every factor $u \in \Lcal_\bw$, we set
\begin{align*}
    E^-(u,\bw) &= \{a \in A \mid a u \in \Lcal_\bw\};	\\
    E^+(u,\bw) &= \{b \in A \mid u b \in \Lcal_\bw\};	\\
    E  (u,\bw) &= \{(a,b) \in A \times A \mid a u b \in \Lcal_\bw\}.
\end{align*}
The set $E(u,\bw)$ is called the {\em extension set} of $u$ in $\bw$.
We represent it by an array of the form
\[
E(u,\bw)\quad=\quad
\begin{array}{c|ccc}
    & \cdots	 	&  j 	& \cdots			\\[-1pt]
\hline
    \vdots	&  &       &       		\\[-1pt]
i	& &		\times					\\[-1pt]
\vdots	&        &			& 			
\end{array},
\]
where a symbol $\times$ in position $(i,j)$ means that $(i,j)$ belongs to $E(u,\bw)$.

The elements of $E^-(u,\bw)$, $E^+(u,\bw)$ and $E(u,\bw)$ are respectively called the {\em left extensions}, the {\em right extensions} and the {\em biextensions} of $u$ in $\bw$. 
When the context is clear, we will omit the information on $\bw$ and simply write $E^-(u)$, $E^+(u)$ and $E(u)$.
The word $u$ is said to be {\em left special} if $\#E^-(u)>1$, {\em right special} if $\#E^+(u)>1$ and {\em bispecial} if it is both left special and right special.

The factor complexity of an infinite word is completely governed by the biextensions of its bispecial factors~\cite{MR2759107}.
In particular, we have the following result.

\begin{proposition}{\rm\cite[Proposition 4.5.3]{MR2759107}}
Let $\bw \in A^\N$ be an infinite word. 
If for every bispecial factor $u$, one has 
\begin{equation}
\label{eq:bilateral mult}
	\#E(u) - \# E^-(u) - \# E^+(u) + 1 = 0,
\end{equation}
then $p_\bw(n) = (p_\bw(1)-1)n +1$ for every $n$. 
\end{proposition}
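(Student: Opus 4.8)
The strategy I would follow is to pass to the second difference of the complexity function and show that the hypothesis forces it to vanish identically. Write $p = p_\bw$, let $\Lcal(n) = \Lcal_\bw(n)$ denote the set of length-$n$ factors, abbreviate the extension sets $E^\pm(u) = E^\pm(u,\bw)$ and $E(u) = E(u,\bw)$, and introduce the \emph{bilateral multiplicity}
\[
    m(u) = \#E(u) - \#E^-(u) - \#E^+(u) + 1,
\]
so that hypothesis~\eqref{eq:bilateral mult} reads $m(u) = 0$ for every bispecial factor $u$. The plan is to prove that in fact $m(u) = 0$ for \emph{all} $u \in \Lcal_\bw$, and then to integrate twice.

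The core of the argument is the counting identity
\[
    p(n+2) - 2p(n+1) + p(n) = \sum_{u \in \Lcal(n)} m(u) \qquad (n \in \N),
\]
which I would obtain from three bijective bookkeeping formulas: grouping a factor of length $n+2$ by its central length-$n$ factor together with its two border letters gives $p(n+2) = \sum_{u \in \Lcal(n)} \#E(u)$; grouping a factor of length $n+1$ by its length-$n$ prefix, resp.\ suffix, and the completing letter gives the two expressions $p(n+1) = \sum_{u \in \Lcal(n)} \#E^+(u) = \sum_{u \in \Lcal(n)} \#E^-(u)$; and trivially $p(n) = \sum_{u \in \Lcal(n)} 1$. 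Substituting the first identity once, the middle identity in both of its forms, and the last one into the alternating sum $p(n+2)-2p(n+1)+p(n)$ collapses it term by term into $\sum_{u} m(u)$.

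Next I would show that only bispecial factors can contribute. If $u$ is not left special, then $E^-(u) = \{a\}$ is a singleton; every biextension of $u$ then has first coordinate $a$, and the map $(a,b) \mapsto b$ identifies $E(u)$ with the set of right extensions of $u$, whence $\#E(u) = \#E^+(u)$ and $m(u) = 0$; the case where $u$ is not right special is symmetric. Combined with the hypothesis, this yields $m(u) = 0$ for every $u \in \Lcal_\bw$, so the second difference above vanishes for every $n$, including $n=0$, where $u = \varepsilon$ is bispecial as soon as $\bw$ has at least two letters. The first difference $s(n) = p(n+1) - p(n)$ is therefore constant, equal to $s(0) = p(1) - 1$, and summing gives $p(n) = 1 + n\,(p(1)-1)$, that is, $p_\bw(n) = (p_\bw(1)-1)n + 1$.

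The step I expect to be the real obstacle is the claim that non-bispecial factors have zero multiplicity: the identification $\#E(u) = \#E^+(u)$ above silently uses that every right extension $ub$ of $u$ is, at some occurrence, preceded by the unique left extension $a$. This holds whenever each factor is both left- and right-extendable — in particular for the uniformly recurrent words to which we ultimately apply the statement — but can fail at a non-recurring prefix of $\bw$; so the careful point is to confirm two-sided extendability of all factors before invoking the singleton argument. The counting identity itself, by contrast, holds for an arbitrary infinite word and needs no such hypothesis, the boundary prefix contributing $0$ consistently on both sides.
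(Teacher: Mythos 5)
Your argument is the classical one. The paper itself offers no proof of this proposition --- it is quoted from Cassaigne--Nicolas \cite{MR2759107} --- and the second-difference identity
\[
    p_\bw(n+2) - 2p_\bw(n+1) + p_\bw(n) \;=\; \sum_{u \in \Lcal_\bw(n)} \bigl(\#E(u) - \#E^-(u) - \#E^+(u) + 1\bigr),
\]
combined with the vanishing of the bilateral multiplicity on non-bispecial factors and a double summation, is exactly the proof behind that citation. Your three counting identities are valid for an arbitrary one-sided infinite word, and the integration step at the end is fine.

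However, the obstacle you flag in your last paragraph is not merely a point requiring care: it is a genuinely missing hypothesis, and the statement as literally quoted (for arbitrary $\bw \in A^\N$) is false. Take $\bw = 011(01)^{\omega} = 01101010\cdots$. Its only bispecial factors are $\varepsilon$ and $1$, and both have multiplicity zero: $E(\varepsilon) = \{(0,1),(1,1),(1,0)\}$ and $E(1) = \{(0,1),(0,0),(1,0)\}$, so in both cases $\#E - \#E^- - \#E^+ + 1 = 3-2-2+1 = 0$. Yet $p_\bw(n) = 4$ for all $n \geq 3$, whereas the conclusion would give $p_\bw(n) = n+1$. What breaks is precisely the step you single out: the factor $u = 01$ is not bispecial (since $E^-(01) = \{1\}$), but it has multiplicity $-1$, because $E^+(01) = \{0,1\}$ while $E(01) = \{(1,0)\}$ only --- the right extension $011$ occurs solely as a prefix of $\bw$ and hence admits no left extension. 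Note that for one-sided words only the left side is at issue (right-extendability is automatic), and left-extendability of every factor is equivalent to recurrence of $\bw$; this is the hypothesis that must be added for your singleton argument, and for the proposition itself, to hold. The omission is harmless for the paper, which applies the proposition only to primitive $\C$-adic words, hence to uniformly recurrent ones; so your proof is correct in exactly the generality in which the result is used, and your diagnosis of where the general statement fails could be upgraded from a worry to a theorem by recording a counterexample such as the one above.
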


Equation~\eqref{eq:bilateral mult} is in particular satisfied when there exists $(a,b) \in E(u)$ such that $E(u) \subset (\{a\} \times A) \cup (A \times \{b\})$.
Such a bispecial factor is said to be {\em ordinary}.
On our tabular representation, this means that the biextensions form a cross as follows:
\[
E(u)\quad=\quad
\begin{array}{c|ccccc}
    & & \cdots	 	&  b 	& \cdots &			\\[-1pt]
\hline
	& & & \times & & \\
\vdots	& & & \vdots & & \\
a	& \times & \cdots &		\times	& \cdots \times				\\[-1pt]
\vdots	& & & \vdots & & \\
	& & & \times & & 
\end{array}.
\]

Another representation of the extension set of a word $u \in \Lcal_\bw$ is given by the {\em extension graph} of $u$.
It is the undirected bipartite graph whose set of vertices is the disjoint union of $E^-(u)$ and $E^+(u)$ and whose set of edges is $E(u)$.
A bispecial factor $u$ is said to be {\em dendric} whenever its extension graph is a tree.
Dendric bispecial factors thus also satisfy Equation~\eqref{eq:bilateral mult}.
Infinite words for which all bispecial factors are dendric are also called {\em dendric} and were recently introduced under the name of tree sets~\cite{MR3320917}.
This family of words contains Arnoux-Rauzy words, codings of regular interval exchange and more~\cite{GLL21}.

\subsection{Bispecial factors in $\C$-adic words}

In this section, we give a detailed description of the extension sets of bispecial factors in $\C$-adic words.
To simplify proofs, we consider 
$\mathcal{C}' = \{c_{11},c_{22},c_{122},c_{211},c_{121},c_{212}\}$, where
\[
\begin{array}{lll}
c_{11} = c_1^2:
\begin{cases}
1 \mapsto 1		\\
2 \mapsto 12 	\\
3 \mapsto 13
\end{cases}
&
c_{122} = c_1 c_2^2:
\begin{cases}
1 \mapsto 12	\\
2 \mapsto 132 	\\
3 \mapsto 2
\end{cases}
&
c_{121} = c_1c_2c_1:
\begin{cases}
1 \mapsto 13		\\
2 \mapsto 132 	\\
3 \mapsto 12
\end{cases}
\\ 
\\
c_{22} = c_2^2:
\begin{cases}
1 \mapsto 13	\\
2 \mapsto 23 	\\
3 \mapsto 3
\end{cases}
&
c_{211} = c_2 c_1^2:
\begin{cases}
1 \mapsto 2	\\
2 \mapsto 213 	\\
3 \mapsto 23
\end{cases}
&
c_{212} = c_2c_1c_2:
\begin{cases}
1 \mapsto 23		\\
2 \mapsto 213 	\\
3 \mapsto 13
\end{cases}
\end{array}.
\]
Every (primitive) $\mathcal{C}$-adic word is a (primitive) $\mathcal{C}'$-adic word and conversely. 
The advantage of considering the substitutions in $\mathcal{C}'$ is that they are injective and strongly (left or right) proper: a substitution $\sigma: A^* \to A^*$ is {\em left proper} (resp. {\em right proper}) {\em for the letter $\ell \in A$} if $\sigma(A) \subset \ell A^*$ (resp., $\sigma(A) \subset A^*\ell$);
it is {\em strongly left proper} (resp. {\em strongly right proper}) {\em for the letter $\ell\in A$} if it is left (resp., right) proper for the letter $\ell$ and if $\ell$ occurs only once in every image $\sigma(b)$, $b \in A$.
The next result is~\cite[Proposition 4.1 and Corollary 4.3]{GLL21} for strongly left proper morphisms, but the proof is symmetric in the strongly right proper case.

\begin{proposition}[\cite{GLL21}]
\label{prop:def antecedent and bsp ext image}
Let $\sigma:A^* \to A^*$ be an injective and strongly left (resp., right) proper substitution for the letter $\ell$ and consider $\bw,\bw' \in A^\mathbb{N}$ such that $\bw = \sigma(\bw')$.
Let finally $u$ be a word in $\mathcal{L}_\bw$ containing an occurrence of $\ell$.
There is a unique triplet $(s,v,p) \in A^* \times \mathcal{L}_{\bw'} \times A^*$ and a pair $(a,b) \in E(v,\bw')$ 
such that $u = s \sigma(v)p$ with
\begin{enumerate}
\item
$s$ a proper suffix of $\sigma(a)$ (resp., a non-empty suffix of $\sigma(a)$);
\item
$p$ a non-empty prefix of $\sigma(b)$ (resp., a proper prefix of $\sigma(a)$). 
\end{enumerate}
The bi-extensions of $u$ are then governed by those of $v$ through the relation
\begin{align}
\label{eq:extensionsofufromv}
	E(u,\bw) &= 
		\{(a',b') \in A \times A 
		\mid 
		\exists (a,b) \in E(v,\bw'): 
	 	\sigma(a) \in A^* a' s 
	 	\wedge 
	 	\sigma(b)\ell \in p b' A^* \}
	\\
	\nonumber
	\text{(resp., } 
	E(u,\bw) &= 
		\{(a',b') \in A \times A 
		\mid 
		\exists (a,b) \in E(v,\bw'): 
	 	\ell\sigma(a) \in A^* a' s 
	 	\wedge 
	 	\sigma(b) \in p b' A^* \}\text{)}.		
\end{align}
In particular, if $u$ is a bispecial factor of $\bw$, then $v$ is a bispecial factor of $\bw'$.
\end{proposition}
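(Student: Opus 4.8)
The plan is to prove the strongly left proper case, the strongly right proper case being entirely symmetric (swap the roles of prefixes and suffixes, and place the marker $\ell$ at the right end of each image). The whole argument rests on one structural observation: since $\sigma$ is strongly left proper for $\ell$, every image $\sigma(c)$ begins with $\ell$ and contains no other occurrence of $\ell$. Writing $\bw=\sigma(\bw'_0)\sigma(\bw'_1)\cdots$, the occurrences of $\ell$ in $\bw$ are therefore exactly the left endpoints of the blocks $\sigma(\bw'_j)$, so $\ell$ acts as a marker that recovers this block decomposition canonically.

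First I would establish existence and uniqueness of the triplet. Fix an occurrence $u=\bw_{[i,i+|u|)}$. As $u$ contains $\ell$, it meets at least one block boundary; let $s$ be the (possibly empty) factor of $u$ preceding its first occurrence of $\ell$ and $p$ the factor from the last occurrence of $\ell$ to the end of $u$. Since $s$ contains no $\ell$, it is a proper suffix of the image $\sigma(a)$ in which $u$ begins; since $p$ starts with $\ell$ and contains exactly one $\ell$, it is a non-empty prefix of the image $\sigma(b)$ in which $u$ ends; the part in between is a concatenation of full images $\sigma(v)$ with $v\in\mathcal{L}_{\bw'}$ and $avb\in\mathcal{L}_{\bw'}$, i.e. $(a,b)\in E(v,\bw')$. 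For uniqueness, note that in any decomposition satisfying conditions (1)–(2) the positions of $\ell$ in $u$ are forced: $s$ carries no $\ell$, each block of $\sigma(v)$ contributes exactly one $\ell$ at its start, and $p$ contributes exactly one $\ell$ at its start. Hence $s$ must be the prefix of $u$ before the first $\ell$ and $p$ the suffix from the last $\ell$, so $(s,\sigma(v),p)$ depends on $u$ alone, and $v$ is then determined by injectivity of $\sigma$.

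Next I would derive the extension formula by desubstituting biextensions. A pair $(a',b')$ lies in $E(u,\bw)$ iff $a'ub'$ occurs in $\bw$. Because $s$ has no $\ell$, the letter $a'$ preceding $s$ sits inside the same image $\sigma(a)$, so $a's$ is a suffix of $\sigma(a)$, i.e. $\sigma(a)\in A^*a's$. On the right, either $p$ is a proper prefix of $\sigma(b)$, so $b'$ is the next letter of $\sigma(b)$, or $p=\sigma(b)$, in which case the letter following $u$ is the initial $\ell$ of the next block, forcing $b'=\ell$; both possibilities are captured uniformly by $\sigma(b)\ell\in pb'A^*$. Conversely, any $(a,b)\in E(v,\bw')$ satisfying these two relations produces, inside $\sigma(avb)$ (extended one letter to the right in $\bw'$ to supply the marker $\ell$ when $p=\sigma(b)$, which is possible since $\bw'$ is infinite), an occurrence of $a'ub'$. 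This yields exactly the stated description of $E(u,\bw)$ and exhibits $(a,b)\mapsto(a',b')$ as a well-defined map whose image is $E(u,\bw)$, since $a'$ is the letter before $s$ in $\sigma(a)$ and $b'$ the letter after $p$ in $\sigma(b)\ell$, each determined by $(a,b)$. The transfer of bispeciality is then immediate: distinct left extensions $a'$ of $u$ force distinct $a\in E^-(v,\bw')$ and distinct right extensions $b'$ force distinct $b\in E^+(v,\bw')$, so $\#E^-(u)\ge 2$ and $\#E^+(u)\ge 2$ imply $\#E^-(v)\ge 2$ and $\#E^+(v)\ge 2$.

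The main obstacle is the careful treatment of the boundary cases in the decomposition — the empty suffix $s$, and above all the full-image prefix $p=\sigma(b)$, where the right extension jumps across the block boundary to the marker $\ell$ of the following image. This is precisely why the conditions are asymmetric (a \emph{proper} suffix on the left versus a \emph{non-empty} prefix on the right) and why the word $\sigma(b)\ell$, rather than $\sigma(b)$, must appear in the right-hand relation of the extension formula; getting these conventions consistent is the delicate point, while the rest is bookkeeping driven by the marker property of $\ell$.
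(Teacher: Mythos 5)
The paper never proves this proposition: it is imported verbatim from \cite[Proposition~4.1 and Corollary~4.3]{GLL21}, so there is no internal argument to compare yours against. Judged on its own, your proof is correct and is in substance the standard desubstitution argument of the cited source: strong left properness makes $\ell$ a marker whose occurrences in $\bw$ are exactly the starts of the blocks $\sigma(\bw'_j)$; this yields existence of the decomposition, its uniqueness (the maximal $\ell$-free prefix of $u$ must be $s$, the suffix of $u$ starting at its last $\ell$ must be $p$, and injectivity of $\sigma$ recovers $v$), both inclusions of the extension formula, and the transfer of bispeciality via the fact that $a'$ is determined by $(a,s)$ and $b'$ by $(b,p)$. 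You also correctly isolate the genuinely delicate point, namely that a right extension can jump across a block boundary, which is exactly what writing $\sigma(b)\ell$ instead of $\sigma(b)$ encodes.

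Two caveats. First, your parenthetical claim that $(a,b)\mapsto(a',b')$ is a well-defined map on $E(v,\bw')$ overstates things: for an arbitrary $(a,b)\in E(v,\bw')$ the word $s$ need not be a suffix of $\sigma(a)$, nor $p$ a prefix of $\sigma(b)$, so the correspondence is only partially defined. Nothing breaks, since your bispeciality argument uses only that distinct $a'$ force distinct $a$ and distinct $b'$ force distinct $b$. Second, the existence of the pair $(a,b)$ tacitly requires $u$ to occur at some positive position in $\bw$ (equivalently $E^-(u,\bw)\neq\emptyset$): your phrase ``the image $\sigma(a)$ in which $u$ begins'' has no meaning for a prefix occurrence with $s=\varepsilon$. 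Concretely, for $\sigma=c_{11}=c_1^2$ and $\bw'=23^\omega$, the factor $u=12$ of $\bw=\sigma(\bw')$ occurs only at position $0$; the (unique) triplet is $(\varepsilon,\varepsilon,\sigma(2))$ with $b=2$, but no letter $a$ satisfies $a2\in\mathcal{L}_{\bw'}$, so no pair exists. This is really a defect of the statement itself in the one-sided, non-recurrent setting rather than of your proof --- it disappears when $\bw'$ is recurrent, and in this paper the proposition is only ever applied to bispecial $u$, which are left-extendable --- but a careful write-up should make the assumption explicit.
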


Let $u$ and $v$ be as in Proposition~\ref{prop:def antecedent and bsp ext image} and assume that $u$ (and so $v$) is bispecial. 
The word $v$ is called the {\em bispecial antecedent of $u$ under $\sigma$} and $u$ is said to be a {\em bispecial extended image of $v$ under $\sigma$}. 
The next result is a direct application of Proposition~\ref{prop:def antecedent and bsp ext image} for morphisms of $\C'$.
We let $\varepsilon$ denote the empty word.

\begin{corollary}
\label{cor:antecedents}
Let $\bw,\bw' \in \A^\N$ be such that $\bw = \sigma(\bw')$ for some $\sigma \in \C'$ and let $u$ be a non-empty bispecial factor of $\bw$.
Then $u$ has a bispecial antecedent $v$ under $\sigma$ and we have one of the following. 
\begin{enumerate}
\item
    If $\sigma = c_{11}$, then $u = \sigma(v) 1$.
\item
    If $\sigma = c_{22}$, then $u = 3 \sigma(v)$.
\item
    If $\sigma = c_{122}$, then $u \in 2 \sigma(v) \{1,\varepsilon\}$.
\item
    If $\sigma = c_{211}$, then $u \in \{3,\varepsilon\} \sigma(v) 2$.
\item
    If $\sigma = c_{121}$, then $u \in \{2,\varepsilon\} \sigma(v) \{1,13\}$.
\item
    If $\sigma = c_{212}$, then $u \in \{3,13\} \sigma(v) \{2,\varepsilon\}$.
\end{enumerate}
\end{corollary}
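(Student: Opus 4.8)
The plan is to apply Proposition~\ref{prop:def antecedent and bsp ext image} separately to each of the six substitutions of $\C'$, the point being that each one is injective and strongly one-sided proper. First I would record, for each $\sigma\in\C'$, which letter $\ell$ witnesses the strong properness and on which side: reading off the images, $c_{11}$ and $c_{121}$ are strongly left proper for $\ell=1$, the substitution $c_{211}$ is strongly left proper for $\ell=2$, while $c_{22}$ and $c_{212}$ are strongly right proper for $\ell=3$ and $c_{122}$ is strongly right proper for $\ell=2$. In every case $\ell$ occurs exactly once in each image, so the hypotheses of Proposition~\ref{prop:def antecedent and bsp ext image} are satisfied.

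Before invoking the proposition I would check that every non-empty bispecial factor $u$ of $\bw$ contains an occurrence of $\ell$, since this is what is needed to produce the decomposition $u=s\sigma(v)p$. In the strongly left (resp.\ right) proper case the occurrences of $\ell$ are exactly the left (resp.\ right) image boundaries, so a factor avoiding $\ell$ is confined to a single image-tail (resp.\ image-head); since these have length at most two, a direct inspection of the six image sets shows that such a short factor is never simultaneously left and right special. Hence $\ell\in u$, and Proposition~\ref{prop:def antecedent and bsp ext image} yields a bispecial antecedent $v\in\mathcal{L}_{\bw'}$ together with the unique triple $(s,v,p)$, with $s$ a suffix and $p$ a prefix of the appropriate images.

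The heart of the argument is then to pin down the finitely many admissible values of $s$ and $p$ from the fact that $u$ is bispecial, using the extension formula~\eqref{eq:extensionsofufromv}. For the left extensions I would enumerate every candidate suffix $s$ permitted by the proposition and, for each, describe the map sending a left extension $a$ of $v$ to the resulting left extension $a'$ of $u$ via the condition that $a's$ be a suffix of $\sigma(a)$ (or of $\ell\sigma(a)$ in the right proper case). Since $u$ is left special this map must attain at least two values, which discards all but the claimed suffixes; for instance for $c_{11}$ only $s=\varepsilon$ survives, since any non-empty $s$ forces $a'=1$, whereas for $c_{121}$ both $s=\varepsilon$ and $s=2$ survive. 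Symmetrically, enumerating the candidate prefixes $p$ and using that $u$ is right special isolates the admissible prefixes, e.g.\ $p=1$ for $c_{11}$ and $p\in\{1,13\}$ for $c_{121}$. Collecting the surviving pairs $(s,p)$ substitution by substitution reproduces exactly the six cases in the statement.

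I expect the only genuine obstacle to be organizational rather than conceptual: the argument is a finite but lengthy enumeration of the suffixes and prefixes of the six image sets, and the bookkeeping must be carried out uniformly so that left special-ness really does fix $s$ and right special-ness really does fix $p$. The one step requiring more than mechanical computation is the preliminary claim that a non-empty bispecial factor must contain $\ell$, but this reduces immediately to inspecting the very short factors that avoid $\ell$.
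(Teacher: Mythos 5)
Your proposal is correct, and it shares the paper's skeleton: both proofs rest on Proposition~\ref{prop:def antecedent and bsp ext image}, both must first verify that the strongly proper letter $\ell$ occurs in every non-empty bispecial factor $u$, and both then discard all but the listed pairs $(s,p)$ by a finite inspection of the six image sets. Where you diverge is in how the two finite checks are run. The paper's mechanism is to determine, from the two-letter words absent from $\Lcal_\bw$, which letters can be left special and which can be right special; since the first (resp.\ last) letter of a left (resp.\ right) special word is itself left (resp.\ right) special, this pins down the possible first and last letters of $u$, and that single fact does double duty: it forces an occurrence of $\ell$ (with one extra absent word, e.g.\ $23 \notin \Lcal_\bw$ when $\sigma = c_{121}$), and it kills every candidate suffix $s$ and prefix $p$ whose first or last letter is incompatible. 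You instead run two independent checks: the occurrence of $\ell$ comes from bounding $\ell$-free factors by the image tails/heads (length at most $2$) and verifying that none of these short factors is bispecial, and the elimination of $(s,p)$ comes from Equation~\eqref{eq:extensionsofufromv}, discarding any $s$ (resp.\ $p$) for which the induced set of left (resp.\ right) extensions of $u$ is forced to be a singleton. Both mechanisms are sound, and the six enumerations you describe do go through and reproduce exactly the claimed sets; the paper's variant is a bit shorter because the first/last-letter observation serves both purposes at once, whereas yours is more mechanical and has the side benefit of rehearsing exactly the extension-transfer computation that the paper needs again in Lemma~\ref{lemma:ordinary preserved}.
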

\begin{proof}
To prove that $u$ has a bispecial antecedent, it suffices to show that $u$ has an occurrence of $\ell$, where $\ell$ is the letter such that $\sigma$ is strongly (left or right) proper for $\ell$.
We prove it for strongly left proper morphisms of $\C'$, the proof being symmetric for the strongly right proper ones.

If $\sigma = c_{11}$, it is immediate to check that $22,23,32,33$ do not belong to $\Lcal_\bw$.
Therefore, the letters $2$ and $3$ are neither left special, nor right special in $\bw$.
Since $u$ is a bispecial factor of $\bw$, it follows that $1$ is the first and the last letter of $u$. 
Since the antecedent $v$ of $u$ satisfies $u = s c_{11}(v) p$, where $s$ is a proper suffix of $c_{11}(a)$ and $p$ is a non-empty proper prefix of $c_{11}(b)1$ for some letters $a,b$, we must have $s=\varepsilon$ and $p = 1$, which shows the result.

If $\sigma = c_{211}$, the same reasoning shows that the first letter of $u$ belongs to $\{2,3\}$ and that the last letter is $2$ and we conclude in a similar way.

If $\sigma = c_{121}$, we again deduce that the first letter of $u$ belongs to $\{1,2\}$ and that the last letter belongs to $\{1,3\}$.
Since $23$ does not belong to $\mathcal{L}_\bw$, the word $u$ contains an occurrence of the letter $1$ and we conclude similarly.
\end{proof}

For an $\Scal$-adic word $\bw$ with $\Scal$-adic representation $((\sigma_n)_{n \in \N},(a_n)_{n \in \N})$, we set for each $n \in \mathbb{N}$, $\bw^{(n)} = \lim_{m \to +\infty} \sigma_n \sigma_{n+1} \cdots \sigma_{m-1}(a_{m})$, provided that the limit exists.
If $\bw$ is a $\C'$-adic word with directive sequence $(\sigma_n)_{n \in \N} \in \C'^\N$, then $\bw = \sigma_0(\bw')$, with $\bw' = \bw^{(1)}$.
Since the bispecial antecedent of a non-empty bispecial word is always shorter, for any bispecial factor $u$ of $\bw$, there is a unique sequence $(u_i)_{0 \leq i \leq n}$ such that 
\begin{itemize}
\item
$u_0 = u$, $u_n = \varepsilon$ and $u_i \neq \varepsilon$ for all $i < n$;
\item
for all $i < n$, $u_{i+1} \in \Lcal_{\bw^{(i+1)}}$ is the bispecial antecedent of $u_i$ under $\sigma_i$.
\end{itemize}
The factor $u$ is called a {\em bispecial descendant} of $\varepsilon$ in $\bw^{(n)}$.

As any bispecial factor of a primitive $\mathcal{C}$-adic word is a descendant of the empty word, to understand the extension sets of any bispecial word in $\bw$, we need to know the possible extension sets of $\varepsilon$ in $\bw^{(n)}$.
We will then use Equation~\eqref{eq:extensionsofufromv} to describe the extension sets of bispecial factors.

\begin{lemma}
\label{lemma: empty word}
If $\bw$ is a primitive $\mathcal{C'}$-adic word with directive sequence
$(\sigma_n)_{n \in \N}$, then the extension set $E(\varepsilon,\bw)$ is one of the following, depending on $\sigma_0$.

\medskip

\begin{center}
    \tabcolsep=5pt
\begin{tabular}{ccc}
    \tabcolsep=5pt
 \begin{tabular}{c|ccc}
 $\sigma_0 = c_{11}$ 
 & $1$ & $2$ & $3$ \\
 \hline
 $1$ & $\times$ & $\times$ & $\times$ \\
 $2$ & $\times$ &   &   \\
 $3$ & $\times$ &   &   \\
 \end{tabular}
 &\qquad
    \tabcolsep=5pt
 \begin{tabular}{c|ccc}
 $\sigma_0 = c_{122}$ 
 & $1$ & $2$ & $3$ \\
 \hline
 $1$ &   & $\times$ & $\times$ \\
 $2$ & $\times$ & $\times$ &   \\
 $3$ &   & $\times$ &   
 \end{tabular}
 &\qquad
    \tabcolsep=5pt
 \begin{tabular}{c|ccc}
 $\sigma_0 = c_{121}$ 
 & $1$ & $2$ & $3$ \\
 \hline
 $1$ &   & $\times$ & $\times$ \\
 $2$ & $\times$ &   &   \\
 $3$ & $\times$ & $\times$ &   \\
 \end{tabular}
    \\ \\
    \tabcolsep=5pt
 \begin{tabular}{c|ccc}
 $\sigma_0 = c_{22}$ 
 & $1$ & $2$ & $3$ \\
 \hline
 $1$ &   &   & $\times$ \\
 $2$ &   &   & $\times$ \\
 $3$ & $\times$ & $\times$ & $\times$ \\
 \end{tabular}
 &\qquad
    \tabcolsep=5pt
 \begin{tabular}{c|ccc}
 $\sigma_0 = c_{211}$ 
 & $1$ & $2$ & $3$ \\
 \hline
 $1$ &   &   & $\times$ \\
 $2$ & $\times$ & $\times$ & $\times$ \\
 $3$ &   & $\times$ &   \\
 \end{tabular}
 &\qquad
    \tabcolsep=5pt
 \begin{tabular}{c|ccc}
 $\sigma_0 = c_{212}$ 
 & $1$ & $2$ & $3$ \\
 \hline
 $1$ &   &   & $\times$ \\
 $2$ & $\times$ &   & $\times$ \\
 $3$ & $\times$ & $\times$ &   \\
 \end{tabular}
 \end{tabular}
 \end{center}
\end{lemma}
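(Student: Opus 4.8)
The plan is to desubstitute a single step and read off the length-$2$ factors. Writing $\bw'=\bw^{(1)}$ we have $\bw=\sigma_0(\bw')$, and $\bw'$ is again a primitive $\mathcal{C}'$-adic word (its directive sequence is the shift of that of $\bw$, hence still primitive). Since primitive $\Scal$-adic words are uniformly recurrent and primitivity forces all three letters to occur, every letter of $\A$ occurs in $\bw'$ infinitely often, hence with both a left and a right extension. This qualitative fact about $\bw'$ is the only input I will need.

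Next I would identify $E(\varepsilon,\bw)$ with $\Lcal_\bw(2)$ through $(a,b)\mapsto ab$ and classify the length-$2$ factors of $\bw=\sigma_0(\bw')$. As the morphisms of $\mathcal{C}'$ are non-erasing, consecutive images $\sigma_0(a)\sigma_0(b)$ form adjacent nonempty blocks, so any window of length $2$ either lies inside a single image $\sigma_0(a)$, giving an \emph{internal} factor, or straddles exactly one block boundary, giving the \emph{boundary} pair consisting of the last letter of $\sigma_0(a)$ and the first letter of $\sigma_0(b)$ for some $ab\in\Lcal_{\bw'}(2)$. No window of length $2$ can cross two boundaries, even when an image has length $1$ (such as $c_{11}(1)=1$), because that would require an empty image. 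Thus $\Lcal_\bw(2)$ is the union of the internal factors, read directly off the images of $\sigma_0$, and the boundary factors.

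The observation that makes the answer depend only on $\sigma_0$ is that each of the six substitutions of $\mathcal{C}'$ is either left proper or right proper. If $\sigma_0$ is left proper for $\ell$, then every image begins with $\ell$ and the boundary factors are exactly $\{(\,\text{last letter of }\sigma_0(a),\ \ell\,)\mid a\in\A\}$; symmetrically, if $\sigma_0$ is right proper for $\ell$ they are $\{(\,\ell,\ \text{first letter of }\sigma_0(b)\,)\mid b\in\A\}$. In both situations the dependence on which pairs $ab$ actually occur in $\bw'$ collapses to the set of letters admitting the relevant one-sided extension, which is all of $\A$ by the first paragraph.

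It then remains to carry out, for each of the six morphisms, the mechanical computation: list the internal factors from the images and the boundary factors from the first- and last-letter maps, and check that the union is the displayed table. For example, for $\sigma_0=c_{11}$ (left proper for $1$, with last-letter map the identity) the internal factors are $12,13$ and the boundary factors are $11,21,31$, yielding $\{11,12,13,21,31\}$ as stated; the other five cases are entirely analogous. I expect the only delicate points---and the only places where primitivity is genuinely used---to be the exhaustiveness of the internal/boundary dichotomy in the presence of length-$1$ images and the guarantee that every letter of $\A$ carries the required extension in $\bw'$; once these are in place, the lemma reduces to the finite verification above.
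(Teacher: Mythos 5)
Your proposal is correct and follows essentially the same route as the paper's proof: primitivity guarantees all three letters occur in $\bw^{(1)}$, length-$2$ factors of $\bw=\sigma_0(\bw^{(1)})$ split into internal factors of images and boundary factors across consecutive blocks, and the left/right properness of every morphism in $\mathcal{C}'$ makes the boundary factors depend only on $\sigma_0$. Your write-up merely spells out more explicitly the points the paper leaves implicit (non-erasingness preventing a window from crossing two boundaries, and the finite case-by-case verification of the tables).
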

\begin{proof}
The directive sequence being primitive, all letters of $\A$ occur in $\bw^{(1)}$.
The extension set of $\varepsilon$ in $\bw$ is governed by the factors of length 2. Any factor $u = u_1u_2$ of length 2 of $\bw$ either occurs in some image of letter $\sigma_0(a)$ or $u_1$ is the last letter of some image of letter and $u_2$ is the first letter of some image of letter.
The result then follows from the fact that all morphisms $\sigma$ in $\mathcal{C}'$ are either left proper ($\sigma(\mathcal{A}) \subset a \mathcal{A}^*$ for some letter $a$) or right proper ($\sigma(\mathcal{A}) \subset \mathcal{A}^* a$ for some letter $a$). 
\end{proof}

Let us now explicitely show how Equation~\eqref{eq:extensionsofufromv} allows to compute the extensions of a bispecial factor $u$ from the extensions of its bispecial antecedent.
When $\sigma$ is strongly left proper for the letter $\ell$, the extensions of $u = s \sigma(v) p$ can be obtained as follows: 
\begin{enumerate}[1)]
\item
replace any left extensions $a$ by $\sigma(a)$ and any right extension $b$ by $\sigma(b)\ell$;
\item
remove the suffix $s$ from the left extensions whenever it is possible (otherwise, delete the row) and remove the prefix $p$ from the right extensions whenever it is possible (otherwise, delete the column);
\item
keep only the last letter of the left extensions and the first letter of the right extensions;
\item
permute and merge the rows and columns with the same label. 
\end{enumerate}
The case where $\sigma$ is strongly right proper is similar.

Let us make this more clear on an example and consider the extension set 
\[
E(v) = \{(1,3),(2,1),(2,2),(2,3),(3,2)\}.
\]
This extension set corresponds to the extension set of the empty word whenever the last applied substitution is $c_{211}$ (see Lemma~\ref{lemma: empty word}).
Using Equation~\eqref{eq:extensionsofufromv}, the extension sets of $2 c_{122}(v)$ and $2 c_{121}(v)1$ are obtained as follows (arrow labels indicate above step number):
\[
    \footnotesize
    \arraycolsep=4pt
\begin{array}{ccccccc}
\begin{array}{c}
E(v)\\
\begin{array}{r|ccc}
  & 1 & 2 & 3	\\
\hline
 1 &   		&   	 & \times 	\\
 2 & \times & \times & \times 	\\
 3 &   		& \times &   
\end{array}
\end{array}
&
\xrightarrow[]{1)}
&
\begin{array}{c}
E(c_{122}(v))\\
\begin{array}{r|ccc}
 & 12 & 132 & 2	\\
\hline
 212 	&   		&   	 & \times 	\\
 2132 	& \times 	& \times & \times 	\\
 22 	&   		& \times &   
\end{array}
\end{array}
& 
\xrightarrow[]{2) \text{ and }3)} 
&
\begin{array}{c}
E(2c_{122}(v))\\
\begin{array}{r|ccc}
& 1 & 1 & 2	\\
\hline
 1 	&   		&   	 & \times 	\\
 3 	& \times 	& \times & \times 	\\
 2	&   		& \times &   
\end{array}
\end{array}
&
\xrightarrow[]{4)}
&
\begin{array}{c}
E(2c_{122}(v))\\
\begin{array}{r|cc}
& 1 & 2 \\
\hline
 1 & 		& \times 		\\
 2 & \times &   			\\
 3 & \times	& \times    
\end{array}
\end{array}
\\
\\
\begin{array}{c}
E(v)\\
\begin{array}{r|ccc}
& 1 & 2 & 3	\\
\hline
 1 &   		&   	 & \times 	\\
 2 & \times & \times & \times 	\\
 3 &   		& \times &   
\end{array}
\end{array}
&
\xrightarrow[]{1)}
&
\begin{array}{c}
E(c_{121}(v))\\
\begin{array}{r|ccc}
& 131 & 1321 & 121	\\
\hline
 13 	&   		&   	 & \times 	\\
 132 	& \times 	& \times & \times 	\\
 12 	&   		& \times &   
\end{array}
\end{array}
& 
\xrightarrow[]{2) \text{ and }3)} 
&
\begin{array}{c}
E(2c_{121}(v)1)\\
\begin{array}{r|ccc}
& 3 & 3 & 2	\\
\hline
 .	&   		&        &   \\
 3 	& \times 	& \times & \times 	\\
 1	&   		& \times &   
\end{array}
\end{array}
&
\xrightarrow[]{4)}
&
\begin{array}{c}
E(2c_{121}(v)1)\\
\begin{array}{r|cc}
& 2 & 3 \\
\hline
 1 & 		& \times 		\\
 3 & \times	& \times    
\end{array}
\end{array}
\end{array}
\]
The proof of Theorem~\ref{thm:factorcomplexity} will essentially consist in describing how ordinary bispecial words occur.
The next lemma allows to understand when bispecial words have ordinary bispecial extended images.

\begin{lemma}
\label{lemma:ordinary preserved}
Let $\bw$ be a $\mathcal{C}'$-adic word with directive sequence $(\sigma_n)_{n \in \N} \in \mathcal{C}'^\N$.
    Let $u \in \Lcal_\bw$ be a non-empty bispecial factor and $v$ be its bispecial antecedent. We have the following.
\begin{enumerate}
\item \label{item:E(v)=E(u)}
If $\sigma_0 \in \{c_{11},c_{22}\}$, then $E(u)=E(v)$;
\item \label{item:v=epsilon}
if $v = \varepsilon$ and $\sigma_0 \in \{c_{121},c_{212}\}$, then $u$ is ordinary;
\item \label{item:E(v) C12}
if $ \sigma_0 \in \{c_{122},c_{121},c_{212}\}$, if $E(v) \subseteq (\A \times \{1,2\}) \cup \{(a,3)\}$ for some letter $a \in \A$ with $E(v) \cap \{(a,1), (a,2)\} \neq\emptyset$ and if $E(v) \setminus \{(a,3)\}$ is the extension set of an ordinary bispecial word, then $u$ is ordinary;
\item \label{item:E(v) L23}
if $ \sigma_0 \in \{c_{211},c_{121},c_{212}\}$, if $E(v) \subseteq (\{2,3\} \times \A) \cup \{(1,a)\}$ for some letter $a \in \A$ with $E(v) \cap \{(2,a), (3,a)\} \neq\emptyset$ and if $E(v) \setminus \{(1,a)\}$ is the extension set of an ordinary bispecial word, then $u$ is ordinary;
\item \label{item:ordinary preserved}
if $v$ is ordinary, then $u$ is ordinary
\end{enumerate}
\end{lemma}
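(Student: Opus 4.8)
The plan is to prove all five items by the same mechanism: convert each claim into an explicit computation of $E(u)$ from $E(v)$, using Corollary~\ref{cor:antecedents} to fix the shape $u=s\,\sigma_0(v)\,p$ and Equation~\eqref{eq:extensionsofufromv} (equivalently, the four-step recipe stated just before the lemma) to transport extensions. For a fixed $\sigma_0\in\mathcal{C}'$ and a fixed admissible pair $(s,p)$, that recipe realises $E(u)$ as the image $(\lambda\times\rho)(E(v))$ of $E(v)$ under a pair of partial maps $\lambda,\rho:\A\to\A$, restricted to the pairs where both coordinates are defined: $\lambda$ sends a left extension $a$ to the last letter of $\sigma_0(a)$ (or of $\ell\sigma_0(a)$ in the right-proper case) after deletion of the fixed suffix $s$, and $\rho$ sends a right extension $b$ to the first letter of $\sigma_0(b)\ell$ (resp. $\sigma_0(b)$) after deletion of the fixed prefix $p$, a letter being \emph{undefined} precisely when $s$ (resp. $p$) is not a suffix (resp. prefix) of the relevant image. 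The first step is therefore to tabulate these finitely many maps $\lambda,\rho$ from the images of $c_{11},\dots,c_{212}$; this is a short finite computation.

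Item~\ref{item:E(v)=E(u)} is then immediate: for $\sigma_0=c_{11}$ (with $s=\varepsilon$, $p=1$) and for $\sigma_0=c_{22}$ (with $s=3$, $p=\varepsilon$) one checks directly that $\lambda$ and $\rho$ are both the identity, so $E(u)=E(v)$. For item~\ref{item:ordinary preserved} I would argue structurally. If $v$ is ordinary, say $E(v)\subseteq(\{a_0\}\times\A)\cup(\A\times\{b_0\})$ with $(a_0,b_0)\in E(v)$, then its image is contained in $(\{\lambda(a_0)\}\times\A)\cup(\A\times\{\rho(b_0)\})$ as soon as $\lambda(a_0)$ and $\rho(b_0)$ are defined. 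Bispeciality of $u$ forces the centre to survive: if $\rho(b_0)$ were undefined, the whole vertical bar $\{(a,b_0)\}$ of the cross would be deleted and $E(u)$ would lie in the single row $\{\lambda(a_0)\}\times\A$, contradicting that $u$ is left special; the case of $\lambda(a_0)$ is symmetric. Hence $(\lambda(a_0),\rho(b_0))\in E(u)$ and $u$ is ordinary.

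Items~\ref{item:E(v) C12} and~\ref{item:E(v) L23} are the corresponding statement for a cross carrying one extra biextension, and are mirror images of each other (word reversal together with the exchange $1\leftrightarrow3$ swaps the two hypotheses and the sets $\{c_{122},c_{121},c_{212}\}$ and $\{c_{211},c_{121},c_{212}\}$), so I would prove item~\ref{item:E(v) C12} and deduce item~\ref{item:E(v) L23} by symmetry. Here $E(v)\setminus\{(a,3)\}$ is an ordinary cross centred at some $(a_0,b_0)$ with $b_0\in\{1,2\}$, and $(a,3)$ is the extra point. Reading off the tabulated right-maps of $\sigma_0\in\{c_{122},c_{121},c_{212}\}$, two behaviours occur: for the prefix $p$ that deletes the column of $3$ (namely $p=1$ for $c_{122}$, $p=13$ for $c_{121}$, $p=2$ for $c_{212}$) the value $\rho(3)$ is undefined, the extra point disappears, and one is back to item~\ref{item:ordinary preserved}; otherwise $\rho$ separates $3$ from $\{1,2\}$ and collapses the latter, so the image of the cross degenerates into part of the single column $\A\times\{\rho(b_0)\}$, while $(a,3)$ lands in row $\lambda(a)$. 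The hypothesis $E(v)\cap\{(a,1),(a,2)\}\neq\emptyset$ ensures that $\lambda(a)$ already carries a point of $E(u)$ in column $\rho(b_0)$, so that $E(u)\subseteq(\{\lambda(a)\}\times\A)\cup(\A\times\{\rho(b_0)\})$ with centre $(\lambda(a),\rho(b_0))\in E(u)$; thus $u$ is ordinary.

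Finally, item~\ref{item:v=epsilon} is the base of the induction, which I would settle by brute force. When $v=\varepsilon$ and $\sigma_0\in\{c_{121},c_{212}\}$, the word $u$ is one of the finitely many short extended images allowed by Corollary~\ref{cor:antecedents}, while $E(\varepsilon,\bw^{(1)})$ is one of the six tables of Lemma~\ref{lemma: empty word}; applying the relevant $(\lambda\times\rho)$ to each of these tables and inspecting the outcome exhibits a cross in every case. The main obstacle throughout is the bookkeeping of the partial maps---tracking, for each admissible $(s,p)$, which letters are deleted and which are merged by the non-injective $\lambda,\rho$---and, for items~\ref{item:E(v) C12} and~\ref{item:E(v) L23}, checking that the lone extra biextension always either vanishes or fuses into the image cross instead of spawning a second special branch.
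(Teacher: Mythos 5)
Your proof is correct and follows essentially the same route as the paper's: the same reduction via Corollary~\ref{cor:antecedents} and Equation~\eqref{eq:extensionsofufromv}, a finite hand check for item~\ref{item:v=epsilon}, a direct transport argument for items~\ref{item:E(v)=E(u)} and~\ref{item:ordinary preserved}, and for item~\ref{item:E(v) C12} the same dichotomy (the prefix $p$ either deletes the column of $3$, reducing to the ordinary case, or merges the columns of $1$ and $2$, with the hypothesis $E(v)\cap\{(a,1),(a,2)\}\neq\emptyset$ supplying the centre of the cross), item~\ref{item:E(v) L23} being obtained by the same reversal-plus-letter-exchange symmetry. Your formalization via the partial maps $(\lambda,\rho)$ is just an explicit rendering of the paper's four-step recipe, so the two arguments coincide in substance.
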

\begin{proof}
Items~\ref{item:E(v)=E(u)} and~\ref{item:ordinary preserved} directly follow from Corollary~\ref{cor:antecedents} and Equation~\eqref{eq:extensionsofufromv}.
 Item~\ref{item:v=epsilon} can be checked by hand using Lemma~\ref{lemma: empty word} and Equation~\eqref{eq:extensionsofufromv}.
Let us prove Item~\ref{item:E(v) C12}, Item~\ref{item:E(v) L23} being symmetric
    (the symmetry consists in applying the reserval, exchanging letters $1$ and $3$
    and exchanging $c_1$ and $c_2$).

We say that two extension sets $E$ and $E'$ are equivalent whenever there exist two permutations $p_1$ and $p_2$ of $\A$ such that $E = \{(p_1(a),p_2(b)) \mid (a,b) \in E'\}$.
If $\sigma_0 = c_{122}$, then $u \in \{2 \sigma(v), 2\sigma(v)1\}$ by Corollary~\ref{cor:antecedents}. 
We make use of Equation~\eqref{eq:extensionsofufromv}. 
If $u = 2 \sigma(v)$, then the extension set of $u$ is equivalent to the one obtained from $E(v)$ by merging the columns with labels 1 and~2. 
If $u = 2 \sigma(v)1$, then the extension set of $u$ is equivalent to the one obtained from $E(v)$ by deleting the column with label 3.
In both cases, $u$ is ordinary.

The same reasoning applies when $\sigma_0 \in \{c_{121},c_{212}\}$: depending on the word $x$ such that $u \in\A^* \sigma(v)x$, either we delete the column with label 3, or we merge the columns with labels 1 and~2.
\end{proof}

\subsection{Factor complexity of $\C$-adic words}

\begin{proof}
    [\Proofof Theorem~\ref{thm:factorcomplexity}]
    Note that the three conditions on $(c_{i_n})_{n\in\N}$ in Theorem~\ref{thm:factorcomplexity} are mutually exclusive and cover all cases, so it is enough to prove that they are sufficient.

\ref{itemFC1} 
This directly follows from Lemma~\ref{lemma:existence of limit} and from the Morse-Hedlund theorem that states that an infinite word has bounded factor complexity if and only if it is eventually periodic~\cite{MR1507944}.

\ref{itemFC2} 
Let $N$ be such that $(c_{i_n})_{n \geq N}$ is in $\{c_1^2,c_2^2\}^\N$ with $c_1$ and $c_2$ occurring infinitely many times in $(c_{i_n})_{n \in \N}$.
Thus $\bw^{(N)}$ is a Sturmian sequence over the alphabet $\{1,3\}$.
As $\sigma_{[0,N)}$ is injective, $\bw$ has factor complexity $p_\bw(n) = n+k$ for some $k \geq 1$ for all large enough $n$~\cite[Proposition 8]{Cassaigne98sequenceswith}.

\ref{itemFC3} 
The sequence $(c_{i_n})_{n \in \N}$ being primitive, the word $\bw$ is uniformly recurrent.
Let us show that $\bw$ is dendric.
Recall that this implies that Equation~\eqref{eq:bilateral mult} holds for all bispecial factors, so that $p_\bw(n)=2n+1$ for all $n$.

To show that the extension graphs of all bispecial factors are trees, we make use of Lemma~\ref{lemma:ordinary preserved}.
If $u$ is a bispecial factor of $\bw$, it is a descendant of $\varepsilon \in \Lcal_{\bw^{(n)}}$ for some $n$.
If $\sigma_n \in \{c_{11},c_{22}\}$, then from Lemma~\ref{lemma: empty word} and Lemma~\ref{lemma:ordinary preserved}, all descendants of $\varepsilon$ are ordinary. 
The extension graph of $u$ is thus a tree.

For $\sigma_n \in\{c_{122},c_{211},c_{121},c_{212}\}$, we represent the extension sets of the descendants of $\varepsilon$ in the graphs represented in Figure~\ref{figure:descendants of c_{122}} and Figure~\ref{figure:descendants of c_{121}}.
Observe that the situation is symmetric for $c_{122}$ and $c_{211}$ and for $c_{121}$ and $c_{212}$ so we only represent the graphs for $c_{122}$ and $c_{121}$.
Furthermore, in these graphs, we do not represent the extension sets of ordinary bispecial factors as the property of being ordinary is preserved by taking bispecial extended images (Lemma~\ref{lemma:ordinary preserved}).
Given an extension set of some bispecial word $v$, if $u$ is a bispecial extended image of $v$ such that $u = s \sigma(v) p$, we label the edge from $E(v)$ to $E(u)$ by $s \cdot \sigma \cdot p$.
Finally, for all $v$, we have $E(c_{11}(v)1)=E(v)$ and $E(3c_{22}(v))=E(v)$, but for the sake of clarity, we do not draw the loops labeled by $c_{11}\cdot 1$ and by $3 \cdot c_{22}$.
We conclude by observing that the extension graphs of all descendants are trees.     
\end{proof}

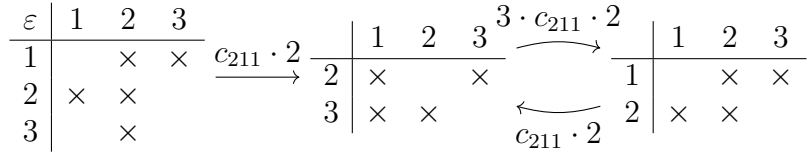
\begin{figure}[t]
\centering
\begin{tikzpicture}
\node(A) at (0,0){
$\begin{array}{c|ccc}
\varepsilon & 1 & 2 & 3 \\
 \hline
 1 &   		& \times & \times \\
 2 & \times & \times &   \\
 3 &   		& \times &  
\end{array}$
};
\node(B) at (4,0){
$\begin{array}{c|ccc}
 & 1 & 2 & 3 \\
 \hline
 2 & \times & 		 & \times  \\
 3 & \times	& \times &  
\end{array}$
};
\node(C) at (8,0){
$\begin{array}{c|ccc}
 & 1 & 2 & 3 \\
 \hline
 1 & 		& \times & \times  \\
 2 & \times	& \times &  
\end{array}$
};
\path [->] (A) edge node[above] {$c_{211}\cdot 2$} (B);
\path [->] (B) edge[bend left=15] node[above] {$3\cdot c_{211}\cdot 2$} (C);
\path [->] (C) edge[bend left=15] node[below] {$c_{211}\cdot 2$} (B);
\end{tikzpicture}
    \caption{Non-ordinary bispecial descendants of $\varepsilon \in \Lcal_{\bw^{(n)}}$ whenever $\sigma_n=c_{122}$.}
\label{figure:descendants of c_{122}}
\end{figure}

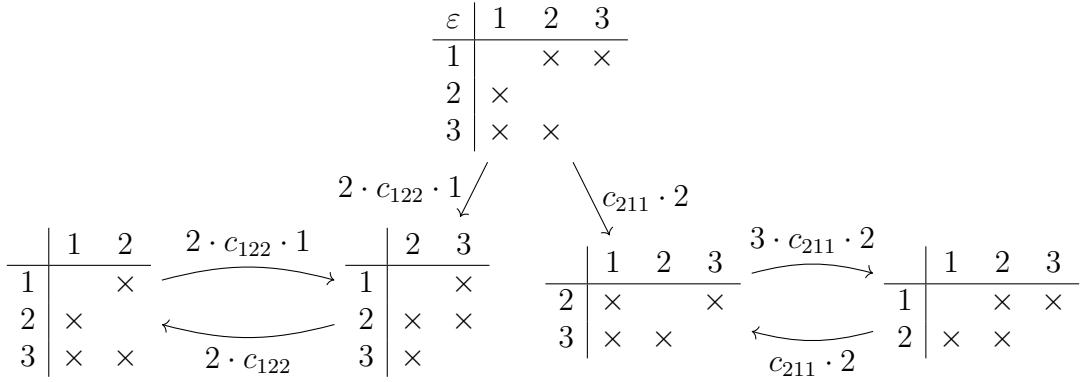
\begin{figure}[t]
\centering
\begin{tikzpicture}[scale=1.5]
\node(A) at (4,2){
$\begin{array}{c|ccc}
\varepsilon & 1 & 2 & 3 \\
 \hline
 1 &   		& \times 	& \times 	\\
 2 & \times &   		&   		\\
 3 & \times & \times 	&
\end{array}$
};
\node(B) at (3,0){
$\begin{array}{c|cc}
 & 2 & 3 \\
 \hline
 1 & 		& \times  \\
 2 & \times & \times  \\
 3 & \times	&   
\end{array}$
};
\node(B') at (0,0){
$\begin{array}{c|cc}
 & 1 & 2 \\
 \hline
 1 & 		& \times  \\
 2 & \times & 		  \\
 3 & \times	& \times  
\end{array}$
};
\node(C) at (5,0){
$\begin{array}{c|ccc}
 & 1 & 2 & 3 \\
 \hline
 2 & \times &		 & \times  \\
 3 & \times	& \times &  
\end{array}$
};
\node(C') at (8,0){
$\begin{array}{c|ccc}
 & 1 & 2 & 3 \\
 \hline
 1 & 		& \times & \times  \\
 2 & \times	& \times &  
\end{array}$
};
\path [->] (A) edge node[left] {$2\cdot c_{122}\cdot 1$} (B);
\path [->] (B) edge[bend left=15] node[below] {$2\cdot c_{122}$} (B');
\path [->] (B') edge[bend left=15] node[above] {$2 \cdot c_{122} \cdot 1$} (B);
\path [->] (A) edge node[right] {$c_{211} \cdot 2$} (C);
\path [->] (C) edge[bend left=15] node[above] {$3 \cdot c_{211} \cdot 2$} (C');
\path [->] (C') edge[bend left=15] node[below] {$c_{211} \cdot 2$} (C);
\end{tikzpicture}
    \caption{Non-ordinary bispecial descendants of $\varepsilon \in \Lcal_{\bw^{(n)}}$ whenever $\sigma_n=c_{121}$.}
\label{figure:descendants of c_{121}}
\end{figure}

\begin{proof}[\Proofof Theorem~\ref{maintheorem:caracterization 2n+1}]    
It directly follows from Theorem~\ref{thm:dim-sur-Q} and Theorem~\ref{thm:factorcomplexity}.    
\end{proof}

\begin{corollary}
Every $\C$-adic word has uniform word frequencies. 
Furthermore, if $\mu$ is a shift-invariant ergodic Borel probability measure on $\{1,2\}^\N$ satisfying $\mu([12121212])>0$, then
$\mu$-almost every $\C$-adic word is uniformly recurrent and balanced, has factor complexity $p_\bw(n)=2n+1$ for every $n$ and its vector of letter frequencies is totally irrational.
\end{corollary}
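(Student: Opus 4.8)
The plan is to read the corollary off the theorems already proved, the only real work being the passage from the measure-theoretic hypothesis $\mu([12121212])>0$ to the primitivity of almost every directive sequence.

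The first assertion is unconditional and is precisely Proposition~\ref{prop:unif frequencies}: every $\C$-adic word $\bw=\lim_{n\to\infty}\sigma_{[0,n)}(1^\omega)$ has uniform word frequencies, and its frequency vector is the vector $\bf$ determined by $\bigcap_{n\geq0}M_{[0,n)}\R^3_{\geq0}=\R_{\geq0}\bf$. No hypothesis on $\mu$ is needed here, so I would dispatch it first.

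The key step for the second assertion is to establish that $\mu(\Pcal)=1$, where $\Pcal$ denotes the set of primitive directive sequences. Since $[12121212]\subset[121]$, the hypothesis gives $\mu([121])\geq\mu([12121212])>0$. As $\mu$ is shift-invariant and ergodic, the pointwise ergodic theorem applied to the indicator $\chi_{[121]}$ shows that for $\mu$-almost every sequence the factor $121$ occurs with frequency $\mu([121])>0$, hence infinitely often; equivalently $\mu\bigl(\bigcap_{m\in\N}\bigcup_{n\geq m}S^{-n}[121]\bigr)=1$. Because $121$ corresponds to the product $C_1C_2C_1=C_1C_2^{2\cdot0+1}C_1$, Proposition~\ref{prop:primitiveness for C adic} guarantees that any sequence with infinitely many occurrences of $121$ is primitive, so $\mu(\Pcal)=1$.

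It then remains to transcribe primitivity into the four asserted properties for $\mu$-almost every word. Primitivity of the directive sequence makes $\bw$ uniformly recurrent and, by Theorem~\ref{thm:factorcomplexity}(iii), is equivalent to $p_\bw(n)=2n+1$ for all $n$; by Theorem~\ref{thm:dim-sur-Q}(iii) it is equivalent to $\dim_\Q(\bf)=3$, that is, to $\bf$ being totally irrational (and $\bf$ is indeed the frequency vector by the first paragraph). Finally, balancedness for $\mu$-almost every word is exactly part~1 of Theorem~\ref{maintheorem:combinatoire almost always}, which already invokes $\mu([12121212])>0$. Assembling these citations yields the corollary. I expect no genuine obstacle: the single substantive point is the ergodic-theorem argument for $\mu(\Pcal)=1$, together with the transfer of $\mu$ from directive sequences to the associated words $\bw$; everything else is a direct appeal to the preceding results.
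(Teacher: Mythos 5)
Your proposal is correct and follows essentially the same route as the paper: uniform word frequencies from Proposition~\ref{prop:unif frequencies}, then ergodicity plus $\mu([12121212])>0$ forcing $\mu$-almost every directive sequence to be primitive (via Proposition~\ref{prop:primitiveness for C adic}, exactly the argument the paper uses earlier for Bernoulli measures), then Theorems~\ref{maintheorem:caracterization 2n+1} and~\ref{maintheorem:combinatoire almost always}. The only cosmetic difference is that you spell out the ergodic-theorem step and cite Theorems~\ref{thm:dim-sur-Q} and~\ref{thm:factorcomplexity} directly, where the paper compresses this into citations of its main theorems.
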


\begin{proof}
Uniform factor frequencies follows from Proposition~\ref{prop:unif frequencies}.
Since $\mu$ is ergodic and satisfies $\mu([12121212])>0$, then $\mu$-almost every directive sequence $(c_{i_n})_{n \in \N}$ is primitive. 
    The result then follows from 
    Theorem~\ref{maintheorem:caracterization 2n+1} and
    Theorem~\ref{maintheorem:combinatoire almost always}.
\end{proof}

\section{Conjugacy with a semi-sorted version of Selmer algorithm}
\label{sec:selmer}

The Selmer algorithm \cite{MR0130852,schweiger} (also called the \emph{GMA algorithm}~\cite{MR1156412}) is an algorithm which subtracts the smallest entry to the largest.
As observed in \cite{MR2413304},
Selmer algorithm is also conjugate on the absorbing simplex to
M\"onkemeyer's algorithm \cite{MR64084} which makes it an algorithm which have
been rediscovered many times.
We prove in this section that $F_C$ also belongs to this family.

As recalled in Section~\ref{sec:lyapunov}, the numerical computation of Lyapunov exponents~\cite{labbe_3-dimensional_2015} indicates that exponents for the Selmer algorithm and $F_C$ have
statistically equal values (the difference is at most $10^{-5}$). 
We confirm this observation by showing 
a relation between $F_C$ and the Selmer algorithm.
The map $F_C$ is not conjugate to the Selmer algorithm, however,
we show that $F_C$ is conjugate to a semi-sorted version of the Selmer
algorithm which keeps the largest entry at index $1$.
We also show that the application of this semi-sorted Selmer algorithm on its absorbing subset defines $\mathcal{S}$-adic subshifts that actually are images of $\C$-adic subshifts by a permutation of the alphabet.

On $\Theta=\{\bx=(x_1,x_2,x_3)\in\mathbb{R}^3_{\geq 0}\mid \max(x_2,x_3)\leq x_1\}$, the semi-sorted version of Selmer algorithm is defined by
\[
    F_S (x_1,x_2,x_3) = 
\begin{cases}
    (x_2, x_1-x_3, x_3) & \mbox{if } 
    x_1 \leq x_2+x_3\text{ and } x_2 \geq x_3,\\
    (x_3, x_2, x_1-x_2) & \mbox{if } 
    x_1 \leq x_2+x_3\text{ and } x_2 < x_3,\\
    (x_1-x_3, x_3, x_2) & \mbox{if } 
    x_1 > x_2+x_3\text{ and } x_2 \geq x_3,\\
    (x_1-x_2, x_3, x_2) & \mbox{if } 
    x_1 > x_2+x_3\text{ and } x_2 < x_3.
\end{cases}
\]
Like with $F_C$, we consider the partition 
\begin{align*}
	\Theta_1 &= \{ (x_1,x_2,x_3) \in \Theta \mid 
	x_1 \leq x_2+x_3\text{ and } x_2 \geq x_3 \}, \\
	\Theta_2 &= \{ (x_1,x_2,x_3) \in \Theta \mid 
	x_1 \leq x_2+x_3\text{ and } x_2 < x_3 \}, \\
	\Theta_3 &= \{ (x_1,x_2,x_3) \in \Theta \mid 
	x_1 > x_2+x_3\text{ and } x_2 \geq x_3 \}, \\
	\Theta_4 &= \{ (x_1,x_2,x_3) \in \Theta \mid 
	x_1 > x_2+x_3\text{ and } x_2 < x_3 \} \\	 
\end{align*}
and the matrices
\[
S_1 = \left(\begin{array}{rrr}
    0 & 1 & 1 \\
    1 & 0 & 0 \\
    0 & 0 & 1
\end{array}\right),
\quad
S_2 = \left(\begin{array}{rrr}
    0 & 1 & 1 \\
    0 & 1 & 0 \\
    1 & 0 & 0
\end{array}\right)
\quad
S_3 = \left(\begin{array}{rrr}
    1 & 1 & 0 \\
    0 & 0 & 1 \\
    0 & 1 & 0
\end{array}\right),
\quad
S_4 = \left(\begin{array}{rrr}
    1 & 0 & 1 \\
    0 & 0 & 1 \\
    0 & 1 & 0
\end{array}\right).
\]
The map $F_S$ is then defined by $F_S(\bx) = S_i^{-1}\bx$ whenever $\bx \in \Theta_i$.
The Selmer algorithm being weakly convergent~\cite{schweiger}, there is a continuous map $\pi:\{1,2,3,4\}^\N \to \Theta$ defined by 
\[
	\bigcap_{n \in \N} S_{i_0} S_{i_1} \cdots S_{i_n} \mathbb{R}^3_{\geq 0} = \mathbb{R}_{\geq 0} \pi((i_n)_{n \in \N}).
\] 
Note that if $\dim_\mathbb{Q}(\bx)=3$, then for all large enough $n$, $F_S^n(\mathbf x)$ belongs to $\Gamma = \Theta_1 \cup \Theta_2$.
Therefore, if $\mu$ is a shift-invariant ergodic measure on $\{1,2,3,4\}^\N$ such that $\pi_*\mu(\{\bx \in \Theta \mid \dim_\mathbb{Q}(\bx) = 3\})=1$, then $\mu([3]) = \mu([4]) = 0$.
To compute the Lyapunov exponents associated with such a measure, we may thus restrict the Selmer algorithm to the absorbing set $\Gamma$.
The next result shows that $F_C$ and $F_S$ (restricted to $\Gamma$) are conjugate, confirming the equality of their respective Lyapunov exponents.

\begin{proposition}
    The maps $F_C:\mathbb{R}^3_{\geq 0}\to\mathbb{R}^3_{\geq 0}$ and
    $F_S:\Gamma\to\Gamma$ 
    are conjugate, i.e.,
    there exists a linear homeomorphism $z:\mathbb{R}^3_{\geq 0}\to\Gamma$ 
    such that $z\circ F_C = F_S\circ z$.
    Furthermore, for all $\bx \in \mathbb{R}^3$, we have $\dim_\Q(\bx) = \dim_\Q(z(\bx))$.
\end{proposition}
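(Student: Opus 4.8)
The plan is to realize $z$ as a \emph{linear} map $z(\bx)=Z\bx$ for a fixed invertible $Z\in\R^{3\times3}$. Both $F_C$ and $F_S$ act as $\bx\mapsto M^{-1}\bx$ on the pieces of a partition (the cones $\Lambda_1,\Lambda_2$ for $F_C$, and $\Theta_1,\Theta_2$ for $F_S$, since $\Gamma=\Theta_1\cup\Theta_2$). Hence the identity $z\circ F_C=F_S\circ z$ follows automatically once $Z$ satisfies three conditions: (i) $Z(\R^3_{\geq0})=\Gamma$; (ii) $Z(\Lambda_i)=\Theta_i$ for $i\in\{1,2\}$; and (iii) the intertwining relations $ZC_i=S_iZ$ for $i\in\{1,2\}$. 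Indeed, for $\bx\in\Lambda_i$ one then has $z(\bx)\in\Theta_i$ and
\[
    z(F_C(\bx))=ZC_i^{-1}\bx=S_i^{-1}Z\bx=F_S(z(\bx)),
\]
where $ZC_i^{-1}=S_i^{-1}Z$ is just (iii) rewritten.

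First I would determine $Z$ by solving (iii). The relations $ZC_1=S_1Z$ and $ZC_2=S_2Z$ form a homogeneous linear system in the nine entries of $Z$: solving $ZC_1=S_1Z$ leaves a three-parameter family, and imposing $ZC_2=S_2Z$ pins $Z$ down up to a scalar. The outcome is
\[
    Z=\begin{pmatrix} 1 & 1 & 1 \\ 1 & 1 & 0 \\ 0 & 1 & 1 \end{pmatrix},
    \qquad
    Z^{-1}=\begin{pmatrix} 1 & 0 & -1 \\ -1 & 1 & 1 \\ 1 & -1 & 0 \end{pmatrix}.
\]
A reassuring preliminary check is that $C_1C_2=S_1S_2$, so the two algorithms share the same ``accelerated'' matrix, which makes a common conjugator plausible. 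Since $\det Z=1$, the map $z$ is a linear homeomorphism of $\R^3$.

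Next I would verify (i) and (ii) directly from $Z\bx=(x_1+x_2+x_3,\;x_1+x_2,\;x_2+x_3)$. Writing $\by=Z\bx$, the identities $y_1-y_2=x_3$, $y_1-y_3=x_1$ and $y_2+y_3-y_1=x_2$ show that $\bx\in\R^3_{\geq0}$ forces $\max(y_2,y_3)\leq y_1\leq y_2+y_3$, i.e.\ $\by\in\Gamma$; the explicit $Z^{-1}$ gives the reverse inclusion, so $Z(\R^3_{\geq0})=\Gamma$. For the partition, $y_2-y_3=x_1-x_3$ yields $\bx\in\Lambda_1\iff x_1\geq x_3\iff y_2\geq y_3\iff\by\in\Theta_1$, and likewise for index $2$, establishing (ii). Together with the intertwining this proves $z\circ F_C=F_S\circ z$.

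Finally, the equality $\dim_\Q(\bx)=\dim_\Q(z(\bx))$ is immediate: the entries of $Z\bx$ are $\Q$-linear combinations of the entries of $\bx$, and since $Z^{-1}$ also has integer entries the converse holds, so the two vectors span the same $\Q$-subspace of $\R$. I expect the only genuinely non-formal point to be step (iii) combined with (i)--(ii): a priori the essentially unique intertwiner need not carry the nonnegative orthant exactly onto $\Gamma$ nor respect the labelling of the partition, and checking that it does (with $\Lambda_i$ matched to $\Theta_i$, and not crossed) is the heart of the argument; the conjugacy identity and the dimension statement are then bookkeeping.
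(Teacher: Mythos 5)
Your proposal is correct and takes essentially the same approach as the paper: the same conjugating matrix $Z$, the same verification that $\mathbf{x}\in\Lambda_i$ if and only if $Z\mathbf{x}\in\Theta_i$, and the same intertwining relations $ZC_i=S_iZ$ for $i=1,2$, from which the conjugacy identity and the equality of $\mathbb{Q}$-dimensions follow. The only difference is presentational: you derive $Z$ by solving the intertwining equations and spell out the inclusions $Z(\mathbb{R}^3_{\geq 0})=\Gamma$ and $Z(\Lambda_i)=\Theta_i$ via the identities $y_1-y_2=x_3$, $y_1-y_3=x_1$, $y_2+y_3-y_1=x_2$, whereas the paper simply exhibits $Z$ and asserts these facts.
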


\begin{proof}
Let $z:\mathbb{R}^3_{\geq 0}\to\Gamma$ be the homeomorphism defined by $\bx\mapsto Z\bx$ with
\[
Z= \left(\begin{array}{rrr}
    1 & 1 & 1 \\
    1 & 1 & 0 \\
    0 & 1 & 1
\end{array}\right).
\]
For $i=1,2$, we have $\bx \in \Lambda_i$ if and only if $Z \bx \in \Theta_i$ and $C_i$ is conjugate to $S_i$ through the matrix $Z$:
\[
S_1 Z=
\left(\begin{array}{rrr}
1 & 2 & 1 \\
1 & 1 & 1 \\
0 & 1 & 1
\end{array}\right)
= Z C_1
\qquad\text{and}\qquad
S_2 Z=
\left(\begin{array}{rrr}
    1 & 2 & 1 \\
    1 & 1 & 0 \\
    1 & 1 & 1
\end{array}\right)
= Z C_2.
\]
    Thus we have $S_i^{-1}z=zC_i^{-1}$ and
    $z\circ F_C = F_S\circ z$.
The equality $\dim_\Q(\bx) = \dim_\Q(z(\bx))$ directly follows from the definition of $z$.
\end{proof}

For example, orbits of the two algorithms are related like in the
following diagram:
\begin{center}
\begin{tikzpicture}[node distance=28mm,auto]
    \node             (a0) {$(3,15,22)$};
    \node[right of=a0] (a1) {$(15,3,19)$};
    \node[right of=a1] (a2) {$(3,15,4)$};
    \node[right of=a2] (a3) {$(15,3,1)$};
    \node[right of=a3] (a4) {$(14,1,3)$};
    \node[below of=a0,node distance=15mm] (b0) {$(40,18,37)$};
    \node[right of=b0] (b1) {$(37,18,22)$};
    \node[right of=b1] (b2) {$(22,18,19)$};
    \node[right of=b2] (b3) {$(19,18,4)$};
    \node[right of=b3] (b4) {$(18,15,4)$};

    \draw[->] (a0) to node {$F_C$} (a1);
    \draw[->] (a1) to node {$F_C$} (a2);
    \draw[->] (a2) to node {$F_C$} (a3);
    \draw[->] (a3) to node {$F_C$} (a4);
    \draw[->] (b0) to node {$F_S$} (b1);
    \draw[->] (b1) to node {$F_S$} (b2);
    \draw[->] (b2) to node {$F_S$} (b3);
    \draw[->] (b3) to node {$F_S$} (b4);
    \draw[->] (a0) to node {$z$} (b0);
    \draw[->] (a1) to node {$z$} (b1);
    \draw[->] (a2) to node {$z$} (b2);
    \draw[->] (a3) to node {$z$} (b3);
    \draw[->] (a4) to node {$z$} (b4);
\end{tikzpicture}
\end{center}

Like for the matrices $C_1$ and $C_2$, we associate with $S_1$ and $S_2$ the two substitutions 
\[
s_1
=
\left\{\begin{array}{l}
1 \mapsto 2\\
2 \mapsto 1\\
3 \mapsto 31
\end{array}\right.
\quad \text{and} \quad
s_2
=
\left\{\begin{array}{l}
1 \mapsto 3\\
2 \mapsto 12\\
3 \mapsto 1
\end{array}\right.,
\]
$S_i$ being the incidence matrix of $s_i$ for $i=1,2$.
Given a sequence $\boldsymbol{\sigma} = (\sigma_n)_{n \in \N} \in \{s_1,s_2\}^\N$ of substitutions and a sequence $(a_n)_{n \in \N} \in \A^\N$ of letters, the convergence of $(\sigma_{[0,n)}(a_n))_{n \in \N}$ to an infinite word is not as nicely described as with the substitutions $c_1$ and $c_2$ (see Lemma~\ref{lemma:existence of limit}).
We can however easily define the associated $\{s_1,s_2\}$-adic subshift 
\[
X_{\boldsymbol{\sigma}}
=
\{\bw \in \A^\N \mid u \in \Lcal_\bw \Rightarrow \exists a \in \A, n \in \N: u \in \Lcal_{\sigma_{[0,n)}(a)} \}.
\]
This subshift is minimal as soon as the sequence $\boldsymbol{\sigma}$ is primitive.
We will now show that such a subshift is actually the image of a $\C$-adic subshift under a permutation of the alphabet.

If $X$ is a subshift over some alphabet $A$ and if $\sigma: A^* \to A^*$ is a substitution, we define the {\em image of $X$ under $\sigma$} by
\[
	\sigma \cdot X
	=
	\left\{ S^i (\sigma(\bw)) 
    \mid 
    \bw = (w_n)_{n \in \N} \in X, 
    0 \leq i < |\sigma(w_0)| \right\}.
\]
It corresponds to the shift-orbit closure of $\sigma(X)$.

Let $z_l$ and $z_r$ be the substitutions:
\[
z_l :\left\{\begin{array}{l}
1 \mapsto 12\\
2 \mapsto 123\\
3 \mapsto 13
\end{array}\right.
\qquad\text{and}\qquad
z_r :\left\{\begin{array}{l}
1 \mapsto 21\\
2 \mapsto 231\\
3 \mapsto 31
\end{array}\right..
\]
Notice that $Z$ is the incidence matrix of both $z_l$ and $z_r$.
The substitution $z_l$ is left proper while $z_r$ is right proper.
Moreover they are conjugate through the equation
\[
    z_l(w)\cdot 1 = 1\cdot z_r(w)
\]
for every $w \in\mathcal{A}^*$.
In particular, for any word $\bw \in \A^\mathbb{N}$, we have
\[
	z_l(\bw) = 1 z_r(\bw) 
	\quad \text{and} \quad	
	z_r(\bw) = S z_l(\bw),
\]
where $S$ is the shift map.
For every minimal subshift $X \subset \A^\N$, we thus have
\[
	z_l \cdot X = z_r \cdot X.
\]

The substitutions $c_i$ are not conjugate to $s_i$ but are related through substitutions $z_l$ and $z_r$ for $i=1,2$:
\begin{align}
\label{eq:conjugacy1}
s_1\circ z_l = z_r\circ c_1 = (1 \mapsto 21, 2 \mapsto 2131, 3 \mapsto 231);
\\
\label{eq:conjugacy2}
s_2\circ z_r = z_l\circ c_2 = (1 \mapsto 123, 2 \mapsto 1213, 3 \mapsto 13).
\end{align}
This allows to prove the following result, where a minimal subshift is {\em dendric} if it is generated by a dendric word.

\begin{proposition}
For all $(i_n)_{n \in \N} \in \{1,2\}^\N$, the sequence $\bc = (c_{i_n})_{n \in \N}$ is primitive if and only if the sequence $\bs = (s_{i_n})_{n \in \N}$ is so.
Furthermore, in this case we have $X_\bs = \rho(c_{121} \cdot X_\bc)$, where $\rho$ is the permutation $(23)$.
In particular, $X_\bs$ is a minimal dendric subshift so it has factor complexity $2n+1$ for all $n$.
\end{proposition}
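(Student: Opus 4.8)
The statement splits into the primitivity equivalence and, assuming primitivity, the subshift identity; the latter is the substantial part. For the equivalence, recall that by Proposition~\ref{prop:primitiveness for C adic} the sequence $\bc$ is \emph{not} primitive precisely when there is an integer $N$ with $i_{N+2k}=i_{N+2k+1}$ for every $k$, a condition bearing on $(i_n)$ alone. I would show that the same condition characterizes non-primitivity of $\bs$. For the forward implication I use that $s_1^2(1)=s_1(2)=1$ and $s_2^2(1)=s_2(3)=1$, so each of $s_1^2,s_2^2$ fixes the letter~$1$; hence, under the pairing, $s_{[N,n)}(1)=1$ when $n-N$ is even, while if $n-N$ is odd the last factor $s_{i_{n-1}}$ sends the letter $2$ (if $i_{n-1}=1$) or $3$ (if $i_{n-1}=2$) to $1$, which is then fixed by the remaining even block; in every case some letter has $s_{[N,n)}$-image equal to $1$, so $\bs$ is not primitive. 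The reverse implication (when the pairing fails, infinitely many blocks $s_1s_2^{2k+1}s_1$ or $s_2s_1^{2k+1}s_2$ occur and their products are positive) follows from a finite boolean-matrix computation identical in spirit to the one in the proof of Proposition~\ref{prop:primitiveness for C adic}. Thus $\bc$ and $\bs$ are simultaneously primitive.

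Assume now both primitive. The algebraic key is the identity $z_l=\rho\circ c_{121}$, checked directly: $c_{121}\colon 1\mapsto13,\,2\mapsto132,\,3\mapsto12$, and post-composing with $\rho=(23)$ gives $1\mapsto12,\,2\mapsto123,\,3\mapsto13$, which is $z_l$. Since $\rho$ is a letter permutation it commutes with the shift, so $\rho(c_{121}\cdot X_\bc)=(\rho\circ c_{121})\cdot X_\bc=z_l\cdot X_\bc$, and it is enough to prove $X_\bs=z_l\cdot X_\bc$. Here I work at the level of subshifts, where the difference between $z_l$ and $z_r$ evaporates: for any minimal subshift $Y$ one has $z_l\cdot Y=z_r\cdot Y$ because $z_l(\bw)=1\,z_r(\bw)$. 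Combining this with \eqref{eq:conjugacy1}, \eqref{eq:conjugacy2} and the functoriality $(\sigma\tau)\cdot Y=\sigma\cdot(\tau\cdot Y)$ of the image operation, I obtain the intertwining
\[
	s_i\cdot(z_l\cdot Y)=z_l\cdot(c_i\cdot Y)\qquad(i=1,2),
\]
where the case $i=1$ reads $s_1\cdot(z_l\cdot Y)=(z_rc_1)\cdot Y=z_r\cdot(c_1\cdot Y)=z_l\cdot(c_1\cdot Y)$, and the case $i=2$ is analogous using $s_2z_r=z_lc_2$ together with $z_l\cdot Y=z_r\cdot Y$.

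Writing $X_\bc^{(n)}$ for the $\C$-adic subshift directed by the shifted sequence $S^n\bc$ and setting $Y_n=z_l\cdot X_\bc^{(n)}$, the intertwining together with $c_{i_n}\cdot X_\bc^{(n+1)}=X_\bc^{(n)}$ yields $Y_n=s_{i_n}\cdot Y_{n+1}$ for all $n$; that is, $(Y_n)_n$ obeys the same recursion as the tail subshifts $(X_\bs^{(n)})_n$. By primitivity of $\bs$ this recursion has a unique solution through minimal subshifts (the seed is forgotten: every factor of $X_\bs$ occurs in some $s_{[0,m)}(a)$, hence in $s_{[0,m)}(w)$ with $w\in Y_m$, and conversely), so $X_\bs=Y_0=z_l\cdot X_\bc$. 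Finally, $c_{121}\cdot X_\bc$ is exactly the $\C$-adic subshift directed by $c_1c_2c_1\,c_{i_0}c_{i_1}\cdots$, i.e.\ by $\bc$ with $121$ prepended; this sequence is again primitive, so by Theorem~\ref{maintheorem:caracterization 2n+1} (equivalently Theorem~\ref{thm:factorcomplexity}) the subshift $c_{121}\cdot X_\bc$ is minimal, dendric and of factor complexity $2n+1$, and these properties pass through the relabelling $\rho$ to $X_\bs$.

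The main obstacle is precisely the mismatch between $z_l$ and $z_r$: the naive substitution identity $s_{[0,n)}z=z\,c_{[0,n)}$ fails for non-alternating directive words, since $s_1$ calls for $z_l$ on its right whereas $s_2$ calls for $z_r$, and trading one for the other shifts the underlying word. Passing to minimal subshifts, where $z_l\cdot Y=z_r\cdot Y$, is what dissolves the difficulty; the other delicate point is the uniqueness of the solution of the subshift recursion, which is where primitivity of $\bs$ is genuinely used.
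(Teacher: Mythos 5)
Your proposal is correct and follows essentially the same route as the paper's proof: the primitivity equivalence rests on Proposition~\ref{prop:primitiveness for C adic}, the identities $s_1^2(1)=s_2^2(1)=1$, and a finite boolean-matrix computation, while the subshift identity is obtained from $z_l\cdot X_\bc = z_r\cdot X_\bc$ (minimality), the conjugacy relations \eqref{eq:conjugacy1}--\eqref{eq:conjugacy2}, and the observation $z_l=\rho\circ c_{121}$. The paper compresses the intertwining/recursion step into a single sentence, and your write-up simply supplies those details; note only that in your uniqueness argument the inclusion $X_\bs\subseteq Y_0$ (all letters occur in each $Y_m$ by primitivity) together with minimality of $Y_0$ already suffices, so the parenthetical ``and conversely'' is not needed.
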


\begin{proof}
    By Proposition~\ref{prop:primitiveness for C adic}, we know that $\bc$ is not primitive if and only if there exists $N$ such that for all $n \geq 0$, $i_{N+2n} = i_{N+2n+1}$.
Since $s_1^2(1)=s_2^2(1) = 1$, we deduce that if $\bc$ is not primitive, then $\bs$ is not primitive either.
To prove that $\bs$ is primitive when so is $\bc$, we may proceed like in Proposition~\ref{prop:primitiveness for C adic}. 
We define graphs similar to those of Figure~\ref{figure: graph for primitive} and show that $\bs$ is primitive.

Now assume that $\bc$ is primitive.
By minimality of $X_\bc$, we get $z_l \cdot X_\bc = z_r \cdot X_\bc$ and using Equations~\ref{eq:conjugacy1} and~\ref{eq:conjugacy2}, we have $z_l \cdot X_\bc = X_\bs$.
To end the proof, it suffices to observe that $z_l = \rho \circ c_{121}$.
\end{proof}

\begin{corollary}
For every totally irrational vector $\bx \in \Gamma$, the application of the semi-sorted Selmer algorithm yields a $\{s_1,s_2\}$-adic subshift which is minimal and dendric. 
\end{corollary}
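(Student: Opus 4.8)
The plan is to reduce the statement, via the conjugacy established in the preceding proposition, to the primitivity criterion for $\C$-adic sequences (Theorem~\ref{thm:dim-sur-Q}) and then to quote the last proposition verbatim. The whole argument is a transfer of the totally irrational hypothesis from the Selmer side to the Cassaigne side.

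First I would set $\by = z^{-1}(\bx) \in \R^3_{\geq 0}$, which is well defined because $z:\R^3_{\geq 0}\to\Gamma$ is a linear homeomorphism; by the final sentence of the conjugacy proposition, $\dim_\Q(\by) = \dim_\Q(\bx) = 3$, so $\by$ is totally irrational. The conjugacy $z\circ F_C = F_S\circ z$ gives $z(F_C^n(\by)) = F_S^n(\bx)$ for all $n$, and since $\by\in\Lambda_i \iff z(\by)\in\Theta_i$ for $i\in\{1,2\}$, the $F_S$-itinerary of $\bx$ inside $\Gamma = \Theta_1\cup\Theta_2$ coincides letter by letter with the $F_C$-itinerary $(i_n)_{n\in\N}\in\{1,2\}^\N$ of $\by$. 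Note that this itinerary lies in $\{1,2\}^\N$ rather than $\{1,2,3,4\}^\N$ precisely because $\Gamma$ is $F_S$-invariant and $\bx\in\Gamma$, so only the parts $\Theta_1,\Theta_2$ are ever visited. Consequently the semi-sorted Selmer algorithm applied to $\bx$ and the algorithm $F_C$ applied to $\by$ produce the \emph{same} directive sequence $(i_n)_{n\in\N}$, and the subshift of the statement is exactly $X_\bs$ with $\bs = (s_{i_n})_{n\in\N}$.

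Next I would show that $\bc = (c_{i_n})_{n\in\N}$ is primitive. By weak convergence (Proposition~\ref{prop:convergence-for-C1-C2}), the nested cones $\bigcap_{n\geq 0} C_{i_0}\cdots C_{i_{n-1}}\R^3_{\geq 0}$ collapse to the half-line $\R_{\geq 0}\by$ (using that $\by$ belongs to every such cone). Normalizing, the limiting vector $\bf = \by/\|\by\|_1 \in \Delta$ satisfies $\dim_\Q(\bf) = \dim_\Q(\by) = 3$, and Theorem~\ref{thm:dim-sur-Q}(iii) then yields that $(C_{i_n})_{n\in\N}$, and hence $\bc$, is primitive.

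Finally, the preceding proposition asserts that $\bc$ is primitive if and only if $\bs$ is primitive, and that in this case $X_\bs = \rho(c_{121}\cdot X_\bc)$ is a minimal dendric subshift of factor complexity $2n+1$. Applying it to our primitive $\bc$ concludes that the $\{s_1,s_2\}$-adic subshift produced by the semi-sorted Selmer algorithm on $\bx$ is minimal and dendric, as desired. I do not expect a genuine obstacle: every step is a citation of an earlier result, and the only delicate bookkeeping is the identification of itineraries across the conjugacy $z$, which follows immediately from the equivalences $\by\in\Lambda_i\iff z(\by)\in\Theta_i$ recorded in the conjugacy proposition.
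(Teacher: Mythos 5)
Your proof is correct and follows exactly the chain of reasoning the paper intends (and leaves implicit, since the corollary is stated without a written proof): transfer total irrationality through $z^{-1}$, match the $F_S$- and $F_C$-itineraries via the conjugacy, invoke weak convergence and Theorem~\ref{thm:dim-sur-Q}(iii) to get primitivity of $(c_{i_n})_{n\in\N}$, and conclude with the final proposition. The bookkeeping points you flag — that $\by$ lies in the nested cones, that $\Gamma$ is $F_S$-invariant so the itinerary stays in $\{1,2\}^\N$, and that normalization preserves $\dim_\Q$ — are all handled correctly, so there is nothing to add.
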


\section*{Appendix}

Figure~\ref{fig:venn-diagram} is an alternative representation
of Figure~\ref{fig:proof-contracting}.

\begin{figure}%
\begin{center}
    \includegraphics[width=.70\linewidth]{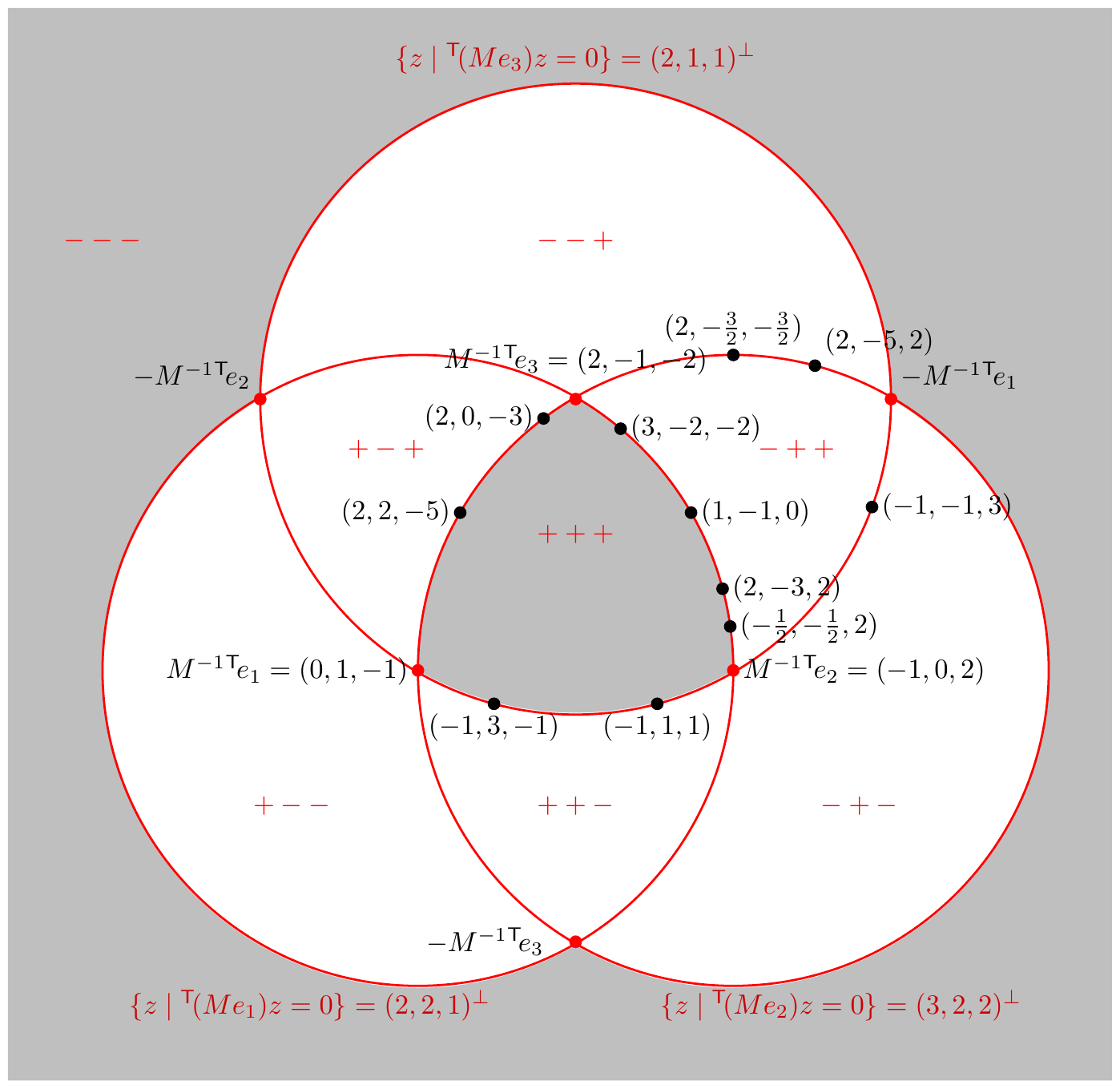}
\end{center}
    \caption{This illustration is an alternative representation
    of Figure~\ref{fig:proof-contracting}.
    Each plane orthogonal to $Me_1$, $Me_2$ or $Me_3$
    where $M=(C_1C_2)^3$
    passing through the origin intersects the sphere in a great circle which is
    represented as a circle in the figure.
    The grey regions represent the vectors $z\in
    \pm(\transposeENV{M} )^{-1}\,\R^3_{>0}$.
    The maximum of $\frac{\Vert \transposeENV{M} z\Vert_D}{\Vert z\Vert_D}$ is
    attained at $z=(2-3,2)$ with a value of $4/5$.}
\label{fig:venn-diagram}
\end{figure}

\bibliographystyle{myalpha} %
\bibliography{biblio}

\end{document}